\newcounter{remark}
\newtheorem{theorem}{Theorem}[section]
\newtheorem{lemma}{Lemma}[section]
\newtheorem{proposition}{Proposition}[section]
\newtheorem{definition}{Definition}[section]
\newtheorem{corollary}{Corollary}[section]
\newcommand{\CC}{\mathbb C}
\newcommand{\cC}{{\mathcal C}}
\newcommand{\cL}{{\mathcal L}}
\newcommand{\cO}{{\mathcal O}}
\newcommand{\cR}{{\mathcal R}}
\newcommand{\cV}{{\mathcal V}}
\newcommand{\cW}{{\mathcal W}}
\newcommand{\cX}{{\mathcal X}}
\newcommand{\cZ}{{\mathcal Z}}
\newcommand{\beeq}{\begin{eqnarray*}}
\newcommand{\eeeq}{\end{eqnarray*}}
\newcommand{\ba}{{\bf a}}
\newcommand{\br}{{\bf r}}
\def\qed{\hfill \vrule height 7pt width 7pt depth 0pt
           \medskip}
\def\haR{\hskip .05cm {\widehat{\hskip -.05cm R}}}
\def\dhaR{{\hskip .08cm \widehat{\hskip -.05cm \haR}}}
\def\dhaRsubsup#1#2{{\dhaR}_{#1}^{\hskip
.08cm\lower4pt\hbox{$\scriptstyle#2$}}}
\def\hatheta{\hskip .08cm {\widehat{\hskip -.08cm \theta}}}
\def\dhatheta{{\hskip .08cm \widehat{\hskip -.08cm \hatheta}}}
\def\dhathetasubsup#1#2{{\dhatheta}_{#1}^{\hskip
.08cm\lower4pt\hbox{$\scriptstyle#2$}}}
\def\12{{\frac 12}}
\def\first_indent{\hskip 1.65em}
\def\ba{\begin{array}}
\def\ea{\end{array}}
\def\beq{\begin{equation}}
\def\eeq{\end{equation}}
\def\beqa{\begin{eqnarray}}
\def\eeqa{\end{eqnarray}}
\def\beqn{\begin{eqnarray*}}
\def\eeqn{\end{eqnarray*}}
\def\Im{{\hbox{Im}}}
\def\Re{{\hbox{Re }}}
\begin{document}
\title{\bf State-space approach to zero-modules
of proper transfer functions}

\author{
Gy\"orgy Michaletzky\\
E\"otv\"os Lor\'and University \\
H-1111 P\'azm\'any P\'eter s\'et\'any 1/C,\\
Budapest, Hungary \\
e-mail: michaletzky@caesar.elte.hu\\
   \mbox{}\\
}
\date{\today}

\maketitle

\begin{abstract}
The poles and zeros of a transfer function can be studied by 
various means. The main motivation of the present paper 
is to give a
{\bf state-space} description of the module theoretic definition
of zeros introduced and analyzed by Wyman et al. in 
\cite{WSCP-89} and \cite{WSCP-91}. This analysis is 
carried out for proper transfer functions. 

The obtained
explicit equations determined by the system matrices are 
used for defining two {\it inner functions} to transform the 
original transfer function into a 
{\it square, invertible }
one via multiplication  
eliminating the ``generic'' zeros corresponding to
the kernel and the image of the transfer function. 

As it is well-known the zeros are connected to various
invariant subspaces arising in geometric control, see 
e.g. Aling and Schumacher \cite{AL-SC-84} for a complete 
description. The connections to these subspaces are also
mentioned in the paper.

{\bf Keywords}: zeros of transfer function, realization of 
transfer function, state-space description, proper rational 
function, output-nulling subspace, controlled invariant
subspace, input-containing subspace.

{\bf MSC2000 Numbers}: 30E05, 93B29, 93B30
\end{abstract}

\section{Introduction}
The study of zeros of transfer functions has already a long
history. Various zeros has been defined, various 
approaches has been used to describe them. We are not brave
enough to give a detailed description of this history
but the book written by H. Rosenbrock ('70) should be cited here
\cite{ROS-70} as well as that of T. Kailath ('80)
\cite{KAI-80}. One of the approaches used in these books to define
the zeros of a transfer function is based on the
Smith-McMillan form of these functions. These are the so-called
transmission zeros. C. B. Schrader and M. K. Sain ('89) in 
\cite{SCH-SAI-89} give a
survey on the notions and results of zeros of linear 
time invariant systems,
including invariant zeros, system zeros, input-decoupling zeros,
output-decoupling zeros and input-output-decoupling zeros, as well.
The connection of these zeros to invariant subspaces
appearing in geometric control theory was considered e.g. in
A. S. Morse ('73) \cite{MOR-73} for strictly proper transfer functions,
for proper transfer functions -- not assuming the minimality
of the realization -- in H. Aling and J. M. Schumacher ('84) 
\cite{AL-SC-84} showing that the combined decomposition
of the state space considering Kalman's canonical 
decomposition and Morse's canonical decomposition in
the same lattice diagram corresponds to the various
notions of multivariate zeros.

The book written by J. Ball, I. Gohberg and L. Rodman
\cite{BAL-GOH-ROD-90} uses the concept of left
(and Right) zero pairs. This offers the possibility
of analyzing -- together with the position of the zeros --
the corresponding zero directions, as well.

The zeros play an important role in the theory of
spectral factors. The connection between  the zeros of
spectral factors, splitting subspaces and the algebraic
Riccati-inequality was studied in A. Lindquist et al. ('95)
\cite{LMP-95}. An important aspect of this paper
was further analyzed by P. Fuhrmann and A. Gombani ('98) 
were the concept of {\it externalized} zeros was introduced.
(Interestingly, this concept can be formulated in the
framework of the dilation theory, as was pointed out by 
the author in \cite{MIC-03}.)

The starting point of the present paper is the module-theoretic
approach to the zeros of multivariate transfer functions
defined by B. F. Wyman and M. K. Sain ('83) \cite{WYM-SAI-83},
and further analyzed by Wyman et al. in \cite{WSCP-89},
\cite{WSCP-91}. In this extension the so-called 
Wedderburn-Forney-spaces play an important role.
(Although the published version of the paper written by
G. D. Forney \cite{FOR-75} does not contain an explicit 
definition of this construction, it was in the original manuscript.)
The main result in \cite{WSCP-91} is that the number of
zeros and poles of a rational transfer function coincide
(even in the matrix case) assuming that the zeros are
counted in a {\it right way}. It is well-known that to define
the multiplicity of a finite zero (or even an infinite zero)
the Rosenbrock matrix provides an appropriate tool.
But it is an easy task to construct (non-square)
matrix-valued transfer function with no finite
(infinite) zeros. In such cases it might happen that
there are rational functions mapped to the
{\it identically} zero function by the transfer 
function. Then the functions n the kernel of the 
transfer function form an infinite dimensional
vector space over the space of scalars, but it is
finite dimensional over the field of rational functions.
But defining the multiplicity of this 
{\it zero-function} as the corresponding dimension
of the kernel subspace does not give a satisfactory
result. To this aim the notion of minimal polynomial
bases should by used as in \cite{FOR-75} 
by G. D. Forney.

The main motivation of the present paper is to give a
matrix theoretic description of the corresponding
zero-concepts, i.e. to show how to compute these
zero-modules starting from a state-space realization of 
the transfer function. 

Section \ref{sec:pre_not} gives a short 
introduction to the zero-modules
and minimal polynomial bases.

Section \ref{sec:zeros_fnc} 
first refreshes the fact that the finite
zeros can be described by the Rosenbrock-matrix, namely
if $F(z) = D + C\left(zI-A\right)^{-1}B$ then the
equation
$\rule{0in}{4ex}
\left[\begin{array}{cc} A & B \\ C & D
\end{array}\right]
\left[\begin{array}{c} \Pi \\ H\end{array}\right]
=
\left[\begin{array}{c} \Pi \Lambda \\ 0 \end{array}
\right]$
should be considered. (The minimality of
the realization will not be assumed in the paper, only
the {\it observability} of the pair
$\left(C, A\right)$.) But it turns out that the same equation
describes the zeros corresponding to the kernel-module, as well,
and although there is a possibility to consider a
{\it maximal} solution of this equation, this maximality
is well-defined in terms of $\Im \left(\Pi\right)$ but
in general the matrix $\Lambda$ (and $H$) is not uniquely defined.
Loosely speaking, some part of it can be freely chosen. 
It is shown that for this maximal solution the
subspace $\Im \left(\Pi\right)$ is the 
maximal output-nulling controlled invariant subspace
(denoted by $\cV^{*}\left(\Sigma\right)$, where
$\Sigma$ indicates the system), while the maximal output-nulling
reachability subspace 
(denoted by $\cR^{*}\left(\Sigma\right)$) describes
that part of the matrix $\Lambda$ where it is not
uniquely defined by the system matrices. 
A maximal solution of the equation
$\rule{0in}{4ex}
\left[\begin{array}{cc} A & B \\ C & D
\end{array}\right]
\left[\begin{array}{c} 0 \\ R_0\end{array}\right]
=
\left[\begin{array}{c} \Pi \alpha_0 \\ 0 \end{array}
\right]$
should be considered and $\cR^{*}\left(\Sigma\right)$ 
is given 
$\Im\left( \Pi \left< \Lambda \mid \alpha_0\right>\right)$,
where $\left<\Lambda \mid \alpha_0\right>$ denotes
the minimal $\Lambda$-invariant subspace containing
$\Im \left(\alpha_0\right)$. As a side result, we obtain
that the minimal indices corresponding to the kernel of $F$
coincide with the controllability indices of the pair
$\left(\Lambda, \alpha_0\right)$. It should be noted here
that the correspondence between the various zeros and the
various invariant subspaces was thoroughly investigated
e.g. in \cite{AL-SC-84} by Aling and Schumacher even in the 
general non-minimal case. Especially, they proved that 
$\cR^{*}\left(\Sigma\right)$ corresponds to the kernel of $F$, 
while 
$\left(\cV^{*}\left(\Sigma\right)\cap 
\left<A\mid B\right>\right)/\cR^{*}\left(\Sigma\right)$
to the finite transmission zeros (assuming the observability
of $(C, A)$). But the explicit reference to the equation
above was not given by them.

Using the maximal solutions of the equations above
a matrix valued tall inner
(in continuous time sense) function $K(z)$ is
constructed explicitly with
columns forming a basis (over the field of rational
functions) in the kernel of the transfer function.
Via a square-inner extension $L$ of $K$ 
(i.e. $\left[ K, L\right]$ is a square inner function)
the generic zeros corresponding to the kernel-zero module
can be turned into finite zeros, in other words
the function $F_{\text{\bf r}} = FL$ has already a trivial kernel
(moreover it is left-invertible) 
but still containing the original finite zeros of $F$.
In terms of the language of geometric control theory,
starting from a minimal realization of $F$ and deriving
from this a realization for $F_{\text{\bf r}}$ 
these realizations
share the same maximal output-nulling controlled invariant
subspace: $\cV^{*}\left(\Sigma_{\text{\bf r}}\right) = 
\cV^{*}\left(\Sigma\right)$, but 
$\cR^{*}\left(\Sigma_{\text{\bf r}}\right)$ becomes
trivial.

In order to eliminate the defect in the image space
in Section \ref{sec:left_right} first
the connection between the left and 
right zero-modules is analyzed showing especially that if 
for the same transfer function the roles of the
input signal and the output signal are changed
(i.e. instead of the effect of the right
multiplication $g\rightarrow Fg$ the left multiplication
$h\rightarrow hF$ is considered) then -- assuming a
minimal realization is taken -- the orthogonal complement
of the maximal output-nulling controlled invariant
subspace defined for right multiplication is the minimal
input-containing subspace defined for the left
multiplication. Shortly, 
$\left(\cV^{*}\left(\Sigma\right)\right)^\perp
=\cC^{*}\left(\Sigma\right)_\text{left}$.

Now using an appropriate flat inner function $L'$ the
left kernel-zero module of $F$ can be eliminated. 
Simultaneous application of the inner functions
$L$ and $L'$ leads to the following definition:
$F_{\text{\bf rl}} = L' F L$. Theorem \ref{thm:image_out} 
claims that if the 
poles of $F$ are in the closed left half-plane
while there is no finite zero on the imaginary axis then
the McMillan degree of $F_{\text{\bf rl}}$ 
is the same as that of $F$,
and the function $F_{\text{\bf rl}}$ 
has only finite and possibly
infinite zeros, thus its kernel-zero module
and the zero module corresponding to the defect in
the image space are trivial. The inner functions
$L'$ and $L$ transform the so-called generic zeros
into finite zeros positioned in the
open right half-plane. The function $F_{\text{\bf rl}}$
is a square, invertible function, thus the "squaring"
of $F$ is achieved via left- and right multiplication.
preserving the poles of the original transfer function.
In the paper written by Ntogramatzidis and Prattichizzo
\cite{NTO-PRA-07} this squaring is obtained via
state-feedback and output-injection.

\section{Preliminaries and notation} \label{sec:pre_not}

Let $U$ and $Y$ be vector spaces over $\CC$ of dimensions 
$q$ and $p$, respectively.
As usual, $\CC(z)$ denotes the field of 
rational functions, $\CC[z]$
the ring of polynomials over $\CC$. Set
\[
U(z) = U \otimes_\CC \CC(z)\,, \quad 
Y(z) = Y \otimes_\CC \CC(z)\,.
\]
(these are the sets of vector valued rational functions).

Let $F(z)$ be a {\it transfer function}, i.e.
an $\CC(z)$ linear map
\[
F(z) : U(z) \rightarrow Y(z)\,.
\]
Choosing bases (over $\CC$) in $U$ and $Y$ we obtain bases for
$U(z)$ and $Y(z)$ (over $\CC(z)$) and a $p\times q$ matrix
representation for $F(z)$.

Let us introduce the notations
\[
\Omega U = U \otimes_\CC \CC[z]\,,
\quad \Omega Y = Y \otimes_\CC \CC[z]\,,
\] 
(these are the sets of vector-valued polynomials) and
\[
\Omega_\infty U = U \otimes_\CC \cO_\infty\,, \quad 
\Omega_\infty Y = Y \otimes_\CC \cO_\infty\,,
\]
where $\cO_\infty$ denotes the set of 
proper rational functions 
in $\CC (z)$. Obviously,
\[
z^{-1}\Omega_\infty U\,,\quad
z^{-1}\Omega_\infty Y\,
\]
are the sets of the strictly proper vector-valued rational functions.

Following R. Kalman we might identify the set 
$\Omega U$ with the (finite) \underline{past} 
(with respect to the zero time point) inputs,
and $\Omega Y$ with the (finite) \underline{past} outputs.

\subsection{Zero and pole modules of a transfer function}
\label{subsec:modules}

In this subsection we recall the definition of
 the pole and 
zero modules following Wyman and Sain 
\cite{WYM-SAI-83}.
The finite pole module is given as
\[
X(F) = \frac{\Omega U}{F^{-1}(\Omega Y) \cap \Omega U}\;.
\]
That is, the set of polynomial inputs is 
factorized by the polynomial inputs giving 
rise to polynomial outputs.

Similarly, the infinite pole module is
\[
X_\infty(F) = \frac{z^{-1}\Omega_\infty U}
{F^{-1}(z^{-1}\Omega_\infty Z) 
\cap z^{-1}\Omega_\infty U}\;.
\]
To define the zero module we might start with 
\[
\frac{F^{-1}(\Omega Y)}{F^{-1}(\Omega Y) \cap \Omega U}\;,
\]
(the set of inputs leading to polynomial outputs factorized
by the inputs which are themselves polynomial, in other words
the set of inputs producing no outputs
after time zero where two inputs are considered
to be equivalent if they differ only in the past). 

In those cases, when there are inputs producing identically
zero outputs, in other words the
kernel of the transfer function is nontrivial, then the space
above is infinite dimensional (over $\CC$).
Factorizing out this kernel  
we obtain the "module of finite zeros":
\begin{eqnarray*}
Z(F) &=& \left(
\frac{F^{-1}(\Omega Y)}{F^{-1}(\Omega Y)\cap \Omega U}
\right)
/
\left(
\frac{ker F (z)}{\ker F(z) \cap \Omega U}
\right)
\\
&&\quad = \, \rule{0in}{5ex}
\frac{F^{-1}(\Omega Y) + \Omega U}{\ker F + \Omega U}\;.
\end{eqnarray*}
The infinite zero module is defined similarly
\[
Z_\infty (F) = 
\frac{F^{-1}(\Omega_\infty Y) + \Omega_\infty U}
{\ker F + \Omega_\infty U}\;,
\]

To define a finite-dimensional 
object {\it counting} 
the ``number of zeros'' 
corresponding to the possibly infinite
dimensional (over $\CC$) of $\ker F$ 
there are two possibilities offered by Forney 
\cite{FOR-75}.
The first one is based on the so-called
Wedderburn-Forney  spaces, the second
one uses the notion of ``minimal polynomial bases''.

To define the first one let us start with
introducing a 
mapping $\pi_-$ rendering to any rational 
function its strictly proper part. I.e
\[
\pi_-: \CC(z) \rightarrow z^{-1}\cO_\infty\;.
\]
This can be extended in an obvious manner to a mapping
from $U(z)$ to $z^{-1}\Omega_\infty U$, and also
to a mapping from $Y(z)$ to $z^{-1}\Omega_\infty Y$.
Both these extended mappings 
will be denoted by the same symbol $\pi_-$.
(Similarly, $\pi_+$ denotes the mapping 
producing the polynomial part of 
any rational function.)

Now the kernel subspace $\ker F$ is 
obviously a module over $\CC (z)$.
The Wedderburn-Forney space obtained from it 
is denoted by $\cW (\ker F)$ and defined as
\[
\cW (\ker F) = \frac{\pi_-(\ker F)}
{\ker F \cap z^{-1}\Omega_\infty U}
\]

According to Theorem 5.1 (and Corollary 5.2) in Wyman et al. 
\cite{WSCP-89} for every
rational transfer function the number of poles
and zeros are equal, if they are ``counted''
in an appropriate way. Namely, set
\[
\cX (F) = X (F) \oplus X_\infty (F)\;, \quad
\cZ (F) = Z (F) \oplus Z_\infty (F)\;.
\]
Then for some linear mappings $\alpha$ and $\beta$
the sequence
\[
0 \rightarrow \cZ (F) 
\overset{\alpha}{\longrightarrow} \frac{\cX(F)}{\cW (\ker F)}
\overset{\beta}{\longrightarrow} \cW (\Im F)
\rightarrow 0
\]
forms an exact sequence. The mapping $\alpha$ is 
induced by the mapping 
\[
(u, v) \rightarrow (\pi_+ (u+v), \pi_- (u+v))\;,
\]
from 
$F^{-1} (\Omega Y)\oplus F^{-1} (z^{-1} \Omega_\infty Y)
\rightarrow \Omega U \oplus z^{-1}\Omega_\infty U$,
while $\beta$ is induced by
\[
(u, v) \rightarrow \pi_- \left[F \cdot (u+v)\right]
\]
from 
$\ \Omega U \oplus z^{-1} \Omega_\infty U 
\rightarrow \pi_- \Im F\ $.
(More precisely, to define the factor space 
$\displaystyle\frac{\cX(F)}{\cW (\ker F)}$ 
first we have to apply
the imbedding of $\ \cW (\ker F)\ $ into
$\ \cX (F)\ $ induced by the linear mapping
$\pi_- u \rightarrow (\pi_+ u, \pi_- u)$
from
$\ \ker F \rightarrow \Omega U \oplus z^{-1} \Omega_\infty U$.)

\bigskip
\subsection{Minimal polynomial basis}
\label{subsec:minpol}

Now let us turn to the second possibility based on the notion
of minimal polynomial basis.

If $v=(v_1, \dots , v_k)$ is a k-tuple of polynomials, then
set $\text{deg}\ v = \max_j \text{degree}\ v_j$. If $V$ is a 
$k\times m$ array of polynomials, then its (column)-degree 
is $\nu = \sum_l \nu_l$, where 
$V = \left[ v_1, \dots , v_m\right]$, i.e. $v_l$ denotes the
$l$-th column of $V$, and $\nu_l = \text{deg}\ v_l$. Denote by
$V_h$ the matrix of highest (column) degree coefficients of $V$.

\begin{definition}[Minimal basis]
Let $\cV$ be a finite dimensional 
subspace of $k$-tuples over
the field of rational functions $\CC (z)$.
The array $V$ is a 
\underline{minimal (polynomial) basis} of $\cV$
(over the rational functions), if 
\begin{itemize}
\item it has polynomial entries,
\item its columns form a basis over the rational functions of $\cV$, and 
\item has least (column)-degree.
\end{itemize}
\end{definition}
Denote by $\deg_{\min} \cV$ the degree of any
minimal polynomial basis in $\cV$.

Then according to Corollary 6.5 in Wyman et al.
\cite{WSCP-89} 
\[ 
\dim \cW (\ker F) =  \deg_{\min} \ker F\;.
\]
 
Similarly, 
\[
\dim \cW (\Im F) = \deg_{\min} \Im F\;.
\]

Thus the correct formulation the 
statement about the number of zeros and poles
is as follows:
\[
\dim \cX (F) = \dim \cZ (F) + \deg_{\min} \ker F 
+ 
\deg_{\min} \Im F\;. 
\]

Later on we need the following 
characterization of minimal 
(polynomial) bases given by Forney \cite{FOR-75}.

\begin{theorem}\label{Forney_thm}
Let $\cV$ be an $n$-dimensional subspace of \, $k$-tuples 
of rational functions. Assume that
$V = \left[ v_1, \dots , v_m\right]$ has polynomial 
entries. Then the following 
properties are equivalent:
\begin{itemize}
\item[(i)] $V$ is a minimal basis of $\cV$;
\item[(ii)] if $\xi = V\zeta$ 
is a polynomial $k$-tuple, $\xi\in\cV$, then
$\zeta$ must be a polynomial $m$-tuple, and
$\text{deg}\ \xi = \max_{1\leq l \leq m} 
\left[ \text{deg}\ \zeta_l + \nu_l\right]$;
\item[(iii)] $\dim \cV_d = \sum_l \left(d-\nu_l\right)^+$,
where $\cV_d$ denotes the set 
of polynomials in $\cV$ of 
degree strictly less, then $d$,
\item[(iv)] for any complex number 
$z_0$ the matrix $V(z_0)$ has full 
column rank, and $V_h$ is also 
of full column rank.
\end{itemize}
\end{theorem}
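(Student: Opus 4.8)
The plan is to prove the four properties equivalent by establishing the cyclic chain (i)$\Rightarrow$(iv)$\Rightarrow$(ii)$\Rightarrow$(iii)$\Rightarrow$(i). Throughout I keep the standing assumption that the columns $v_1,\dots,v_m$ lie in $\cV$ and span it over $\CC(z)$; this is implicit in calling $V$ a ``basis'' in (i), and each of (ii)--(iv) is in fact seen to force these columns to be $\CC(z)$-independent (for (iv), a polynomial syzygy among the columns would make $V(z_0)$ column-rank-deficient at every $z_0$; for (ii), applying the hypothesis to $\xi=0$ rules out a nontrivial polynomial relation), so that $m=n$ and $V$ is a genuine basis. With this understood, $\zeta$ is uniquely determined by $\xi=V\zeta$.

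For (i)$\Rightarrow$(iv) I would argue by contradiction through two elementary column reductions. If $V_h$ is column-rank-deficient, choose a nonzero combination $\sum_l c_l v_{h,l}=0$ of its columns, let $j$ be an index of maximal $\nu_j$ among those with $c_j\ne 0$, and replace $v_j$ by $v_j+\sum_{l\ne j}(c_l/c_j)\,z^{\nu_j-\nu_l}v_l$ (all exponents nonnegative by the choice of $j$); the coefficient of $z^{\nu_j}$ becomes $(1/c_j)\sum_l c_l v_{h,l}=0$, so this basis-preserving operation strictly lowers $\sum_l\nu_l$, contradicting minimality. If instead $V(z_0)c=0$ for some $z_0$ and some nonzero constant $c$, then $w=(z-z_0)^{-1}Vc$ is a polynomial vector in $\cV$; replacing the column $v_j$ of maximal degree among those with $c_j\ne 0$ by $w$ again gives a basis of strictly smaller total degree, since $\deg w\le\max_{c_l\ne 0}\nu_l-1=\nu_j-1$. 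I expect this to be the main obstacle: one must verify that the modified array is still a basis and that the specific choice of the maximal-degree column forces the strict drop in $\sum_l\nu_l$.

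For (iv)$\Rightarrow$(ii) I would take any polynomial $\xi=V\zeta\in\cV$ and first show $\zeta$ is polynomial: if $\zeta$ had a pole at $z_0$ of order $r\ge 1$, then $\eta=(z-z_0)^r\zeta$ is polynomial with $\eta(z_0)\ne 0$, and evaluating $(z-z_0)^r\xi=V\eta$ at $z_0$ gives $V(z_0)\eta(z_0)=0$, contradicting the full column rank of $V(z_0)$. Granting $\zeta$ polynomial, the bound $\deg\xi\le\max_l(\deg\zeta_l+\nu_l)$ is automatic, and equality holds because, writing $e=\max_l(\deg\zeta_l+\nu_l)$, the coefficient of $z^e$ in $V\zeta$ equals $V_h c$ for the nonzero vector $c$ supported on the degree-maximal indices, whose entries are the leading coefficients of the corresponding $\zeta_l$; this is nonzero since $V_h$ has full column rank. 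This is precisely the predictable degree property.

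Finally, (ii)$\Rightarrow$(iii) is a dimension count: the map $\zeta\mapsto V\zeta$ is a $\CC$-linear isomorphism from the space of polynomial tuples with $\deg\zeta_l<d-\nu_l$ for all $l$ onto $\cV_d$, with injectivity from $\CC(z)$-independence of the columns and surjectivity from (ii); since the source has dimension $\sum_l(d-\nu_l)^+$, so does $\cV_d$. For (iii)$\Rightarrow$(i) I would use that for \emph{any} polynomial basis $W=[w_1,\dots,w_n]$ with degrees $\mu_l$ the vectors $\{z^j w_l:0\le j<d-\mu_l\}$ are $\CC$-independent elements of $\cV_d$, whence $\dim\cV_d\ge\sum_l(d-\mu_l)^+$; comparing this with the exact value $\dim\cV_d=\sum_l(d-\nu_l)^+$ from (iii) and letting $d\to\infty$ (both sides being affine in $d$ with slope $n$) yields $\sum_l\nu_l\le\sum_l\mu_l$, so $V$ has least column-degree and is a minimal basis.
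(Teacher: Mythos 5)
The paper never proves this statement: it is quoted verbatim as Forney's characterization of minimal bases (Theorem 1 of \cite{FOR-75}, ``Later on we need the following characterization of minimal (polynomial) bases given by Forney''), so there is no in-paper argument to compare yours against. Your cyclic chain (i)$\Rightarrow$(iv)$\Rightarrow$(ii)$\Rightarrow$(iii)$\Rightarrow$(i) is the standard route to Forney's theorem, and three of the four legs are correct as written: the two column-reduction contradictions for (i)$\Rightarrow$(iv) (the verifications you flag as ``the main obstacle'' are indeed routine: in the first case the operation is unimodular, in the second case $v_j$ is recoverable from $w$ and the remaining columns since $c_j\neq 0$), the predictable-degree argument for (iv)$\Rightarrow$(ii), and the dimension count for (ii)$\Rightarrow$(iii). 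One wording slip in (iv)$\Rightarrow$(ii): if $\zeta$ has poles at several points, $\eta=(z-z_0)^r\zeta$ is not polynomial but only analytic at $z_0$; since you evaluate both sides of $(z-z_0)^r\xi=V\eta$ only at $z_0$, analyticity there together with $\eta(z_0)\neq 0$ is all the argument uses, so this is a one-word repair.

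The one genuine gap sits in (iii)$\Rightarrow$(i), inside the framework you yourself chose. You take as standing hypothesis only that the columns lie in $\cV$ and span it, and you assert that ``each of (ii)--(iv)'' forces $\CC(z)$-independence, but you justify this only for (ii) and for (iv); the case your chain actually needs is (iii), and it is the one you skip. Without $m=n$ the final comparison collapses: if $m>n$, then $\dim\cV_d=\sum_l(d-\nu_l)^+=md-\sum_l\nu_l$ has slope $m$ for large $d$, the inequality against $\sum_l(d-\mu_l)^+$ (slope $n$) is vacuously true for large $d$, and no bound $\sum_l\nu_l\le\sum_l\mu_l$ follows; your parenthetical ``both sides being affine in $d$ with slope $n$'' silently assumes what is to be shown. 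The repair is short: since $\dim_{\CC(z)}\cV=n$, choose $n$ coordinates for which the projection $\pi$ of $\CC(z)^k$ onto $\CC(z)^n$ is injective on $\cV$ (possible because any $k\times n$ matrix whose columns form a basis of $\cV$ has an invertible $n\times n$ submatrix); $\pi$ maps $\cV_d$ injectively into the $n$-tuples of polynomials of degree less than $d$, so $\dim\cV_d\le nd$, and comparing with $md-\sum_l\nu_l$ for large $d$ forces $m\le n$, hence $m=n$. Alternatively, if one reads the theorem with ``the columns of $V$ form a basis of $\cV$'' as a standing hypothesis --- which is Forney's own formulation and the way the paper actually uses the result --- the issue disappears and your proof is complete as it stands.
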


\subsection{State-space realization, invariant subspaces}

In this paper we are going to 
characterize the zero
modules of a proper transfer 
function $F(z)$ based on some linear
equations using the so-called 
Rosenbrock-matrix 
associated to $F$.

Following Rosenbrock we shall use the notation
\[
F(z) \sim \Sigma =
\left(
\begin{array}{c|c} A &   B
\\
\hline \rule{0cm}{.42cm}
   C & D
\end{array}
\right)
\]
indicating that $F(z) = D + C\left(zI - A\right)^{-1} B$.

Although the definition of zeros considered in this paper
of a rational
function does not depend on whether 
a continuous or a discrete time system is associated to it
it will turn out of the analysis later that 
discrete time systems arise in a natural way.
Namely, the system
\begin{equation}\label{system}
\left\{
\begin{array}{rcl}
x(k+1) &=& A x(k) + B u(k)\;, \\
\rule{0in}{3ex}
y(k) &=& C x(k) + D u(k)\;.
\end{array}
\right.
\end{equation}
The subspace $\left< A \mid B\right> = \Im 
\left[ B, AB, A^2B , ...\right]$ is the
reachability subspace of the state-space.

A subspace $\cV$ of the state space is called
{\bf output-nulling controlled-invariant}
if there exists a feedback map $K$ such that 
\[
\left(A+BK\right) \cV \subset
\cV 
\subset
\ker \left(C+DK\right)\;.
\]

It is well-known that there exists a maximal
output-nulling controlled-invariant set --
denoted by $\cV^{*}\left(\Sigma\right)= 
\cV^{*}(A,B,C,D)\ $.
(See for example P. A. Fuhrmann and U. Helmke
 \cite{FUR-HEL-01} where
these sets are characterized using polynomial
and rational models of state-space systems.)
Note that 
$\cV^{*}\left(\Sigma\right) \cap 
\left<A \mid B\right>$ is also
an output-nulling controlled-invariant subspace.
 
\medskip
The set of the {\bf output-nulling reachable} elements 
$\ \cC^{*}\left(\Sigma\right)$ of the system 
(\ref{system}) also 
plays important role in this paper. 
This is defined as follows:
\begin{equation}\label{reachable_set}
\cC^{*}\left(\Sigma\right) =  
\left\{
\begin{array}{c|l}
x \  & \ 
\exists \ 
\left( \dots , 0, u(-k), u(-k+1), \dots , u(0) 
\right) \ \text{input}\ \text{such that}\\ 
 & \ \rule{0in}{3ex}\kern .3in
y(j) = 0\;, \ j\leq 0\quad  \text{and} \quad
x = x(1) 
\end{array}
\right\}
\end{equation}
Obviously 
$\cC^{*}\left(\Sigma\right) = \cC^{*}(A,B,C,D)$ 
forms a subspace
of the state-space. Note that 
$\cC^{*}\left(\Sigma\right)$ coincides
with the {\bf minimal input-containing} subspace. See e.g.
Aling and Schumacher \cite{AL-SC-84}. (A subspace $\cC$ is
called input containing if there exists an output-injection
$L$ such that $\left(A+LC\right) \cC \subset \cC$ and
$\Im (B+LD) \subset \cC$.)

The intersection 
$\cR^{*}\left(\Sigma\right) 
= \cV^{*}\left(\Sigma\right) 
\cap \cC^{*}\left(\Sigma\right)$ is
the {\bf maximal output-nulling reachability} subspace. 
(See again \cite{AL-SC-84}.)

For a matrix $A$ its adjoint 
will be denoted by $A^{*}$, 
while for a matrix valued 
function $F(z)$ the notation
$F^{*}(z)$ refers to its 
para-hermitian conjugate function, 
i.e. $F^{*}(z) = \left( F(-\bar z)\right)^{*}$. 

\medskip
\section{Zeros of proper transfer functions}
\label{sec:zeros_fnc}

\bigskip
As we have seen there are several ingredients of 
the ``zero structure'' of a transfer function. 

Let us recall that to determine the finite zero module $Z (F)$ 
first we have
characterize the set  $F^{-1}(\Omega Y) + \Omega U$, i.e. those functions $h$ for which there
exists a polynomial $q$-tuple $\psi$  (in $\Omega U$) such that
$\phi = F\cdot (h+\psi)$ is a polynomial $p$-tuple.
In order to get $Z(F)$ this should be factorized with respect to 
$\ker F + \Omega_\infty U$. This set contains the functions $h$ for which there
exists a polynomial $q$-tuple $\psi$ such that $F\cdot (h+\psi ) = 0$.

Similarly, to characterize the infinite zero module we have to
consider $F^{-1}(\Omega_\infty Y) + \Omega_\infty U$ with
a similar charaterization as above but instead of polynomials we have to consider
proper functions.
To get $Z_\infty (F)$ this should be factorized with respect to
$\ker F + \Omega_\infty U$.

To obtain the kernel-module $\cW ( \ker F )$ 
we have to first consider $\pi_- \left( \ker F\right)$, i.e. the set of those 
strictly proper functions $h$ for which there exists a polynomial 
$q$-tuple $\psi$ such that $F \cdot \left(h+\psi \right) =0$.
 In order to get $\cW (\ker F )$ this set should be factorized 
with respect to the set of strictly proper functions in the kernel of $F$. 

Similarly to describe $\cW ( \Im F )$ we should first consider $\pi_- (\Im F)$,
i.e. the strictly proper functions $h$ for which there exists a
polynomial $p$-tuple $\phi$ such that $h+\phi \in \Im (F)$. 
Two functions $h_1, h_2$ are considered to be equivalent if
$h_1-h_2 \in \Im (F)$.

\subsection{The finite zero-module
$Z(F)$ and the kernel-module
$\cW (\ker F)$}
\label{subsec:fin_ker}

Let us start with the analysis of 
$F^{-1}(\Omega Y) + \Omega U$
appearing in the definition of $Z(F)$.

Let us point out that
Theorem  1 in Michaletzky-Gombani
\cite{MIC-GOM-2012} essentially characterizes these functions. 
\begin{theorem}\label{zeros_description}
Let $F(z) = D+C\left(zI - A\right)^{-1}B$ 
be a rational function. 
\begin{enumerate}
\item[(i)]
Assume that there exists a - possibly 
matrix-valued - function
$$g (z) = H\left(zI - \Lambda\right)^{-1} G 
+ \psi (z)\;,$$
where $\psi$ is a matrix-valued 
polynomial and $(\Lambda, G)$ is
a controllable pair such that 
\[
F g \ \text{ is analytic at the 
eigenvalues of} \ \Lambda. 
\]
If moreover the pair $(C, A)$ is 
observable, then there exists a 
matrix $\Pi$ such that
$\Im\, \Pi \subset < A\ \mid \ B >$ solving the equation
\begin{equation}\label{matrix_zero_eq}
\left[ \begin{array}{cc} A & B \\ C & D
  \end{array}\right]
\left[ \begin{array}{c}
\Pi \\ H \end{array} \right]
= \left[ \begin{array}{c}  \Pi \Lambda \\  0 
\end{array}\right]\,.
\end{equation}
\item[(ii)]
Assume that the matrices 
$\Lambda, H, \Pi$ satisfy the equation
(\ref{matrix_zero_eq}), where 
$\left( H, \Lambda\right)$ is an
observable pair and 
$\Im\, \Pi \subset < A\ \mid \ B >$. 
Then there exists a matrix polynomial $\psi$ 
such that for 
\begin{equation}\label{function_phi}
g (z) =  H \left(zI - \Lambda\right)^{-1}  + \psi (z)
\end{equation}
the function $F g$ is
analytic at the eigenvalues of $\Lambda$. 

Finally, equation
(\ref{matrix_zero_eq}) implies that 
\begin{equation}\label{matrix_product_eq}
F(z) H \left( zI - \Lambda\right)^{-1}  =
- C\left(zI-A\right)^{-1} \Pi\;,
\end{equation}
which is already analytic at the 
eigenvalues of $\Lambda$ if
$A$ and $\Lambda$ have no common 
eigenvalues.
\end{enumerate}
\end{theorem}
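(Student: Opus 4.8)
The plan is to prove the two implications separately, with part (ii) furnishing the algebraic identity that drives everything and part (i) supplying the harder existence argument. Throughout I write $\sigma(A)$, $\sigma(\Lambda)$ for the spectra and exploit the block structure of (\ref{matrix_zero_eq}), which unpacks as the two relations $A\Pi + BH = \Pi\Lambda$ and $C\Pi + DH = 0$.

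For part (ii) I would start from these two relations and substitute $DH = -C\Pi$ and $BH = \Pi\Lambda - A\Pi$ into $F(z)H = DH + C(zI-A)^{-1}BH$. Rewriting $\Pi\Lambda - A\Pi = (zI-A)\Pi - \Pi(zI-\Lambda)$, the two $C\Pi$ contributions cancel and one is left with $F(z)H = -C(zI-A)^{-1}\Pi(zI-\Lambda)$; right-multiplying by $(zI-\Lambda)^{-1}$ gives (\ref{matrix_product_eq}) at once. Since the right-hand side $-C(zI-A)^{-1}\Pi$ has poles only in $\sigma(A)$, it is automatically analytic on $\sigma(\Lambda)$ whenever the two spectra are disjoint, so the choice $\psi = 0$ already settles the existence claim in that case; observability of $(H,\Lambda)$ is not needed for the identity itself and serves only to certify that every point of $\sigma(\Lambda)$ is a genuine pole of $g$.

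For part (i) the engine is the Sylvester equation $A\Pi - \Pi\Lambda = -BH$, which, assuming first that $\sigma(A)\cap\sigma(\Lambda) = \emptyset$, has a unique solution $\Pi$; this is precisely the top row of (\ref{matrix_zero_eq}). Representing it through the contour integral $\Pi = \frac{1}{2\pi i}\oint_\Gamma (zI-A)^{-1}(-BH)(zI-\Lambda)^{-1}\,dz$ with $\Gamma$ enclosing $\sigma(A)$ only, and using that the $A$-invariant subspace $\langle A\mid B\rangle$ is preserved by $(zI-A)^{-1}$, one reads off $\Im\Pi \subset \langle A\mid B\rangle$ for free, since the integrand has columns in $\langle A\mid B\rangle$ for every $z$ on $\Gamma$. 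To extract the bottom row I would insert the resolvent identity $(zI-A)^{-1}BH(zI-\Lambda)^{-1} = \Pi(zI-\Lambda)^{-1} - (zI-A)^{-1}\Pi$ into the expansion of $Fg$; the term $-C(zI-A)^{-1}\Pi G$ and the contributions of the polynomial $\psi$ are analytic on $\sigma(\Lambda)$, so the entire principal part of $Fg$ at $\sigma(\Lambda)$ collapses to that of $(C\Pi + DH)(zI-\Lambda)^{-1}G$. The hypothesis that $Fg$ is analytic at $\sigma(\Lambda)$ forces this strictly proper rational function to vanish identically; expanding at infinity yields $(C\Pi + DH)\Lambda^k G = 0$ for all $k\geq 0$, and controllability of $(\Lambda,G)$ — full row rank of $[G,\Lambda G,\Lambda^2 G,\dots]$ — then gives $C\Pi + DH = 0$, completing (\ref{matrix_zero_eq}).

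The main obstacle is the overlapping-spectrum case $\sigma(A)\cap\sigma(\Lambda)\neq\emptyset$, where the Sylvester equation is no longer uniquely solvable and the shared eigenvalues are simultaneously poles of $F$ and of $g$, so that the clean decomposition of $Fg$ used above breaks down. This is exactly where observability of $(C,A)$ must be used: it is equivalent to injectivity of the map $\Pi\mapsto C(zI-A)^{-1}\Pi$, since $C(zI-A)^{-1}\Pi\equiv 0$ forces $CA^k\Pi = 0$ for all $k$ and hence $\Pi=0$, and this is what pins $\Pi$ down uniquely even when the Sylvester equation does not. Concretely I would split the state space of $\Lambda$ into the spectral part disjoint from $\sigma(A)$, handled exactly as above, and the part over $\sigma(A)\cap\sigma(\Lambda)$, on which I would match principal parts order by order and invoke the injectivity just described to solve for the corresponding block of $\Pi$ consistently with the top row. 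The polynomial $\psi$ in part (ii) carries the complementary burden: it absorbs precisely the overlap between $\sigma(A)$ and $\sigma(\Lambda)$, which is why the bare identity (\ref{matrix_product_eq}) needs the disjoint-spectra caveat while the $\psi$-version of the statement does not.
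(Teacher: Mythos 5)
Your handling of the disjoint-spectrum case is correct on both sides: the algebraic derivation of (\ref{matrix_product_eq}) from the two block rows of (\ref{matrix_zero_eq}), and, for part (i), the Sylvester equation $A\Pi-\Pi\Lambda=-BH$, the contour-integral formula showing $\Im\,\Pi\subset\langle A\mid B\rangle$, and the Liouville-plus-controllability step forcing $C\Pi+DH=0$. The genuine gap is that everything beyond this case --- which is the actual content of the theorem, and the reason $(C,A)$-observability appears as a hypothesis --- is only gestured at. (Note that the paper itself does not prove this theorem but imports it from \cite{MIC-GOM-2012}, so the comparison here is with what a complete proof requires.) In part (i) your sketch assigns observability the wrong job: injectivity of $\Pi\mapsto C(zI-A)^{-1}\Pi$ gives \emph{uniqueness}, but when $\sigma(A)\cap\sigma(\Lambda)\neq\emptyset$ the Sylvester operator $\Pi\mapsto A\Pi-\Pi\Lambda$ is singular and the issue is \emph{existence} of a $\Pi$ satisfying both block rows simultaneously. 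Existence really does consume observability: take $A=0$, $B=1$, $C=D=0$, $\Lambda=0$, $H=G=1$; then $Fg\equiv 0$ is analytic everywhere, yet the top row of (\ref{matrix_zero_eq}) reads $1=0$. Your spectral splitting also fails to reduce the overlap block to a self-contained problem: writing $\Lambda=\Lambda_1\oplus\Lambda_2$ with $\sigma(\Lambda_2)\subset\sigma(A)$, the analyticity hypothesis at $\sigma(\Lambda_2)$ involves the cross terms $F\psi$ and $FH_1(zI-\Lambda_1)^{-1}G_1=-C(zI-A)^{-1}\Pi_1G_1$, both of which in general have poles at points of $\sigma(\Lambda_2)\subset\sigma(A)$; so ``matching principal parts'' must disentangle these coupled contributions, and your proposal contains no mechanism for doing so. What is actually needed is an argument of the type the paper uses for Proposition \ref{product_equation}: realize the relevant product through the cascade state matrix with diagonal blocks $A$, $\Lambda$ and coupling block $BH$, and compare the dimensions of the unobservable and unreachable subspaces with the sizes of $\Lambda$ and $A$, exploiting observability of $(C,A)$ and controllability of $(\Lambda,G)$ jointly.

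Part (ii) has the same defect: $\psi=0$ settles only the disjoint case, and your closing remark that $\psi$ ``absorbs the overlap'' describes what must be proven rather than constructing anything. The hypothesis you never touch, $\Im\,\Pi\subset\langle A\mid B\rangle$, is precisely what manufactures $\psi$: each column of $\Pi$ lies in $\langle A\mid B\rangle$, hence (collecting columns) $\Pi=B\psi_1(z)-(zI-A)\psi_2(z)$ for some matrix polynomials $\psi_1,\psi_2$, and then
\[
F(z)\left(H(zI-\Lambda)^{-1}+\psi_1(z)\right)
=-C(zI-A)^{-1}\Pi+D\psi_1(z)+C(zI-A)^{-1}\Pi+C\psi_2(z)
=D\psi_1(z)+C\psi_2(z)
\]
is a polynomial, hence analytic at every eigenvalue of $\Lambda$ regardless of any spectral overlap. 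This is exactly the computation the paper performs in the proof of Theorem \ref{zeros_finite_kernel_thm}(iii), and it also yields the remark following the theorem that $\psi$ can be chosen so that $Fg$ is a polynomial. As written, your part (ii) proves the statement only under the extra assumption $\sigma(A)\cap\sigma(\Lambda)=\emptyset$; the fact that $\Im\,\Pi\subset\langle A\mid B\rangle$ never enters your argument is the telltale sign of the gap.
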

Furthermore, in part (ii) the polynomial $\psi$ can be 
chosen in such a way that
the product $F g$ be a polynomial.
In other words, the columns
of the function $H \left( zI - \Lambda\right)^{-1}$
are in $F^{-1}(\Omega Y) + \Omega U$.

\medskip
This gives the possibility of formulating Theorem 
\ref{zeros_description} in the following way.
\begin{theorem}\label{zeros_description2}
Let $F(z) = D+C\left(zI - A\right)^{-1}B$ 
be a rational function. 
\begin{enumerate}
\item[(i)]
Assume that the pair $(C, A)$ 
is observable and the pair
$(\Lambda, G)$ is controllable. Then, if
the columns of the function
$$g (z) = H\left(zI - \Lambda\right)^{-1} G \;,$$
are in $F^{-1} (\Omega Y) + \Omega U$,
 then there exists a 
matrix $\Pi$ such that $\Im\,\Pi\subset < A\ \mid \ B >$ solving
the equation
\begin{equation}\label{matrix_zero_eq2}
\left[ \begin{array}{cc} A & B \\ C & D
  \end{array}\right]
\left[ \begin{array}{c}
\Pi \\ H \end{array} \right]
= \left[ \begin{array}{c}  \Pi \Lambda \\  0 
\end{array}\right]\,.
\end{equation}
\item[(ii)]
Assume that the matrices 
$\Lambda, H, \Pi$ satisfy the equation
(\ref{matrix_zero_eq2}), where 
$\left( H, \Lambda\right)$ is an
observable pair and 
$\Im\,\Pi\subset < A\ \mid \ B >$. 
Then there exists a matrix polynomial $\psi$ 
such that for 
\begin{equation}\label{function_phi2}
g (z) =  H \left(zI - \Lambda\right)^{-1}  + \psi (z)
\end{equation}
the function $F g$ is 
a polynomial, i.e. the columns of the matrix-valued rational 
function 
$\ H\left(zI-\Lambda \right)^{-1}\ $ are
in $F^{-1}(\Omega Y) + \Omega U$.
\end{enumerate}
\end{theorem}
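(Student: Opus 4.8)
The plan is to read Theorem \ref{zeros_description2} as a dictionary translation of Theorem \ref{zeros_description}, in which the two analytic conditions --- ``$Fg$ is analytic at the eigenvalues of $\Lambda$'' in part (i), and the analyticity produced in part (ii) --- are upgraded to membership in $F^{-1}(\Omega Y)+\Omega U$. The bridge in both directions is the definition of that module: a rational $q$-tuple $h$ lies in $F^{-1}(\Omega Y)+\Omega U$ precisely when there is a polynomial $\psi$ with $F(h+\psi)$ polynomial. Since a polynomial is analytic everywhere, membership is strictly stronger than analyticity of $Fg$ at the eigenvalues of $\Lambda$, and this is exactly what lets me avoid any discussion of eigenvalues shared by $A$ and $\Lambda$.

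For part (i) I would start from the hypothesis that each column of $g_0 = H(zI-\Lambda)^{-1}G$ lies in $F^{-1}(\Omega Y)+\Omega U$. Column by column this produces a matrix polynomial $\psi$ with $F(g_0+\psi)$ polynomial. Setting $g = g_0 + \psi = H(zI-\Lambda)^{-1}G + \psi$, the product $Fg$ is then a polynomial, hence a fortiori analytic at the eigenvalues of $\Lambda$. Because $(\Lambda,G)$ is controllable, $(C,A)$ observable, and $g$ has exactly the form required in Theorem \ref{zeros_description}(i), I can invoke that theorem directly to obtain a matrix $\Pi$ with $\Im\Pi\subset\left<A\mid B\right>$ solving \eqref{matrix_zero_eq2}. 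The only point to check is that taking $\psi\neq 0$ is legitimate, which it is, since Theorem \ref{zeros_description}(i) permits an arbitrary polynomial summand.

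For part (ii) the starting point is the product identity \eqref{matrix_product_eq}, namely $FH(zI-\Lambda)^{-1} = -C(zI-A)^{-1}\Pi$, already recorded in Theorem \ref{zeros_description}(ii) (it follows from \eqref{matrix_zero_eq2} by writing $BH = (zI-A)\Pi - \Pi(zI-\Lambda)$ from the top block-row and using $C\Pi = -DH$ from the bottom one). For $g = H(zI-\Lambda)^{-1}+\psi$ this gives $Fg = -C(zI-A)^{-1}\Pi + F\psi$, whose strictly proper part is $\pi_-(Fg) = -C(zI-A)^{-1}\Pi + \pi_-(F\psi)$. The key computation is that for any matrix polynomial $\psi = \sum_k \psi_k z^k$ one has $\pi_-(F\psi) = C(zI-A)^{-1}\sum_k A^k B\psi_k$, so that the reachable vectors $\sum_k A^k B\psi_k$ sweep out precisely $\left<A\mid B\right>$. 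Since $\Im\Pi\subset\left<A\mid B\right>$, I can solve $\sum_k A^k B\psi_k = \Pi$ for a polynomial $\psi$, which forces $\pi_-(Fg)=0$ and hence $Fg$ polynomial; this is exactly the assertion that the columns of $H(zI-\Lambda)^{-1}$ lie in $F^{-1}(\Omega Y)+\Omega U$.

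I expect the main obstacle to be part (ii), and specifically the recognition that the hypothesis $\Im\Pi\subset\left<A\mid B\right>$ is not decorative but is precisely the solvability condition for the pole-cancellation $\pi_-(F\psi)=C(zI-A)^{-1}\Pi$; the properness of $F$ is what confines all poles of $F\psi$ to the eigenvalues of $A$ and thereby makes the strictly proper part computable in closed form. By contrast part (i) is essentially bookkeeping, once one notices that the polynomial furnished by module membership should be absorbed into the summand $\psi$ rather than discarded, which is what makes $Fg$ polynomial and sidesteps the shared-eigenvalue difficulty that a naive $\psi=0$ argument would encounter. Observability of $(H,\Lambda)$ is not needed for the existence claim in part (ii); it is carried along only for compatibility with Theorem \ref{zeros_description}.
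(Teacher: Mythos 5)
Your proof is correct. On part (i) it coincides with what the paper does: the paper presents Theorem~\ref{zeros_description2} as a reformulation of Theorem~\ref{zeros_description}, and your argument is exactly that reduction --- assemble, column by column, the polynomial furnished by membership in $F^{-1}(\Omega Y)+\Omega U$ into the summand $\psi$, note that a polynomial product is in particular analytic at the eigenvalues of $\Lambda$, and invoke Theorem~\ref{zeros_description}(i) to get $\Pi$ with $\Im\,\Pi\subset\left< A\mid B\right>$ solving \eqref{matrix_zero_eq2}. Part (ii) is where you genuinely go beyond the text: the paper does not prove this direction, it rests on the sentence following Theorem~\ref{zeros_description} (``the polynomial $\psi$ can be chosen in such a way that the product $Fg$ be a polynomial''), which is asserted without proof, Theorem~\ref{zeros_description} itself being quoted from \cite{MIC-GOM-2012}. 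You supply the missing argument: derive the product identity \eqref{matrix_product_eq} from \eqref{matrix_zero_eq2} via $BH=(zI-A)\Pi-\Pi(zI-\Lambda)$ and $C\Pi=-DH$, compute $\pi_-\left(F\psi\right)=C\left(zI-A\right)^{-1}\sum_k A^kB\psi_k$ for $\psi=\sum_k\psi_k z^k$, and use $\Im\,\Pi\subset\left< A\mid B\right>$ to solve $\sum_k A^kB\psi_k=\Pi$ columnwise (each column of $\Pi$ is a finite sum $\sum_k A^kBu_k$ precisely because it lies in $\Im\left[B, AB, A^2B,\dots\right]$), which cancels the strictly proper part of $Fg$ exactly. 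This constructive pole-cancellation makes transparent that the hypothesis $\Im\,\Pi\subset\left< A\mid B\right>$ is exactly the solvability condition, whereas the paper leaves that as an implicit feature of the cited theorem; and your side remark is also right that observability of $(H,\Lambda)$ plays no role in this direction --- the paper carries it only to match the hypotheses of Theorem~\ref{zeros_description}(ii). So: part (i) same route, part (ii) a self-contained proof of what the paper only asserts, at the modest cost of redoing a computation the reference already contains.
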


\begin{corollary}
Assume that $F(z) = D+C\left(zI - A\right)^{-1}B$ is
a minimal realization, and $(H, \Lambda )$ is an 
observable pair.

Then the columns of $H \left( zI - \Lambda \right)^{-1}$
are in the set $F^{-1} (\Omega Y) + \Omega U$
if and only if equation (\ref{matrix_zero_eq2}) holds.
\end{corollary}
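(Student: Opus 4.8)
The plan is to derive the corollary directly from Theorem \ref{zeros_description2}, exploiting the fact that minimality eliminates precisely the two auxiliary hypotheses of that theorem which are not already granted here. First I would record what minimality buys us: it means that $(C,A)$ is observable (so the observability hypothesis in both parts of Theorem \ref{zeros_description2} is automatically met) and that $(A,B)$ is controllable, whence $\left< A \mid B\right>$ is the entire state space. This second point is the crucial simplification, since the side condition $\Im\,\Pi \subset \left< A \mid B\right>$ that decorates the matrix equation throughout Theorem \ref{zeros_description2} becomes vacuous. Thus, in the minimal case, the substantive content of (\ref{matrix_zero_eq2}) reduces to the matrix identity alone.

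For the direction $(\Leftarrow)$, I would suppose that $\Pi, H, \Lambda$ satisfy (\ref{matrix_zero_eq2}). Since $(H,\Lambda)$ is observable by hypothesis and $\Im\,\Pi \subset \left< A \mid B\right>$ holds automatically by controllability, the hypotheses of Theorem \ref{zeros_description2}(ii) are in force. That part then supplies a matrix polynomial $\psi$ rendering $Fg$ a polynomial for $g(z) = H\left(zI-\Lambda\right)^{-1} + \psi(z)$, which is exactly the assertion that the columns of $H\left(zI-\Lambda\right)^{-1}$ lie in $F^{-1}(\Omega Y) + \Omega U$.

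For the direction $(\Rightarrow)$, I would suppose that the columns of $H\left(zI-\Lambda\right)^{-1}$ lie in $F^{-1}(\Omega Y) + \Omega U$, and then arrange to invoke Theorem \ref{zeros_description2}(i). The one gap is that part (i) is phrased for a function of the form $H\left(zI-\Lambda\right)^{-1}G$ with $(\Lambda, G)$ controllable; to bridge it I would simply take $G = I$, so that $g(z) = H\left(zI-\Lambda\right)^{-1}$ as required and the pair $(\Lambda, I)$ is trivially controllable, its reachability subspace already containing $\Im I$, the full space. Together with observability of $(C,A)$, the hypotheses of part (i) hold, and the theorem then produces a matrix $\Pi$ with $\Im\,\Pi \subset \left< A \mid B\right>$ solving (\ref{matrix_zero_eq2}), closing the equivalence.

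I do not anticipate any genuine obstacle, as the whole statement is a specialization of Theorem \ref{zeros_description2}; the proof is essentially bookkeeping. The only points deserving care are verifying that minimality makes $\Im\,\Pi \subset \left< A \mid B\right>$ automatic, and choosing $G = I$ so that the controllability hypothesis of part (i) is met while $g$ takes the desired shape. I would also double-check that Theorem \ref{zeros_description2} imposes no hidden nondegeneracy requirement on $G$ beyond controllability of $(\Lambda, G)$; since $G = I$ yields a controllable pair, no difficulty should arise.
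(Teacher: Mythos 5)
Your proposal is correct and follows exactly the route the paper intends: the corollary is stated as an immediate specialization of Theorem \ref{zeros_description2}, where minimality makes $(C,A)$ observable and renders the side condition $\Im\,\Pi \subset \left< A \mid B\right>$ vacuous since $\left< A \mid B\right>$ is the whole state space. Your choice $G = I$ to satisfy the controllability hypothesis of part (i) is the natural (and evidently intended) bookkeeping step, so the argument is complete as written.
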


Later on we shall utilize to following proposition
which in a sense can be considered as a converse of 
the last statement in Theorem \ref{zeros_description}.

\begin{proposition}\label{product_equation}
Assume that equation
\begin{equation}\label{eliminate_poles}
\left[D+C\left(zI-A\right)^{-1}B\right]\ 
\left[H\left(zI-\Lambda\right)^{-1}G\right] = 
-C\left(zI-A\right)^{-1}S
\end{equation}
holds. 

Then if the pair
$\left(C, \ A\right)$ is observable and the pair
$\left(\Lambda, \ G \right)$ is controllable
then there exists a matrix $\Pi$ such that equation
(\ref{matrix_zero_eq2}) holds, as well. Moreover
\[
\Pi G = S\;.
\]
\end{proposition}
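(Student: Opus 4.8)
The plan is to prove this converse by constructing $\Pi$ directly from the Laurent (Markov) coefficients of \eqref{eliminate_poles}, rather than through a Sylvester equation. The Sylvester route is the natural one in the generic case: when $A$ and $\Lambda$ share no eigenvalue the equation $A\Pi-\Pi\Lambda=-BH$ has a unique solution $\Pi$, a partial-fraction split $(zI-A)^{-1}BH(zI-\Lambda)^{-1}=\Pi(zI-\Lambda)^{-1}-(zI-A)^{-1}\Pi$ separates the poles at $\sigma(\Lambda)$ from those at $\sigma(A)$ in \eqref{eliminate_poles}, and matching the two (disjoint) pole sets then yields $C\Pi+DH=0$ (via controllability) and $\Pi G=S$ (via observability). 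However, common eigenvalues genuinely occur here, and then $A\Pi-\Pi\Lambda=-BH$ need not even be solvable, so I avoid it and work instead with the power-series expansion, which is purely algebraic and blind to coincidences of spectra.

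First I would expand both sides of \eqref{eliminate_poles} in powers of $z^{-1}$. Both sides are strictly proper rational functions, hence determined by their Markov parameters, and equating the coefficient of $z^{-n}$ gives, for every $n\ge 1$,
\[
DH\Lambda^{n-1}G+\sum_{k+j=n-2}CA^{k}BH\Lambda^{j}G=-CA^{n-1}S ,
\]
the sum running over $k,j\ge 0$ (so it is empty for $n=1$); call this relation $(\dagger_n)$. Guided by the two identities $\Pi G=S$ and $\Pi\Lambda=A\Pi+BH$ that I am trying to produce, I set
\[
T_j:=A^{j}S+\sum_{l=0}^{j-1}A^{\,j-1-l}BH\Lambda^{l}G ,
\]
the value that $\Pi\Lambda^{j}G$ is forced to take; note $T_0=S$ and $T_{j+1}=AT_j+BH\Lambda^{j}G$. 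A one-line rearrangement of $(\dagger_{j+1})$ gives the crucial identity $CT_j=-DH\Lambda^{j}G$.

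Since $(\Lambda,G)$ is controllable, the columns of $G,\Lambda G,\Lambda^{2}G,\dots$ span the state space of $\Lambda$, so I may try to define $\Pi$ by $\Pi\Lambda^{j}G=T_j$. The one real difficulty — and the heart of the proof — is \emph{well-definedness}: the spanning family carries linear relations, so I must check that $\sum_j\Lambda^{j}Gv_j=0$ forces $\sum_jT_jv_j=0$. Writing $\xi=\sum_jT_jv_j$, I would compute $CA^{i}\xi$ for each $i\ge 0$. Using $CT_{i+j}=-DH\Lambda^{i+j}G$ to re-express $CA^{i+j}S$ and then telescoping, $CA^{i}T_j$ collapses to
\[
CA^{i}T_j=-DH\Lambda^{i+j}G-\sum_{s=0}^{i-1}CA^{\,i-1-s}BH\Lambda^{s}\bigl(\Lambda^{j}G\bigr),
\]
so that $CA^{i}\xi=-DH\Lambda^{i}\bigl(\sum_j\Lambda^{j}Gv_j\bigr)-\sum_{s=0}^{i-1}CA^{\,i-1-s}BH\Lambda^{s}\bigl(\sum_j\Lambda^{j}Gv_j\bigr)=0$. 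As this holds for all $i$, observability of $(C,A)$ forces $\xi=0$, and $\Pi$ is well defined.

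It then remains to read off the three conclusions. The recursion $T_{j+1}=AT_j+BH\Lambda^{j}G$ says $(\Pi\Lambda-A\Pi-BH)\Lambda^{j}G=0$ for all $j$, the identity $CT_j=-DH\Lambda^{j}G$ says $(C\Pi+DH)\Lambda^{j}G=0$ for all $j$, and controllability of $(\Lambda,G)$ upgrades both to $A\Pi+BH=\Pi\Lambda$ and $C\Pi+DH=0$, i.e. equation \eqref{matrix_zero_eq2}; finally $\Pi G=T_0=S$. I expect the well-definedness step to be the only delicate point; the rest is formal bookkeeping with Markov coefficients. It is worth stressing that, unlike the earlier statements, no inclusion $\Im \Pi\subseteq\langle A\mid B\rangle$ is asserted here, and in fact $S$ need not lie in $\langle A\mid B\rangle$ (already for $F\equiv D$ with $B=0$ one finds $\Pi=-DH/C$ and $S=\Pi G\notin\langle A\mid B\rangle=0$); this is precisely why the proposition cannot be reduced to Theorem~\ref{zeros_description} and must be argued by the direct construction above.
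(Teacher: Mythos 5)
Your proof is correct, and it follows a genuinely different route from the paper's. The paper rewrites (\ref{eliminate_poles}) as the statement that the augmented system with state matrix $\left[\begin{array}{cc} A & BH \\ 0 & \Lambda\end{array}\right]$, input matrix $\left[\begin{array}{c} S \\ G \end{array}\right]$ and output matrix $\left[ C ,\ DH\right]$ realizes the identically zero transfer function, and then runs a Kalman-type argument on that system: observability of $(C,A)$ forces its unobservable subspace to have dimension at most the size of $\Lambda$, controllability of $(\Lambda,G)$ forces its reachable subspace to have codimension at most the size of $A$, and a dimension count (resting on the fact that for a zero transfer function the reachable subspace sits inside the unobservable one) makes both bounds tight; normalizing bases then yields a matrix $\delta$ with $BH+\delta\Lambda = A\delta$, $S+\delta G=0$, $DH=C\delta$, so that $\Pi=-\delta$ works. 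You instead construct $\Pi$ explicitly on the Krylov columns by $\Pi\Lambda^{j}G = T_j$ with $T_j = A^{j}S+\sum_{l=0}^{j-1}A^{j-1-l}BH\Lambda^{l}G$, and the well-definedness step is exactly where observability enters; I checked the two identities this rests on, namely $CT_j=-DH\Lambda^{j}G$ (from your $(\dagger_{j+1})$) and the telescoped expression for $CA^{i}T_j$, and they are right, as is the final upgrade to $A\Pi+BH=\Pi\Lambda$, $C\Pi+DH=0$, $\Pi G=S$ via controllability of $(\Lambda,G)$. Structurally your $T_j$ are precisely the reachability iterates of the paper's augmented system, and your well-definedness claim is the concrete form of the statement that the reachable subspace of that system meets the set $\{ (x,0)\}$ trivially, so the two proofs exploit the same phenomenon; what yours buys is that it is elementary and explicit — a formula for $\Pi$ on a spanning set, no dimension counting, no basis normalization, and no case distinction according to whether $A$ and $\Lambda$ share eigenvalues — while the paper's version is shorter within its geometric framework and stays in the state-space decomposition language used throughout the rest of the paper. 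Your closing observation is also correct and worth keeping: the proposition (rightly) does not assert $\Im\,\Pi\subset \left< A \mid B\right>$, and your $B=0$ example shows that it cannot, which is exactly why the coincident-spectrum case cannot simply be delegated to Theorem~\ref{zeros_description}.
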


\begin{proof}
Let us observe that if the matrices $A$ and $\Lambda$
have no common eigenvalues then equation 
(\ref{eliminate_poles}) implies that the product is
analytic at the eigenvalues of $\Lambda$, thus 
Theorem \ref{zeros_description} (i) implies 
immediately that equation (\ref{matrix_zero_eq}) holds true.

In the general case let us observe that equation
(\ref{eliminate_poles}) can be written in the 
following form, as well.
\begin{equation}\label{extended_transfer}
\left[C, \ DH\right]
\left(
z\left[\begin{array}{cc} I & 0 \\ 
                         0 & I 
\end{array}\right] -
\left[
\begin{array}{cc}
A & BH \\ 0 & \Lambda
\end{array}
\right]
\right)^{-1}
\left[
\begin{array}{c}
S \\ G
\end{array}
\right] = 0
\end{equation}
In other notation
\[
0 \sim
\left(
\begin{array}{c|c}
\begin{array}{cc}
A & BH \\ 0 & \Lambda
\end{array} & 
\begin{array}{c}
S \\ G
\end{array}\\
\hline \rule{0cm}{.42cm}
\begin{array}{cc} 
C  & DH 
\end{array}
&  0
\end{array}
\right)\;.
\]

Let us first consider the unobservability subspace 
of the realization obtained above
of the identically zero function. Suppose that
the columns of the matrix 
$\left[\begin{array}{c} \alpha \\ \beta \end{array}\right]$
form a basis in the unobservability subspace. Then
\[
\left[ C \quad DH \right]
\left[
\begin{array}{c} 
\alpha \\ \beta 
\end{array}
\right] = 0\;,\quad 
\left[
\begin{array}{cc}
A & BH \\ 0 & \Lambda
\end{array}
\right] 
\left[
\begin{array}{c}
\alpha \\ \beta
\end{array}
\right]
= 
\left[
\begin{array}{c}
\alpha \\ \beta
\end{array}
\right]
\rho\;,
\]
for some matrix $\rho$.

If for some vector $\xi$ the product $\beta\xi = 0$
then $\beta \rho \xi = 0$, as well. Thus the subspace \ 
$\ker \beta$ is $\rho$-invariant. Consider now 
an eigenvector $\xi$ of $\rho$ belonging to this subspace.
Then
\[
\rho \xi = \lambda \xi\;,
\quad C \alpha \xi = 0\;,
\quad
A \alpha \xi = \alpha \rho \xi = \lambda \alpha \xi\;.
\]
The observability of the pair $(C, \ A)$ implies
that $\alpha \xi=0$, as well, thus the columns
of the matrix 
$\left[ \begin{array}{c} \alpha \\ \beta
\end{array}
\right]$ are linearly dependent, contrary to our
assumption. Consequently, the columns of $\beta$ are
linearly independent. Equation
$\Lambda \beta = \beta \rho$ implies that
dimension of the unobservability subspace
cannot be greater than the size of the matrix
$\Lambda$. 

Now let us assume that the row vectors of the matrix
$\left[ \gamma, \ \delta\right]$ form  
a basis in the orthogonal complement of the
controllability subspace. Then
\[
\left[\gamma \quad \delta \right] 
\left[
\begin{array}{c}
S \\ G
\end{array}
\right] = 0 \;, \quad
\left[\gamma \quad \delta \right] 
\left[
\begin{array}{cc}
A & BH \\ 0 & \Lambda
\end{array}
\right] 
= 
\tau
\left[\gamma \quad \delta \right]\;.
\]
Using the controllability of the pair $(\Lambda, G)$
similar reasoning shows that the
codimension of the controllability subspace
cannot be larger than the size of $A$. 
Since these two subspaces
together should generate the whole space 
we obtain that -- comparing the
dimensions of these subspaces with the
sizes of the corresponding matrices -- equalities
should hold. Thus $\beta$ and $\gamma$ 
should be square matrices
with trivial kernels. Applying nonsingular 
transformations
it can be assumed that both are identity
matrices. Then especially
\[
\tau = A \;, \quad S + \delta G = 0 \;,
\quad
BH + \delta \Lambda = A\delta\;. 
\]
Substituting the equation
$BH = \left(zI-\Lambda\right)\delta -
\delta \left(zI-A\right)$
into (\ref{eliminate_poles}) straightforward computation
gives that
\[
\left(DH - C\delta\right)
\left(zI-\Lambda\right)^{-1}G = 0\;.
\]
The controllability of $\left(\Lambda, \ G \right)$
implies that $DH-C\delta = 0$.
Thus the matrix $\Pi=-\delta$ 
satisfies the required equations.
\qed\end{proof}

\medskip
Note that equation (\ref{matrix_zero_eq2}) implies that
there exists a state feedback $K$ such that
\[
(A+BK) \Im \Pi \subset  \Im \Pi \subset 
\ker (C+DK )\,.
\]
Thus the columns of $\Pi$ are in the 
maximal output-nulling  controlled invariant set
of the system (\ref{system}).
I.e.
\[
\Im \left(\Pi\right) \subset 
\cV^{*}\left(\Sigma\right)\;.
\]

\medskip

For the sake of completeness we provide a proof of 
the next obvious statement showing that
$\cV^{*}\left(\Sigma\right) = 
\cV^{*}(A,B,C,D)$ can be characterized via
the "maximal solution" of equation
(\ref{matrix_zero_eq2}).

\begin{lemma}\label{lem:max_pi}
For any system $\Sigma$ determined by the matrices $A, B, C, D$
there exists a maximal solution 
$\left(\Pi_\text{max}, H_\text{max}, \Lambda_\text{max}\right)$
of the equation
\begin{equation}\label{matrix_zero_eq_max}
\left[\begin{array}{cc} A & B \\ C & D
\end{array}\right]
\left[\begin{array}{c} \Pi_\text{max} \\ H_\text{max}
\end{array}\right]
=\
\left[\begin{array}{c} \Pi_\text{max}\Lambda_\text{max} \\
0 
\end{array}\right]
\end{equation}
in the sense that 
\[
\Im \left(\Pi_\text{max} \right) 
\supset
\Im \left( \Pi_1\right)
\]
if $\left(\Pi_1, H_1, \Lambda_1\right)$ is
any other solution of the equation above.

Moreover, for this maximal solution
\[
\cV^{*}\left(\Sigma\right) = \cV^{*}(A,B,C,D) 
= \Im \left(\Pi_\text{max}\right)\;.
\]
\end{lemma}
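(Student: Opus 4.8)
The plan is to identify $\cV^{*}(\Sigma)$ as the target subspace and to prove the lemma by an ``upper bound plus attainment'' argument: (a) every solution of (\ref{matrix_zero_eq_max}) has image contained in $\cV^{*}(\Sigma)$, and (b) some solution has image exactly $\cV^{*}(\Sigma)$. The solution produced in (b) is then automatically the maximal one in the stated sense, and both assertions follow at once.

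For (a) I would invoke the remark preceding the lemma, which already records that any solution $(\Pi_1,H_1,\Lambda_1)$ of the Rosenbrock equation satisfies $\Im(\Pi_1)\subset\cV^{*}(\Sigma)$. Concretely, reading (\ref{matrix_zero_eq_max}) column-by-column, for each column $\pi_j$ of $\Pi_1$ with corresponding column $h_j$ of $H_1$ one has $A\pi_j+Bh_j=\Pi_1(\Lambda_1)_{\cdot j}\in\Im(\Pi_1)$ and $C\pi_j+Dh_j=0$, so that
\[
\begin{bmatrix} A \\ C\end{bmatrix}\Im(\Pi_1)\ \subset\ \bigl(\Im(\Pi_1)\times\{0\}\bigr)+\Im\begin{bmatrix} B \\ D\end{bmatrix}.
\]
This is precisely the friend-free criterion for $\Im(\Pi_1)$ to be an output-nulling controlled-invariant subspace, and maximality of $\cV^{*}(\Sigma)$ then yields the inclusion.

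For (b) I would run the same criterion backwards. By definition of $\cV^{*}(\Sigma)$ there is a feedback $K$ with $(A+BK)\cV^{*}(\Sigma)\subset\cV^{*}(\Sigma)\subset\ker(C+DK)$. Taking $\Pi_\text{max}$ to be any matrix whose columns form a basis of $\cV^{*}(\Sigma)$, the first inclusion produces a matrix $\Lambda_\text{max}$ with $(A+BK)\Pi_\text{max}=\Pi_\text{max}\Lambda_\text{max}$; setting $H_\text{max}=K\Pi_\text{max}$ I would verify directly that
\[
A\Pi_\text{max}+BH_\text{max}=(A+BK)\Pi_\text{max}=\Pi_\text{max}\Lambda_\text{max},\qquad C\Pi_\text{max}+DH_\text{max}=(C+DK)\Pi_\text{max}=0,
\]
so that $(\Pi_\text{max},H_\text{max},\Lambda_\text{max})$ solves (\ref{matrix_zero_eq_max}) with $\Im(\Pi_\text{max})=\cV^{*}(\Sigma)$. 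Combining (a) and (b), this solution has image containing that of every other solution, hence is maximal, and its image equals $\cV^{*}(\Sigma)$ -- which is exactly both claims of the lemma.

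I do not anticipate a genuine obstacle here: the whole proof is a translation between the matrix equation (\ref{matrix_zero_eq_max}) and the feedback description of $\cV^{*}$. The one conceptual point worth flagging is that the \emph{existence} of a maximal solution is not a Zorn- or compactness-type statement at all, but simply the observation that the common upper bound $\cV^{*}(\Sigma)$ supplied by (a) is actually attained by the explicit solution constructed in (b).
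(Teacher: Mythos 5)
Your proof is correct, but it is organized differently from the paper's. The paper proceeds in two independent steps: it first proves the \emph{existence} of a maximal solution by pure linear algebra --- if $(\Pi_1,H_1,\Lambda_1)$ and $(\Pi_2,H_2,\Lambda_2)$ are solutions, then so is
\[
\left( \left[\Pi_1,\Pi_2\right],\ \left[H_1,H_2\right],\ \left[\begin{array}{cc}\Lambda_1 & 0\\ 0 & \Lambda_2\end{array}\right]\right),
\]
so the family of solution images is closed under joins and, by finite-dimensionality, the join of all of them is itself the image of a solution --- and only afterwards identifies that image with $\cV^{*}\left(\Sigma\right)$ by the same double inclusion you use (the remark preceding the lemma for one direction, the feedback construction combined with the already-established maximality for the other). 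You instead fold existence into the identification: every solution image is bounded above by $\cV^{*}\left(\Sigma\right)$, and that bound is attained by the feedback-built solution $\left(\Pi_\text{max}, K^{*}\Pi_\text{max}, \Lambda_\text{max}\right)$, so a maximal solution exists and equals $\cV^{*}\left(\Sigma\right)$ in one stroke. Your route is shorter and, as you note, makes explicit that no abstract existence argument is needed. What it gives up is that the paper's concatenation trick establishes existence of a maximal solution with no reference to geometric control theory whatsoever --- a structural fact (closure of solution images under joins) that the paper implicitly reuses later, for instance in the proposition immediately following the lemma, where the solution is augmented by an extra column to $\left[\Pi_\text{max},\, x\right]$, and in the proof of Theorem~\ref{zeros_finite_kernel_thm}. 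By contrast, your existence claim is parasitic on the standard facts that $\cV^{*}\left(\Sigma\right)$ exists, is maximal among output-nulling controlled-invariant subspaces, and admits a friend $K^{*}$, together with the equivalence between the matrix equation and the friend-free criterion. Since the paper itself invokes those facts as well known, both arguments sit at the same level of rigor, and yours is a legitimate, arguably more economical, alternative.
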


\begin{proof}
If $\left(\Pi_1, H_1, \Lambda_1\right)$ 
and
$\left(\Pi_2, H_2, \Lambda_2\right)$ are solutions
of the equation then
\[
\left(
\left[\Pi_1,\Pi_2\right], 
\left[H_1, H_2\right],
\left[\begin{array}{cc} \Lambda_1 & 0 \\
0 & \Lambda_2
\end{array}\right]\right)
\]
is a solution, as well.
Since 
\[
\Im \left[\Pi_1, \Pi_2\right] 
=
\Im \Pi_1 \vee \Im \Pi_2
\]
the subspace generated by the ranges of 
all solutions is also the range of a solution, proving that
there exist a maximal solution.

To prove the last statement let us point out
that we have 
already observed that 
$\Im\left(\Pi\right) \subset 
\cV^{*}\left(\Sigma\right)$
for any solution 
$(\Pi, H, \Lambda)$ of 
(\ref{matrix_zero_eq2}). 

For the converse inclusion
consider a matrix $\Pi^{*}$ with column vectors
forming a basis in $\cV^{*}\left(\Sigma\right)$. 
Then 
there exists a feedback 
matrix $K^{*}$ such that the inclusions
\[
\left(A+BK^{*}\right)\Im \Pi^{*} \subset
\Im \Pi^{*} \subset 
\text{ker} \left(C+DK^{*}\right)
\]
hold. In other words
\[
\left[\begin{array}{cc}
A & B \\ C & D 
\end{array}\right]
\left[\begin{array}{c}
\Pi^{*} \\ K^{*}\Pi^{*}
\end{array}\right]
=
\left[\begin{array}{c}
\Pi^{*}\Lambda^{*} \\ 0
\end{array}\right]
\]
for some matrix $\Lambda^{*}$.

The first part of
the lemma implies that 
$\cV^{*}\left(\Sigma\right) = 
\Im \Pi^{*} \subset \Im \Pi_\text{max}$, 
if $(\Pi_\text{max}, H_\text{max}, \Lambda_\text{max})$ is a maximal 
solution, concluding the proof of
the lemma.
\qed\end{proof}

Later on we need the following simple property of 
the subspace $\Im (\Pi_\text{max})$.

\begin{proposition}
If
\begin{eqnarray}\label{next_in_nu}
x_+ &=& Ax + Bu \\
0 &=& Cx + Bu
\end{eqnarray}
and $x_+ \in \Im (\Pi_\text{max})$ then 
$x \in \Im (\Pi_\text{max})$, as well, where
$\Pi_\text{max}$ is a maximal solution of 
equation (\ref{matrix_zero_eq_max}).
\end{proposition}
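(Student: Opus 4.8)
The plan is to realize the vector $x$ as a column of an augmented solution of equation (\ref{matrix_zero_eq_max}) and then to invoke the maximality of $\Pi_\text{max}$ from Lemma \ref{lem:max_pi}. First I read the second displayed relation as the output-nulling condition $0 = Cx + Du$, so that the hypotheses are $x_+ = Ax + Bu$, $Cx + Du = 0$, and $x_+ \in \Im(\Pi_\text{max})$. Because $x_+$ lies in the range of $\Pi_\text{max}$, I may write $x_+ = \Pi_\text{max}\,\eta$ for some vector $\eta$.

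Next I would form the augmented triple
\[
\Pi' = \left[\, x \ \ \Pi_\text{max}\,\right], \quad
H' = \left[\, u \ \ H_\text{max}\,\right], \quad
\Lambda' = \left[\begin{array}{cc} 0 & 0 \\ \eta & \Lambda_\text{max} \end{array}\right],
\]
and verify that it again solves (\ref{matrix_zero_eq_max}). The output block is immediate: $C\Pi' + DH' = \left[\, Cx + Du \ \ C\Pi_\text{max} + DH_\text{max}\,\right] = 0$ by the output-nulling hypothesis together with the fact that $(\Pi_\text{max}, H_\text{max}, \Lambda_\text{max})$ already satisfies the equation. For the state block, block multiplication gives $A\Pi' + BH' = \left[\, x_+ \ \ \Pi_\text{max}\Lambda_\text{max}\,\right]$, while $\Pi'\Lambda' = \left[\, \Pi_\text{max}\eta \ \ \Pi_\text{max}\Lambda_\text{max}\,\right]$; the two coincide precisely because $x_+ = \Pi_\text{max}\eta$.

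Having produced a bona fide solution $(\Pi', H', \Lambda')$, the first part of Lemma \ref{lem:max_pi} applies and forces $\Im(\Pi') \subset \Im(\Pi_\text{max})$. Since $x$ is a column of $\Pi'$ it belongs to $\Im(\Pi')$, whence $x \in \Im(\Pi_\text{max})$, which is the claim.

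The only delicate point — and the place where the hypothesis $x_+ \in \Im(\Pi_\text{max})$ is indispensable — is the choice of the first column of $\Lambda'$. One wants to retain $\Lambda_\text{max}$ as the lower-right block so that the columns of $\Pi_\text{max}$ keep satisfying their part of the equation; the new column of $\Pi'$ then imposes $x\cdot 0 + \Pi_\text{max}\,\eta = x_+$, and such a coupling is available exactly when $x_+$ sits in the range of $\Pi_\text{max}$. Geometrically this is nothing but the backward-invariance of $\cV^{*}(\Sigma) = \Im(\Pi_\text{max})$ along output-nulling trajectories, but the matrix augmentation sidesteps the reconstruction of any feedback $K$ and stays entirely within the setting of (\ref{matrix_zero_eq_max}).
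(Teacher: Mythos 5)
Your proof is correct and is essentially the paper's own argument: the paper likewise augments the maximal solution to $\left(\left[\Pi_\text{max}, x\right], \left[H_\text{max}, u\right], \left[\begin{smallmatrix} \Lambda_\text{max} & \zeta \\ 0 & 0 \end{smallmatrix}\right]\right)$ with $x_+ = \Pi_\text{max}\zeta$ (your version merely places the new column first instead of last) and then invokes the maximality of $\Im(\Pi_\text{max})$ from Lemma \ref{lem:max_pi}. Your reading of the second hypothesis as $0 = Cx + Du$ (correcting the obvious typo $Bu$) also matches what the paper actually uses.
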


\begin{proof}
Equations (\ref{next_in_nu}) and 
(\ref{matrix_zero_eq_max}) together can be written
as follows:
\begin{equation}\label{extended_matrix_zero}
\left[ \begin{array}{cc} A & B \\ C & D
  \end{array}\right]
\left[ \begin{array}{cc}
\Pi_\text{max} & x \\ H_\text{max} & u \end{array} \right]
= \left[ \begin{array}{cc}  \Pi_\text{max} \Lambda_\text{max} & x_+
\\  0 & 0 
\end{array}\right]\,.
\end{equation}
According to the assumption there exists a vector
$\zeta$ such that $x_+ = \Pi_\text{zero} \zeta$ 
giving the identity
\[
\left[ \begin{array}{cc}  \Pi_\text{max} \Lambda_\text{max} & x_+
\\  0 & 0 
\end{array}\right] =
\left[ \Pi_\text{max} , x \right]
\left[ \begin{array}{cc}  \Lambda_\text{max} & \zeta
\\  0 & 0 
\end{array}\right]\;.
\]
Substituting this into the left hand side of
(\ref{extended_matrix_zero}) and 
using the maximality
of $\Im (\Pi_\text{max})$ we obtain that 
$x\in Im (\Pi_\text{max})$. 
\qed\end{proof}

\bigskip
Now let us return to the analysis of the space
$F^{-1}(\Omega Y) + \Omega U$.
Theorem \ref{zeros_description2} suggests that
to describe the functions in this space
we have to consider
a maximal -- in some sense -- solution of 
equation (\ref{matrix_zero_eq2}). Lemma 
\ref{lem:max_pi} shows that in terms
of $Im(\Pi)$ there exists a maximal solution.
But the following lemma
shows that in terms of the triplet
$(\Pi, H, \Lambda)$ -- in general -- this is 
not possible.

\begin{lemma}
Let $h$ be a polynomial $q$-tuple. Assume that $Fh = 0$. 
Consider an 
arbitrary complex number $a \in \CC$, and write 
$h(z) = \sum_{j=0}^k h_j (z-a)^j$.
Set 
\[
H = \left[h_0, h_1, \dots , h_{k-1}, h_k  \right]
\]
and 
\[
\Lambda_a =
\left[
\begin{array}{ccccc}
a & 1       &   0    & \cdots & 0 \\
0 & a       &   1    &  & 0\\
0 &   & \ddots & \ddots  & 0 \\
0 & \cdots  & 0      & a & 1 \\
0 & 0 & \cdots & 0& a
\end{array}
\right]\,,
\]
the Jordan-matrix corresponding to the value $a$.

Then --  assuming the
observability of the pair $\left(C, A\right)$ --
the function $g(z) = \left(zI-A\right)^{-1}B h$
is also a polynomial, $g(z) = \sum_{j=0}^{k-1} g_j (z-a)^j$
and equation
\begin{equation}\label{poly_zero_eq}
\left[
\begin{array}{cc}
A & B \\ C & D
\end{array}
\right]
\left[
\begin{array}{c}
G \\ H
\end{array}
\right]
=
\left[
\begin{array}{c}
G\Lambda_a \\ 0 
\end{array}
\right]
\end{equation}
holds where
\[
G = \left[g_0, g_1, \dots , g_{k-1}, 0 \right]\;.
\]
\end{lemma}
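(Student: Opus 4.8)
The plan is to extract from the identity $Fh = 0$ two polynomial relations, to prove that $g(z) = (zI-A)^{-1}Bh(z)$ is a polynomial of degree $k-1$, and then to obtain (\ref{poly_zero_eq}) by matching Taylor coefficients at $a$. First I would record the consequences of $Fh = 0$. With $g(z) = (zI-A)^{-1}Bh(z)$, the relation $Fh = Dh + Cg = 0$ gives
\[
Cg(z) + Dh(z) = 0,
\]
while multiplying the definition of $g$ by $(zI-A)$ yields
\[
Ag(z) + Bh(z) = zg(z).
\]

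The key step is to show that $g$ is in fact a polynomial. I would split $g = \pi_+ g + w$ with $w = \pi_- g$ its strictly proper part. Since $(zI-A)\pi_+ g$ and $Bh$ are polynomials, the identity $Ag + Bh = zg$ forces $(zI-A)w = zw - Aw$ to be a polynomial; because $w$ is strictly proper, this polynomial is in fact a constant vector $c$, so $w(z) = (zI-A)^{-1}c$. On the other hand, $C$ is a constant matrix, so $Cw = \pi_-(Cg) = \pi_-(-Dh) = 0$ as $Dh$ is a polynomial, whence $C(zI-A)^{-1}c = 0$, i.e. $CA^j c = 0$ for every $j \ge 0$. The observability of $(C,A)$ now forces $c = 0$, so $w = 0$ and $g$ is a polynomial. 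Comparing degrees in $Ag + Bh = zg$ (the right-hand side has degree one higher than $g$) gives $\deg g = k-1$, so $g(z) = \sum_{j=0}^{k-1} g_j (z-a)^j$ as claimed. I expect this step — ruling out poles of $g$ at the eigenvalues of $A$ — to be the main obstacle, and observability is precisely what makes it work.

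It then remains to expand the two polynomial identities in powers of $(z-a)$ and compare coefficients. Using $z = a + (z-a)$, the coefficient of $(z-a)^m$ in $zg$ is $a g_m + g_{m-1}$, with the conventions $g_{-1} = 0$ and $g_k = 0$. Matching coefficients in $Ag + Bh = zg$ gives
\[
A g_m + B h_m = a g_m + g_{m-1}, \qquad 0 \le m \le k,
\]
and in particular $B h_k = g_{k-1}$ at $m=k$; matching in $Cg + Dh = 0$ gives $C g_m + D h_m = 0$ for $0 \le m \le k$. Finally I would observe that these are exactly the column-by-column form of (\ref{poly_zero_eq}). Reading the action of the Jordan block $\Lambda_a$ on columns, the $j$-th column of $G\Lambda_a$ is $a g_j + g_{j-1}$ for $j \ge 1$ and $a g_0$ for $j = 0$, with the last column equal to $g_{k-1}$ since the last column of $G$ is zero. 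These agree with $A g_j + B h_j$ (and $B h_k = g_{k-1}$) in the top block and with $C g_j + D h_j = 0$ in the bottom block, which establishes (\ref{poly_zero_eq}) and completes the proof.
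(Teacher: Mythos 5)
Your proof is correct, and its overall route coincides with the paper's: first establish that $g(z)=(zI-A)^{-1}Bh(z)$ is a polynomial of degree at most $k-1$, then expand the two identities $Ag+Bh=zg$ and $Cg+Dh=0$ in powers of $(z-a)$ and read the resulting coefficient relations column by column as the matrix equation. The one genuine difference is how the polynomiality of $g$ is obtained: the paper simply cites Lemma 7.1 of Aling--Schumacher \cite{AL-SC-84} for this, whereas you prove it from scratch by splitting $g=\pi_+g+w$ with $w=\pi_-g$, observing that $(zI-A)w$ must then be a constant vector $c$, using $Cw=\pi_-(Cg)=\pi_-(-Dh)=0$ to get $C(zI-A)^{-1}c\equiv 0$, i.e. $CA^jc=0$ for all $j\ge 0$, and invoking observability of $(C,A)$ to force $c=0$. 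This self-contained argument is exactly the content of the cited lemma and correctly isolates observability as the hypothesis that kills the would-be poles of $g$ at eigenvalues of $A$; what it buys is a proof that stands on its own, at the cost of a few lines. One small imprecision: your degree comparison in $Ag+Bh=zg$ only yields $\deg g\le k-1$, not equality (if $Bh_k=0$ the coefficient $g_{k-1}$ vanishes), but this is all the lemma requires, since $G=\left[g_0,\dots,g_{k-1},0\right]$ only presupposes a representation $g(z)=\sum_{j=0}^{k-1}g_j(z-a)^j$; the paper's phrase ``polynomial of degree $k-1$'' carries the same caveat.
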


\begin{proof}
Set $g(z) = \left(zI-A\right)^{-1} B h(z)$. 
Invoking Lemma 7.1 in \cite{AL-SC-84} we get that
$g$ is a polynomial of degree $k-1$.

Arranging the 
the identity $\left((z-a)I-(A-aI)\right) g(z) = B h(z)$
on the coefficients of $h$ and $g$
into matrix form we arrive at the equation
(\ref{poly_zero_eq})
\qed
\end{proof}

\medskip
Note that according \cite{AL-SC-84} the following more general 
statement holds, as well. Consider a set of polynomials
$h_1, h_2, ... , h_l$ from $\ker F$. Define 
$x_i (z)= (zI-A)^{-1}Bh_i (z)$, $j=1, \dots , l$. 
According to the previous
statement -- under the assumed observability
of the pair $(C, A)$ -- they are also polynomial. Then
$h_1, h_2, ... , h_l$ form a minimal polynomial
basis in $\ker F$ if and only if 
$\left[\begin{array}{c} x_1 \\ h_1\end{array}\right],
\left[\begin{array}{c} x_2 \\ h_2\end{array}\right],
... , 
\left[\begin{array}{c} x_l \\ h_l\end{array}\right]
$ form a minimal polynomial basis in
$\rule{0in}{4ex}
\ker \left[\begin{array}{cc} zI-A & B \\ -C & D
\end{array}\right]$.

\bigskip
\bigskip
Since if there exists a rational function in the
kernel of $F$ then there is also a polynomial
in it (multiplying with the common denominator of
its entries), the previous lemma shows that in this case
there is no {\it largest} matrix $\Lambda$ containing
all ``zeros'' of $F$ as its eigenvalues. But 
to characterize the finite zero module
$Z(F)$ only the equivalence classes
in $F^{-1} (\Omega Y) + \Omega U$
should be taken, where two functions
$g_1, g_2$ in it are considered to
be equivalent if $g_1-g_2 \in \ker F + \Omega U$.
Especially the polynomial part can be eliminated.
In other words,  
\[
g - \pi_- (g) \in \Omega U \subset \ker F + \Omega U\;.
\]
and
\[ 
g \in F^{-1}(\Omega Y) + \Omega U
\quad 
\text{if and only if }
\quad
\pi_- (g) \in F^{-1} (\Omega Y) + \Omega U\,.
\]

Thus our next goal is to characterize the {\it equivalence classes}
in $F^{-1}(\Omega Y) + \Omega U$ via
the equation (\ref{matrix_zero_eq2}).
As we have seen a polynomial $q$-tuple in
$\ker F$ also induces a solution of this equation.
So what we might hope is that this equation
is appropriate for characterizing not only the
elements of $Z(F)$ but also including $\ker (F)$ or
more precisely the elements in $\cW (\ker F)$. 

Let us observe that according to the definition
of $\cW (\ker F)$ a $q$-tuple
$g \in \cW (\ker F)$ if and only if there 
exists a polynomial $\psi$ such that 
$g+\psi \in \ker F$, and two functions $g_1, g_2$
with this property are considered to be equivalent
if $g_1 - g_2 \in \ker F \cap z^{-1} \Omega_\infty U$.

Summarizing these considerations to
describe the elements of 
$Z(F) \oplus \cW (\ker F)$ those rational
$q$-tuples $g$ should be considered  
for which there exists a polynomial $q$-tuple $\psi$
such that $ F (g+ \psi)$ is a polynomial, and
two functions $g_1, g_2$ are taken to be equivalent
if and only if $g_1-g_2 \in 
\left( \ker F\right) \cap z^{-1}\Omega_\infty U$.
Obviously every equivalence class contains a
strictly proper rational function.

\medskip
Since 
\[
\frac{F^{-1}(\Omega Y)+\Omega U}
{\ker F \cap z^{-1}\Omega_\infty U} 
\ \simeq \ 
Z(F) \oplus \cW (\ker F)
\]
is finite dimensional and 
every equivalence class
contains a strictly proper function
there exists a rational function
$H\left(zI-\Lambda\right)^{-1} G$
such that for every strictly proper
rational function 
$g \in F^{-1} (\Omega Y) + \Omega U$
there exists a vector $\alpha$
such that
\[
g - H \left(zI-\Lambda\right)^{-1} G \alpha 
\in \ker F \cap z^{-1} \Omega_\infty  U\;.
\] 
(We might obviously assume that
$(H, \Lambda)$ is an observable, 
while $(\Lambda, G)$ is a controllable
pair.)

\bigskip
\subsubsection{The linear space
$Z(F) \oplus \cW\left(\ker F\right)$}
\label{subsec:zk}

The argument above can be be amplified to
the following theorem.

\begin{theorem}\label{zeros_finite_kernel_thm}
\begin{itemize}
\item[(i)]
Assume that the pair $\left(C, A\right)$ is observable. 
Then there exists a pair 
$\left(H_{fzk}, \Lambda_{fzk}\right)$ and a
matrix $\Pi_{fzk}$ such that for every strictly proper
rational $q$-tuple 
\[
g \in F^{-1} (\Omega Y) + \Omega U
\] 
there
exists a vector $\alpha$ for which
\[
g(z) - H_{fzk}\left(zI-\Lambda_{fzk}\right)^{-1}\alpha
\in
\ker F \cap z^{-1}\Omega_\infty U\;,
\]
furthermore equation
\begin{equation}\label{matrix_zero_eq_fzk}
\left[ \begin{array}{cc} A & B \\ C & D
  \end{array}\right]
\left[ \begin{array}{c}
\Pi_{fzk} \\ H_{fzk} \end{array} \right]
= \left[ \begin{array}{c}  \Pi_{fzk} \Lambda_{fzk} \\  0 
\end{array}\right]\,,
\end{equation}
holds, and the kernel of $\Pi_{fzk}$ is trivial.
\item[(ii)]
If $\left(\Pi_1, H_1, \Lambda_1\right)$
provide a solution of 
(\ref{matrix_zero_eq_fzk})
then there exists a triple
$\left(\Pi^{'}, H^{'}, \Lambda^{'}\right)$
solving equation
(\ref{matrix_zero_eq_fzk})
such that 
\[
\text{ker} \Pi^{'} = \left\{ 0 \right\}\;,\quad
\Im \Pi^{'} = \Im \Pi_1
\]
and the columns of $H^{'}\left(zI-\Lambda^{'}\right)^{-1}$
generate the same equivalence classes with respect to
$\ker F \cap z^{-1}\Omega_\infty U$
as those of $H_1\left(zI-\Lambda_1\right)^{-1}$.

Moreover, if $\left(C, A\right)$ is observable and
$\Im\,\Pi_1\subset < A \ \mid \ B >$ 
then the inclusion
\[
\Im \left(\Pi_1\right) \subset 
\Im \left(\Pi_{fzk}\right)
\]
holds, where $\Pi_{fzk}$ is determined by part (i).
\item[(iii)]
Assume that the pair $\left(C, A\right)$ is
observable. Consider
a triplet $(\Pi_1, H_1, \Lambda_1)$ 
providing a solution of 
(\ref{matrix_zero_eq_fzk}). Let $G_1$ be a column
vector. Then the function $H_1\left(zI-\Lambda_1\right)^{-1}G_1$
is in $F^{-1}\left(\Omega U\right) + \Omega U$ if and only if
\[
\Pi_1 G_1 \in \left< A \mid B \right>\;.
\]
where $< A \mid B >$ denotes the reachability subspace
of the state-space.
\end{itemize}
\end{theorem}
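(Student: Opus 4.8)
The plan is to reduce everything to the product identity (\ref{matrix_product_eq}), which gives $F(z)\,H(zI-\Lambda)^{-1} = -C(zI-A)^{-1}\Pi$ for every solution $(\Pi,H,\Lambda)$ of (\ref{matrix_zero_eq_fzk}), together with one elementary remark. Since the columns of $H(zI-\Lambda)^{-1}$ are strictly proper, two combinations $g_1=H(zI-\Lambda)^{-1}w_1$ and $g_2=H(zI-\Lambda)^{-1}w_2$ are equivalent modulo $\ker F\cap z^{-1}\Omega_\infty U$ if and only if $Fg_1=Fg_2$, i.e. if and only if $C(zI-A)^{-1}\Pi w_1=C(zI-A)^{-1}\Pi w_2$. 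Thus the class of $g=H(zI-\Lambda)^{-1}w$ is faithfully recorded by $-C(zI-A)^{-1}\Pi w$, and when $(C,A)$ is observable this is in turn recorded by $\Pi w\in\Im\Pi$. In particular the set of classes carried by the columns of $H(zI-\Lambda)^{-1}$ depends only on $\Im\Pi$, not on the chosen triple. This single observation drives parts (i) and (ii).

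For part (iii) I would compute $\pi_-(F\psi)$ for a polynomial $\psi(z)=\sum_j c_j z^{j}$. Writing $F\psi=D\psi+C(zI-A)^{-1}B\psi$ and using $\pi_-\!\big[C(zI-A)^{-1}Bz^{\,j}\big]=C(zI-A)^{-1}A^{j}B$, one gets $\pi_-(F\psi)=C(zI-A)^{-1}\big(\sum_j A^{j}Bc_j\big)$, where $\sum_j A^{j}Bc_j$ ranges over exactly $\langle A\mid B\rangle$ as the $c_j$ vary. Since (\ref{matrix_product_eq}) gives $\pi_-(Fg)=-C(zI-A)^{-1}\Pi_1G_1$ for $g=H_1(zI-\Lambda_1)^{-1}G_1$, this $g$ lies in $F^{-1}(\Omega Y)+\Omega U$ exactly when some polynomial $\psi$ cancels its strictly proper part, i.e. when $C(zI-A)^{-1}\big(\sum_jA^jBc_j-\Pi_1G_1\big)=0$ for suitable $c_j$; by observability of $(C,A)$ this is solvable iff $\Pi_1G_1=\sum_jA^jBc_j\in\langle A\mid B\rangle$. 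This proves (iii), and it is the one genuinely computational step.

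For the reduction in part (ii) I would \emph{not} try to quotient $\Lambda_1$ by $\ker\Pi_1$, which in general is not $\Lambda_1$-invariant, since the equation only gives $\Pi_1\Lambda_1 v=BH_1v$ for $v\in\ker\Pi_1$. Instead I would use that $\Im\Pi_1$ is output-nulling controlled invariant: for $x=\Pi_1w$ the input $u=H_1w$ satisfies $Ax+Bu=\Pi_1\Lambda_1 w\in\Im\Pi_1$ and $Cx+Du=0$, so $\Im\Pi_1$ admits a friend $K$ with $(A+BK)\Im\Pi_1\subset\Im\Pi_1\subset\ker(C+DK)$. Taking $\Pi'$ an injective matrix whose columns are a basis of $\Im\Pi_1$, writing $(A+BK)\Pi'=\Pi'\Lambda'$ and setting $H'=K\Pi'$ yields a solution of (\ref{matrix_zero_eq_fzk}) with $\ker\Pi'=\{0\}$ and $\Im\Pi'=\Im\Pi_1$; by the opening observation the columns of $H'(zI-\Lambda')^{-1}$ carry exactly the classes of those of $H_1(zI-\Lambda_1)^{-1}$, both being recorded by $-C(zI-A)^{-1}\Im\Pi_1$. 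Part (i) then follows by applying this reduction to a triple produced by the existence discussion preceding the theorem: that discussion furnishes a covering function $H_0(zI-\Lambda_0)^{-1}G_0$ with $(\Lambda_0,G_0)$ controllable, Theorem \ref{zeros_description2}(i) supplies a matching $\Pi_0$ with $\Im\Pi_0\subset\langle A\mid B\rangle$, and the reduction converts it into $(\Pi_{fzk},H_{fzk},\Lambda_{fzk})$ with trivial kernel carrying the same, hence all, classes.

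Finally, the ``Moreover'' clause of (ii) combines the two previous parts. Given a solution with $(C,A)$ observable and $\Im\Pi_1\subset\langle A\mid B\rangle$, part (iii) shows every $g=H_1(zI-\Lambda_1)^{-1}w$ lies in $F^{-1}(\Omega Y)+\Omega U$; the covering property of (i) then gives $\alpha$ with $Fg=F\big[H_{fzk}(zI-\Lambda_{fzk})^{-1}\alpha\big]$, i.e. $C(zI-A)^{-1}\Pi_1w=C(zI-A)^{-1}\Pi_{fzk}\alpha$, whence $\Pi_1w=\Pi_{fzk}\alpha\in\Im\Pi_{fzk}$ by observability, so $\Im\Pi_1\subset\Im\Pi_{fzk}$. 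I expect the main obstacle to be conceptual rather than computational: one must recognize that passing to classes modulo $\ker F\cap z^{-1}\Omega_\infty U$ collapses the whole triple $(\Pi,H,\Lambda)$ to the single datum $\Im\Pi$, equivalently to $-C(zI-A)^{-1}\Pi$, so that the non-uniqueness of the triple and the non-invariance of $\ker\Pi_1$ become irrelevant and the friend-map construction, rather than a quotient of $\Lambda_1$, is the correct device for killing the kernel while preserving both the image and the classes.
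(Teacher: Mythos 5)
Your proof is correct, and its skeleton is the paper's: start from the finite-dimensionality discussion preceding the theorem, invoke Theorem \ref{zeros_description2}(i) to get a matching $\Pi$, use the product identity $F(z)H(zI-\Lambda)^{-1}=-C(zI-A)^{-1}\Pi$ together with observability of $(C,A)$ as the engine, and prove (iii) by a direct polynomial computation (yours, phrased via $\pi_-$, is the same calculation the paper does by rearranging $C(zI-A)^{-1}\bigl(Bg(z)-\Pi_1G_1\bigr)$). The genuine differences are in the mechanics. First, you isolate as an explicit lemma the fact that the equivalence class of $H(zI-\Lambda)^{-1}w$ modulo $\ker F\cap z^{-1}\Omega_\infty U$ is faithfully recorded by $\Pi w$; the paper uses exactly this observation repeatedly (``as before, from this it follows that $\Pi'=\Pi_{fzk}\alpha'$\,'') but never names it, and making it explicit is a real streamlining. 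Second, for the kernel-killing reduction in (ii) you construct a friend map $K$ and set $H'=K\Pi'$, $(A+BK)\Pi'=\Pi'\Lambda'$, whereas the paper uses a purely algebraic compression: choose $\alpha,\beta$ with $\Pi_1\alpha=\Pi'$, $\Pi'\beta=\Pi_1$, $\beta\alpha=I$, and set $H'=H_1\alpha$, $\Lambda'=\beta\Lambda_1\alpha$. Your worry about quotienting $\Lambda_1$ by the non-invariant subspace $\ker\Pi_1$ is well-founded, but note the paper's device is a compression, not a quotient, so it too sidesteps that obstruction---and it is arguably more elementary, since it needs no feedback existence argument (though the paper records that argument elsewhere anyway). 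Third, in (i) the paper does \emph{not} apply the reduction to the big triple $(\tilde\Pi,\tilde H,\tilde\Lambda)$ as you do; it builds the solution explicitly as $H_{fzk}=\tilde H\tilde G$, $\Pi_{fzk}=\tilde\Pi\tilde G$, manufacturing $\Lambda_{fzk}$ from the class of $z\cdot\tilde H(zI-\tilde\Lambda)^{-1}\tilde G$ (a shift argument). Your route is shorter and turns (i) into a corollary of the reduction in (ii); the paper's buys an explicit realization of size equal to $\dim\bigl(Z(F)\oplus\cW(\ker F)\bigr)$ tied directly to the chosen basis representatives, a form it reuses later (e.g.\ in Theorem \ref{thm:char_ker2} and Remark \ref{rem:fzk_vs_max}). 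Both constructions yield triples of the same size, since by your own lemma $\dim\Im\tilde\Pi$ equals the number of classes, so nothing is lost either way.
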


\begin{proof}
(i)
Consider a basis in the
finite-dimensional space
of equivalence classes
\[
\left(F^{-1}(\Omega Y)+\Omega U\right)\ / 
\ \left(\ker F \cap z^{-1}\Omega_\infty U\right)
\] 
and pick up strictly proper rational functions
from their equivalence classes. Using these 
rational $q$-tuples form a matrix-valued
strictly proper rational function with 
minimal realization
\[
\tilde H\left(zI - \tilde \Lambda \right)^{-1} 
\tilde G\,.
\]
Due to the assumption that its columns
generate a basis in $Z(F) \oplus \cW(\ker F)$
for every strictly proper rational function
$g \in F^{-1} (\Omega Y) + \Omega U$
there exists a vector $\alpha$ such that
\[
g(z) - \tilde H
\left(zI-\tilde\Lambda\right)^{-1}\tilde G\alpha
\in \left( \ker F \cap z^{-1}\Omega_\infty U\right)\,.
\]
Theorem \ref{zeros_description} (i) 
-- using the observability of the pair
$\left(C, A \right)$ --
implies that there
exists a matrix $\tilde\Pi$ for which
equation (\ref{matrix_zero_eq2}) holds. 

The identity 
$z\tilde H \left( zI -\tilde\Lambda\right)^{-1}
\tilde G = 
\tilde H \tilde G +
\tilde H \left(zI-\tilde\Lambda\right)^{-1}
\tilde\Lambda\tilde G$ implies that
\[
\tilde H \left(zI-\tilde\Lambda\right)^{-1}
\tilde\Lambda\tilde G
\in 
F^{-1}(\Omega Y)+\Omega U\;.
\]
Thus there exists a matrix $\Lambda_{fzk}$ such that
\[
\tilde H \left(zI-\tilde\Lambda\right)^{-1}
\tilde\Lambda\tilde G - 
\tilde H \left(zI-\tilde\Lambda\right)^{-1}
\tilde G \Lambda_{fzk}
\in \left(\ker G \cap z^{-1}\Omega_\infty U\right)\,.
\]
On the other hand equation 
(\ref{matrix_product_eq}) implies
that
\begin{eqnarray*}
\left[D+C\left(zI-A\right)^{-1}B\right]&&
\kern -.3truein
\left[
\tilde H \left(zI-\tilde\Lambda\right)^{-1}
\tilde\Lambda\tilde G - 
\tilde H \left(zI-\tilde\Lambda\right)^{-1}
\tilde G \Lambda_{fzk}
\right] \\ & =&
-C\left(zI-A\right)^{-1}\tilde\Pi 
\left(\tilde\Lambda\tilde G - \tilde G \Lambda_{fzk}\right)\,.
\end{eqnarray*}
Invoking again the observability of the pair
$(C, A)$ we get that
\[
\tilde\Pi 
\left(\tilde\Lambda\tilde G - \tilde G \Lambda_{fzk}\right)
= 0\;.
\]
Straightforward calculation gives that
\[
\left[D+C\left(zI-A\right)^{-1}B\right]
\left[
\tilde H \left(zI-\tilde\Lambda\right)^{-1}\tilde G
-
\tilde H \tilde G \left(zI-\Lambda_{fzk}\right)^{-1}
\right]
=
0
\]
Set
\[
H_{fzk} = \tilde H \tilde G\, , \quad 
\Pi_{fzk} = \tilde \Pi \tilde G\,.
\]
Then the columns of $H_{fzk}\left(zI-\Lambda_{fzk}\right)^{-1}$
determine the same equivalence classes
as those of 
$\tilde H\left(zI-\tilde\Lambda\right)^{-1}
\tilde G$, thus they form a basis
in $Z(F) \oplus \cW\left(\text{ker} F\right)$, 
and
equation
\[
\left[
\begin{array}{cc}
A & B \\ C & D
\end{array}
\right]
\left[
\begin{array}{c} \Pi_{fzk} \\ H_{fzk}\end{array}
\right]
=
\left[
\begin{array}{c} \Pi_{fzk}\Lambda_{fzk} \\ 0 \end{array}
\right]
\]
is satisfied. 
The fact that the columns of
$H_{fzk}\left(zI-\Lambda_{fzk}\right)^{-1}$ form a basis
implies that for any non-zero vector $\alpha$
\[
\left[D+C\left(zI-A\right)^{-1}B\right]
H_{fzk} \left(zI-\Lambda_{fzk}\right)^{-1}\alpha 
= - C\left(zI-A\right)^{-1}\Pi_{fzk}\alpha \neq 0\;,
\]
thus $\Pi_{fzk} \alpha \neq 0$. In other words
\[
\ker \Pi_{fzk} = \left\{ 0\right\}\;.
\]

(ii) 
Consider any solution of the equation
\begin{equation}\label{matrix_zero_eq3a}
\left[ \begin{array}{cc} A & B \\ C & D
  \end{array}\right]
\left[ \begin{array}{c}
\Pi_1  \\ H_1   \end{array} \right]
= \left[ \begin{array}{c}  
\Pi_1\Lambda_1 \\ 
0 
\end{array}\right]\;.
\end{equation}
Define the matrix $\Pi^{'}$ in such a way
that its column vectors form a basis
in $\Im \Pi_1$. Then there are matrices 
$\alpha, \beta$ such that
\[
\Pi_1 \alpha = \Pi^{'}\;, \quad
\Pi^{'} \beta = \Pi_1\;.
\]
Equation $\Pi^{'}\beta \alpha = \Pi^{'}$
and $\text{ker} \Pi^{'} = \left\{ 0 \right\}$
imply that $\beta\alpha = I$. Set
\[
H^{'} = H_1 \alpha\;,\quad
\Lambda^{'} = \beta \Lambda_1 \alpha\;.
\]
Multiplying equation 
(\ref{matrix_zero_eq3a}) from the right
by $\alpha$ we obtain that the triplet
$\left(\Pi^{'}, H^{'}, \Lambda^{'}\right)$
provides a solution of
(\ref{matrix_zero_eq3a}), as well.

Defining a matrix $\alpha_1$ with column 
vectors forming a basis in $\text{ker} \Pi_1$, 
we get that
the matrix $\left[ \alpha, \alpha_1\right]$ 
is regular.
Now equation (\ref{matrix_zero_eq3a}) implies that
\[
F(z) H_1\left(zI-\Lambda_1\right)^{-1} = 
- C\left(zI-A\right)^{-1}\Pi_1\;,
\]
thus
\[
F(z) H_1\left(zI-\Lambda_1\right)^{-1} \alpha_1 = 0\;,
\]
i.e. the columns of 
$H_1\left(zI-\Lambda_1\right)^{-1}\alpha_1$ are in
$\ \text{ker} F \cap z^{-1}\Omega_\infty U$.
On the other hand
\[
F(z) \left( H^{'}\left(zI-\Lambda^{'}\right)^{-1}
-
H_1\left(zI-\Lambda_1\right)^{-1}\alpha\right) =
-C\left(zI-A\right)^{-1}\Pi^{'}
+C\left(zI-A\right)^{-1}\Pi_1\alpha = 0\;,
\]
proving the first part of (ii).

Now -- using the observability of the pair 
$\left(C, A\right)$ --
we show that $\left(H^{'}, \Lambda^{'}\right)$ 
is observable, as well.
In fact, if for some $\xi$ the 
identities $H^{'}\xi = 0$,
$\Lambda^{'} \xi = \lambda \xi$ holds, then 
equation (\ref{matrix_zero_eq3a}) 
implies that
\[
A \Pi^{'} \xi = \Pi^{'}\Lambda^{'}\xi = 
\lambda \Pi^{'} \xi\;,
\quad
C\Pi^{'}\xi = 0\;.
\]
From the observability of $(C, A)$ we obtain that
$\Pi^{'} \xi =0$, implying that $\xi = 0$, 
proving the observability
$\left(H^{'}, \Lambda^{'}\right)$. 

Since  -- according to our assumption -- 
$\Im\,\Pi^{'} =  \Im\,\Pi_1 \subset < A\ \mid \ B >$ we can apply  
Theorem \ref{zeros_description2} (ii).
From this we obtain that the columns of
$H^{'}\left(zI-\Lambda^{'}\right)^{-1}$ are
in $F^{-1}(\Omega Y) + \Omega U$. Consequently,
there exists a matrix $\alpha^{'}$ such that
\[
H^{'}\left(zI-\Lambda^{'}\right)^{-1} -
H_{fzk}\left(zI-\Lambda_{fzk}\right)^{-1}\alpha^{'} 
\in \ker F \cap z^{-1}\Omega_\infty U\;.
\]
As before, from this it follows that
$\Pi^{'} = \Pi_{fzk} \alpha^{'}$, in other words
\[
\Im \Pi_1 = \Im \Pi^{'} \subset \Im \Pi_{fzk}\;,
\]
proving the maximality of $\Im \Pi_{fzk}$.

(iii) If the function $H_1\left(zI-\Lambda_1\right)^{-1}G_1$
is in $F^{-1}\left(\Omega Y\right) + \Omega U$ then
there exists a polynomial $g$ such that
\[
F(z) \left(H_1\left(zI-\Lambda_1\right)^{-1}G_1 + g(z)\right)
= q(z)
\]
is also a polynomial. Using equation (\ref{matrix_zero_eq3a})
we obtain that
\[
C\left(zI-A\right) \left( B g(z) - \Pi_1 G_1\right) =
q(z) - D g(z)\;.
\]
The observability of the pair $(C, A)$ implies that
$\left(zI-A\right)^{-1} \left( B g(z) - \Pi_1 G_1\right)$
is a polynomial, as well. Denote this by $\psi$.
Rearranging the terms we get that
\[
\Pi_1 G_1 = B g(z) - (zI-A)\psi (z)\;,
\] 
proving that $\Pi_1 G_1 \in < A \mid B >$.

Conversely, if $\Pi_1 G_1 \in < A \mid B >$ then there
exist two polynomials $g, \psi$ such that
$\Pi_1 G_1 = B g(z) - (zI-A)\psi(z)$. 
Straightforward calculation gives that
\[
\left(D+C\left(zI-A\right)^{-1}\right)
\left(H_1\left(zI-\Lambda_1\right)^{-1}G_1 + g(z)\right) =
C \psi (z) + D g(z)\;,
\]
thus $H_1\left(zI-\Lambda_1\right)^{-1}G_1 
\in F^{-1}\left(\Omega Y\right)+ \Omega U$, 
concluding the proof (iii).
\qed
\end{proof}

For later use it is worth summarizing part (i) and (ii)
in the following corollary which was proved e.g. 
partly in Theorem 2
in \cite{AL-SC-84} without the identification of
the zero directions but under more general
assumptions.

\begin{corollary}\label{cor:ZF_kerF}
Assume that the pair $\left(C, A\right)$ is
observable, and $\left(A, B\right)$ is
controllable.
Then for an observable pair $\left(H, \Lambda\right)$
the columns of 
$H\left(zI-\Lambda\right)^{-1}$ form a basis
in $Z(F) \oplus \cW\left(\text{ker} F\right)$ if
and only if
$\Im \Pi$ is maximal and $\ker \Pi = \left\{ 0\right\}$, 
where $\Pi$ (together with
$H, \Lambda$) provide a solution of
(\ref{matrix_zero_eq_max}).
\end{corollary}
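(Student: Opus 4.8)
My plan is to derive the corollary directly from parts (i) and (ii) of Theorem \ref{zeros_finite_kernel_thm}, with the single additional observation that controllability of $(A,B)$ makes the reachability subspace $\left< A\mid B\right>$ equal to the entire state space. First I would note that this renders the hypothesis $\Im\,\Pi_1 \subset \left< A\mid B\right>$ in part (ii) automatic for every solution of (\ref{matrix_zero_eq_fzk}); hence the inclusion $\Im\,\Pi_1 \subset \Im\,\Pi_{fzk}$ holds for all solutions. Comparing with Lemma \ref{lem:max_pi}, this exhibits $\Pi_{fzk}$ as a maximal solution and yields $\Im\,\Pi_{fzk} = \cV^{*}(\Sigma)$, so that the phrase ``$\Im\,\Pi$ is maximal'' means exactly $\Im\,\Pi = \Im\,\Pi_{fzk}$. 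The workhorse throughout will be the product identity (\ref{matrix_product_eq}), $F(z)H(zI-\Lambda)^{-1} = -C(zI-A)^{-1}\Pi$, which lets observability of $(C,A)$ translate vanishing of $F\cdot H(zI-\Lambda)^{-1}$ into vanishing of $\Pi$.

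For the ``if'' direction I would assume $\ker\Pi = \{0\}$ and $\Im\,\Pi = \Im\,\Pi_{fzk}$. Since $\Im\,\Pi \subset \left< A\mid B\right>$ holds trivially and $(H,\Lambda)$ is observable, Theorem \ref{zeros_description2}(ii) places the columns of $H(zI-\Lambda)^{-1}$ in $F^{-1}(\Omega Y)+\Omega U$, so they represent elements of $Z(F)\oplus\cW(\ker F)$. To prove they are independent in this quotient, I would take a constant vector $c$ with $H(zI-\Lambda)^{-1}c \in \ker F\cap z^{-1}\Omega_\infty U$; the product identity gives $C(zI-A)^{-1}\Pi c = 0$, observability of $(C,A)$ forces $\Pi c = 0$, and $\ker\Pi = \{0\}$ forces $c = 0$. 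I would then finish by a dimension count: as $\ker\Pi = \{0\}$, the number $m$ of columns equals $\dim\Im\,\Pi = \dim\Im\,\Pi_{fzk}$, which by part (i) equals $\dim(Z(F)\oplus\cW(\ker F))$; an independent family of this cardinality is a basis.

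For the ``only if'' direction I would assume the columns of $H(zI-\Lambda)^{-1}$ form a basis of $Z(F)\oplus\cW(\ker F)$. To obtain $\ker\Pi = \{0\}$, suppose $\Pi\alpha = 0$; the product identity gives $F(z)H(zI-\Lambda)^{-1}\alpha = 0$, and since $H(zI-\Lambda)^{-1}\alpha$ is strictly proper it lies in $\ker F\cap z^{-1}\Omega_\infty U$ and thus represents the zero class, so linear independence of the basis forces $\alpha = 0$. For maximality I would invoke part (ii): controllability makes its inclusion applicable, giving $\Im\,\Pi \subset \Im\,\Pi_{fzk}$. Since both $\Pi$ and $\Pi_{fzk}$ have trivial kernel and the same number of columns $m = \dim(Z(F)\oplus\cW(\ker F))$, both images have dimension $m$; the inclusion is therefore an equality and $\Im\,\Pi$ is maximal.

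The step I expect to require the most care is keeping the two notions of independence apart. The columns of $H(zI-\Lambda)^{-1}$ are automatically independent as rational functions by observability of $(H,\Lambda)$, but what the corollary needs is independence modulo $\ker F\cap z^{-1}\Omega_\infty U$; it is precisely the product identity together with observability of $(C,A)$ and triviality of $\ker\Pi$ that upgrades the former to the latter. A secondary point to verify carefully is that a basis of $Z(F)\oplus\cW(\ker F)$ indeed has cardinality $\dim\Im\,\Pi_{fzk}$, which rests on $\ker\Pi_{fzk} = \{0\}$ from part (i) together with the identification $\Im\,\Pi_{fzk} = \cV^{*}(\Sigma)$ established at the outset.
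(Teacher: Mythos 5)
Your proposal is correct and takes essentially the same route the paper intends: the paper presents Corollary \ref{cor:ZF_kerF} as a direct summary of parts (i) and (ii) of Theorem \ref{zeros_finite_kernel_thm}, and you derive it from exactly those parts, using Lemma \ref{lem:max_pi} and controllability to identify $\Im\,\Pi_{fzk}$ with $\cV^{*}(\Sigma)$, Theorem \ref{zeros_description2}(ii) for membership in $F^{-1}(\Omega Y)+\Omega U$, and the product identity (\ref{matrix_product_eq}) with observability of $(C,A)$ to pass between $\ker\Pi=\{0\}$ and independence modulo $\ker F\cap z^{-1}\Omega_\infty U$. The independence and dimension-count details you supply are the same ones appearing inside the paper's own proofs of Theorem \ref{zeros_finite_kernel_thm}(i)--(ii), so nothing is missing.
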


Let us note that according to Lemma
\ref{lem:max_pi} the maximality of
$\Im (\Pi)$ can be expressed as
$\Im(\Pi) = \cV^{*}\left(\Sigma\right)$.

More generally, without assuming the controllability of
$(A, B)$.

\begin{corollary}\label{cor:Im_pi}
Assume that the pair $\left(C, A\right)$ is
observable.
Then for an observable pair $\left(H, \Lambda\right)$
the columns of 
$H\left(zI-\Lambda\right)^{-1}$ form a basis
in $Z(F) \oplus \cW\left(\text{ker} F\right)$ if
and only if
\[
\Im \Pi = \cV^{*}\left(\Sigma\right) 
\cap < A \mid B > \quad
\text{and}
\quad 
\ker \Pi = \left\{ 0 \right\}\;,
\] 
where $\Pi$ (together with
$H, \Lambda$) provide a solution of
(\ref{matrix_zero_eq2}).
\end{corollary}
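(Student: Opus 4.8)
The plan is to reduce everything to the distinguished solution $(\Pi_{fzk}, H_{fzk}, \Lambda_{fzk})$ furnished by Theorem \ref{zeros_finite_kernel_thm}(i), for which the columns of $H_{fzk}\left(zI-\Lambda_{fzk}\right)^{-1}$ already form a basis of $Z(F) \oplus \cW(\ker F)$ and $\ker \Pi_{fzk} = \{0\}$. The central preliminary step is to identify its image, namely to prove $\Im \Pi_{fzk} = \cV^{*}\left(\Sigma\right) \cap \left< A \mid B \right>$. The inclusion ``$\subset$'' is immediate: every column of $H_{fzk}\left(zI-\Lambda_{fzk}\right)^{-1}$ lies in $F^{-1}(\Omega Y) + \Omega U$, so Theorem \ref{zeros_finite_kernel_thm}(iii) gives $\Im \Pi_{fzk} \subset \left< A \mid B \right>$, while $\Im \Pi_{fzk} \subset \cV^{*}\left(\Sigma\right)$ holds for every solution of (\ref{matrix_zero_eq2}).

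For the reverse inclusion I would take $x \in \cV^{*}\left(\Sigma\right) \cap \left< A \mid B \right>$ and use Lemma \ref{lem:max_pi} to write $x = \Pi_\text{max}\zeta$ for the maximal solution $(\Pi_\text{max}, H_\text{max}, \Lambda_\text{max})$ of (\ref{matrix_zero_eq_max}). Since $\Pi_\text{max}\zeta = x \in \left< A \mid B \right>$, part (iii) shows the column $H_\text{max}\left(zI-\Lambda_\text{max}\right)^{-1}\zeta$ lies in $F^{-1}(\Omega Y) + \Omega U$; the basis property then yields a vector $\alpha$ with $H_\text{max}\left(zI-\Lambda_\text{max}\right)^{-1}\zeta - H_{fzk}\left(zI-\Lambda_{fzk}\right)^{-1}\alpha \in \ker F \cap z^{-1}\Omega_\infty U$. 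Applying $F$ and invoking (\ref{matrix_product_eq}) for both triples turns this into $C\left(zI-A\right)^{-1}\left(\Pi_{fzk}\alpha - \Pi_\text{max}\zeta\right) = 0$, so that observability of $(C,A)$ forces $x = \Pi_\text{max}\zeta = \Pi_{fzk}\alpha \in \Im \Pi_{fzk}$. Because $\ker \Pi_{fzk} = \{0\}$, the established identity also gives $\dim\left(\cV^{*}\left(\Sigma\right)\cap\left< A \mid B \right>\right) = N$, where $N := \dim\left(Z(F) \oplus \cW(\ker F)\right)$.

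With this identification both implications follow by a rank count. For ``$\Leftarrow$'', given observable $(H,\Lambda)$ and a solution $\Pi$ of (\ref{matrix_zero_eq2}) with $\ker \Pi = \{0\}$ and $\Im \Pi = \cV^{*}\left(\Sigma\right) \cap \left< A \mid B \right>$, triviality of $\ker \Pi$ forces $\Lambda$ to be $N \times N$; part (iii) places every column of $H\left(zI-\Lambda\right)^{-1}$ in $F^{-1}(\Omega Y) + \Omega U$; and by (\ref{matrix_product_eq}) a constant combination $H\left(zI-\Lambda\right)^{-1}\alpha$ lies in $\ker F \cap z^{-1}\Omega_\infty U$ exactly when $C\left(zI-A\right)^{-1}\Pi\alpha = 0$, i.e., by observability together with $\ker \Pi = \{0\}$, exactly when $\alpha = 0$. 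Thus the $N$ columns are linearly independent modulo $\ker F \cap z^{-1}\Omega_\infty U$ and hence form a basis. For ``$\Rightarrow$'', if the columns form a basis then $\Lambda$ is $N \times N$; the same computation shows $\Pi\alpha = 0$ makes the column combination equivalent to zero, whence $\alpha = 0$ and $\ker \Pi = \{0\}$; part (iii) gives $\Im \Pi \subset \cV^{*}\left(\Sigma\right) \cap \left< A \mid B \right>$; and since $\dim \Im \Pi = N = \dim\left(\cV^{*}\left(\Sigma\right) \cap \left< A \mid B \right>\right)$, this inclusion is an equality.

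The step I expect to be the main obstacle is the reverse inclusion in the identification of $\Im \Pi_{fzk}$: it is the only place where the restriction to $\left< A \mid B \right>$ is genuinely used, since in the controllable setting of Corollary \ref{cor:ZF_kerF} this subspace is the whole state space. It requires passing from the abstract maximal solution $\Pi_\text{max}$ attached to $\cV^{*}\left(\Sigma\right)$ to the concrete basis-producing solution $\Pi_{fzk}$, tying together part (iii), equation (\ref{matrix_product_eq}), and the observability of $(C,A)$; everything afterward is bookkeeping with ranks and the correspondence between constant column combinations and their equivalence classes.
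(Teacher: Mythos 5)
Your proof is correct and assembles the corollary in essentially the way the paper intends: the paper states this result without an explicit proof, as an immediate consequence of Theorem \ref{zeros_finite_kernel_thm} (parts (i)--(iii)), Lemma \ref{lem:max_pi} and the identity (\ref{matrix_product_eq}), and these are precisely the ingredients you combine, including the correct rank/dimension bookkeeping in both directions. The only variation worth noting is the reverse inclusion $\cV^{*}\left(\Sigma\right)\cap\left< A\mid B\right>\subset\Im\,\Pi_{fzk}$: you argue column-by-column through $\Pi_\text{max}$, part (iii), the basis property and observability, whereas the paper's implicit route would invoke the maximality statement of part (ii) together with the observation (made in its preliminaries) that $\cV^{*}\left(\Sigma\right)\cap\left< A\mid B\right>$ is itself output-nulling controlled invariant and hence the image of some solution; both routes are sound and of comparable length.
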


We get immediately -- using the notation introduced above --
that
\begin{equation}\label{pi_nu_fzk}
\cV^{*}\left(\Sigma\right) \cap < A \mid B > 
= \Im \Pi_{fzk}\;.
\end{equation}

\medskip
\begin{remark}
Let us observe that even the maximal 
solution triplet 
$\left(\Pi, H, \Lambda\right)$ of equation
(\ref{matrix_zero_eq_max}) is not unique.  
Although the subspace 
$\Im (\Pi_\text{max}) = \cV^{*}\left(\Sigma\right)
=\cV^{*}(A,B,C,D)$ is
given by the {\it realization}
 of the transfer function $F$, 
so without
loss of generality we might fix a basis in it, 
determining this way the matrix $\Pi_\text{max}$, but even
for a {\it fixed} $\Pi$ the matrices $\Lambda$ and $H$
are not necessarily uniquely defined. 
Obviously, if $\Lambda_1, \Lambda_2$ and
$H_1, H_2$ are two solutions (for the same  
$\Pi$) then equation
\begin{equation}\label{nonunique_eq}
\left[
\begin{array}{c} B \\ D \end{array}
\right]
\left(H_1-H_2\right)
=
\left[
\begin{array}{c} \Pi 
\left(\Lambda_1 - \Lambda_2\right)
\\ 0
\end{array}
\right]
\end{equation}
holds. This equation will play an important role in the characterization
of $\cW (\ker F )$, as we shall see later.
\end{remark}

\bigskip

\subsubsection{The module $\cW\left(\ker F\right)$ and 
the minimal indices of $\ker F$}
\label{subsec:kerF}

To characterize the set $\cW (\ker F)$ we have to 
analyze the space $\ \ker F + \Omega U$.
The next theorem provides a 
description of this set in term of equation 
(\ref{matrix_zero_eq2}) and the set of
{\it maximal output-nulling reachability subspace} 
$\cR^{*} (\Sigma)$.

\begin{theorem}\label{thm:char_ker}
Assume that $\left(C, A\right)$ is observable.

Let $(\Lambda_1, G_1)$ be a controllable pair,
where $G_1$ is a column vector. 

The function 
\[
H_1\left(zI-\Lambda_1 \right)^{-1}G_1\ \in \  
 \left(\ker F + \Omega U\right)
\]
in and only if
there exists a solution $\Pi_1$ of the
equation
\begin{equation}\label{matrix_zero_eq4}
\left[ \begin{array}{cc} A & B \\ C & D
  \end{array}\right]
\left[ \begin{array}{c}
\Pi_1 \\ H_1 \end{array} \right]
= \left[ \begin{array}{c}  \Pi_1 \Lambda_1 \\  0 
\end{array}\right]
\end{equation}
and
\[
\Pi_1 G_1 \in 
\cC^{*}\left(\Sigma\right) 
\cap \cV^{*}\left(\Sigma\right)\;.
\]
\end{theorem}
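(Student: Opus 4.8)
The plan is to translate both implications into the time-domain dictionary between rational functions and trajectories of the system (\ref{system}), and to read off membership in $\cC^{*}(\Sigma)$ directly from its definition (\ref{reachable_set}). Throughout I would use that, whenever $\Pi_1$ solves (\ref{matrix_zero_eq4}), the identity (\ref{matrix_product_eq}) gives
\[
F(z)\,H_1\left(zI-\Lambda_1\right)^{-1}G_1 = -C\left(zI-A\right)^{-1}\Pi_1 G_1\;,
\]
so the output produced by the strictly proper input $g=H_1\left(zI-\Lambda_1\right)^{-1}G_1$ is exactly $-C\left(zI-A\right)^{-1}\Pi_1 G_1$. I would also invoke the observation recorded after Proposition \ref{product_equation}, namely $\Im \Pi_1 \subset \cV^{*}(\Sigma)$ for every solution of (\ref{matrix_zero_eq4}); hence $\Pi_1 G_1 \in \cV^{*}(\Sigma)$ is automatic and the only substantive point in either direction is membership in $\cC^{*}(\Sigma)$.

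For the ``only if'' direction I would argue as follows. Since $\ker F \subset F^{-1}(\Omega Y)$, the hypothesis $g \in \ker F + \Omega U$ gives $g \in F^{-1}(\Omega Y) + \Omega U$, so Theorem \ref{zeros_description2}(i) --- using observability of $(C,A)$ and controllability of $(\Lambda_1,G_1)$ --- produces a solution $\Pi_1$ of (\ref{matrix_zero_eq4}) (unique given $H_1,\Lambda_1$, again by observability). Writing $g=h+\psi$ with $h\in\ker F$ and $\psi$ a polynomial, one gets $F\psi = Fg = -C\left(zI-A\right)^{-1}\Pi_1 G_1$, which is strictly proper. Reading $\psi(z)=\sum_j \psi_j z^j$ as a finite past input $u(-j)=\psi_j$ of the system started from the zero state in the remote past, the left-division $\left(zI-A\right)^{-1}B\psi = q(z) + \left(zI-A\right)^{-1}s$ identifies the reached state $x(1)=s=\sum_j A^jB\psi_j$, while the output $z$-transform equals $F\psi$: its polynomial part is the past output $(y(k))_{k\le 0}$ and its strictly proper part the future output $(y(k))_{k\ge 1}$. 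As $F\psi$ is strictly proper, the past output vanishes, so by (\ref{reachable_set}) the state $x(1)=s$ lies in $\cC^{*}(\Sigma)$. Comparing strictly proper parts and invoking observability of $(C,A)$ yields $s=-\Pi_1 G_1$, whence $\Pi_1 G_1 \in \cC^{*}(\Sigma)$, and together with $\Pi_1 G_1\in\cV^{*}(\Sigma)$ this is the claim.

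The ``if'' direction runs the same dictionary backwards, and this is where I expect the only genuine care to be needed. Given a solution $\Pi_1$ with $\Pi_1 G_1 \in \cC^{*}(\Sigma)\cap\cV^{*}(\Sigma)$, the subspace $\cC^{*}(\Sigma)$ contains $-\Pi_1 G_1$, so by (\ref{reachable_set}) there is a finite past input $\psi$ driving the system from the remote zero state to $x(1)=-\Pi_1 G_1$ with vanishing output for all $k\le 0$. Regarding $\psi$ as a polynomial in $z$, the past-output-nulling condition says precisely that $F\psi$ has no polynomial part, while its strictly proper part is $C\left(zI-A\right)^{-1}x(1) = -C\left(zI-A\right)^{-1}\Pi_1 G_1$; hence $F\psi = -C\left(zI-A\right)^{-1}\Pi_1 G_1 = Fg$ by (\ref{matrix_product_eq}). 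Therefore $F(g-\psi)=0$, i.e. $g-\psi\in\ker F$ and $g\in\ker F+\Omega U$. The main obstacle is purely bookkeeping: fixing the $z$-transform convention so that ``polynomial $=$ past input'' and ``strictly proper $=$ future output'' are consistent with (\ref{system}) and with the sign in (\ref{matrix_product_eq}), and checking that the finite-support input supplied by (\ref{reachable_set}) is exactly the polynomial $\psi$ needed; the algebraic content (the matrix identity and the two appeals to observability) is routine.
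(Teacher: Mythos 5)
Your proposal is correct and follows essentially the same route as the paper's own proof: both directions rest on Theorem \ref{zeros_description2}(i), the identity (\ref{matrix_product_eq}), observability of $(C,A)$ to identify the reached state with $\pm\Pi_1 G_1$, the trajectory reading of polynomial inputs against the definition (\ref{reachable_set}) of $\cC^{*}(\Sigma)$, and the automatic inclusion $\Im\,\Pi_1\subset\cV^{*}(\Sigma)$. The paper merely writes the time-domain dictionary out explicitly as the term-by-term matrix identity (\ref{ker_zero_eq}), and its polynomial is the negative of yours, so the reached state is $+\Pi_1 G_1$ rather than $-\Pi_1 G_1$ --- an immaterial difference since $\cC^{*}(\Sigma)$ is a subspace.
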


\medskip
Note that the identification of the ``kernel indices'' to
the subspace 
$\cR^{*}\left(\Sigma\right) = 
\cC^{*}\left(\Sigma\right) 
\cap \cV^{*}\left(\Sigma\right)$ was already
proved in Theorem 5 of \cite{AL-SC-84} for observable
systems and extended in Theorem 6 to general systems.

\medskip
\begin{proof}
If $H_1\left(zI-\Lambda_1 
\right)^{-1}G_1 \in \ker F + \Omega U$
then it is obviously in 
$F^{-1} (\Omega Y) + \Omega U$ and
there exists a polynomial 
$h_0+ h_1 z + \dots + h_j z^j$
such that 
\[
g = H_1 \left(zI-\Lambda_1\right)^{-1}G_1 + 
h_0 + h_z + \dots + h_j z^j \in \ker F\;.
\] 
Applying Theorem 
\ref{zeros_description2} (i)
we get that there exists 
a matrix $\Pi_1$ such that
equation (\ref{matrix_zero_eq4}) holds.

Then
\begin{eqnarray*}
0 &=& \left[D+C\left(zI-A\right)^{-1}B\right]
\left[H_1\left(zI-\Lambda_1\right)^{-1}G_1 + 
h_0 + h_1z + \dots + h_jz^j\right] \\
&=& D\left(h_0 + h_1z + \dots + h_jz^j\right)+
C\left(zI-A\right)^{-1}
\left[
B\left(h_0 + h_1z + \dots + h_jz^j\right)
-\Pi_1 G_1\right]
\end{eqnarray*}
Thus 
$C\left(zI-A\right)^{-1}
\left[
B\left(h_0 + h_1z + \dots + h_jz^j\right)
-\Pi_1 G_1\right]$
is a polynomial. Using the observability of the pair
$\left(C, A\right)$ we get that
\[
\left(zI-A\right)^{-1}
\left[
B\left(h_0 + h_1z + \dots + h_jz^j\right)
-\Pi_1 G_1\right]
=
k_0 + k_1 z + \dots + k_{j-1} z^{j-1}
\]
is also a polynomial.

Writing up the last two equations term by term
we obtain the following set of equations
\begin{equation}\label{ker_zero_eq}
\left[
\begin{array}{cc}
A & B \\ C & D
\end{array}
\right]
\left[
\begin{array}{ccccc}
k_0 & k_1 & \dots & k_{j-1} & 0 \\
h_0 & h_1 & \dots & h_{j-1} & h_j 
\end{array}
\right]
=
\left[
\begin{array}{ccccc}
\Pi_1 G_1 & k_0 & k_1 & \dots & k_{j-1} \\
0     & 0   &  0  & \dots &   0
\end{array}
\right]
\end{equation}
In other words, if in the system
\begin{eqnarray*}
x(k+1) &=& A x(k) + B u(k) \\
y(k) &=& C x(k) + D u(k)
\end{eqnarray*}
starting from the origin the input sequence 
$h_j, h_{j-1}, \dots , h_0$ 
(in this order) is applied
then output sequence during these 
$j+1$ time instants will be zero while the state
vector in the $j+2$ step is exactly $\Pi_1 G_1$.

Thus $\Pi_1 G_1$ is in the minimal input-containing set.
Equation (\ref{matrix_zero_eq4}) 
and Lemma \ref{lem:max_pi} imply that
$\Im \left( \Pi_1\right) 
\subset \cV^{*}\left(\Sigma\right)$, 
consequently 
$\Pi_1G_1 \in \cC^{*}\left(\Sigma\right)
\cap \cV^{*}\left(\Sigma\right)$.

Conversely, 
assume that $\Pi_1$ (together with $H_1, \Lambda_1$)
provides a solution of equation (\ref{matrix_zero_eq4}),
and 
$\Pi_1G_1 \in \cC^{*}\left(\Sigma\right)
\cap \cV^{*}\left(\Sigma\right)$. Since
$\Pi_1 G_1 \in \cC^{*}\left(\Sigma\right)$ 
there exists a finite sequence
denoted by $h_0, h_1, \dots , h_j$ such that when
this is used as an input $h_j, h_{j-1}, \dots , h_0$
(in this order) then the output is zero while
the immediate next state is $\Pi_1 G_1$.

Forming the function 
\[
g(z) = H_1 \left(zI-\Lambda_1\right)^{-1}G_1 + 
h_0 + h_1 z + \dots + h_j z^j
\] 
immediate calculation gives that $F(z) g(z) = 0$.
(In these calculations equation (\ref{matrix_zero_eq4}) 
should be used, as well. )
Thus the columns of $H_1\left(zI-\Lambda_1\right)^{-1}G_1$
are in $\text{ker} F + \Omega U$, 
concluding the proof of the theorem.
\qed
\end{proof}

\begin{remark}
According to Theorem \ref{zeros_finite_kernel_thm} under the
assumptions of the observability of the pair $(C, A)$
there exists a pair $\left(H_{fzk}, \Lambda_{fzk}\right)$
such that the columns of 
$H_{fzk}\left(zI-\Lambda_{fzk}\right)^{-1}$ generate
a basis in $F^{-1}(\Omega Y) + \Omega U$. Now, if
$\Pi_{fzk}$ is given by equation (\ref{matrix_zero_eq_fzk}),
then the obvious inclusion 
$\cC^{*}\left(\Sigma\right) \subset < A\mid B >$
and Corollary \ref{cor:Im_pi} imply that
\[
\cC^{*}\left(\Sigma\right) 
\cap \cV^{*}\left(\Sigma\right) = 
\cC^{*}\left(\Sigma\right) 
\cap \Im \left(\Pi_{fzk}\right)\;.
\]
\end{remark}

According a theorem proven by 
Wyman and Sain \cite{WYM-SAI-83}
in the space $\cW (\ker F)$ the equivalence 
classes (modulo $\ \ker F 
\cap z^{-1}\Omega_\infty U\ $)
of functions
\[
\pi_- \left(z^{-l} v_j\right), 
\ j=1, \dots , \nu_j,\quad
j= 1, \dots , m
\]
form a basis, where the 
$q$-tuples $v_1, \dots , v_m$ 
define a minimal polynomial basis in $\ker F$
and
$\nu_j = \deg v_j$, $j=1, \dots , m$.

\medskip
Now we are going to characterize these 
functions in terms of special solutions
of equation (\ref{matrix_zero_eq2}).

\begin{theorem}\label{thm:char_ker2}
Let $\left(C, A\right)$ be an observable pair.
Assume that the columns of the function
$H_{fzk}\left(zI-\Lambda_{fzk}\right)^{-1}$
provide a basis in 
$Z(F)\oplus \cW\left(\text{ker} F\right)$.
Let $\Pi_{fzk}$ be the corresponding solution of
(\ref{matrix_zero_eq_fzk}). 

Consider now a maximal solution -- in terms
of $\alpha_0$ and $R_0$ -- of the equation
\begin{equation}\label{matrix_kernel_eq}
\left[\begin{array}{cc}
A & B \\ C & D 
\end{array}
\right]
\left[
\begin{array}{c}
0 \\ R_0
\end{array}
\right] =
\left[\begin{array}{c}
\Pi_{fzk} \alpha_0 \\ 0 
\end{array}
\right]
\end{equation}
(the maximality is meant in
the subspace inclusion sense for 
$\Im \ R_0$).
Then the equivalence classes in 
$\cW \left(\ker F \right)$
are determined by the functions
\[
H_{fzk}\left(zI-\Lambda_{fzk}\right)^{-1} \beta
\]
where $\beta$ is any vector in
\[
\left< \Lambda_{fzk} \, \mid \, \alpha_0
\right> = 
\Im \left(
\left[
\alpha_0, \Lambda_{fzk} \alpha_0, 
\Lambda_{fzk}^2 \alpha_0, \dots 
\right]
\right) \;.
\]
\end{theorem}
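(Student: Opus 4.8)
The plan is to reduce the assertion to a single geometric identity for the maximal output-nulling reachability subspace, namely
\[
\cR^{*}\left(\Sigma\right) = \Pi_{fzk}\left<\Lambda_{fzk}\mid\alpha_0\right>\,,
\]
and then to translate this identity back into the language of functions via Theorem \ref{thm:char_ker}.

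First I would record the translation step. Since $H_{fzk}\left(zI-\Lambda_{fzk}\right)^{-1}$ is strictly proper, a combination $H_{fzk}\left(zI-\Lambda_{fzk}\right)^{-1}\beta$ represents an element of $\cW\left(\ker F\right)$ precisely when it lies in $\ker F + \Omega U$. The argument of Theorem \ref{thm:char_ker} shows this happens if and only if $\Pi_{fzk}\beta\in\cR^{*}\left(\Sigma\right)$; the controllability hypothesis there is invoked only to manufacture the matrix $\Pi$ through Theorem \ref{zeros_description2}, which is superfluous here because $\Pi_{fzk}$ is already given, so the equivalence is available for every column $\beta$. As the columns of $H_{fzk}\left(zI-\Lambda_{fzk}\right)^{-1}$ form a basis of $Z(F)\oplus\cW\left(\ker F\right)$, the map $\beta\mapsto H_{fzk}\left(zI-\Lambda_{fzk}\right)^{-1}\beta$ is an isomorphism onto that space, under which $\cW\left(\ker F\right)$ pulls back to $\left\{\beta : \Pi_{fzk}\beta\in\cR^{*}\left(\Sigma\right)\right\}$. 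Because $\ker\Pi_{fzk}=\left\{0\right\}$, the geometric identity above then identifies this with $\left<\Lambda_{fzk}\mid\alpha_0\right>$, which is exactly the claim.

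Next I would prove the geometric identity. Using $\cC^{*}\left(\Sigma\right)\subset\left<A\mid B\right>$ together with (\ref{pi_nu_fzk}) one has $\cR^{*}\left(\Sigma\right)=\cC^{*}\left(\Sigma\right)\cap\cV^{*}\left(\Sigma\right)=\cC^{*}\left(\Sigma\right)\cap\Im\Pi_{fzk}$, so it suffices to show $\cC^{*}\left(\Sigma\right)\cap\Im\Pi_{fzk}=\Pi_{fzk}\left<\Lambda_{fzk}\mid\alpha_0\right>$. I would run the recursion behind (\ref{reachable_set}): put $R_0=\left\{0\right\}$ and $R_{k+1}=\left\{Ax+Bu : x\in R_k,\ Cx+Du=0\right\}$, so that the $R_k$ are nested with union $\cC^{*}\left(\Sigma\right)$, and match it against $W_k=\Im\alpha_0+\Lambda_{fzk}\Im\alpha_0+\dots+\Lambda_{fzk}^{k-1}\Im\alpha_0$. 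The claim to prove by induction is $R_k\cap\Im\Pi_{fzk}=\Pi_{fzk}W_k$. The base case $R_1\cap\Im\Pi_{fzk}=B\left(\ker D\right)\cap\Im\Pi_{fzk}=\Im\left(\Pi_{fzk}\alpha_0\right)$ is precisely the maximality of $\Im R_0$ in (\ref{matrix_kernel_eq}), rewritten through $BR_0=\Pi_{fzk}\alpha_0$. For $\Pi_{fzk}W_{k+1}\subset R_{k+1}\cap\Im\Pi_{fzk}$ I would feed $u=H_{fzk}w$ into the recursion: the bottom row of (\ref{matrix_zero_eq_fzk}) gives $C\Pi_{fzk}w+DH_{fzk}w=0$ and the top row gives $\Pi_{fzk}\Lambda_{fzk}w=A\Pi_{fzk}w+BH_{fzk}w$, so $\Pi_{fzk}w\in R_k$ forces $\Pi_{fzk}\Lambda_{fzk}w\in R_{k+1}$.

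The reverse inclusion $R_{k+1}\cap\Im\Pi_{fzk}\subset\Pi_{fzk}W_{k+1}$ is the heart of the matter and the step I expect to be the main obstacle. Given $v=Ax+Bu\in\Im\Pi_{fzk}$ with $x\in R_k$ and $Cx+Du=0$, the difficulty is that the predecessor $x$ need not a priori lie in $\Im\Pi_{fzk}$, so the inductive hypothesis cannot be applied to it directly. Here I would invoke the Proposition following Lemma \ref{lem:max_pi}: since $v\in\Im\Pi_{fzk}\subset\cV^{*}\left(\Sigma\right)=\Im\Pi_\text{max}$ and $\left(x,u\right)$ is a zero-output transition into $v$, that Proposition forces $x\in\cV^{*}\left(\Sigma\right)$; combined with $x\in R_k\subset\cC^{*}\left(\Sigma\right)\subset\left<A\mid B\right>$ this yields $x\in\cV^{*}\left(\Sigma\right)\cap\left<A\mid B\right>=\Im\Pi_{fzk}$, so $x=\Pi_{fzk}\gamma$ with $\gamma\in W_k$ by induction. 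Writing $u=H_{fzk}\gamma+u'$, the relation $C\Pi_{fzk}\gamma+DH_{fzk}\gamma=0$ forces $Du'=0$, while with $v=\Pi_{fzk}\beta$ one gets $Bu'=v-\Pi_{fzk}\Lambda_{fzk}\gamma=\Pi_{fzk}\left(\beta-\Lambda_{fzk}\gamma\right)\in\Im\Pi_{fzk}$. Maximality of $\Im R_0$ then places $u'\in\Im R_0$, whence $Bu'\in\Im\left(\Pi_{fzk}\alpha_0\right)$ and, by injectivity of $\Pi_{fzk}$, $\beta-\Lambda_{fzk}\gamma\in\Im\alpha_0$. Thus $\beta\in\Lambda_{fzk}W_k+\Im\alpha_0=W_{k+1}$, completing the induction; letting $k$ grow past the state dimension turns $R_k\cap\Im\Pi_{fzk}=\Pi_{fzk}W_k$ into the desired identity $\cC^{*}\left(\Sigma\right)\cap\Im\Pi_{fzk}=\Pi_{fzk}\left<\Lambda_{fzk}\mid\alpha_0\right>$, and hence, through the translation of the first paragraph, the theorem.
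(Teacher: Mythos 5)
Your proof is correct, but it follows a genuinely different route from the paper's. The paper argues function-theoretically: it takes a Forney minimal polynomial basis $v_1,\dots,v_m$ of $\ker F$, forms $R(z)=\left[z^{-\nu_1}v_1,\dots,z^{-\nu_m}v_m\right]$, invokes the Wyman--Sain result that the classes of $\pi_-\left(z^{-l}v_j\right)$ span $\cW(\ker F)$, and then uses Theorem \ref{Forney_thm} to build matrices $\alpha_r$ with $H_{fzk}\left(zI-\Lambda_{fzk}\right)^{-1}\alpha_r-\pi_-\left(z^rR(z)\right)\in\ker F$, showing recursively that $\Im\left(\alpha_r\right)\subset\Im\left(\left[\alpha_0,\Lambda_{fzk}\alpha_0,\dots,\Lambda_{fzk}^r\alpha_0\right]\right)$; the pair $(R_0,\alpha_0)$ and its maximality are extracted from $R(z)$ only at the end. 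You bypass Forney theory entirely: you reduce the claim, via the argument of Theorem \ref{thm:char_ker} (correctly noting that controllability of $(\Lambda_1,G_1)$ there serves only to manufacture $\Pi_1$, which is already at hand -- and indeed, by observability the solution $\Pi$ of (\ref{matrix_zero_eq_fzk}) for fixed $(H_{fzk},\Lambda_{fzk})$ is unique), to the geometric identity $\cR^{*}\left(\Sigma\right)=\Pi_{fzk}\left<\Lambda_{fzk}\mid\alpha_0\right>$, i.e.\ to Corollary \ref{cor:nu_r}, and prove that by induction along the recursion generating $\cC^{*}\left(\Sigma\right)$. The delicate step -- that a zero-output predecessor of a state in $\Im\Pi_{fzk}$ lies again in $\Im\Pi_{fzk}$ -- is handled correctly by combining the unnumbered Proposition after Lemma \ref{lem:max_pi} with $x\in\cC^{*}\left(\Sigma\right)\subset\left<A\mid B\right>$ and (\ref{pi_nu_fzk}), and the maximality of $\Im R_0$ enters exactly where it must (base case, and placing $u'\in\Im R_0$). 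What each route buys: yours is shorter, stays wholly inside the state-space framework, and makes Corollary \ref{cor:nu_r} the central lemma rather than a consequence; the paper's constructive route additionally establishes the correspondence between $\Lambda_{fzk}^r\alpha_0$ and $\pi_-\left(z^rR(z)\right)$, which is what underwrites the subsequent Remark identifying the minimal indices of $\ker F$ with the controllability indices of $\left(\Lambda_{fzk},\alpha_0\right)$ -- your argument does not yield that refinement. One cosmetic caution: rename your recursion subspaces (you set ``$R_0=\{0\}$, $R_{k+1}=\dots$''), since $R_0$ already denotes the matrix of (\ref{matrix_kernel_eq}) whose maximality you invoke in the same sentences.
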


\medskip
\begin{remark}
Let us note that equation (\ref{matrix_kernel_eq})
in this theorem coincides to the equation
(\ref{nonunique_eq}) describing the non-uniqueness
of the solutions $H, \Lambda$ of (\ref{matrix_zero_eq2})
for a fixed matrix $\Pi$.
\end{remark}

\medskip
\begin{proof}
First we are going to show that the minimal polynomial basis
$v_1, \dots , v_m$ in $\ker F$ generates a
solution of (\ref{matrix_kernel_eq}). 
Set $l_0 = \max_{j=1, \dots , m} \nu_j$. Denote by
\[
R(z) 
= \left[ z^{-\nu_1} v_1, \dots , z^{-\nu_m} v_m\right]
= R_0 + R_1 z^{-1} + \dots + R_l z^{-l}
\;.
\]
Note that for any rational function $g \in \ker F$ there exists a
rational function $h$ such that $g(z) = R(z) h(z)$, and on the other hand 
$R_0 = V_h$, the highest (column) degree
coefficients matrix of the matrix-polinom
$\left[ v_1, \dots , v_m\right]$. Theorem \ref{Forney_thm} (iv)
implies that
it is of maximal column rank.

We claim that
if for a strictly proper rational $q$-tuple
$g$
there exists a polynomial $\psi$ of degree no greater
than $r$ such that 
\[
g+\psi \in \ker F
\] 
then
$\psi$ can be written as a linear combination
of the columns of $\pi_+\left(z^{s} R(z)\right)$, 
$s=0, 1, \dots , r$

In fact, as we have pointed out the elements
$\pi_- (z^{-l}v_j), l=1, \dots , \nu_j$,
$j=1, \dots , m$ form a basis in $\cW (\ker F )$.
In terms of the function $R(z)$ this implies
that the columns of 
$\pi_-\left(z^{s}R(z)\right)$, 
$s=0, 1, \dots ,l_0-1$ induce a generating system
in $\cW (\ker F )$. Thus 
\[
g(z) - \sum_{s=0}^{l_0-1} 
\pi_- \left(z^{s} R(z) c_s\right) 
\in \ker F \cap z^{-1}\Omega_\infty U\;,
\]
for some coefficients $c_s, s=0, \dots , l_0-1$.
Denote by $h(z) = \sum_{s=0}^{l_0-1} z^s c_s$.
Then
\[
\pi_- \left( R(z) h(z)\right)
+ \psi (z) \in \ker F\;.
\]
Since the degree of $R(z)$ in $z^{-1}$ is no greater than $l_0$, 
consequently
$z^{l_0} \left[
\pi_- \left( R(z) h(z)\right)
+ \psi (z)
\right]$ 
is a polynomial in $\ker F$.

According to Theorem \ref{Forney_thm} (ii)
there exist polynomials 
$\phi_1, \phi_2, \dots , \phi_m$
such that
\[
\sum_{j=1}^m v_j \phi_j = 
z^{l_0} \left[
\pi_- \left( R(z) h(z)\right)
+ \psi (z)
\right]
\]
Now the degree of the right hand side is
$l_0+r$, consequently --using again
Theorem \ref{Forney_thm} -- 
\[
\deg \phi_j + \nu_j \leq l_0 + r\;.
\]
Now
\begin{eqnarray*}
\psi (z) &=& \pi_+
\left(
z^{-l_0} \sum_{j=1}^q v_j \phi_j
\right) =
\pi_+
\left( R(z)
\left[ z^{-(l_0-\nu_1)} \phi_1, \dots , 
z^{-(l_0-\nu_m)} \phi_m
\right]
\right) \\
&=&
\pi_+
\left( R(z)\ \pi_+
\left[ z^{-(l_0-\nu_1)} \phi_1, \dots , 
z^{-(l_0-\nu_m)} \phi_m
\right]
\right)\;.
\end{eqnarray*}
Since $\deg \pi_+ \left(z^{-(l_0-\nu_j)} \phi_j\right)
\leq r$
we have obtained that $\psi$ can be written
as linear combinations of the 
columns of $\pi_+ \left(z^s R(z)\right)$,
$s=0, \dots , r$, as claimed.

\medskip
Now denote by
\[
\gamma_0 + \gamma_1 z + \dots + \gamma_r z^r = 
\pi_+
\left[ z^{-(l_0-\nu_1)} \phi_1, \dots , 
z^{-(l_0-\nu_m)} \phi_m\;.
\right]
\]
Then obviously
\begin{equation}\label{strictly_proper_part}
g(z) - \pi_- 
\left( \sum_{j=0}^r z^jR(z)\gamma_j \right) 
\in \ker F\;,
\end{equation}
as well.

\bigskip

Now for any $r = 0, 1, \dots , l_0-1$ the 
columns of
$z^r R(z)$ are in $\ker F$, thus 
\[
\pi_- \left(z^r R(z)\right) \in \ker F + \Omega U 
\subset F^{-1} (\Omega Y) + \Omega U\;.
\]
Theorem \ref{zeros_finite_kernel_thm} (i)
implies that there exist a matrix 
$\alpha_r$ such that
\begin{equation}\label{r_kernel}
H_{fzk}\left(zI-\Lambda_{fzk}\right)^{-1}\alpha_r
- 
\pi_-\left(z^r R(z) \right) \in \ker F\;.
\end{equation}

We are going to show that 
the subspace
$\Im \left( [\alpha_0, \Lambda_{fzk}\alpha_0, \dots , \Lambda_{fzk}^r\alpha_0 ]\right) $
contains the column-vectors of $\alpha_r$.
Adding the function $z^r R(z)$ we get that
\begin{equation}\label{K_r_kernel}
H_{fzk}\left(zI-\Lambda_{fzk}\right)^{-1}\alpha_r + 
R_0 z^r + R_1 z^{r-1} + \dots + R_r \ \in \ 
\ker F\;.
\end{equation}
On the other hand
\begin{multline*}
z^r \pi_-\left(R(z)\right) - 
z^r H_{fzk} \left(zI-\Lambda_{fzk}\right)^{-1}\alpha_0 \\
= z^r R(z) -  R_0 z^r - 
\left(H_{fzk}\left(zI-\Lambda_{fzk}\right)^{-1}
\Lambda_{fzk}^r \alpha_0 + H_{fzk}\Lambda_{fzk}^{r-1}\alpha_0
+ \dots +
H_{fzk}\alpha_0 z^{r-1}\right)  
\in \ker F
\end{multline*}
Taking the difference
\[
H_{fzk}\left(zI-\Lambda_{fzk}\right)^{-1}
\left(\alpha_r-\Lambda_{fzk}^r \alpha_0\right)
+\left(R_r-H_{fzk}\Lambda_{fzk}^{r-1}\alpha_0\right) + 
\dots + 
\left(R_1-H_{fzk}\alpha_0\right) z^{r-1} \in \ker F\;.
\]
In other words by adding a
polynomial of degree no greater than $r-1$
to the strictly proper rational function
$H_{fzk}\left(zI-\Lambda_{fzk}\right)^{-1}
\left(\alpha_r-\Lambda_{fzk}^r\alpha_0\right)$
a function in $\ker F$ is obtained.
Consequently, according to the previous argument
for some vectors 
$c_0, c_1, \dots , c_{r-1}$
\[
H_{fzk}\left(zI-\Lambda_{fzk}\right)^{-1}
\left(\alpha_r-\Lambda_{fzk}^r \alpha_0\right)
- 
\pi_- 
\left( \sum_{j=0}^{r-1} z^jR(z)c_j \right) \in \ker F\;.
\] 
Equation (\ref{r_kernel}) implies that
\[
H_{fzk}\left(zI-\Lambda_{fzk}\right)^{-1}
\left(\alpha_r-\Lambda_{fzk}^r \alpha_0\right)
-
H_{fzk}\left(zI-\Lambda_{fzk}\right)^{-1} \sum_{j=0}^{r-1} \alpha_j c_j
\in \ker F\;.
\]
Due to the fact that the columns of 
$H_{fzk}\left(zI-\Lambda_{fzk}\right)^{-1}$ generate a basis
in the equivalence classes defined
modulo $\ker F \cap z
^{-1}\Omega_\infty U$ we get
that
\[
\alpha_r-\Lambda_{fzk}^r \alpha_0 = 
\sum_{j=0}^{r-1} \alpha_j c_j\;.
\]
Applying this recursively the inclusion
\[
\Im \left(\alpha_r\right) \subset
\Im \left(
\left[\alpha_0, \Lambda_{fzk} \alpha_0, \dots , \Lambda_{fzk}^r\alpha_0
\right]
\right)
\]
can be derived.

Let us remark that the following 
converse statement obviously holds. If for some
vector $\beta$ the identity
\[
\beta = \sum_{j=0}^r \Lambda_{fzk}^j \alpha_0 c_j
\]
holds, then 
\[
H_{fzk}\left(zI-\Lambda_{fzk}\right)^{-1}\beta
-
\pi_- \left(\sum_{j=0}^r z^j R(z) c_j\right)
\in \ker F\;.
\]

It remains to characterize the matrix $\alpha_0$.
Let us recall that the columns of the proper rational function
$H_{fzk}\left(zI-\Lambda_{fzk}\right)^{-1}\alpha_0 
+ R_0$ are in  $\ker F$.
On the other hand -- using equation
(\ref{matrix_zero_eq2})
\begin{multline*}
\left(D+C\left(zI-A\right)^{-1}B\right)
\left(H_{fzk}\left(zI-\Lambda_{fzk}\right)^{-1}\alpha_0 + R_0\right)
\\
= 
DR_0 + C\left(zI-A\right)^{-1}
\left[ BR_0 - \Pi_{fzk}\alpha_0\right]\;,
\end{multline*}
implying that
\begin{eqnarray*}
DR_0 &=& 0 \\
BR_0 - \Pi_{fzk} \alpha_0 &=& 0\;
\end{eqnarray*}
proving that (\ref{matrix_kernel_eq}) holds for the matrices $R_0, \alpha_0$ where 
$R_0=V_h$ and $\alpha_0$ is obtained as the solution of
$H_{fzk} \left(zI - \Lambda_{fzk}\right)^{-1} \alpha_0 - \pi_- \left(R(z)\right)
\in \ker (F)$ (or as  
$R_0 + H_{fzk} \left(zI - \Lambda_{fzk}\right)^{-1} \alpha_0 \in \ker (F)$).

To prove the maximality of $\Im R_0$ observe that, 
if for some vectors $\beta$ and
$\gamma_0$ the equations
\[
D\gamma_0 = 0 \;,\quad B\gamma_0 - \Pi_{fzk} \beta =0
\]
hold then the rational function 
$H_{fzk}\left(zI-\Lambda_{fzk}\right)^{-1}\beta + \gamma_0$
is in $\ker F$,  thus the previous
argument applied for $r=0$ and $g(z) = H_{fzk}\left(zI-\Lambda_{fzk}\right)^{-1}\beta$
gives that
\[
H_{fzk}\left(zI-\Lambda_{fzk}\right)^{-1}\beta
-
\pi_-\left(R(z) c_0\right) \in \ker F
\]
holds, implying that
\[
\beta = \alpha_0 c_0\;.
\] 
(The case $\beta = 0$ corresponds 
to the situation when the constant
vector $\gamma_0$ is in the 
kernel of $F$. Note that 
$\pi_- \left(\gamma_0\right) = 0$, 
consequently in this case the corresponding
equivalence class in 
$\cW\left(\ker F\right)$ is zero.)

Thus -- fixing $\Pi_{fzk}$ -- a maximal solution of
\[
\left[\begin{array}{cc}
A & B \\ C & D 
\end{array}
\right]
\left[
\begin{array}{c}
0 \\ R_0
\end{array}
\right] =
\left[\begin{array}{c}
\Pi_{fzk} \alpha_0 \\ 0 
\end{array}
\right]
\]
should be considered (the maximality is meant in
the subspace inclusion sense for 
$\Im \left(R_0\right)$) and for any vector $\beta$ in
\[
\Im \left(
\left[
\alpha_0, \Lambda_{fzk} \alpha_0, 
\Lambda_{fzk}^2 \alpha_0, \dots 
\right]
\right) 
\]
the strictly proper rational function
\[
H_{fzk}\left(zI-\Lambda_{fzk}\right)^{-1} \beta
\]
generates an equivalence class in
$\cW \left(\ker F \right)$.
\qed
\end{proof}

\medskip
\begin{remark}
Note that an immediate consequence of 
the previous theorem that the minimal indices
$\nu_1, \dots , \nu_m$ of the minimal polynomial basis
in $\ker F$ coincide with the controllability indices
of the pair $\left(\Lambda_{fzk}, \alpha_0\right)$.

This result should be considered in parallel to 
Theorem 5 in \cite{AL-SC-84} where the minimal indices
corresponding to a minimal polynomial basis in $\ker F$
are also identified with the controllability
indices of a pair of suitably chosen 
matrices. There these matrices
are obtained using a feedback transformation. 
In addition to these Corollary 3 of the same paper
shows that these minimal indices corresponding to
the $\ker F$ are invariant under feedback transformation
and output injection, as well. (I.e. for the systems
$(A, B, C, D)$, $(A+BL, B, C+DL, D)$ and 
$(A+LC, B+LD, C, D)$ these minimal indices coincide.
\end{remark}

\medskip
\begin{corollary}\label{cor:nu_r}
Let $\left(C, A\right)$ be an 
observable pair. Assume that the columns of the
function $H_{fzk}\left(zI-\Lambda_{fzk}\right)^{-1}$
provide a basis in $Z(F) \oplus \cW (\ker F )$.
Let $\Pi_{fzk}$ be the corresponding solution
of (\ref{matrix_zero_eq_fzk}).
 
Then
\[
\cC^{*}\left(\Sigma\right) 
\cap \Im \left(\Pi_{fzk}\right) = 
\Pi_{fzk} \left< \Lambda_{fzk} 
\,\mid\, \alpha_0\right>\;.
\]
\end{corollary}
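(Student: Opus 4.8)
The plan is to set up a linear correspondence between vectors $\beta$ and the strictly proper functions $H_{fzk}\left(zI-\Lambda_{fzk}\right)^{-1}\beta$, and to translate membership in $\cC^{*}\left(\Sigma\right)$ into a statement about these functions lying in $\ker F + \Omega U$. The central fact I would establish first is the equivalence
\[
H_{fzk}\left(zI-\Lambda_{fzk}\right)^{-1}\beta \in \ker F + \Omega U
\quad\Longleftrightarrow\quad
\Pi_{fzk}\beta \in \cC^{*}\left(\Sigma\right)\,.
\]
Since $\Pi_{fzk}, H_{fzk}, \Lambda_{fzk}$ already solve (\ref{matrix_zero_eq_fzk}), equation (\ref{matrix_product_eq}) gives $F(z)H_{fzk}\left(zI-\Lambda_{fzk}\right)^{-1} = -C\left(zI-A\right)^{-1}\Pi_{fzk}$, and both directions of the argument used in the proof of Theorem \ref{thm:char_ker} apply with $\Pi_1 = \Pi_{fzk}$ and $G_1 = \beta$. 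The only place that proof invokes the controllability of $\left(\Lambda_1, G_1\right)$ is to produce the solution $\Pi_1$ via Theorem \ref{zeros_description2}; here $\Pi_{fzk}$ is given outright, so controllability of the (generally non-cyclic) pair $\left(\Lambda_{fzk}, \beta\right)$ is not needed. Finally, by (\ref{pi_nu_fzk}) we have $\Im \Pi_{fzk} \subset \cV^{*}\left(\Sigma\right)$, so $\Pi_{fzk}\beta$ lies automatically in $\cV^{*}\left(\Sigma\right)$ and the condition $\Pi_{fzk}\beta \in \cC^{*}\left(\Sigma\right)\cap\cV^{*}\left(\Sigma\right)$ of Theorem \ref{thm:char_ker} reduces to $\Pi_{fzk}\beta \in \cC^{*}\left(\Sigma\right)$.

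The second ingredient is the injectivity, modulo $\ker F \cap z^{-1}\Omega_\infty U$, of the map $\beta \mapsto H_{fzk}\left(zI-\Lambda_{fzk}\right)^{-1}\beta$. Indeed, if this strictly proper function lies in $\ker F \cap z^{-1}\Omega_\infty U$, then $F(z)H_{fzk}\left(zI-\Lambda_{fzk}\right)^{-1}\beta = 0$; by (\ref{matrix_product_eq}) this equals $-C\left(zI-A\right)^{-1}\Pi_{fzk}\beta$, so the observability of $\left(C, A\right)$ forces $\Pi_{fzk}\beta = 0$, and since $\ker \Pi_{fzk} = \left\{0\right\}$ by part (i) of Theorem \ref{zeros_finite_kernel_thm} we get $\beta = 0$. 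With these two facts I would prove the two inclusions. For ``$\supseteq$'': any $\beta \in \left< \Lambda_{fzk} \mid \alpha_0 \right>$ yields, by Theorem \ref{thm:char_ker2}, a representative $H_{fzk}\left(zI-\Lambda_{fzk}\right)^{-1}\beta$ of a class in $\cW\left(\ker F\right)$, hence a function in $\ker F + \Omega U$, so the equivalence above gives $\Pi_{fzk}\beta \in \cC^{*}\left(\Sigma\right)$, whence $\Pi_{fzk}\beta \in \cC^{*}\left(\Sigma\right)\cap \Im \Pi_{fzk}$. For ``$\subseteq$'': given $v \in \cC^{*}\left(\Sigma\right)\cap\Im \Pi_{fzk}$, write $v = \Pi_{fzk}\beta$; the equivalence shows $H_{fzk}\left(zI-\Lambda_{fzk}\right)^{-1}\beta$ represents a class in $\cW\left(\ker F\right)$, which by Theorem \ref{thm:char_ker2} is also represented by some $H_{fzk}\left(zI-\Lambda_{fzk}\right)^{-1}\beta'$ with $\beta' \in \left< \Lambda_{fzk} \mid \alpha_0 \right>$; the injectivity just established then forces $\beta = \beta'$, so $v \in \Pi_{fzk}\left< \Lambda_{fzk} \mid \alpha_0 \right>$.

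The step I expect to be the main obstacle is the bookkeeping in the ``$\subseteq$'' inclusion: one must verify that a strictly proper function in $\ker F + \Omega U$ genuinely represents an element of $\cW\left(\ker F\right) = \pi_-\left(\ker F\right)/\left(\ker F \cap z^{-1}\Omega_\infty U\right)$, which uses the elementary observation that a strictly proper $g$ lies in $\pi_-\left(\ker F\right)$ exactly when $g \in \ker F + \Omega U$, and then that Theorem \ref{thm:char_ker2} supplies a representative of that same class with $\beta'$ in $\left< \Lambda_{fzk} \mid \alpha_0 \right>$. The injectivity is what upgrades ``same class'' to the vector identity $\beta = \beta'$; without it the argument would only locate $\Pi_{fzk}\beta$ up to the image of functions in $\ker F \cap z^{-1}\Omega_\infty U$, and it is precisely the triviality of $\ker \Pi_{fzk}$ that removes this ambiguity.
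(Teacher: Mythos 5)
Your proof is correct and follows essentially the route the paper intends: Corollary \ref{cor:nu_r} is stated there as an immediate consequence of Theorems \ref{thm:char_ker} and \ref{thm:char_ker2} (together with the observation that $\cC^{*}\left(\Sigma\right)\cap\cV^{*}\left(\Sigma\right)=\cC^{*}\left(\Sigma\right)\cap\Im\left(\Pi_{fzk}\right)$), and your argument is exactly that combination written out in full. The two points you make explicit --- that the controllability hypothesis in Theorem \ref{thm:char_ker} serves only to manufacture the matrix $\Pi_1$ and is therefore dispensable once $\Pi_{fzk}$ is given, and that $\ker \Pi_{fzk}=\left\{0\right\}$ upgrades equality of classes modulo $\ker F\cap z^{-1}\Omega_\infty U$ to equality of the coefficient vectors --- are precisely the details the paper leaves implicit.
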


\medskip
It is worth pointing out that the following 
statement which was already present in 
\cite{AND-75} is also an immediate corollary
of the previous theorem. 
\begin{corollary}\label{cor:ker_triv}
Let $\left(C, A\right)$ be an
observable pair. Assume that the columns of the
function $H_{fzk}\left(zI-\Lambda_{fzk}\right)^{-1}$
provide a basis in $Z(F) \oplus \cW (\ker F )$.
Let $\Pi_{fzk}$ be the corresponding solution
of (\ref{matrix_zero_eq_fzk}).

Then the subspace $\cW (\ker F )$ 
is trivial if and only if
\[
\Im (\Pi_{fzk}) \cap 
\left\{
B \eta \ \mid \ D\eta = 0 
\right\} = \{ 0 \}.
\]
\end{corollary}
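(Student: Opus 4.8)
The plan is to trace the triviality of $\cW\left(\ker F\right)$ backwards through Theorem \ref{thm:char_ker2} to a condition on $\alpha_0$, then to a condition on $BR_0$, and finally to the stated intersection condition. First I would record the consequence of Theorem \ref{thm:char_ker2}: every equivalence class in $\cW\left(\ker F\right)$ is represented by a function $H_{fzk}\left(zI-\Lambda_{fzk}\right)^{-1}\beta$ with $\beta \in \left<\Lambda_{fzk} \mid \alpha_0\right>$. Since the columns of $H_{fzk}\left(zI-\Lambda_{fzk}\right)^{-1}$ form a basis in $Z(F)\oplus\cW\left(\ker F\right)$ — in particular they are independent modulo $\ker F \cap z^{-1}\Omega_\infty U$ — the assignment $\beta \mapsto H_{fzk}\left(zI-\Lambda_{fzk}\right)^{-1}\beta$ sends nonzero $\beta$ to a nonzero class. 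Hence $\cW\left(\ker F\right)$ is trivial precisely when $\left<\Lambda_{fzk} \mid \alpha_0\right> = \{0\}$, and because this subspace contains $\Im\left(\alpha_0\right)$ and is generated by the iterates $\Lambda_{fzk}^j\alpha_0$, that happens if and only if $\alpha_0 = 0$.

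Next I would convert the condition $\alpha_0 = 0$ into a statement about $B$ and $D$. Equation (\ref{matrix_kernel_eq}) reads $\Pi_{fzk}\alpha_0 = BR_0$ together with $DR_0 = 0$. Because $\ker\Pi_{fzk} = \{0\}$ by Theorem \ref{zeros_finite_kernel_thm}(i), we have $\alpha_0 = 0 \iff \Pi_{fzk}\alpha_0 = 0 \iff BR_0 = 0$. Thus $\cW\left(\ker F\right)$ is trivial if and only if $BR_0 = 0$ for the maximal solution $R_0$.

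Finally I would identify the maximal $\Im R_0$ explicitly. The constraints on a column $\eta$ admitted into $R_0$ are exactly $D\eta = 0$ together with $B\eta \in \Im\Pi_{fzk}$ (the latter being precisely what allows a matching column of $\alpha_0$, uniquely, as $\ker\Pi_{fzk}=\{0\}$), so the maximal range is $\Im R_0 = \ker D \cap B^{-1}\!\left(\Im\Pi_{fzk}\right)$. A direct double inclusion then yields
\[
B\left(\Im R_0\right) = \Im\Pi_{fzk} \cap B(\ker D) = \Im\left(\Pi_{fzk}\right)\cap\left\{B\eta \mid D\eta = 0\right\},
\]
so that $BR_0 = 0$ is equivalent to the triviality of this intersection. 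Chaining the three reductions gives the corollary.

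The step requiring the most care is the injectivity claim underlying the first paragraph: one must genuinely exploit that the columns of $H_{fzk}\left(zI-\Lambda_{fzk}\right)^{-1}$ form a basis in $Z(F)\oplus\cW\left(\ker F\right)$, and that Theorem \ref{thm:char_ker2} places the classes indexed by $\beta\in\left<\Lambda_{fzk}\mid\alpha_0\right>$ inside the $\cW\left(\ker F\right)$ summand, so that triviality of the module is faithfully detected by $\alpha_0 = 0$ and not merely implied by it. By comparison the remaining two reductions are routine once $\ker\Pi_{fzk}=\{0\}$ and the explicit form of the maximal $\Im R_0$ are in hand.
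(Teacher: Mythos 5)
Your proof is correct, and while it follows the same skeleton as the paper's argument (Theorem \ref{thm:char_ker2} applied to the maximal solution of equation (\ref{matrix_kernel_eq})), it is genuinely more careful in a way worth noting. The paper disposes of the corollary in one line, asserting that $\cW(\ker F)=\{0\}$ if and only if the \emph{only} solution of (\ref{matrix_kernel_eq}) is $R_0=0$, $\alpha_0=0$; taken literally this intermediate claim fails in the degenerate case $\ker B\cap\ker D\neq\{0\}$, i.e.\ when $\ker F$ contains a nonzero constant vector $\eta$ (then $R_0=\eta$, $\alpha_0=0$ is a nonzero solution even though such constants contribute only the zero class to $\cW(\ker F)$, since $\pi_-(\eta)=0$). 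Your chain avoids this: you key triviality of $\cW(\ker F)$ to $\alpha_0=0$ alone (using the basis property of the columns of $H_{fzk}\left(zI-\Lambda_{fzk}\right)^{-1}$ for injectivity, exactly as needed), pass to $BR_0=0$ via $\ker\Pi_{fzk}=\{0\}$ from Theorem \ref{zeros_finite_kernel_thm}(i), and then close the gap that the paper leaves implicit by identifying the maximal range as $\Im R_0=\ker D\cap B^{-1}\left(\Im\Pi_{fzk}\right)$, whence $B\left(\Im R_0\right)=\Im\left(\Pi_{fzk}\right)\cap\left\{B\eta \mid D\eta=0\right\}$. That last computation is precisely the bridge from the equation-based criterion to the subspace-intersection statement of the corollary, and it is what makes the statement come out true even in the degenerate case where the paper's one-line criterion does not. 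So: same route in outline, but your version is the one that actually withstands the edge case, and nothing in it needs repair.
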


\begin{proof}
This is immediate from the previous theorem
giving that $\cW (\ker F) = \{ 0 \}$
if and only if the only solution 
of (\ref{matrix_kernel_eq})
is $R_0 = 0$, $\alpha_0 = 0$.
\qed\end{proof}

In some cases the following form of this corollary
can be also of use which follows immediately from
the inclusion $\Im B \subset \left< A \mid B\right>$
and the identity $\Im \Pi_{fzk} =
\Im \Pi_{max} \cap \left< A \mid B\right>$.

\begin{corollary}\label{cor:ker_triv2}
Assume that $(C, A)$ is an observable pair. Consider
a maximal solution $\left(\Pi_\text{max}, 
H_\text{max}, \Lambda_\text{max}\right)$ of
equation (\ref{matrix_zero_eq_max}).

Then the subspace $\cW (\ker F )$ is trivial if and only if
\[
\cV^{*}\left(\Sigma\right) \cap 
\left\{
B \eta \ \mid \ D\eta = 0 
\right\} =
\Im (\Pi_\text{max}) \cap 
\left\{
B \eta \ \mid \ D\eta = 0 
\right\} = \{ 0 \}.
\]
\end{corollary}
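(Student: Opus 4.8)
The plan is to obtain this as a formal consequence of Corollary \ref{cor:ker_triv}, combined with the two facts highlighted just before the statement, so that beyond one set-theoretic simplification there is essentially nothing to prove. First I would record that Lemma \ref{lem:max_pi} gives $\cV^{*}\left(\Sigma\right)=\Im\left(\Pi_\text{max}\right)$; hence the middle and right-hand intersections in the displayed condition coincide as sets, and it suffices to characterize the triviality of $\cW\left(\ker F\right)$ through the single condition $\Im\left(\Pi_\text{max}\right)\cap\left\{B\eta \mid D\eta=0\right\}=\{0\}$.

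Next I would invoke Corollary \ref{cor:ker_triv}, applied to a solution $\Pi_{fzk}$ whose columns are attached to a basis of $Z(F)\oplus\cW\left(\ker F\right)$: it states that $\cW\left(\ker F\right)$ is trivial if and only if $\Im\left(\Pi_{fzk}\right)\cap\left\{B\eta \mid D\eta=0\right\}=\{0\}$. The link to $\Pi_\text{max}$ is the identity (\ref{pi_nu_fzk}), namely $\Im\Pi_{fzk}=\cV^{*}\left(\Sigma\right)\cap\left< A\mid B\right>=\Im\Pi_\text{max}\cap\left< A\mid B\right>$.

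The only genuine step is then to notice that $\left\{B\eta \mid D\eta=0\right\}\subset\Im B\subset\left< A\mid B\right>$, so that intersecting this set with $\left< A\mid B\right>$ leaves it unchanged. Therefore
\begin{align*}
\Im\left(\Pi_{fzk}\right)\cap\left\{B\eta \mid D\eta=0\right\}
&=\left(\Im\Pi_\text{max}\cap\left< A\mid B\right>\right)\cap\left\{B\eta \mid D\eta=0\right\}\\
&=\Im\Pi_\text{max}\cap\left\{B\eta \mid D\eta=0\right\},
\end{align*}
and the equivalence supplied by Corollary \ref{cor:ker_triv} transfers verbatim to the $\Pi_\text{max}$-formulation. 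I do not expect a real obstacle here: all the content sits in the inclusion $\left\{B\eta \mid D\eta=0\right\}\subset\left< A\mid B\right>$, and the only care needed is to confirm that (\ref{pi_nu_fzk}) together with Lemma \ref{lem:max_pi} legitimately replaces $\Pi_{fzk}$ by $\Pi_\text{max}$ inside this particular intersection, completing the argument.
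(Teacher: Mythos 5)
Your proposal is correct and follows exactly the paper's own (very terse) justification: the paper also derives this corollary from Corollary \ref{cor:ker_triv} via the inclusion $\Im B \subset \left< A \mid B\right>$, the identity $\Im \Pi_{fzk} = \Im \Pi_\text{max} \cap \left< A \mid B\right>$ from (\ref{pi_nu_fzk}), and Lemma \ref{lem:max_pi} giving $\cV^{*}\left(\Sigma\right) = \Im\left(\Pi_\text{max}\right)$. Nothing is missing; the set-theoretic simplification you isolate is precisely the content of the paper's one-line argument.
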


\begin{remark}\label{rem:left_inv}
Let us recall (see e.g. \cite{AL-SC-84}) that the function
$F$ is left-invertible if and only if both
$\cW(\ker F)$ and $\ker \left[\begin{array}{c}
B \\ D \end{array}\right]$ are trivial.
\end{remark}

The following corollary is also immediate from the 
previous theorem.

\begin{corollary}
Assume that $\left(C, A\right)$
is observable, $\left(A, B\right)$ is controllable. 
Then the subspace $Z (F)$ is trivial if and only if
the pair $(\Lambda_{fzk}, \alpha_0 )$ from equations
(\ref{matrix_zero_eq_fzk}), (\ref{matrix_kernel_eq})
is controllable.
\end{corollary}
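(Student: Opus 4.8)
The plan is to reduce the triviality of $Z(F)$ to a dimension count inside the state space of $\Lambda_{fzk}$, exploiting the basis property that the controllability hypothesis guarantees. First I would invoke Corollary \ref{cor:ZF_kerF}: under the observability of $(C,A)$ and the controllability of $(A,B)$, the pair $(H_{fzk},\Lambda_{fzk})$ together with the solution $\Pi_{fzk}$ of (\ref{matrix_zero_eq_fzk}) can be taken so that the columns of $H_{fzk}(zI-\Lambda_{fzk})^{-1}$ form a basis in $Z(F)\oplus\cW(\ker F)$. Consequently the linear map
\[
\beta \mapsto \left[H_{fzk}(zI-\Lambda_{fzk})^{-1}\beta\right],
\]
sending a state vector to the equivalence class (modulo $\ker F \cap z^{-1}\Omega_\infty U$) of the corresponding combination of columns, is an isomorphism from the state space of $\Lambda_{fzk}$ onto $Z(F)\oplus\cW(\ker F)$. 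In particular $\dim \Lambda_{fzk} = \dim Z(F) + \dim \cW(\ker F)$.

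Next I would identify the subspace that this isomorphism carries onto the summand $\cW(\ker F)$. This is precisely the content of Theorem \ref{thm:char_ker2}: the equivalence classes in $\cW(\ker F)$ are exactly those generated by $H_{fzk}(zI-\Lambda_{fzk})^{-1}\beta$ with $\beta \in \left<\Lambda_{fzk}\mid\alpha_0\right>$, where $\alpha_0$ comes from a maximal solution of (\ref{matrix_kernel_eq}). Hence the preimage of $\cW(\ker F)$ under the isomorphism is the minimal $\Lambda_{fzk}$-invariant subspace containing $\Im(\alpha_0)$, and therefore
\[
\dim \cW(\ker F) = \dim \left<\Lambda_{fzk}\mid\alpha_0\right>.
\]
Subtracting this from the previous identity yields
\[
\dim Z(F) = \dim \Lambda_{fzk} - \dim \left<\Lambda_{fzk}\mid\alpha_0\right>.
\]

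The conclusion is then immediate. The subspace $Z(F)$ is trivial if and only if $\dim Z(F)=0$, that is, if and only if $\left<\Lambda_{fzk}\mid\alpha_0\right>$ exhausts the whole state space of $\Lambda_{fzk}$. Since $\left<\Lambda_{fzk}\mid\alpha_0\right> = \Im[\alpha_0, \Lambda_{fzk}\alpha_0, \Lambda_{fzk}^2\alpha_0,\dots]$ is the reachability subspace of the pair $(\Lambda_{fzk},\alpha_0)$, this equality is exactly the assertion that $(\Lambda_{fzk},\alpha_0)$ is controllable, which finishes the argument.

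The only delicate point—and the step I expect to require the most care—is the claim that the correspondence between state vectors and equivalence classes is a genuine isomorphism carrying $\left<\Lambda_{fzk}\mid\alpha_0\right>$ \emph{onto} $\cW(\ker F)$. Injectivity and surjectivity of the full map are the two halves of the basis property supplied by Corollary \ref{cor:ZF_kerF}, while the fact that the reachability subspace is mapped exactly onto $\cW(\ker F)$, rather than merely into it, is the surjectivity assertion of Theorem \ref{thm:char_ker2}. Once these are in hand the remainder is purely a comparison of dimensions.
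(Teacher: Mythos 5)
Your proposal is correct and takes essentially the same route as the paper, which states this corollary as an immediate consequence of Theorem \ref{thm:char_ker2}. You simply spell out that immediacy: the basis property of the columns of $H_{fzk}\left(zI-\Lambda_{fzk}\right)^{-1}$ (Corollary \ref{cor:ZF_kerF}, via Theorem \ref{zeros_finite_kernel_thm}) identifies the whole state space of $\Lambda_{fzk}$ with $Z(F)\oplus\cW(\ker F)$, Theorem \ref{thm:char_ker2} identifies the subspace $\left<\Lambda_{fzk}\,\mid\,\alpha_0\right>$ with $\cW(\ker F)$, and hence $Z(F)=\left\{0\right\}$ exactly when that reachability subspace is everything, i.e.\ when $(\Lambda_{fzk},\alpha_0)$ is controllable.
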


\begin{remark}\label{rem:nonuniq_sol}
Let us emphasize that according to Corollary \ref{cor:Im_pi}
any pair $(H, \Lambda)$ can be used as a starting point in Theorem 
\ref{thm:char_ker2} for which the corresponding solution $\Pi$
of (\ref{matrix_zero_eq2}) satisfies that
$\ker \Pi = \left\{ 0 \right\}$ and $\Im \Pi = \cV^{*} (\Sigma ) \cap
 < A \ \mid \ B >$. For a fixed $\Pi$ with these properties the nonuniqueness 
of maximal solution solution of (\ref{matrix_kernel_eq}) is determined 
by a nonsingular matrix multiplyer from the right. Thus for a fixed $\Pi$ 
(with $\ker \Pi = \left\{ 0 \right\}$, $\Im \Pi = \cV^{*} (\Sigma ) \cap 
< A \ \mid \ B >$) all solution of (\ref{matrix_zero_eq_fzk}) and 
(\ref{matrix_kernel_eq}) can be described as $(H+R_0 \beta, \Lambda + \alpha_0 \beta )$,
and $(R_0 \gamma, \alpha_0 \gamma)$ where $\beta$ is an arbitrary matrix,
$\gamma$ is an arbitrary nonsingular matrix,
$(H, \Lambda)$ and $R_0, \alpha_0$ are particular solutions of these equations.

\end{remark}

\begin{remark}\label{rem:fzk_vs_max}
Furthermore, the identity
$\Im \Pi_{fzk} = \Im \Pi_{\text{max}} \cap 
< A \mid B >$ implies that $R_0$ and $\alpha_0$
can be determined starting with the matrix $\Pi_{max}$ instead of $\Pi_{fzk}$.
In fact, consider a maximal solution $(\Pi_{max}, H_{max}, \Lambda_{max})$ 
of (\ref{matrix_zero_eq2}) with $\ker\left(\Pi_{max}\right) = 
\left\{0 \right\}$ and using $\Pi_{max}$ consider a maximal solution 
$\widetilde{R_0}, \widetilde{\alpha_0}$ of
\begin{equation}\label{matrix_kernel_eq2}
\left[\begin{array}{cc} A & B \\ C & D
\end{array}\right]
\left[\begin{array}{c} 0 \\ \widetilde{R_0}
\end{array}\right]
=
\left[\begin{array}{c}
\Pi_\text{max} \widetilde{\alpha_0} \\ 0
\end{array}\right]\;
\end{equation}
assuming that $\ker \left(\widetilde{R_0}\right) = \left\{0\right\}\;,$
where the maximality is meant in the subspace inclusion sense
for $\Im (\Pi_{max})$ and $\Im (\widetilde {R_0})$.

Multiplying $\Pi_{max}$ from the right by a nonsingular matrix and $\widetilde{\alpha_0}$
from the right by its inverse we might assume that $\Pi_{max}$ has the following form
$\Pi_{max} = [ \Pi_{fzk}, \Pi^{'}]$. Partition $\widetilde {\alpha_0}$ accordingly:
$\widetilde{\alpha_0} = \left[\begin{array}{c} \alpha_1\\ \alpha_2\end{array}\right]$
Now 
\[
B \widetilde{R_0} = \Pi_{fzk} \alpha_1 + \Pi^{'} \alpha_2\,.
\]
The obvious inclusion $\ \Im B \subset < A \mid B >$  
gives that the columns of the matrix above should be in 
$\left\{< A \mid B > \cap\ \Im (\Pi_{max})\right\}=
\Im (\Pi_{fzk})$. Thus $\Pi^{'} \alpha_{2} = 0$. i.e. $\alpha_{2}=0$.
The equations $B \widetilde{R_0} = \Pi_{fzk} \alpha_1$, 
$D\widetilde{R_0} = 0$ and the maximality of the solution $\widetilde{R_0}, 
\widetilde{\alpha_0}$ implies that after a multiplication from the right by a 
nonsingular matrix we can achieve that 
\[
\widetilde{R_0} = R_0\,,\quad \alpha_1 = \alpha_0\,.
\] 

Moreover, applying the same nonsingular matrix multiplication from the right
to the equation (\ref{matrix_zero_eq_max}) we might again assume that
$\Pi_{max} = [ \Pi_{fzk}, \Pi^{'} ]$. Partition $H_{max}$ and $\Lambda_{max}$
accordingly. 
\[
H_{max} = [ H_1, H_2 ]\,, \quad 
\Lambda_{max} = \left[\begin{array}{cc} 
\Lambda_{11} & \Lambda_{12} \\
\Lambda_{21} & \Lambda_{22}
\end{array}\right]
\]
We obtain that
\[
A \Pi_{fzk} + B H_1 = \Pi_{fzk} \Lambda_{11} + \Pi^{'} \Lambda_{21}\;.
\]
But equation (\ref{matrix_zero_eq_fzk}) shows that the columns of
$A\Pi_{fzk}$ are in the subspace generated by $\Im B$ and $\Im \Pi_{fzk}$. 
Similar argument as before gives that $\Pi^{'}\Lambda_{21} = 0$, 
i.e. $\Lambda_{21} = 0$. 
It follows that $H_1, \Lambda_{11}$ provide a solution of (\ref{matrix_zero_eq_fzk}),
consequently they can be denoted by $H_{fzk}, \Lambda_{fzk}$.

Let us observe that 
\begin{multline}\label{K0_max_fzk}
R_0 + H_{max} (zI - \Lambda_{max})^{-1} \widetilde\alpha_0 = \\
R_0 + \left[ H_{fzk}, H_2 \right]
\left(z \left[\begin{array}{cc} I & 0 \\ 0 & I \end{array}\right] -
\left[\begin{array}{cc} \Lambda_{fzk} & \Lambda_{12} \\
0 & \Lambda_{22}\end{array}\right]\right)^{-1} 
\left[\begin{array}{c} \alpha_0 \\ 0 \end{array}\right] = \\
R_0 + H_{fzk}(zI - \Lambda_{fzk})^{-1} \alpha_0\;.
\end{multline}
\end{remark}

\bigskip
\begin{remark}\label{rem:K0}
Let us introduce the notation:
\[
K_0(z) = R_0 + H_{fzk}
\left(zI-\Lambda_{fzk}\right)^{-1}\alpha_0\;.
\]
As we have already pointed out the columns of this function
are in the kernel of $F$. Moreover, the
columns of $\pi_-\left(z^r K_0(z)\right)$, $r\geq 0$
generate $\cW(\ker F)$.

Note that the realization above of $K_0$ 
is -- in general -- non-minimal. Although the 
observability of the pair $(C, A)$ implies
that $\left(H_\text{fzk}, \Lambda_\text{fzk}\right)$
is observable, as well, the controllability of
$\left(\Lambda_\text{fzk}, \alpha_0\right)$
in general does not hold. 

Let us emphasize that the function $K_0$ is defined via fixing 
a particular solution of (\ref{matrix_zero_eq_fzk}) and
also of (\ref{matrix_kernel_eq}). As we have pointed out
in Remark \ref{rem:nonuniq_sol}
all solution can be obtained from these.
Let us observe that -- using the notations from Remark \ref{rem:nonuniq_sol}
\[
\left[
R_0 \gamma + (H_{fzk} + R_0\beta)
\left(zI - (\Lambda_{fzk} + \alpha_0 \beta)\right)^{-1} \alpha_0 \gamma 
\right]
\gamma^{-1} 
\left(I- \beta (zI - \Lambda_{fzk})^{-1}\alpha_0 \right)
=
K_0 (z) \;,
\]
where the function $\gamma^{-1} 
\left(I- \beta (zI - \Lambda_{fzk})^{-1}\alpha_0 \right)$ is proper
with proper inverse.
\end{remark}

\medskip
The following proposition explicitly shows 
how the columns of the function $K_0$ 
generate the kernel of $F$.

\begin{proposition}\label{prop:gen_K0}
Assume that for the proper rational
$q$-tuple $g$ the identity 
\[
F(z) g(z) = 0
\]
holds. Then there exists a proper rational function
$h(z)$ such that
\[
g(z) = K_0(z) h(z)\;,
\]
i.e. the columns of $K_0$ generate the kernel
of $F$ over the field of rational functions.
\end{proposition}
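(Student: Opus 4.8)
The plan is to reduce everything to the single structural fact that $K_0(\infty)=R_0$ has full column rank. First I would recall from Remark~\ref{rem:K0} that each column of $K_0$ already lies in $\ker F$, and that $R_0=V_h$ is the matrix of highest column-degree coefficients of the minimal polynomial basis $[v_1,\dots,v_m]$ of $\ker F$. By Theorem~\ref{Forney_thm}(iv) this $V_h=R_0$ has full column rank, where $m=\dim_{\CC(z)}\ker F$ is the number of its columns; and since $\left(zI-\Lambda_{fzk}\right)^{-1}\to 0$ as $z\to\infty$ we have $K_0(\infty)=R_0$. Thus $K_0$ is a $q\times m$ proper rational matrix whose value at infinity has full column rank and whose $m$ columns lie in the $m$-dimensional space $\ker F$.

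The first step is to show that the columns of $K_0$ are linearly independent over $\CC(z)$, so that---there being exactly $m$ of them, all in $\ker F$---they form a $\CC(z)$-basis of $\ker F$; this already gives the parenthetical generation statement. Suppose $K_0(z)c(z)=0$ for a nonzero rational vector $c$. Writing the leading behaviour of $c$ at infinity as $c(z)\sim c_\infty z^{d}$ with $c_\infty\neq 0$, the vector $z^{-d}c(z)$ is proper with $\lim_{z\to\infty}z^{-d}c(z)=c_\infty$, while $K_0(z)\,z^{-d}c(z)=0$ still holds. Letting $z\to\infty$ and using $K_0(\infty)=R_0$ yields $R_0c_\infty=0$, contradicting the full column rank of $R_0$. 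Hence the columns are independent, and every element of $\ker F$ is uniquely $K_0h$ for some rational $h$.

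The second and main step is properness: given a proper $g$ with $Fg=0$, the basis property produces the unique rational $h$ with $g=K_0h$, and I must show $h$ is proper. Here the same full-rank-at-infinity property does the work. If $h$ were not proper it would have a pole at infinity, say $h(z)\sim h_d z^{d}$ with $d\geq 1$ and $h_d\neq 0$. Then in $K_0h=R_0h+H_{fzk}\left(zI-\Lambda_{fzk}\right)^{-1}\alpha_0\,h$ the second summand contributes only terms of order $z^{d-1}$, since $\left(zI-\Lambda_{fzk}\right)^{-1}=z^{-1}I+O(z^{-2})$, so the top-degree term of $K_0h$ is $R_0h_d\,z^{d}$. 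As $R_0$ has full column rank, $R_0h_d\neq 0$, whence $K_0h$ has a genuine pole of order $d\geq 1$ at infinity, contradicting the properness of $g=K_0h$. Therefore $h$ is proper, which is exactly the assertion. The only delicate point is the identification $K_0(\infty)=R_0=V_h$ together with its full column rank from Theorem~\ref{Forney_thm}(iv): this is the bridge between the proper state-space object $K_0$ and the minimal-basis description of $\ker F$, and once it is in hand both the independence and the properness claims reduce to reading off leading coefficients at infinity, the rest being routine asymptotic bookkeeping.
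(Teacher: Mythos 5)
Your proof is correct, but it follows a genuinely different route from the paper's. The paper argues entirely in state-space terms: it takes a controllable realization $g(z)=\delta+\gamma\left(zI-\lambda\right)^{-1}\beta$, evaluates $Fg=0$ at infinity to get $D\delta=0$, invokes Proposition~\ref{product_equation} (via the observability of $(C,A)$) to produce a matrix $\rho$ solving the Rosenbrock-type equation with $\rho\beta=B\delta$, and then uses the \emph{maximality} of $\Pi_\text{max}$ and $R_0$ to write $\rho=\Pi_\text{max}\xi$, $\delta=R_0\eta$, $\gamma-H_\text{max}\xi=R_0\zeta$, $\xi\lambda-\Lambda_\text{max}\xi=\widetilde\alpha_0\zeta$, finally exhibiting $h(z)=\eta+\zeta\left(zI-\lambda\right)^{-1}\beta$ and checking $K_0h=g$ through the identity (\ref{K0_max_fzk}). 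You instead reduce everything to two facts already available in the paper: the columns of $K_0$ lie in $\ker F$, and $K_0(\infty)=R_0$ coincides (up to a nonsingular right factor) with $V_h$, hence is a $q\times m$ matrix of full column rank with $m=\dim_{\CC(z)}\ker F$ --- note that this identification lives in the proof of Theorem~\ref{thm:char_ker2}, not in Remark~\ref{rem:K0} as you cite, though nothing mathematical hinges on this. From there your evaluation-at-infinity arguments for independence (hence spanning, by dimension count) and for properness of the coefficient vector are both sound: a proper rational matrix whose value at infinity is injective can only map non-proper vectors to non-proper vectors. What each approach buys: yours is shorter, isolates the single structural property doing the work ($K_0(\infty)$ injective), and avoids any realization of $g$; the paper's is constructive --- it produces an explicit proper realization of $h$ with poles among those of $g$, stays within the self-contained matrix-equation machinery (so it does not lean on the Forney-theoretic identification $R_0=V_h$), and its intermediate identities are reused elsewhere in the paper.
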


\begin{proof}
Assume that the realization of $F$ given by
\[
F(z) \sim 
\left(
\begin{array}{c|c} A &   B
\\
\hline \rule{0cm}{.42cm}
   C & D
\end{array}
\right)
\]
where $\left(C, A\right)$ is an observable pair,
furthermore the realization of $g$ is given
by
\[
g(z) \sim 
\left(
\begin{array}{c|c} \lambda &   \beta
\\
\hline \rule{0cm}{.42cm}
   \gamma & \delta
\end{array}
\right)\;,
\]
where $\left(\lambda, \beta\right)$ is a
controllable pair.

Evaluating equation $F g = 0$ 
at infinity we obtain that
$D \delta =0$.  Now, the equation can be written in the
following form
\[
\left(D+C\left(zI-A\right)^{-1}B\right) 
\gamma \left(zI-\lambda\right)^{-1} \beta = 
- C\left(zI-A\right)^{-1}B\delta\;.
\]
According to Proposition \ref{product_equation}
there exists a solution $\rho$ of the equation
\[
\left[\begin{array}{cc} A & B \\ 
C & D \end{array}\right] 
\left[\begin{array}{c} \rho \\ \gamma
\end{array}\right] =
\left[\begin{array}{c} 
\rho \lambda \\ 0 
\end{array}\right] 
\]
for which $\rho \beta  = B \delta$ holds. This latter
together with $D\delta = 0$ can be written
in the form
\[
\left[\begin{array}{cc} A & B \\ C & D 
\end{array}\right]
\left[\begin{array}{c} 0 \\ \delta \end{array}\right]
=
\left[\begin{array}{c} \rho \beta \\ 0 \end{array}\right]\;.
\]

Using the maximality of $\Pi_\text{max}$ and
$R_0$ we get that
\[
\rho = \Pi_\text{max} \xi\;, \quad \text{and consequently }
\quad
\delta = R_0 \eta
\]
for some matrix $\xi$ and some vector $\eta$. 
Substituting into $\rho \beta = B\delta $ we obtain that
$\Pi_{max} \xi \beta = B R_0 \eta = \Pi_{max} \widetilde\alpha_0 \eta$
inplying that
\[
\xi \beta = \widetilde\alpha_0 \eta\;.
\]
 
Multiplying the equation (\ref{matrix_zero_eq_max})
from the right by $\xi$ and 
(\ref{matrix_kernel_eq2}) from the right by 
$\eta$ and taking the differences with the
previous equations we arrive at the 
following equations:
\[
\left[\begin{array}{cc} A & B \\
C & D\end{array}\right]
\left[\begin{array}{c} 0 \\
\gamma - H_\text{max}\xi  \end{array}\right]
\left[\begin{array}{c} 
\Pi_\text{max}
\left(\xi \lambda - \Lambda_\text{max} \xi\right) \\
0 \end{array}\right]
\]
implying that there exists 
a matrix $\zeta$ for which
\begin{eqnarray}
\gamma - H_\text{max} \xi &=& R_0 \zeta \\
\xi \lambda - \Lambda_\text{max} \xi 
&=& \widetilde\alpha_0 \zeta
\end{eqnarray}
hold.

Now, define
\[
h (z) = \eta + \zeta 
\left(zI -\lambda\right)^{-1}\beta\;.
\]
Then straightforward calculation -- using the identity
(\ref{K0_max_fzk}) -- gives that
\[
K_0 (z) h(z) = g(z)\;,
\]
concluding the proof of the proposition.
\qed\end{proof}

\medskip
\begin{remark}\label{rem:partition}
Let us return to the non-uniqueness of the solution of 
equation (\ref{matrix_zero_eq_fzk}). Assume that the
pair $(C, A)$ is observable. Consider a
solution $\left(\Pi_{fzk}, H_{fzk}, \Lambda_{fzk}\right)$
of (\ref{matrix_zero_eq_fzk}) for which
$\Im \Pi_{fzk} = \cV^{*}\left(\Sigma\right) 
\cap \left< A \mid B\right>$,
$\ker \Pi_{fzk} = \left\{ 0 \right\}$ and a
maximal solution $(\alpha_0, R_0)$ 
of (\ref{matrix_kernel_eq}). The maximality of this 
solution implies that -- fixing the matrix $\Pi_{fzk}$ --
{\it any} solution of (\ref{matrix_zero_eq_fzk})
has the form 
\[
\left(\Pi_{fzk}, H_{fzk}+R_0\beta, 
\Lambda_{fzk}+\alpha_0\beta\right)\;,
\]
where $\beta$ is an arbitrary 
matrix (of appropriate size).

\medskip
Without loss of generality we might assume 
that the matrices
$\Lambda_{fzk}$ and $\alpha_0$ are of the form
\begin{equation}\label{special_form1}
\Lambda_{fzk} = \left[\begin{array}{cc} 
\Lambda_k & \Lambda_{kf} \\
0 & \Lambda_{f}
\end{array}
\right]\;, \quad
\alpha_0 = \left[
\begin{array}{c} \alpha_k \\ 0
\end{array}
\right]\;,
\end{equation}
where the pair 
$\left(\Lambda_k, \alpha_k\right)$ is
controllable. Accordingly,
\begin{equation}\label{special_form2}
\Pi_{fzk} = \left[ 
\Pi_k, \ \Pi_f
\right]\;, \quad
H_{fzk} = \left[ 
H_k, \ H_f
\right]\:,
\quad\beta = 
\left[ \beta_1, \ \beta_2 \right]
\;.
\end{equation}
(Note, that this transformation does not affect
the choice of $R_0$.)

Observe that
\begin{equation}\label{pi_1}
\left[\begin{array}{cc}
A & B \\ C & D
\end{array}
\right]
\left[\begin{array}{c}
\Pi_k \\ H_k
\end{array}\right] =
\left[\begin{array}{c}
\Pi_k \Lambda_k \\ 0
\end{array}\right]\;,
\end{equation}

and
\begin{equation}\label{alpha1}
\left[\begin{array}{c}
B \\ D
\end{array}\right]
R_0 = 
\left[\begin{array}{c}
\Pi_k \alpha_k \\ 0
\end{array}\right]\;.
\end{equation}
\end{remark}

For later use it is worth 
pointing use that the controllability
of the pair $\left(\Lambda_k, 
\alpha_k\right)$ and Corollary 
\ref{cor:nu_r}
imply that the identity
\begin{equation}\label{eq_im_pi1}
\Im \left(\Pi_k\right) =
\cC^{*}\left(\Sigma\right) \cap 
\Im \left(\Pi_{fzk}\right) =
\cC^{*}\left(\Sigma\right) \cap 
\cV^{*}\left(\Sigma\right) =
\cR^{*}\left(\Sigma\right)
\end{equation}
holds.

Notice also that
\[
\Lambda_{fzk} + \alpha_0 \beta =
\left[\begin{array}{cc} \Lambda_k + \alpha_k \beta_k &
\Lambda_{kf}+\alpha_k\beta_f \\
0 & \Lambda_f
\end{array}\right]
\]
and
\[
H_{fzk} + R_0 \beta = 
\left[ H_k + R_0 \beta_k , H_f + R_0\beta_f\right]\;.
\]
Consequently, invoking Theorem \ref{thm:char_ker2}
we get that the columns of the function
\[
\left(H_k+R_0\beta_k\right)
\left(zI-\left(\Lambda_k+\alpha_k\beta_k\right)\right)^{-1}
\]
generate a basis in $\cW\left(\ker F\right)$,
while (factoring out $\cW\left(\ker F\right)$)
the columns of
\[
\left(H_f+R_0\beta_f\right)
\left(zI- \Lambda_f\right)^{-1}
\]
generate a basis in $Z(F)$.

This latter observation justifies the following definition
\begin{definition}
Under the assumptions of the previous remark the matrix
$\Lambda_f$ is called finite zero matrix of the function $F$.
Its eigenvalues are the so-called finite 
(or transmission) zeros of $F$.

The eigenvalues of $\Lambda_k$ (or of
$\left(\Lambda_k + \alpha_k\beta_k\right)$) are 
called virtual zeros of
$F$.
\end{definition}

The expression {\it virtual zero} refers to 
the fact that choosing the
matrix
$\beta$ in an appropriate way the eigenvalues of 
$\Lambda_k + \alpha_k\beta_k$ can be
moved around in the complex plane.

\begin{remark}\label{rem:Kbeta}
Let us introduce the notation
\[
K_\beta (z) = R_0 + \left(H_{fzk}+R_0 \beta\right)
\left(zI - \left(\Lambda_{fzk}+\alpha_0\beta\right)\right)^{-1}
\alpha_0\;.
\]
This function has obviously the same properties as $K_0$, 
namely, its columns are in the kernel of $F$, and
the columns of $\pi_-\left(z^r K_\beta (z)\right)$, 
$r\geq 0$ generate $\cW(\ker F)$.
\end{remark}

\subsubsection{Choosing $K_\beta$ as a tall inner-function}
\label{subsec:K_inner}

\medskip
The following theorem shows that the rational
function 
$K_0(z) = R_0 + H_{fzk} 
\left( zI-\Lambda_{fzk}\right)^{-1}\alpha_0$
being equivalent to $R(z)$ and playing 
an important role in the proof 
of Theorem \ref{thm:char_ker2} can be chosen
to be a tall inner function. To this aim
we are going to use the property that 
for a fixed matrix $\Pi$ all solutions
of (\ref{matrix_zero_eq_fzk}) can be given in
the form $\Lambda+\alpha_0\,\beta$,
$H+R_0\,\beta$, where $\beta$ is arbitrary.

\begin{theorem}\label{inner_fnc_kernel}
Let $\left(C, A\right)$ be an observable pair.
Assume that the columns of the function
$H_{fzk}\left(zI-\Lambda_{fzk}\right)^{-1}$
provide a basis in $Z(F)\oplus \cW\left(\text{ker} F\right)$.
Let $\Pi_{fzk}$ be the corresponding solution of
(\ref{matrix_zero_eq_fzk}). 
Consider a maximal solution -- in terms
of $\alpha_0$ and $R_0$ -- of the equation
(\ref{matrix_kernel_eq}) assuming 
-- w.l.o.g. -- that
the column-vectors of the matrix $R_0$ are orthonormal.
Then there exists a matrix $\beta$ such that the
function
\[
K_\beta (z) = R_0 + \left( H_{fzk}+ R_0 \beta \right)
\left( zI - \left(\Lambda_{fzk} + \alpha_0 \beta \right)
\right)^{-1}
\alpha_0
\]
is a tall inner (in continuous time sense) 
function.
\end{theorem}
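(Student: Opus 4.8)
The plan is to reduce $K_\beta$ to its controllable part, recognize the resulting problem as the standard state-space construction of a continuous-time inner function by feedback, and then solve it through an algebraic Riccati equation whose solvability I will secure by a zero-freeness property inherited from the observability of $(C,A)$.

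First I would exploit the block structure of Remark \ref{rem:partition}, assuming $\Lambda_{fzk}=\bigl(\begin{smallmatrix}\Lambda_k & \Lambda_{kf}\\ 0 & \Lambda_f\end{smallmatrix}\bigr)$, $\alpha_0=\bigl(\begin{smallmatrix}\alpha_k\\0\end{smallmatrix}\bigr)$, $\Pi_{fzk}=[\Pi_k,\Pi_f]$, $H_{fzk}=[H_k,H_f]$ with $(\Lambda_k,\alpha_k)$ controllable, and $\beta=[\beta_k,\beta_f]$. Since $\alpha_0$ vanishes on the lower block, a direct computation of $(zI-(\Lambda_{fzk}+\alpha_0\beta))^{-1}\alpha_0$ shows that the uncontrollable block $\Lambda_f$ drops out, so that $K_\beta(z)=R_0+(H_k+R_0\beta_k)\bigl(zI-(\Lambda_k+\alpha_k\beta_k)\bigr)^{-1}\alpha_k$, depending only on $\beta_k$. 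The reduced quadruple $(\Lambda_k,\alpha_k,H_k,R_0)$ is controllable, $R_0^{*}R_0=I$ by the orthonormality assumption, and $K_\beta$ is tall of full column rank since its columns span the proper subspace $\ker F$.

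Next I would invoke the state-space criterion for inner-ness: with $A_\beta=\Lambda_k+\alpha_k\beta_k$, $C_\beta=H_k+R_0\beta_k$, $B=\alpha_k$, $D=R_0$, the function $K_\beta$ is inner (continuous-time sense) whenever $A_\beta$ is a stability matrix and there is a Hermitian $P\ge 0$ with $PA_\beta+A_\beta^{*}P+C_\beta^{*}C_\beta=0$, $PB+C_\beta^{*}D=0$ and $D^{*}D=I$. The third identity holds automatically; the second forces $\beta_k=-(\alpha_k^{*}P+R_0^{*}H_k)$; and substituting this back into the first collapses it to the control algebraic Riccati equation $P\Lambda_k+\Lambda_k^{*}P+H_k^{*}H_k-(P\alpha_k+H_k^{*}R_0)(\alpha_k^{*}P+R_0^{*}H_k)=0$. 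Thus the whole problem reduces to producing a stabilizing, positive semidefinite solution $P$ of this equation, from which $\beta_k$ (and then $\beta$, with $\beta_f=0$) are read off, and $A_\beta=\Lambda_k-\alpha_k(\alpha_k^{*}P+R_0^{*}H_k)$ becomes stable.

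The crux, and the step I expect to be the main obstacle, is guaranteeing a stabilizing Riccati solution \emph{without} any hypothesis on the location of the zeros. I would obtain this by showing that $(\Lambda_k,\alpha_k,H_k,R_0)$ has no invariant zeros at all. Suppose $z_0$ were one, so that $(z_0I-\Lambda_k)v=\alpha_k w$ and $H_k v+R_0 w=0$ for some $(v,w)\neq 0$. Using the defining relations \ref{pi_1} and \ref{alpha1}, namely $A\Pi_k+BH_k=\Pi_k\Lambda_k$, $C\Pi_k+DH_k=0$, $BR_0=\Pi_k\alpha_k$, $DR_0=0$, and putting $x=\Pi_k v$, a short manipulation gives $Ax=z_0x$ and $Cx=0$. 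The observability of $(C,A)$ forces $x=0$; since $\Pi_k$ is injective (its columns are part of the injective $\Pi_{fzk}$) this gives $v=0$, hence $R_0 w=0$ and $w=0$, a contradiction. Consequently the quadruple is zero-free, so the associated Hamiltonian has no eigenvalues on the imaginary axis; together with controllability (hence stabilizability) of $(\Lambda_k,\alpha_k)$, standard Riccati theory then supplies the required stabilizing $P=P^{*}\ge 0$, and the three algebraic identities hold by construction, making $K_\beta$ tall inner. The residual work — confirming via the lossless bounded-real lemma that these identities indeed yield $K_\beta^{*}K_\beta=I$, and the routine substitutions above — I would treat as standard.
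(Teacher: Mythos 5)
Your proposal is correct, and its skeleton coincides with the paper's proof: the same reduction to the controllable block $(\Lambda_k,\alpha_k,H_k,R_0)$ via the partition of Remark \ref{rem:partition}, the same three lossless identities (\ref{inner_eq1})--(\ref{inner_eq3}), the same feedback formula (\ref{beta_def}) for $\beta_k$, and the same Riccati equation (\ref{ric}). Where you genuinely diverge is in how the right solution is certified. The paper takes a positive semidefinite solution of (\ref{ric}) (existence attributed to controllability of $(\Lambda_k,\alpha_k)$, i.e.\ standard LQ theory) and then proves by hand, first, that any such $\sigma$ is invertible and, second, that $\Lambda_k+\alpha_k\beta_k$ is asymptotically stable; in both steps a problematic eigenvector $\xi$ is shown to satisfy $\left(I-R_0R_0^{*}\right)H_k\xi=0$, after which $\Pi_k\xi$ is lifted through (\ref{pi_1}) and (\ref{alpha1}) to an unobservable eigenvector of $(C,A)$, forcing $\xi=0$. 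You instead prove that the quadruple $(\Lambda_k,\alpha_k,H_k,R_0)$ has no invariant zeros at all --- and your computation $Ax=z_0x$, $Cx=0$ with $x=\Pi_k v$ is exactly the same lifting trick --- and then delegate existence of a stabilizing $P=P^{*}\ge 0$ to standard Hamiltonian/ARE theory. Both arguments therefore hinge on the identical key computation; yours packages it once, as a zero-freeness lemma, and outsources the spectral analysis, which is cleaner but less self-contained. Two small points to tighten: (a) excluding imaginary-axis eigenvalues of the Hamiltonian requires controllability in addition to zero-freeness (to rule out eigenvectors of the form $(0,p)$ coming from left eigenvectors of the closed-loop matrix), so make that dependence explicit rather than folding it into the citation; (b) your route delivers only $P\ge 0$, whereas the paper also establishes $\sigma>0$ --- irrelevant for the theorem as stated, but the inverse $\sigma^{-1}$ is used later (in the extension $L_\beta$ of (\ref{eq:Kinner}) and throughout Theorem \ref{thm:f1}), and your zero-freeness argument would give invertibility with one extra line, so it is worth recording.
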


\begin{proof}
We might assume that the matrices are partitioned according
to (\ref{special_form1}) and (\ref{special_form2})
Then
\[
K_\beta(z) = R_0 + \left( H_k+ R_0 \beta_k \right)
\left( zI - \left(\Lambda_k + \alpha_k \beta_k \right)
\right)^{-1}
\alpha_k\;.
\]
(Especially, the value of $\beta_f$ has no effect
on the function $K_\beta$.) 

\medskip
Obviously the equations
\begin{eqnarray}
\sigma \left(\Lambda_k+\alpha_k\beta_k\right)
+
\left(\Lambda_k+\alpha_k\beta_k\right)^{*}\sigma
+
\left(H_k+R_0 \beta_k\right)^{*}
\left(H_k+R_0 \beta_k\right) &=& 0 
\label{inner_eq1}\\
\alpha_k^{*} \sigma + R_0^{*} 
\left(H_k+R_0 \beta_k\right) &=& 0 
\label{inner_eq2}\\
R_0^{*} R_0 &=& I
\label{inner_eq3}
\end{eqnarray}
imply the equation $K_\beta^{*}(z) K_\beta(z) = I$\;.

Due to the fact that the columns of $R_0$ are
orthonormal the third equation trivially holds.

The second equation gives that
\begin{equation}\label{beta_def}
\beta_k = -\alpha_k^{*} \sigma  - R_0^{*} H_k\;.
\end{equation}
Substituting this expression into the first equation
the following Riccati-equation
\begin{equation}\label{ric}
\sigma \left(\Lambda_k - \alpha_k R_0^{*} H_k\right)
+
\left(\Lambda_k - \alpha_k R_0^{*} H_k\right)^{*} \sigma
- \sigma\ \alpha_k \ \alpha_k^{*}\ \sigma 
+
H_k^{*} \left(I-R_0 R_0^{*}\right) H_k
=0
\end{equation}
is obtained. 

The controllability of the pair 
$(\Lambda_k, \alpha_k)$
implies that equation (\ref{ric}) 
has a unique
positive semidefinite solution.

Next we prove that any solution $\sigma$ 
of this equation is invertible.
Obviously, if $\xi \in \ker \sigma$ then 
-- multiplying by $\xi^{*}$ from the left and by $\xi$
from the right the equation
\[
\left(I-R_0R_0^{*}\right) H_k \xi = 0
\]
is obtained. Now multiplying only from the right
by $\xi$ we get that $\ker \sigma$ is 
$\left(\Lambda_k - \alpha_k R_0^{*} H_k\right)$-invariant.
Choosing $\xi$ to be an eigenvector of this matrix
\[
\left(\Lambda_k - \alpha_k R_0^{*} 
H_k\right)\xi = \lambda \xi\;,
\]
and using (\ref{pi_1}) and 
(\ref{alpha1}) we obtain that
\[
\left[\begin{array}{cc}
A & B \\ C & D
\end{array}
\right]
\left[\begin{array}{c}
\Pi_k \xi \\ (H_k-R_0R_0^{*}H_k)\xi
\end{array}\right]
=
\left[\begin{array}{c}
\Pi_k (\Lambda_k - \alpha_k R_0^{*}H_k)\xi \\
0
\end{array}\right] =
\left[\begin{array}{c}
\lambda \Pi_k \xi \\ 0
\end{array}\right]\;.
\]
Invoking the observability of the
pair $(C, A)$ we get that $\Pi_k \xi = 0$.
But according to our assumption the column vectors of
$\Pi_k$ are linearly independent, thus $\xi = 0$,
proving the invertibility of $\sigma$.

It remains to prove the 
analyticity of $K_\beta$ on the right
half plane. If $\xi$ is an eigenvector of 
$\Lambda_k + \alpha_k\beta_k$ i.e.
\[
\left(\Lambda_k + \alpha_k\beta_k\right)
\xi = \lambda \xi\;,
\]
then
\[
2 \Re \lambda \ \xi^{*} \sigma \xi +
\xi^{*} \sigma \alpha_k\alpha_k^{*}\sigma \xi
+
\xi^{*} H_k^{*}\left(I-R_0R_0^{*}\right)H_k\xi = 0\;. 
\]
Thus $\Re \lambda \leq 0$. If $\Re \lambda = 0$, then
$\left(I-R_0R_0^{*}\right)H_k\xi = 0$ and
$\alpha_k^{*}\sigma \xi = 0$. 
Using again equations
(\ref{pi_1}) and 
(\ref{alpha1}) we obtain that
\[
\left[\begin{array}{cc}
A & B \\ C & D
\end{array}
\right]
\left[\begin{array}{c}
\Pi_k \xi \\ H_k\xi-R_0
\left(\alpha_k^{*}\sigma +R_0^{*}H_k\right)\xi
\end{array}\right]
=
\left[\begin{array}{c}
\Pi_k \left(\Lambda_k - \alpha_k 
\left(\alpha_k^{*}\sigma + R_0^{*}H_k\right)\right)\xi \\
0
\end{array}\right] =
\left[\begin{array}{c}
\lambda \Pi_k \xi \\ 0
\end{array}\right]\;.
\]
Invoking the observability of the
pair $(C, A)$ and $\ker \Pi_k =
\left\{ 0 \right\}$ we get -- similarly as before --
$\xi=0$. Thus the matrix 
$\Lambda_k + \alpha_k\beta_k$ is
asymptotically stable (in continuous time sense), 
proving that the function 
$K_\beta$ is inner.
\qed\end{proof}

The inner function $K_\beta$ will be called as 
``right-kernel'' inner function.

\bigskip
\subsubsection{Zero structure of a tall inner function}

As an immediate application of Theorems 
\ref{zeros_finite_kernel_thm}
and \ref{thm:char_ker2} let us consider the zero structure
of a tall inner function. In order to emphasize that in this
subsection a special case is considered let us denote this
tall inner function by 
$Q(z) = D_Q+C_Q\left(zI-A_Q\right)^{-1}B_Q$. 
Assume that the pair $\left(C_Q, A_Q\right)$ is
observable and all the eigenvalues of the
matrix $A_Q$ have negative real part.
Consider 
a square inner extension of $Q$ in the form 
$\left[ Q, \widetilde Q\right]$ assuming that a realization of
$\widetilde Q$ is given as
$\widetilde Q (z) = 
\widetilde D_Q + C_Q\left(zI-A_Q\right)^{-1} \widetilde B_Q$.

As it is well-known this extension can be obtained
in the following way.
Consider the solution $P$ of
the Lyapunov equation
\begin{equation}\label{eqn_ljap}
PA_Q + A_Q^{*}P + C_Q^{*}C_Q = 0\;.
\end{equation}
$P$ is uniquely determined and positive definite.
The matrix $\widetilde  D_Q$ provides a unitary 
extension of $D_Q$, i.e. $\left[ D_Q, \widetilde D_Q\right]$
is a unitary matrix. (In other words, the orthonormal 
vectors formed by the columns of $D_Q$ are extended 
to an orthonormal basis.)
Then 
\[
\widetilde B_Q = - P^{-1}C_Q^{*} \widetilde D_Q\;.
\]
Note that $B_Q = -P^{-1}C_Q^{*} D_Q$.
In other words the identity
\[
\left[ 
B_Q, \widetilde B_Q
\right]
\left[
\begin{array}{c}
D_Q \\ \widetilde D_Q
\end{array}
\right]^{*} +
P^{-1}C_Q^{*} = 0
\]
holds.

\begin{proposition}\label{prop:zeros_inner}
Let $Q$ be an tall inner function (in continuous time sense)
with the realization above. Assume that the pair
$\left( C_Q, A_Q\right)$ is observable, and
all the eigenvalues of $A_Q$ have negative real part.

Consider a square inner extension 
$\left[Q, \widetilde Q\right]$ of $Q$ with the realization above. 
Then
\begin{itemize}
\item[(i)]
the maximal solution of (\ref{matrix_zero_eq_max})
is given by the triplet $\left(\Pi, H, \Lambda\right)$
where $ P \Im \left(\Pi\right)$ is the 
the orthogonal complement of 
the reachability subspace
$\left< A_Q \mid \widetilde B_Q\right)$.

\[
H = - D_Q^{*}C_Q\Pi\]
and $\Lambda$ is determined by
the equation
\[
\left(A_Q-B_QD_Q^{*}C_Q\right)\Pi = \Pi \Lambda\;.
\]
The matrix $\Lambda$ is the finite zero matrix of $Q$.

\item[(ii)]
The module $\cW\left( \ker Q\right)$ is trivial.
\item[(iii)]
On the subspace $\Im \left(\Pi\right)$ 
the matrices $-P^{-1}A_Q^{*}P$ and 
$\left(A_Q-B_QD_Q^{*}C_Q\right)$ coincide.
\end{itemize}
\end{proposition}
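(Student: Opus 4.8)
The plan is to read off the defining equations of the maximal solution, to recognise the relevant invariant subspace in state coordinates, and then to transport it through $P$ onto the reachability subspace of $\widetilde B_Q$; part (iii) is exactly the algebraic identity that makes this transport work, so I would prove it on the way. I would begin with (ii): since $[D_Q,\widetilde D_Q]$ is unitary we have $D_Q^{*}D_Q=I$, so $D_Q$ is injective and $\{B_Q\eta:D_Q\eta=0\}=\{0\}$; Corollary~\ref{cor:ker_triv2} then gives $\cW(\ker Q)=\{0\}$ immediately (equivalently, an inner $Q$ is left-invertible, cf.\ Remark~\ref{rem:left_inv}). In particular $\cR^{*}\left(\Sigma\right)=\{0\}$, which later forces the whole of $\Lambda$ to be the finite-zero part.

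For (i) I would write (\ref{matrix_zero_eq_max}) for $Q$ as $A_Q\Pi+B_QH=\Pi\Lambda$ and $C_Q\Pi+D_QH=0$. Multiplying the second relation by $D_Q^{*}$ and using $D_Q^{*}D_Q=I$ forces $H=-D_Q^{*}C_Q\Pi$, while solvability of $D_QH=-C_Q\Pi$ is equivalent to $(I-D_QD_Q^{*})C_Q\Pi=\widetilde D_Q\widetilde D_Q^{*}C_Q\Pi=0$, i.e.\ to $\widetilde D_Q^{*}C_Q\Pi=0$ (here $I-D_QD_Q^{*}=\widetilde D_Q\widetilde D_Q^{*}$ by unitarity). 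Substituting $H$ into the first relation gives $(A_Q-B_QD_Q^{*}C_Q)\Pi=\Pi\Lambda$, the asserted equation for $\Lambda$. Writing $A_{\times}:=A_Q-B_QD_Q^{*}C_Q$, the solutions are therefore precisely the $A_{\times}$-invariant subspaces $\Im\Pi\subset\ker(\widetilde D_Q^{*}C_Q)$, with $H$ determined; by Lemma~\ref{lem:max_pi} the maximal one is $\cV^{*}\left(\Sigma\right)=\Im\Pi_{\mathrm{max}}$, namely the largest $A_{\times}$-invariant subspace inside $\ker(\widetilde D_Q^{*}C_Q)$.

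The core of the proof is the identity (iii). Using $B_Q=-P^{-1}C_Q^{*}D_Q$ and the Lyapunov equation $PA_Q+A_Q^{*}P+C_Q^{*}C_Q=0$, a direct computation gives $-P^{-1}A_Q^{*}P=A_Q+P^{-1}C_Q^{*}C_Q$ and $A_{\times}=A_Q+P^{-1}C_Q^{*}D_QD_Q^{*}C_Q$, whence, setting $M:=-P^{-1}A_Q^{*}P$,
\[
M-A_{\times}=P^{-1}C_Q^{*}(I-D_QD_Q^{*})C_Q=P^{-1}C_Q^{*}\widetilde D_Q\,\widetilde D_Q^{*}C_Q .
\]
Since the right-hand side factors through $\widetilde D_Q^{*}C_Q$, the maps $M$ and $A_{\times}$ agree on all of $\ker(\widetilde D_Q^{*}C_Q)$, and in particular on $\Im\Pi$, which is (iii). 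Consequently a subspace of $\ker(\widetilde D_Q^{*}C_Q)$ is $A_{\times}$-invariant iff it is $M$-invariant, so $\cV^{*}\left(\Sigma\right)$ is also the largest $M$-invariant subspace of $\ker(\widetilde D_Q^{*}C_Q)$, that is $\bigcap_{k\ge0}\ker(\widetilde D_Q^{*}C_QM^{k})$. To finish I would transport through $P$: from $\widetilde B_Q=-P^{-1}C_Q^{*}\widetilde D_Q$ we get $\widetilde B_Q^{*}=-\widetilde D_Q^{*}C_QP^{-1}$, and using $P^{-1}(A_Q^{*})^{k}P=(P^{-1}A_Q^{*}P)^{k}=(-M)^{k}$ one finds $\widetilde B_Q^{*}(A_Q^{*})^{k}Pv=(-1)^{k+1}\widetilde D_Q^{*}C_QM^{k}v$. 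Thus $Pv$ is orthogonal to $\langle A_Q\mid\widetilde B_Q\rangle$ exactly when $\widetilde D_Q^{*}C_QM^{k}v=0$ for all $k$, i.e.\ when $v\in\cV^{*}\left(\Sigma\right)$; this is precisely $P\,\Im\Pi=\langle A_Q\mid\widetilde B_Q\rangle^{\perp}$. Finally, (iii) gives $M\Pi=A_{\times}\Pi=\Pi\Lambda$, so $\Lambda$ represents the restriction of $-P^{-1}A_Q^{*}P$ to $\Im\Pi$; its spectrum is a subset of that of $-A_Q^{*}$ and hence lies in the open right half-plane, and together with $\cR^{*}\left(\Sigma\right)=\{0\}$ this identifies $\Lambda$ as the finite-zero matrix.

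The step I expect to be the main obstacle is the replacement of the $A_{\times}$-characterisation of $\cV^{*}\left(\Sigma\right)$ by the $M$-characterisation, which rests entirely on the observation that $M$ and $A_{\times}$ coincide on $\ker(\widetilde D_Q^{*}C_Q)$. Once this is in hand, the identification $P\,\Im\Pi=\langle A_Q\mid\widetilde B_Q\rangle^{\perp}$ is a routine conjugation-by-$P$ computation, and the remaining assertions follow from the unitarity relations $D_Q^{*}D_Q=I$, $I-D_QD_Q^{*}=\widetilde D_Q\widetilde D_Q^{*}$ and the Lyapunov identity.
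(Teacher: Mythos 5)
Most of your argument is sound, and in places cleaner than the paper's. Deriving $H=-D_Q^{*}C_Q\Pi$, the invariance relation $(A_Q-B_QD_Q^{*}C_Q)\Pi=\Pi\Lambda$ and the constraint $(I-D_QD_Q^{*})C_Q\Pi=0$ is exactly the paper's first step; your proof of (iii) is a mild strengthening (the paper only multiplies the Lyapunov identity by $\Pi$ on the right, while you show that $-P^{-1}A_Q^{*}P$ and $A_Q-B_QD_Q^{*}C_Q$ agree on all of $\ker(\widetilde D_Q^{*}C_Q)$); and your identification $P\,\Im\Pi=\left< A_Q\mid \widetilde B_Q\right>^{\perp}$ via the unobservability subspace $\bigcap_{k}\ker(\widetilde D_Q^{*}C_QM^{k})$ and conjugation by $P$ replaces, in one stroke, the paper's two separate inclusion arguments (solution $\Rightarrow$ orthogonality, and orthogonal complement $\Rightarrow$ solution). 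Part (ii) via Corollary \ref{cor:ker_triv2} and injectivity of $D_Q$ is also fine; the paper gets the same conclusion by noting $D_Q^{*}D_Q=I$ forces $R_0=0$ in (\ref{matrix_kernel_eq}).

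There is, however, a genuine gap in the final claim of (i), that $\Lambda$ is the finite zero matrix of $Q$. By Corollary \ref{cor:Im_pi} and Remark \ref{rem:partition}, the finite zero matrix is read off from the solution whose image is $\cV^{*}\left(\Sigma\right)\cap\left< A_Q\mid B_Q\right>$, \emph{not} from the maximal solution whose image is $\cV^{*}\left(\Sigma\right)$; since controllability of $(A_Q,B_Q)$ is not assumed in the proposition, these may a priori differ, and the part of $\cV^{*}\left(\Sigma\right)$ outside $\left< A_Q\mid B_Q\right>$ would contribute spurious eigenvalues to your $\Lambda$. Your closing argument --- spectrum in the open right half-plane together with $\cR^{*}\left(\Sigma\right)=\{0\}$ --- does not rule this out: triviality of $\cR^{*}$ only kills the kernel part ($\alpha_0=0$), and says nothing about the inclusion $\cV^{*}\left(\Sigma\right)\subset\left< A_Q\mid B_Q\right>$. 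The paper devotes the last portion of its proof to exactly this point: it introduces the controllability Gramians $P_1$, $P_2$ of $(A_Q,B_Q)$ and $(A_Q,\widetilde B_Q)$, uses the square-inner property of $[Q,\widetilde Q]$ to obtain $P_1+P_2=P^{-1}$, and then for $\xi\in\ker P_2$ deduces $\xi=PP_1\xi$, hence
\[
\Im \left(\Pi\right)=P^{-1}\ker\left(P_2\right)\subset \Im\left(P_1\right)=\left< A_Q\mid B_Q\right>\;.
\]
You need this step (or an equivalent argument) before you may invoke Corollary \ref{cor:Im_pi} to conclude that your $\Lambda$ is the finite zero matrix.
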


\begin{proof}
According the Theorem \ref{zeros_finite_kernel_thm}
first a maximal solution of equation
\begin{equation}\label{eqn:matrix_zero_inner}
\left[\begin{array}{cc} A_Q & B_Q \\
    C_Q & D_Q
    \end{array}
    \right]
\left[\begin{array}{c}
\Pi \\ H\end{array}
\right]
=
\left[ \begin{array}{c}
\Pi \Lambda \\ 0 \end{array}\right]
\end{equation}
should be considered.

Multiplying the second equation from the left by 
$D_Q^{*}$ we get that
$H = - D_Q^{*} C_Q \Pi$.
Substituting this values into the first and 
the second equation we arrive at
the following equations:
\begin{eqnarray*}
\left( A_Q - B_Q D_Q^{*} C_Q \right) \Pi &=&
\Pi \Lambda \;,\\
\left(I-D_Q D_Q^{*}\right) C_Q \Pi &=& 0\;.
\end{eqnarray*}

Now $B_Q D_Q^{*} C_Q = - P^{-1} C_Q^{*} D_QD_Q^{*} C_Q$,
consequently the Lyapunov-equation (\ref{eqn_ljap})
above can be written as
\[
P \left(A_Q-B_QD_Q^{*}C_Q\right) + A_Q^{*} P 
+ C_Q^{*} \left(I-D_QD_Q^{*}\right) C_Q = 0\;.
\]
Multiplying form the right by $\Pi$ and from
the left by $P^{-1}$ we obtain
that
\[
P^{-1} A_Q^{*} P \Pi = - \left(A_Q-B_QD_Q^{*}C_Q\right)\Pi 
= - \Pi \Lambda\;. 
\]
Thus the subspace $\Im \left(\Pi \right)$ should be
 $\left(A_Q-B_QD_Q^{*}C_Q\right)$-invariant
and on it the matrices
$-P^{-1}A_Q^{*}P$ and 
$\left(A_Q-B_QD_Q^{*}C_Q\right)$ coincide.
(This proves (iii).)

The identities 
\begin{eqnarray*}
\left(I-D_Q D_Q^{*}\right) C_Q &=&
\widetilde D_Q \widetilde D_Q^{*} C_Q = 
-\widetilde D_Q \widetilde B_Q^{*} P\\
A_Q^{*} P \Pi &=& - P \Pi \Lambda
\end{eqnarray*}
imply that the subspace
$P \Im \left(\Pi \right)$ should be orthogonal
to the reachability subspace
$\left< A_Q \mid \widetilde B_Q\right>$.

Conversely, consider 
orthogonal complement of 
$\left< A_Q\mid \widetilde B_Q\right>$ and choose the 
matirx $\Pi$ is such a way that the columns of $P \Pi$ span the
this subspace. In this case
then 
\begin{eqnarray*}
\Pi^{*} P \widetilde{B_Q} &=& 0 \\
\Pi^{*} P A_Q &=& -\Lambda \Pi^{*} P
\end{eqnarray*}
for some matrix $\Lambda$. Using the equations above
we get that
\[
\left(I-D_Q D_Q^{*}\right) C_Q \Pi = 0\;,
\]
and from the Lyapunov-equation (\ref{eqn_ljap}) we obtain that
\[
\left(A_Q-B_QD_Q^{*}C_Q\right)\Pi 
=  \Pi \Lambda
\]

Thus defining 
$H= - D_Q^{*} C_Q \Pi$ we get that 
$\left(\Pi, H, \Lambda\right)$ provide
a solution of (\ref{eqn:matrix_zero_inner}),
proving the first part of (i).

To identify the corresponding $\Lambda$ as
the finite zero matrix -- using 
Corollary \ref{cor:Im_pi} and Remark
\ref{rem:partition} -- we have to
prove that $\left< A \mid B\right> 
\supset \Im \left(\Pi\right)$ and
$\cW\left( \ker Q\right)$ is trivial.

To this aim first consider solutions
of the Lyapunov-equations
\begin{eqnarray*}
A_QP_1 + P_1A_Q^{*} + B_QB_Q^{*} &=& 0 \;,\\
A_QP_2 + P_2A_Q^{*} + \widetilde B_Q \widetilde B_Q^{*} &=& 0\;.
\end{eqnarray*}
Then invoking that $\left[Q, \widetilde Q\right]$
is a square inner function we get that
\begin{equation}\label{eqn:contr_gram}
P_! + P_2 = P^{-1}\;.
\end{equation}
The kernel of $P_2$ determines the orthogonal
complement of the reachability subspace
$\left< A\mid \widetilde B\right>$, while the
image of $P_!$ gives 
$\left< A\mid B\right>$. Now --as we have seen -- 
for the
maximal solution of (\ref{eqn:matrix_zero_inner})
the identity 
\[
\Im \left(P\Pi\right) = \ker \left( P_2\right)
\]
holds. In other words
\[
\Im \left(\Pi\right) = P^{-1} \ker \left(P_2\right)\;. 
\]
But the equation (\ref{eqn:contr_gram}) implies that
if $\xi \in \ker \left(P_2\right)$ then
$\xi = P P_1 \xi$, thus
\[
\Im \left(\Pi\right) = P^{-1} \ker \left(P_2\right) 
\subset \Im \left(P_1\right)
=
\left< A \mid B \right>\;.
\]

Now consider a maximal solution of
(\ref{eqn:matrix_zero_inner}) and solve the 
equation
\[
\left[\begin{array}{c} B_Q \\ D_Q
\end{array}\right] R_0 =
\left[\begin{array}{c} \Pi \alpha \\ 0
\end{array}\right]\;.
\]
But the identiy $D_Q^{*} D_Q = I$ implies
that $R_0 = 0$, i.e. according
to Theorem \ref{thm:char_ker2} the
module $\cW \left( \ker Q \right)$ is trivial,
proving (iii) and finishing the proof of (i), 
thus
concluding the proof of the proposition.
\qed\end{proof}

\begin{remark}
Let us point out two special cases
of the proposition
above. 

\begin{itemize}
\item[(i)]
The finite zero module $Z(Q)$ is trivial,
if the pair
$\left( A_Q, \widetilde B_Q\right)$ is
controllable,
\item[(ii)]
The finite zero matrix of $Q$ is given
by $A_Q-B_QD_Q^{*}C_Q$, if $Q$ is a square inner function.
\end{itemize}
\end{remark}

\bigskip
\subsubsection{Eliminating $\cW\left(\ker F\right)$
via factorization}
\label{sct:elim_ker}

\medskip
The ``right-kernel'' inner function 
$K_\beta$ constructed in Theorem 
\ref{inner_fnc_kernel} is a tall 
inner function. 
Consider its square inner extension. 
Straightforward
computation gives that the function
\begin{eqnarray}
K_{\beta , ext} &=&
\left[ K_\beta, L_\beta\right]
\label{eq:Kinner}\\
&=& \left[ R_0, L_0\right]
+ \left(H_k+R_0\beta_k\right)
\left(zI- \left(\Lambda_k + 
\alpha_k\beta_k\right)\right)^{-1}
\left[ \alpha_k, 
-\sigma^{-1}\left(H_k+R_0\beta_k\right)^{*}L_0
\right]\nonumber
\end{eqnarray}
-- where the matrix $L_0$ is chosen in such a way
that the matrix $\left[ R_0, L_0\right]$
be unitary, and $\sigma$ is the positive definite
solution of the Riccati-equation (\ref{ric}) 
-- is a
square inner function. 

\begin{remark}
Let us
observe that Proposition \ref{prop:zeros_inner}
implies (using that the pair
$\left( \Lambda_k, \alpha_k\right)$ is reachable)
that the finite zero module 
$Z\left(L_\beta\right)$ and the
kernel module 
$\cW\left(\ker L_\beta\right)$
of $L_\beta$ is trivial.
\end{remark}

\medskip
Now define the function $F_{\text{\bf r}}$ as follows.
\begin{equation}\label{f1_def}
F_{\text{\bf r}} = F L_\beta\;.
\end{equation}
Then $F_{\text{\bf r}} L_\beta^{*} = F L_\beta L_\beta^{*} = 
F \left(K_\beta K_\beta^{*} 
+ L_\beta L_\beta^{*}\right) = F$,
using that $F K_\beta = 0$ and 
$K_\beta K_\beta^{*} + L_\beta L_\beta^{*} = I$.

The following theorem essentially shows that 
$Z(F_{\text{\bf r}}) = Z(F) \oplus \cW \left(\ker F\right)$
(they are isomorphic as vector spaces).

\begin{theorem}\label{thm:f1}
Let $\left(C, A\right)$ be an observable pair.
Assume that the columns of the function
$H_{fzk}\left(zI-\Lambda_{fzk}\right)^{-1}$
provide a basis in $Z(F)\oplus \cW\left(\text{ker} F\right)$.
Let $\Pi_{fzk}$ be the corresponding solution of
(\ref{matrix_zero_eq_fzk}). 
Consider a maximal solution -- in terms
of $\alpha_0$ and $R_0$ -- of the equation
(\ref{matrix_kernel_eq}) assuming 
-- w.l.o.g. -- that
the column-vectors of the matrix $R_0$ are orthonormal
and the matrices are partitioned according
to (\ref{special_form1}) and 
(\ref{special_form2}).

Consider the function $F_{\text{\bf r}}$ defined in
(\ref{f1_def}).  
Then
\begin{itemize}
\item[(i)]
$F_{\text{\bf r}}$ has the following (in general non-minimal)
realization
\begin{equation}\label{f1_real}
F_{\text{\bf r}} \sim \Sigma_\text{\bf r} = 
\left(
\begin{array}{c|c} A &  
\left( B+\Pi_k\sigma^{-1}H_k^{*}\right)L_0
\\
\hline \rule{0cm}{.42cm}
   C & D L_0
\end{array}
\right)
\end{equation}
where $\sigma$ is the positive definite solution of the
Riccati-equation
(\ref{ric}).
\item[(ii)]
denoting by $\cV^{*}\left(\Sigma_{\text{\bf r}}\right)$,  
$\cC^{*}\left(\Sigma_{\text{\bf r}}\right)$ 
the maximal output-nulling controlled
invariant subspace and the minimal input-containing
subspace, respectively, of the {\bf realization} of
$F_{\text{\bf r}}$ provided in (\ref{f1_real}) we get that
\begin{eqnarray}
\cV^{*}\left(\Sigma_{\text{\bf r}}\right) &=& 
\cV^{*}\left(\Sigma\right)\label{v_v1}\\
\cV^{*}\left(\Sigma_{\text{\bf r}}\right) \cap
\cC^{*}\left(\Sigma_{\text{\bf r}}\right) &=& \left\{ 0 \right\}
\label{v1_r1}\\
\left(\cV^{*}\left(\Sigma\right) 
\cap \cC^{*}\left(\Sigma\right)\right) 
\vee
\cC^{*}\left(\Sigma_{\text{\bf r}}\right) &=&
\cC^{*}\left(\Sigma\right)
\label{vr_v1r1}\\
\noalign{and}
\cW(\ker F_{\text{\bf r}} ) &=& \left\{ 0 \right\}\;,
\end{eqnarray}
\item[(iii)] 
the reachability subspace of the given realization of $F$
contains that of (\ref{f1_real}), i.e.
\[
 \left< A \mid\left(B+\Pi_k\sigma^{-1}H_k^{*}\right)L_0 
\right>
\subset
\left< A \mid B \right>
\]
and if \begin{itemize}
\item[a)] 
all the eigenvalues of the matrix $A$ have
non-positive real part, or
\item[b)]
the matrices
\begin{eqnarray}\label{eqn:disjoint_sp}
A \quad \text{and}\quad -A^{*}&&
\text{have no common eigenvalues, and}\\
\label{eqn:reach_AB_ACbar}
\text{the pair}\quad 
( A, \overline C^{*} )
 &&
\text{is stabilizable (in continuous time sense)}
\end{eqnarray}
where $\overline C = C P + D B^{*}$ and
$P$ is the solution of the Lyapunov-equation 
\begin{equation}\label{eqn:ljap_F_P}
AP + PA^{*} + BB^{*} = 0\;,
\end{equation}
\end{itemize}
then the 
reachability subspaces of the given
realizations of $F$ and $F_{\text{\bf r}}$ coincide, i.e.
\begin{equation}\label{eqn:reach_F_Fr}
\left< A \mid B \right> = \left< A \mid
\left(B+\Pi_k\sigma^{-1}H_k^{*}\right)L_0 
\right>
\end{equation}
\item[(iv)]
if the reachability subspaces above coincide then 
the finite zero matrix of $F_{\text{\bf r}}$
(will be denoted by $\Lambda_f(F_{\text{\bf r}})$) is  
given by 
\begin{equation}\label{eqn:finite_zero_Fr}
\Lambda_f (F_{\text{\bf r}}) = 
\left[
\begin{array}{cc}
-\sigma^{-1}\left(\Lambda_k+\alpha_k\beta_k\right)^{*}
\sigma & 
\Lambda_{kf} + \sigma^{-1}
\left(\beta_k^{*}R_0^{*}+H_k^{*}\right)H_f \\
0 & \Lambda_f
\end{array}
\right]\;.
\end{equation}
\end{itemize}
\end{theorem}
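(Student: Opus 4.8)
The plan is to treat the four parts in order, since each later part leans on the structural identities extracted earlier. For part (i) I would start from the standard cascade realization of the product $F_{\text{\bf r}}=FL_\beta$, reading off the realization of $L_\beta$ from \eqref{eq:Kinner}: state matrix $\Lambda_k+\alpha_k\beta_k$, input matrix $-\sigma^{-1}(H_k+R_0\beta_k)^{*}L_0$, output matrix $H_k+R_0\beta_k$, feedthrough $L_0$. The cascade then has state matrix $\left(\begin{smallmatrix} A & B(H_k+R_0\beta_k)\\ 0 & \Lambda_k+\alpha_k\beta_k\end{smallmatrix}\right)$, and I would apply the similarity $T=\left(\begin{smallmatrix} I & \Pi_k\\ 0 & I\end{smallmatrix}\right)$. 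Using $A\Pi_k+BH_k=\Pi_k\Lambda_k$ and $BR_0=\Pi_k\alpha_k$ from \eqref{pi_1}--\eqref{alpha1}, the off-diagonal block $A\Pi_k+B(H_k+R_0\beta_k)-\Pi_k(\Lambda_k+\alpha_k\beta_k)$ collapses to $(BR_0-\Pi_k\alpha_k)\beta_k=0$, so the transformed state matrix is block diagonal; the output row becomes $(C,\,C\Pi_k+D(H_k+R_0\beta_k))=(C,0)$ by $C\Pi_k+DH_k=0$ and $DR_0=0$; and the top input block becomes $(B+\Pi_k\sigma^{-1}H_k^{*})L_0$ after cancelling $\Pi_k\sigma^{-1}\beta_k^{*}R_0^{*}L_0$ via $R_0^{*}L_0=0$. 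The second coordinate block is now unobservable and is deleted, leaving exactly \eqref{f1_real}.

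For part (ii) I would prove \eqref{v_v1} by a two–way correspondence of Rosenbrock solutions. Given any solution $(\Pi',H',\Lambda')$ of the $\Sigma_{\text{\bf r}}$–equation, the triple $([\Pi',\Pi_k],[L_0H',H_k],\left(\begin{smallmatrix}\Lambda' & 0\\ -\sigma^{-1}H_k^{*}L_0H' & \Lambda_k\end{smallmatrix}\right))$ solves the $\Sigma$–equation (using \eqref{pi_1} and $DR_0=0$), whence $\Im\Pi'\subset\cV^{*}(\Sigma)$; conversely, from a maximal $\Sigma$–solution $\Pi_{\max}$ and the choice $H'=L_0^{*}H_{\max}$ one checks, via $L_0L_0^{*}=I-R_0R_0^{*}$ and $\Im\Pi_k\subset\Im\Pi_{\max}$, that $(\Pi_{\max},H',\Lambda')$ solves the $\Sigma_{\text{\bf r}}$–equation, so $\cV^{*}(\Sigma)\subset\cV^{*}(\Sigma_{\text{\bf r}})$, and Lemma \ref{lem:max_pi} gives \eqref{v_v1}. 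Triviality of $\cW(\ker F_{\text{\bf r}})$ I would get directly: if $L_\beta u=K_\beta w$ then $L_\beta^{*}L_\beta=I$, $L_\beta^{*}K_\beta=0$ force $u=0$, so $\ker F_{\text{\bf r}}=\{0\}$ (here the columns of $K_\beta$ span $\ker F$ by Proposition \ref{prop:gen_K0} and Remark \ref{rem:Kbeta}), and Corollaries \ref{cor:ker_triv} and \ref{cor:nu_r} applied to $\Sigma_{\text{\bf r}}$ then yield \eqref{v1_r1}. Equation \eqref{vr_v1r1} is the delicate point: I would work with the output–nulling reachable sets, writing every admissible $\Sigma$–input as $u=L_0u'+R_0w$, noting that $DR_0=0$ leaves the constraint $Cx+Du=Cx+D_{\text{\bf r}}u'$ unchanged and that both $BR_0w=\Pi_k\alpha_k w$ and the discrepancy $\Pi_k\sigma^{-1}H_k^{*}L_0u'$ lie in $\Im\Pi_k=\cR^{*}(\Sigma)$; since $\cR^{*}(\Sigma)$ is an output–nulling reachability subspace, one may steer freely inside it with zero output, matching $\Sigma$– and $\Sigma_{\text{\bf r}}$–trajectories modulo $\cR^{*}(\Sigma)$ to conclude $\cC^{*}(\Sigma)=\cR^{*}(\Sigma)\vee\cC^{*}(\Sigma_{\text{\bf r}})$.

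For part (iii) the inclusion is immediate: $\Im B\subset\Im B_{\text{\bf r}}+\Im\Pi_k$ and $\Im\Pi_k\subset\Im\Pi_{fzk}\subset\langle A\mid B\rangle$ give $\langle A\mid B_{\text{\bf r}}\rangle\subset\langle A\mid B\rangle$. Since moreover $B=B_{\text{\bf r}}L_0^{*}+\Pi_k(\alpha_kR_0^{*}-\sigma^{-1}H_k^{*}L_0L_0^{*})$, the reverse inclusion is equivalent to $\Im\Pi_k\subset\langle A\mid B_{\text{\bf r}}\rangle$, i.e.\ to showing every left eigenvector $w^{*}A=\mu w^{*}$ with $w^{*}B_{\text{\bf r}}=0$ satisfies $w^{*}\Pi_k=0$. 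Setting $v^{*}=w^{*}\Pi_k$ and substituting the expression for $w^{*}B$ obtained from $w^{*}B_{\text{\bf r}}=0$, $BR_0=\Pi_k\alpha_k$, \eqref{beta_def} and the Riccati equation \eqref{ric} into $w^{*}(A\Pi_k+BH_k)=w^{*}\Pi_k\Lambda_k$, I am led to $v^{*}\bigl(-\sigma^{-1}(\Lambda_k+\alpha_k\beta_k)^{*}\sigma\bigr)=\mu v^{*}$; hence if $v\neq0$ then $\mu$ is an eigenvalue of the flipped block, whose spectrum lies in the open right half–plane because $\Lambda_k+\alpha_k\beta_k$ is stable. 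Under hypothesis (a), $\Re\mu\le0$ is a contradiction, forcing $v=0$ and thus \eqref{eqn:reach_F_Fr}. This spectral step is the main obstacle, and hypothesis (b) is the harder case: there I would use that \eqref{eqn:disjoint_sp} makes the solution $P$ of \eqref{eqn:ljap_F_P} unique with $\Im P=\langle A\mid B\rangle$, express $w^{*}\overline{C}^{*}=(w^{*}P)C^{*}+(w^{*}B)D^{*}$, feed in the relation $w^{*}P(\mu I+A^{*})=-w^{*}BB^{*}$ coming from \eqref{eqn:ljap_F_P}, and invoke the stabilizability \eqref{eqn:reach_AB_ACbar} of $(A,\overline{C}^{*})$ to exclude a right half–plane $\mu$ with $v\neq0$; the no–common–eigenvalue condition is precisely what prevents the flipped zero from colliding with a pole and producing a genuine cancellation.

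Finally, for part (iv) I would use $\cW(\ker F_{\text{\bf r}})=\{0\}$ from part (ii), so that the whole $\Lambda^{\text{\bf r}}_{fzk}$ is the finite–zero matrix $\Lambda_f(F_{\text{\bf r}})$, together with the coincidence of reachability subspaces giving $\Im\Pi^{\text{\bf r}}_{fzk}=\cV^{*}(\Sigma_{\text{\bf r}})\cap\langle A\mid B_{\text{\bf r}}\rangle=\cV^{*}(\Sigma)\cap\langle A\mid B\rangle=\Im\Pi_{fzk}$, so I may take $\Pi^{\text{\bf r}}_{fzk}=\Pi_{fzk}=[\Pi_k,\Pi_f]$. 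Solving the $\Sigma_{\text{\bf r}}$–Rosenbrock equation with this $\Pi$, the feedthrough row forces $D_{\text{\bf r}}H^{\text{\bf r}}=DH_{fzk}$ and, using $L_0^{*}L_0=I$ and $L_0^{*}R_0=0$, yields $H^{\text{\bf r}}=L_0^{*}H_{fzk}$. Substituting into the top row and reducing with \eqref{pi_1}, \eqref{alpha1} and $L_0L_0^{*}=I-R_0R_0^{*}$ gives $\Pi_{fzk}(\Lambda^{\text{\bf r}}_{fzk}-\Lambda_{fzk})=\Pi_k\bigl(-\alpha_kR_0^{*}H_{fzk}+\sigma^{-1}H_k^{*}(I-R_0R_0^{*})H_{fzk}\bigr)$; since the columns of $\Pi_{fzk}$ are independent, the $\Pi_f$–row vanishes (leaving $\Lambda_f$ intact) while the $\Pi_k$–row, after inserting \eqref{beta_def} and eliminating $H_k^{*}(I-R_0R_0^{*})H_k$ through \eqref{ric}, reproduces the two remaining blocks of \eqref{eqn:finite_zero_Fr}. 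These last manipulations are routine once \eqref{ric} and \eqref{beta_def} are in hand, so I regard the reachability equality in part (iii) as the only substantial difficulty.
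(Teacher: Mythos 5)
Your parts (i), (ii) and (iv) are, in substance, the paper's own proof with harmless reorganization. In (i) you block-diagonalize the cascade realization by the similarity with upper-right block $\Pi_k$ and delete the unobservable coordinate; the paper instead multiplies the transfer functions directly, but both computations rest on exactly the identities \eqref{pi_1}, \eqref{alpha1} and $R_0^{*}L_0=0$. In (ii) your two-way correspondence of Rosenbrock solutions for \eqref{v_v1} matches the paper's argument, and your derivation of $\cW(\ker F_{\text{\bf r}})=\{0\}$ is a genuinely different (and cleaner) route: you prove the stronger statement $\ker F_{\text{\bf r}}=\{0\}$ from $L_\beta^{*}L_\beta=I$, $L_\beta^{*}K_\beta=0$, and then read \eqref{v1_r1} off Corollaries \ref{cor:nu_r} and \ref{cor:ker_triv}, whereas the paper first verifies the algebraic criterion of Corollary \ref{cor:ker_triv2} for $\Sigma_{\text{\bf r}}$ and deduces triviality of the kernel module from it. (For your route note that Proposition \ref{prop:gen_K0} expresses kernel elements through $K_0$, not $K_\beta$, so you also need $L_\beta^{*}K_0=0$; this follows from $K_0=K_\beta W$ with $W$ proper with proper inverse, Remark \ref{rem:K0}.) Your treatment of \eqref{vr_v1r1}, however, is only a sketch: "matching trajectories modulo $\cR^{*}(\Sigma)$" must be expanded into the paper's two separate inductions, with the explicit substitutions $u=R_0u_1+L_0u_2$, $\xi=\xi_1+\Pi_k v$ and the verification that the discrepancy produced at each step stays in $\Im\Pi_k$; the idea is right but the work is not done. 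Part (iv) coincides with the paper's computation.

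The genuine gap is part (iii), case b), which you correctly identify as the crux and then only gesture at. Listing the ingredients ($P$, $\overline C$, stabilizability, spectral disjointness) is not an argument, and the sentence about the disjointness condition "preventing the flipped zero from colliding with a pole" proves nothing. What is actually required: take $\xi$ whose columns span $\left< A\mid (B+\Pi_k\sigma^{-1}H_k^{*})L_0\right>^{\perp}$, so $\xi^{*}A=\kappa\xi^{*}$ and $\xi^{*}(B+\Pi_k\sigma^{-1}H_k^{*})L_0=0$; combine the Lyapunov equation \eqref{eqn:ljap_F_P} with \eqref{eqn:B_form_Pik} and \eqref{eqn:AB_form_Pik} to obtain the intertwining relation
\[
\kappa\,\xi^{*}\left(P+\Pi_k\sigma^{-1}\Pi_k^{*}\right)
=-\xi^{*}\left(P+\Pi_k\sigma^{-1}\Pi_k^{*}\right)A^{*}\;,
\]
then use \eqref{eqn:disjoint_sp} (a Sylvester-uniqueness argument, since $\mathrm{spec}(\kappa)\subset\mathrm{spec}(A)$) to conclude $\xi^{*}(P+\Pi_k\sigma^{-1}\Pi_k^{*})=0$; a further computation using $D(H_k+R_0\beta_k)=-C\Pi_k$ then yields $\xi^{*}\overline C^{*}=0$, so the eigenvalues of $\kappa$ are uncontrollable modes of $(A,\overline C^{*})$ and hence lie in the closed left half-plane by \eqref{eqn:reach_AB_ACbar}; finally the same right-half-plane spectral argument as in case a) forces $\xi^{*}\Pi_k=0$ and then $\xi^{*}B=0$. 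The auxiliary matrix $P+\Pi_k\sigma^{-1}\Pi_k^{*}$ is the key invention here and is entirely absent from your plan (your intermediate claim $\Im P=\left< A\mid B\right>$ is also unjustified under \eqref{eqn:disjoint_sp} alone — only the inclusion $\Im P\subset\left< A\mid B\right>$ holds in general — though it is not needed). A secondary, fixable imprecision affecting both cases a) and b): you quantify only over left \emph{eigenvectors} $w^{*}A=\mu w^{*}$, but the orthogonal complement of a reachability subspace is merely $A^{*}$-invariant, hence spanned by \emph{generalized} eigenvectors; you must either propagate your computation along Jordan chains or, as the paper does, work with a full basis matrix $\xi$ and the induced map $\kappa$, so that the spectral contradiction applies to the invariant subspace $\Im(\xi^{*}\Pi_k)$ rather than to one eigenvector at a time.
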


Let us observe that the theorem shows that
as a result of factoring out
$\cW (\ker F)$ the {\it virtual zeros}
of $F$ are materialized as {\it finite zeros}
of $F_{\text{\bf r}}$ appearing on the right half plane of
$\CC$ together with preserving the original finite zeros
of $F$.

\begin{proof}
(i)
Let us first compute a realization of $F_{\text{\bf r}}$.
\begin{eqnarray}
F_{\text{\bf r}}(z)& = & F(z) L_\beta (z) 
\nonumber \\
&=& \left(D+C\left(zI-A\right)^{-1}B\right)
\left(
L_0
- \left(H_k+R_0\beta_k\right)
\left(zI- \left(\Lambda_k + 
\alpha_k\beta_k\right)\right)^{-1}
\sigma^{-1}H_k^{*}L_0
\right) 
\nonumber \\
&=& 
DL_0 + C\left (zI-A\right)^{-1} 
\left(BL_0 + \Pi_k \sigma^{-1} H_k^{*}L_0\right)\;,
\label{eq:real_f1}
\end{eqnarray}
using the identities
\[
B\left(H_k+R_0\beta_k\right) = 
\left(zI-A\right)\Pi_k -
\Pi_k \left(zI-\Lambda_k - \alpha_k\beta_k\right)
\]
and 
\[
C \Pi_k + D\left(H_k+R_0\beta_k\right) = 0\;,
\]
proving part (i)

(ii)
According to Lemma \ref{lem:max_pi} 
to characterize the space 
$\cV^{*}\left(\Sigma_{\text{\bf r}}\right)$
a maximal solution of equation (\ref{matrix_zero_eq_max})
should be considered.
To this aim compute the following product:
\[
\left[
\begin{array}{cc}
A & \left(B+\Pi_k\sigma^{-1}H_k^{*}\right)L_0 \\
C & DL_0
\end{array}
\right]
\left[
\begin{array}{c}
\Pi_\text{max} \\ L_0^{*} H_\text{max}
\end{array}
\right]\;.
\]
Let us take the first element:
\begin{multline}
A\Pi_\text{max} + \left(B+\Pi_k\sigma^{-1}H_k^{*}\right) L_0
L_0^{*} H_\text{max} \\
= \Pi_\text{max}\Lambda_\text{max} - BH_\text{max} 
+ B L_0L_0^{*} H_\text{max} + 
\Pi_k\sigma^{-1}H_k^{*} L_0L_0^{*}H_\text{max} \\
\rule{0in}{3ex}
= \Pi_\text{max} \Lambda_\text{max} - BR_0R_0^{*}H_\text{max} + 
\Pi_k\sigma^{-1}H_k^{*} L_0L_0^{*}H_\text{max} \\
= \Pi_\text{max} \Lambda_\text{max} -
\Pi_k \alpha_k R_0^{*} H_\text{max} +
\Pi_k \sigma^{-1} H_k^{*} L_0 L_0^{*} H_\text{max}\;. 
\end{multline}

On the other hand
\[
C\Pi_\text{max} + DL_0 L_0^{*}H_\text{max} = 
-DH_\text{max}
+ D\left(I-R_0R_0^{*}\right) H_\text{max} = 0\;,
\]
proving that equation
\begin{equation}\label{pre_pi_in_pibar}
\left[
\begin{array}{cc}
A & \left(B+\Pi_k\sigma^{-1}H_k^{*}\right)L_0 \\
C & DL_0
\end{array}
\right]
\left[
\begin{array}{c}
\Pi_\text{max} \\ L_0^{*} H_\text{max}
\end{array}
\right]
 = 
\left[
\begin{array}{c}
\Pi_\text{max} \Lambda_e\\
0
\end{array}
\right]
\end{equation}
holds, for some matrix $\Lambda_e$ using that
$\Im \Pi_k \subset \Im \Pi_\text{max}$ and thus
proving that
$\Im \Pi_\text{max} \subset 
\cV^{*}\left(\Sigma_{\text{\bf r}}\right)$.

\bigskip
To prove the converse inclusion 
let us assume that the matrices $\bar\Pi$,
$\bar H$, $\bar\Lambda$ provide a maximal
solution of the equation
\begin{equation}\label{max_pi_f1}
\left[
\begin{array}{cc}
A & \left(B+\Pi_k\sigma^{-1}H_k^{*} \right)L_0 \\
C & DL_0
\end{array}
\right]
\left[
\begin{array}{c}
\bar\Pi \\ \bar H
\end{array}
\right] = 
\left[
\begin{array}{c}
\bar \Pi \bar\Lambda \\ 0
\end{array}
\right]\;.
\end{equation}
Due to the maximality we already have that 
$\Im \Pi_k \subset \Im \Pi_{fzk} \subset \Im \Pi$.
Rearranging the terms in
(\ref{max_pi_f1}) we get that
\[
\left[
\begin{array}{cc}
A & B \\
C & D
\end{array}
\right]
\left[
\begin{array}{c}
\bar\Pi \\ L_0 \bar H
\end{array}
\right] = 
\left[
\begin{array}{c}
\bar \Pi \bar\Lambda -
\Pi_k \sigma^{-1}H_k^{*}L_0^{*}\bar H\\ 0
\end{array}
\right] = 
\left[
\begin{array}{c}
\bar \Pi \bar\Lambda' \\ 0
\end{array}
\right]
\]
(where $\bar\Lambda'$ defined in an obvious way), 
giving that
\[
\Im \left(\bar \Pi\right) \subset 
\Im \left(\Pi_\text{max}\right)\;,
\]
thus
\[
\cV^{*}\left(\Sigma_{\text{\bf r}}\right) = 
\Im \left(\bar\Pi\right) =
\Im \left(\Pi_\text{max}\right) = 
\cV^{*}\left(\Sigma\right)\;,
\]
proving (\ref{v_v1}).

\bigskip
To prove that $\cV^{*}\left(\Sigma_{\text{\bf r}}\right) 
\cap \cC^{*}\left(\Sigma_{\text{\bf r}}\right) =
\left\{ 0 \right \}$ Corollary \ref{cor:ker_triv2}
can be applied giving that solutions of
\[
\left[
\begin{array}{c}
\left(B + \Pi_k\sigma^{-1} H_k^{*}\right)
L_0 \\ D L_0
\end{array}
\right] \xi =
\left[
\begin{array}{c}
\bar \Pi \eta \\ 0
\end{array}
\right]
\]
should be considered. Rearranging 
the first equation
we obtain that
\[
BL_0\xi = \bar \Pi \eta 
- \Pi_k \sigma^{-1} H_k^{*}L_0\xi \in 
\Im \bar \Pi = \Im \Pi_\text{max}\;.
\]
Using the second equation: 
$D \left(L_0\xi\right) = 0$, 
the maximality of 
$R_0$ in (\ref{matrix_kernel_eq}) 
gives that $L_0\xi \in \Im \left(R_0\right)$.
This implies that $\xi = 0$, 
consequently $\eta = 0$.
Thus
\[
\cV^{*}\left(\Sigma_{\text{\bf r}}\right) 
\cap
\cC^{*}\left(\Sigma_{\text{\bf r}}\right) 
= \left\{ 0 \right\}\;,
\]
or in other words the module
$\cW \left(\ker F_{\text{\bf r}}\right)$ is trivial.

\bigskip
To prove (\ref{vr_v1r1}) we first 
verify the inclusion
$\cC^{*}\left(\Sigma_{\text{\bf r}}\right) 
\subset \cC^{*}\left(\Sigma\right)$.
Since the elements of 
$\cC^{*}\left(\Sigma_{\text{\bf r}}\right)$ are those
vectors in the state-space 
which are reachable from
the origin via a trajectory 
producing no output 
we might apply
an induction argument. Obviously,
$0 \in \cC^{*}\left(\Sigma_{\text{\bf r}}\right)
\cap \cC^{*}\left(\Sigma\right)$.  

Now, if  
$\xi \in \cC^{*}\left(\Sigma_{\text{\bf r}}\right) 
\cap \cC^{*}\left(\Sigma\right)$ 
and equations
\begin{eqnarray*}
\eta &=& A \xi 
+ \left(BL_0 + \Pi_k\sigma^{-1}H_k^{*}
L_0\right) u \\
0 &= & C\xi + DL_0 u
\end{eqnarray*}
hold, then 
$\eta \in \cC^{*}\left(\Sigma_{\text{\bf r}}\right)$
and $A\xi + B L_0u \in \cC^{*}\left(\Sigma\right)$, while
$\Pi_k \sigma^{-1}H_k^{*} L_0 u \in \Im \Pi_k =
\cV^{*}\left(\Sigma\right)\cap 
\cC^{*}\left(\Sigma\right)$. 
Thus $\eta \in \cC^{*}\left(\Sigma\right)$, as well.

By induction this proves that
\[
\cC^{*}\left(\Sigma_{\text{\bf r}}\right) 
\subset \cC^{*}\left(\Sigma\right)\;.
\]

\medskip
Conversely, if 
$\xi \in \cC^{*}\left(\Sigma\right) \cap
\left(\cC^{*}\left(\Sigma_{\text{\bf r}}\right) \vee
\Im \Pi_k\right)$
and
\begin{eqnarray*}
\eta &=& A \xi + B u \\
0 &= & C\xi + D u
\end{eqnarray*}
then $\eta \in \cC^{*}\left(\Sigma\right)$. Introducing
the notation $u_1 = R_0^{*} u$,
$u_2 = L_0^{*}u$ we get that
$u = R_0 u_1 + L_0 u_2$.

The assumption 
$\xi \in \left(\cC^{*}\left(\Sigma_{\text{\bf r}}\right) \vee
\Im \Pi_k\right)$ implies that
\[
\xi = \xi_1 + \Pi_k v\;,
\]
for some vectors $\xi_1, v$, where
$\xi_1 \in \cC^{*}\left(\Sigma_{\text{\bf r}}\right)$.
Now
\begin{eqnarray*}
C\xi_1 + D\left(L_0 u_2 - H_k v\right) &=& 
C\xi_1 + D L_0 u_2 - D H_k v + D R_0R_0^{*} H_k v \\
&=& 
C\xi_1 + DL_0 u_2 + C\Pi_k v \\
&=& C\xi + Du = 0
\end{eqnarray*}
implying that
\[
A \xi_1 + \left(B + \Pi_k\sigma^{-1}H_k^{*}\right)
L_0
\left(u_2 - L_0^{*} H_k v \right) \in
\cC^{*}\left(\Sigma_{\text{\bf r}}\right)\;.
\]
On the other hand 
\begin{eqnarray*}
\eta &=& A\xi + B u \\
&=& A\xi_1 + A\Pi_k v + B R_0 u_1 + B L_0 u_2 \\
&=& A\xi_1 + \left(B+\Pi_k\sigma^{-1}H_k^{*}\right)
L_0 \left( u_2 - L_0^{*}H_k v\right) 
- \Pi_k \sigma^{-1} H_k^{*}
L_0\left( u_2 - L_0^{*}H_1 v\right) \\
&& \qquad + B L_0L_0^{*} H_k v 
+ A\Pi_k v + B R_0 u_1 \\
&=&
A\xi_1 + \left(B+\Pi_k\sigma^{-1}H_k^{*}\right)
L_0 \left( u_2 - L_0^{*}H_k v\right)
\\
&& \qquad - \Pi_k \sigma^{-1} H_k^{*}
L_0 \left(u_2 - L_0^{*}H_k v\right) 
+\Pi_k\Lambda_k v 
+ \Pi_k \alpha_k \left(u_2 - R_0^{*}H_k v\right)\;.
\end{eqnarray*}
thus
\[
\eta \in \left(\cC^{*}\left(\Sigma_{\text{\bf r}}\right)
\vee \Im \left(\Pi_k\right)\right)\;.
\]

Induction argument gives that
\[
\cC^{*}\left(\Sigma\right) \subset 
\left(\cC^{*}\left(\Sigma_{\text{\bf r}}\right) \vee 
\Im \Pi_k\right)
\]
Consequently
\[
\cC^{*}\left(\Sigma\right)
=
\Im \Pi_k \vee \cC^{*}\left(\Sigma_{\text{\bf r}}\right)  =
\left(\cC^{*}(\Sigma)\cap \cV^{*}(\Sigma)\right) \vee \cC^{*}\left(\Sigma_{\bf r}\right)
\;.
\]
Finally the identity (\ref{v_v1}) and 
Corollary \ref{cor:ker_triv2}
imply that 
\[
\cW\left(\ker F_{\text{\bf r}}\right) = \left\{ 0 \right\}\;,
\]
concluding the proof of part (ii).

(iii)
To prove the first part 
let us recall that $\Im \left(\Pi_k\right) \subset 
\Im \left(\Pi_{fzk}\right) = \cV^{*}\left(\sigma\right) \cap < A \mid B >$.
Thus if the column-vectors of the matrix $\xi$ form a basis
in the orthogonal complement of the reachability
subspace of $\left< A\mid B\right>$ i.e. 
$\xi^{*} B = 0$ and $\xi^{*} A = \kappa \xi^{*}$ holds for
some matrix $\kappa$ then $\xi^{*} \Pi_k = 0$, as well.
Consequently, $\xi^{*} \left(B+\Pi_k \sigma^{-1}H_k^{*}\right)L_0 =0$
giving that the columns of $\xi$ are orthogonal to the
elements of $\left< A\mid \left(B+\Pi_k\sigma^{-1}H_k^{*}\right)L_0
\right>$ proving the first inclusion.

To prove the second part of (iii) let us first consider 
two identities.
\begin{eqnarray}
B &=& \left( B + \Pi_k \sigma^{-1}H_k^{*}\right)
\left(L_0L_0^{*} + R_0 R_0^{*}\right) -
\Pi_k \sigma^{-1} H_k^{*} \nonumber \\
&=& \left(B+\Pi_k\sigma^{-1}H_k^{*}\right) L_0L_0^{*} 
+
\Pi_k \left(\sigma^{-1} H_k^{*} R_0R_0^{*} +\alpha_k R_0^{*}
-\sigma^{-1} H_k^{*}\right) \nonumber \\
\label{eqn:B_form_Pik}
&=& \left(B+\Pi_k\sigma^{-1}H_k^{*}\right) L_0L_0^{*}
-\Pi_k \sigma^{-1}\left(\beta_k^{*} R_0^{*} + H_k^{*}\right)
\end{eqnarray}
using that $BR_0 = \Pi_k \alpha_k$, and (\ref{beta_def})
and
\begin{eqnarray}
A\Pi_k + \left(B+\Pi_k\sigma^{-1}H_k^{*}\right)
L_0L_0^{*} H_k &=&
A\Pi_k + B H_k + \Pi_k\sigma^{-1}
\left(\beta_k^{*} R_0^{*} + H_k^{*}\right) H_k 
\nonumber \\
&=&
\Pi_k\Lambda_k + \Pi_k \sigma^{-1} 
\left(\beta_k^{*}R_0^{*} + H_k^{*}\right) H_k 
\nonumber \\
&=&
\Pi_k \left(\Lambda_k + \sigma^{-1}
\left(\beta_k^{*} R_0^{*} + H_k^{*}\right) H_k\right) 
\nonumber \\
&=& - \Pi_k \sigma^{-1} 
\left(\Lambda_k + \alpha_k \beta_k\right)^{*} \sigma\;,
\label{eqn:AB_form_Pik}
\end{eqnarray}
using that $A\Pi_k + B H_k = \Pi_k\Lambda_k$ and equations
(\ref{inner_eq1}), (\ref{inner_eq2}). 

\if 0
Next we prove that the inclusion
\[
\left< A \mid B\right > \supset
\left< A\mid \left(B+\Pi_k\sigma^{-1}H_k^{*}\right)L_0
\right>
\]
holds without the assumption on the eigenvalues of $A$.
In fact, if the column-vectors of the matrix $\xi$ form a basis
in the orthogonal complement of the reachability
subspace of $\left< A\mid B\right>$ then
$\xi^{*} B = 0$ and $\xi^{*} A = \kappa \xi^{*}$ for
some matrix $\kappa$.
Using the equation $BR_0 = \Pi_k\alpha_k$ we obtain that
\[
\xi^{*} \left(A\Pi_k + B
\left(H_k+R_0\bar\beta\right)\right) =
\xi^{*} \Pi_k\left(\Lambda_k + \alpha_k \bar\beta\right)
\]
for {\it any} matrix $\bar \beta$. I.e.
\[
\kappa \xi^{*} \Pi_k = 
\xi^{*} \Pi_k \left(\Lambda_k + \alpha_k \bar \beta\right)
\]
implying that $\xi^{*} \Pi_k = 0$, due to the
obaservation that the eigenvalues of
$\kappa$
form a subset of the eigenvalues of $A$, while the
controllability of the pair 
$\left(\Lambda_k, \alpha_k\right)$ implies that
the eigenvalues of 
$\left(\Lambda_k+\alpha_k\bar\beta\right)$
can be moved -- choosing the matrix 
$\bar \beta$ in an appropriate
way -- to any part of the complex plane. Thus 
\[
\xi^{*} \left(B+\Pi_k\sigma^{-1}H_k^{*}\right) L_o = 0\;,
\]
consequently
\[
\left< A \mid B \right> \supset
\left< A \mid 
\left(B+\Pi_k\sigma^{-1}H_k^{*}\right)L_0\right>\;.
\]
\fi

Consider first the assumption formulated in a),
i.e. if all the eigenvalues of the matrix $A$
have non-positive real part then the reachability
subspaces of the given realizations of $F$ and $F_{\text{\bf r}}$
coincide. 

If the column-vectors of the matrix 
$\xi$ form a basis in the
orthogonal complement of the 
reachability subspace of
the realization above of $F_{\text{\bf r}}$, then
$\xi^{*} A = \kappa \xi^{*}$ for 
some matrix $\kappa$
and $\xi^{*} \left(B+\Pi_k\sigma^{-1}
H_k^{*}\right)L_0 = 0$. The eigenvalues
of the matrix $\kappa$ should form a
subset of those of $A$. 

Equations (\ref{eqn:AB_form_Pik}) and
(\ref{eqn:B_form_Pik}) imply that
$\kappa \xi^{*} \Pi_k = 
- \xi^{*} \Pi_k \sigma^{-1}
\left(\Lambda_k + \alpha_k \beta_k\right)^{*} \sigma$
showing in particular that 
$\Im \left( \xi^{*} \Pi_k\right)$
is $\kappa$-invariant.
Since according to the proof of 
Theorem \ref{inner_fnc_kernel}
the matrix $\Lambda_k+\alpha_k\beta_k$ is
asymptotically stable we get
that on the subspace
$\Im \left( \xi^{*} \Pi_k\right)$
the eigenvalues of the matrix $\kappa$
have positive real part.

But according to the assumption
the spectrum of the matrix $A$ 
is in the closed left half plane, 
consequently the eigenvalues of $\kappa$
should have non-positive real part.
Thus
the equation $\xi^{*}\Pi_k = 0$ holds true
implying that $\xi^{*} B =0$, as well. 
So the columns of $\xi$
are orthogonal to the reachability subspace of the
realization of $F$. I.e.
\[
\left( A\mid B\right> \subset
\left< A\mid \left(B+\Pi_k\sigma^{-1}H_k^{*}
\right)L_0\right>\;,
\]
proving the second part of (iii) using the
assumption formulated in a).

To prove the converse inclusion based on the
assumption b) assume again that
the column-vectors of the matrix 
$\xi$ form a basis in the
orthogonal complement of the 
reachability subspace of
the realization above of $F_{\text{\bf r}}$, then
$\xi^{*} A = \kappa \xi^{*}$ for 
some matrix $\kappa$
and $\xi^{*} \left(B+\Pi_k\sigma^{-1}
H_k^{*}\right)L_0 = 0$.
Now from (\ref{eqn:B_form_Pik}) we get
that
\[
\xi^{*} B = - \xi^{*} \Pi_k\sigma^{-1} 
\left(H_k + R_0 \beta_k\right)^{*}\;.
\]
Multiplying from the right by $D^{*}$ and using 
equation (\ref{alpha1}) and (\ref{pi_1})
we obtain that
\[
\xi ^{*} B D^{*} = \xi^{*} \Pi_k \sigma^{-1} 
\Pi_k^{*} C^{*}\;.
\]
On the other hand multiplying the Lyapunov-equation (\ref{eqn:ljap_F_P})
above from the left by $\xi^{*}$ and using
again equations (\ref{alpha1}),  
(\ref{pi_1}) and (\ref{eqn:AB_form_Pik})
we arrive at the following 
equation
\begin{eqnarray*}
- \xi^{*} P A^{*} &=& \xi^{*} AP  + 
\xi^{*} BB^{*} \\
&=& \kappa \xi^{*} P 
-\xi^{*} \Pi_k\sigma^{-1} 
\left(H_k + R_0 \beta_k\right)^{*} B^{*} \\
&=& 
\kappa \xi^{*} P 
-\xi^{*} \Pi_k\sigma^{-1} 
\left( \left(\Lambda_k+\alpha_k\beta_k\right)^{*} \Pi_k^{*}
- \Pi_k^{*} A^{*}\right) \\
&=& 
\kappa \xi^{*} P  
+ \kappa \xi^{*} \Pi_k \sigma^{-1}\Pi_k^{*} 
+ \xi^{*} \Pi_k \sigma^{-1}\Pi_k^{*}  A^{*}\;.
\end{eqnarray*}
Rearranging it
\[
\kappa \xi^{*}\left( P + \Pi_k\sigma^{-1}\Pi_k^{*}\right) = 
-\xi^{*}\left( P + \Pi_k\sigma^{-1}\Pi_k^{*}\right) A^{*}\;.
\]
Since according to our assumption the spectra of $A$ and
$-A^{*}$ are disjoint but the spectrum of $\kappa$ should be
a subset of that of $A$ we find that
\[
\xi^{*}\left(P + \Pi_k \sigma^{-1}\Pi_k^{*}\right) = 0\;.
\]
Consequently,
\[
\xi^{*} \overline C^{*} = 
\xi^{*} \left( P C^{*} + B D^{*}\right)
= \xi^{*} \left(-\Pi_k \sigma^{-1}\Pi_k^{*} C^{*}
+ 
\Pi_k \sigma^{-1}\Pi_k^{*} C^{*}\right) = 0\;.
\]
Thus the eigenvalues of $\kappa$ belong to
the uncontrollable (with respect to the pair
$\left(A, \overline C^{*}\right)$) eigenvalues
of $A$. According to the assumption these
eigenvalues have non-positive real part, but
equation (\ref{eqn:AB_form_Pik}) 
implies that on the subspace
$\Im \left(\xi^{*} \Pi_k\right)$ the
matrix $(-\kappa)$ should be asymptotically stable.
Thus $\xi^{*} \Pi_k = 0$. Consequently,
\[
\xi^{*} B = - \xi^{*} \Pi_k\sigma^{-1} 
\left(H_k + R_0 \beta_k\right)^{*} = 0\;.
\]
Thus
\[
\left< A\mid B\right> \subset
\left< A\mid 
\left(B+\Pi_k\sigma^{-1}H_k^{*}\right)L_0\right>\;,
\]
proving in this case, as well, 
that these two reachability subspaces coincide.

\medskip
(iv)
To conclude the proof of the theorem the finite zero matrix of
$F_{\bf r}$ should be computed.
According to Theorem  \ref{thm:char_ker}
equation (\ref{v1_r1}) gives that
$\cW\left(\ker F_{\text{\bf r}}\right) = \left\{ 0 \right\}$,
consequently from Corollary \ref{cor:Im_pi} it
follows that to identify the finite zero matrix
of $F_{\text{\bf r}}$ a basis in 
$\cV^{*}\left(\Sigma_{\text{\bf r}}\right) \cap 
\left< A \mid \left(B+\Pi_k\sigma^{-1}
H_k^{*}\right)L_0\right>$
should be considered and taken as 
the matrix ``$\Pi$''
in the corresponding form of the equation
(\ref{matrix_zero_eq_fzk}). Now equation 
(\ref{v_v1}) in part (ii) and the assumption concerning the
reachability subspaces imply that
\[
\cV^{*}\left(\Sigma_{\text{\bf r}}\right) \cap 
\left< A \mid \left(B+\Pi_k\sigma^{-1}H_k^{*}
\right)L_0\right> =
\cV^{*}\left(\Sigma\right) \cap \left< A \mid B\right>
\]  
so the columns of $\Pi_{fzk}$ form a basis 
in this subspace.
Thus it is reasonable to compute the product
\begin{equation}\label{guess_pi}
\left[\begin{array}{cc}
A & \left(B+\Pi_k\sigma^{-1}H_k^{*}\right)L_0 \\
C & DL_0
\end{array}\right]
\left[\begin{array}{c}
\Pi_{fzk} \\
L_0^{*} H_{fzk}
\end{array}\right]\;.
\end{equation}
Let us take the first element:
\begin{multline}
A \Pi_{fzk} +\left(B+\Pi_k\sigma^{-1}H_k^{*}\right)
L_0 L_o^{*} H_{fzk} \\
=
\Pi_{fzk}\Lambda_{fzk} - B H_{fzk} + BL_0L_0^{*} H_{fzk}
+ \Pi_k\sigma^{-1}H_k^{*}L_0L_0^{*}H_{fzk} \\
= \Pi_{fzk} \Lambda_{fzk} - BR_0R_0^{*} H_{fzk} 
+ \Pi_k \sigma^{-1}H_k^{*} L_0L_0^{*} H_{fzk} \\
= 
\Pi_{fzk} \Lambda_{fzk} - \Pi_k \alpha_k R_0^{*} H_{fzk}
+\Pi_k \sigma^{-1} H_k^{*} L_0L_0^{*} H_{fzk}\;.
\end{multline}
Taking the partitioned form of these matrices 
we obtain that the first
block is
\begin{eqnarray*}
\Pi_k \Lambda_k &-& \Pi_k\alpha_kR_0^{*}H_k 
+ \Pi_k \sigma^{-1} H_k^{*} L_0L_0^{*}
H_k \\
&=& \Pi_k \sigma^{-1} \left( \sigma
\left(\Lambda_k - \alpha_k
      R_0^{*}H_k\right) +
H_k^{*}\left(I-R_0R_0^{*}\right)H_k\right) \\
&=&
-\Pi_k\sigma^{-1}\left(\left(\Lambda_k 
-\alpha_k R_0^{*}H_k\right)^{*}
\sigma - \sigma \alpha_k 
\alpha_k^{*} \sigma \right) \\
&=& -\Pi_k \sigma^{-1} 
\left(\Lambda_k + \alpha_k\beta_k
\right)^{*}\sigma\;,
\end{eqnarray*}
and also
\[
\Pi_k \Lambda_k - \Pi_k \alpha_k R_0^{*} H_k 
+\Pi_k \sigma^{-1} H_k^{*} L_0L_0^{*} H_k =
\Pi_k \left(\Lambda_k +
\sigma^{-1}\left(\beta_k^{*} R_0^{*} 
+ H_k^{*}\right) H_k\right)
\]
using the Riccati-equation (\ref{ric}) and the identity 
(\ref{beta_def}).

Let us compute the second block:
\begin{multline}
\Pi_k\Lambda_{kf} + \Pi_f \Lambda_f - \Pi_k \alpha_k R_0^{*} H_f
+ \Pi_k \sigma^{-1} H_k^{*} L_0L_0^{*} H_f  \\
=
\Pi_k \left( \Lambda_{kf} + \sigma^{-1}
\left(\beta_k^{*} R_0^{*} + H_k^{*}\right) H_f \right) + 
\Pi_f \Lambda_f\;.
\end{multline}
On the other hand the second element in (\ref{guess_pi}):
\[
C\Pi_{fzk} + D L_0 L_0^{*} H_{fzk} =
- D H_{fzk} + D \left(I-R_0R_0^{*}\right)H_{fzk} = 0\;,
\]
proving that equation
\begin{equation}\label{pi_in_pibar}
\left[\begin{array}{cc}
A & \left(B+\Pi_k\sigma^{-1}H_k^{*}\right)L_0 \\
C & DL_0
\end{array}\right]
\left[\begin{array}{c}
\Pi_{fzk} \\
L_0^{*} H_{fzk}
\end{array}\right]
=
\left[\begin{array}{c}
\Pi_{fzk}\left(\Lambda_{fzk} 
+ \Gamma H_{fzk}\right) \\
0
\end{array}\right]
\end{equation}
holds, where
\begin{equation}\label{eq:def_gamma}
\Gamma = 
\left[\begin{array}{c}
\sigma^{-1} \left(\beta_k^{*} R_0^{*} 
+ H_k^{*}\right) \\
0
\end{array}\right]\;.
\end{equation}
Thus -- using Corollary \ref{cor:Im_pi} and 
$\cW(\ker F_{\text{\bf r}} ) = \left\{ 0 \right\}$ from part (ii) -- the matrix
\[
\Lambda_{fzk} + \Gamma H_{fzk} =
\left[\begin{array}{cc}
-\sigma^{-1}\left(\Lambda_k 
+\alpha_k\beta_k\right)^{*} \sigma &
\Lambda_{kf} + \sigma^{-1} 
\left(\beta_k^{*} R_0^{*} + H_k^{*}\right)
H_f \\
0 & \Lambda_f
\end{array}\right]
\]
is the finite zero matrix of $F_{\text{\bf r}}$, 
concluding the proof of (iv) and that of the theorem.
\qed\end{proof}

\begin{remark}\label{rem:F_r_left_inv}
Let us note that the function $F_{\text{\bf r}}$ is 
{\it left-invertible}. 
In fact, according to the Remark \ref{rem:left_inv}
and equation (\ref{v1_r1}) it 
remains only to check the
kernel of 
$\left[\begin{array}{c} 
\left(B+\Pi_k\sigma^{-1}H_k^{*}\right)L_0 \\
D L_0
\end{array}\right]$. Now if for some vector $\xi$
the identity 
$\left(B+\Pi_k\sigma^{-1}H_k^{*}\right)L_0 \xi = 0$
holds, then obviously 
 $BL_0 \xi \in \Im \Pi_k \subset \Im \Pi$. 
If moreover
$DL_0 \xi =0$, as well, then -- using 
the maximality of $R_0$
-- $L_0 \xi \in \Im R_0$ should hold. 
But this implies that
$L_0\xi = 0$, so $\xi =0$. I.e. both 
conditions for the
left-invertibility hold.
\end{remark}
\bigskip

\begin{remark}\label{rem:red_pik}
Let us point out that even in the case when there is a reduction in the reachability
subspace the finite zero matrix $\Lambda_f$ of $F$ appears in the finite zero matrix 
of $F_{\bf r}$.

In fact, we are going to show that
\begin{multline}
\dim \left[ \cV^{*}(\Sigma) \cap < A \mid B>\right] -
\dim \left[\cV^{*}(\Sigma _{\bf r}) \cap 
\left< A \mid \left(B+\Pi_k \sigma^{-1}H_k^{*}\right)L_0\right>
\right] \\
=
\dim \left(\Im \Pi_k\right) -
 \dim \left[\Im (\Pi_k) \cap 
\left< A \mid \left(B+\Pi_k \sigma^{-1}H_k^{*}\right)L_0\right> 
\right]
\end{multline}
i.e. the "reduction" affects only the subspace $\cC^{*}(\sigma) \cap \cV^{*}(\sigma) = 
\cR^{*}(\sigma)$

Let us observe that the inclusion $\Im (\Pi_k) \subset \cV^{*}(\Sigma ) =
\cV^{*} (\Sigma_{\bf r})$ implies that the inequality $\geq$ holds trivially.

To prove the converse inequality let us consider a matrix $\xi$ with columns forming a basis in 
the orthogonal complement of the reachability subspace
$\left< A \mid (B+\Pi_k\sigma^{-1}H_k^{*})L_0\right>$. Then 
\[
\text{rank } \xi^{*} \Pi_k = \dim \left(\Im \Pi_k\right) -
 \dim \left[\Im (\Pi_k) \cap 
\left< A \mid \left(B+\Pi_k \sigma^{-1}H_k^{*}\right)L_0\right>\right] \;.
\]
We are going to show that the inclusions
$\Im\, \left[\xi^{*} A^jB\right] \subset\Im\,  \xi^{*} \Pi_k$ hold, for all $j \geq 0$ proving that
$\text{rank } \xi^{*} [ B, AB, A^2B, \dots ] \leq \text{rank} \xi^{*} \Pi_k$.
In fact, equation (\ref{eqn:B_form_Pik}) gives that 
$\xi^{*} B =  \xi^{*} \Pi_k \sigma^{-1} \left(\beta_k^{*}R_0^{*} + H_k^{*}\right)$,
thus $\Im \,\xi^{*} B \subset \Im\,\xi^{*} \Pi_k$. On the other hand
(\ref{eqn:AB_form_Pik}) gives immediately that
$\Im\, \xi^{*}A\Pi_k \subset \Im\, \xi^{*}\Pi_k$. Starting form these observation
we shall prove by induction that 
$\Im\,\xi^{*} A^j\Pi_k \subset \Im\, \xi^{*} \Pi_k$ and
$\Im\,\xi^{*} A^jB \subset \Im\,\xi^{*}\Pi_k$ for all $j\geq 0$.

Equations (\ref{eqn:AB_form_Pik}) and (\ref{eqn:B_form_Pik}) imply that
\begin{eqnarray*}
\xi^{*}AB &= & -\xi^{*} A 
\Pi_k \sigma^{-1} \left(\beta_k^{*}R_0^{*} + H_k^{*}\right) \\
&=& 
\xi^{*}\Pi_k \sigma^{-1} \left(\Lambda_k + \alpha_k\beta_k\right)^{*} 
  \left(\beta_k^{*}R_0^{*} + H_k^{*}\right)
\end{eqnarray*}
Using the equations (\ref{eqn:B_form_Pik}) and $A\Pi_k = -BH_k + \Pi_k \Lambda_k$ we can write
\begin{eqnarray*}
\xi^{*} A^j B &=& -\xi^{*} A^j \Pi_k  \sigma^{-1} \left(\beta_k^{*}R_0^{*} + H_k^{*}\right) \\
&=&
-\xi^{*} A^{j-1}(-B H_k  + \Pi_k \Lambda_k) 
\sigma^{-1} \left(\beta_k^{*}R_0^{*} + H_k^{*}\right)\;.
\end{eqnarray*}
Thus 
$\Im\,\xi^{*} A^jB \subset \Im\,\xi^{*} A^j\Pi_k \subset
\Im\,\xi^{*} A^{j-1}B \vee \Im\,\xi^{*} A^{j-1}\Pi_k
\subset \Im\,\xi^{*}\Pi_k$
by induction.

Consequently,
\begin{multline*}
\dim \left[ \cV^{*}(\Sigma) \cap < A \mid B>\right] -
\dim \left[\cV^{*}(\Sigma _{\bf r}) \cap 
\left< A \mid \left(B+\Pi_k \sigma^{-1}H_k^{*}\right)L_0\right>
\right] \\
=
\dim \left[ \cV^{*}(\Sigma) \cap < A \mid B>\right] -
\dim \left[\cV^{*}(\Sigma ) \cap 
\left< A \mid \left(B+\Pi_k \sigma^{-1}H_k^{*}\right)L_0\right>
\right]  \\
\leq 
\dim <A \mid B > - \dim < A \mid (B+\Pi_k\sigma^{-1}H_k^{*})L_0 > 
\leq  \text{rank } \xi^{*}\Pi_k \\
= 
\dim (\Im \Pi_k ) -
 \dim \left[\Im \Pi_k \cap < A \mid (B+\Pi_k\sigma^{-1}H_k^{*})L_0 >\right]\;,
\end{multline*}
proving the converse inequality, as well.
\end{remark}

\bigskip
\subsection{The zero module $\cW\left(\Im \ F \right)$}

Now let us turn to the analysis of the space
\[
\cW (\Im F)
=\frac{\pi_- (\Im F)}{\Im F \cap z^{-1}\Omega_\infty Y}\,.
\]
A $p$-tuple $h$ is in $\pi_- (\Im F)$ if 
it is strictly proper and there
exists a polynomial $p$-tuple $\phi$ such that
$h+\phi \in \Im (F)$. Two such functions $h_1, h_2$
are considered to be equivalent 
if $h_1-h_2 \in \Im (F)$.

Based on these observations 
the following theorem gives a ``state-space''
characterization of the elements in
$\cW (\Im F)$.

\begin{theorem}\label{thm:char_im}
Assume that the pair $\left(C, A\right)$ is 
observable. Then
the equivalence classes of 
$\cW\left(\Im \ F\right)$
are determined by the functions
\[
C\left(zI-A\right)^{-1}\beta
\]
where $\beta \in \left< A \mid B \right>$
and two functions -- given 
by the vectors $\beta_1, \beta_2$ --
are considered to be equivalent if
\[
\beta_1-\beta_2 \in 
\cV^{*}\left(\Sigma\right) \vee 
\cC^{*}\left(\Sigma\right)\;.
\]
\end{theorem}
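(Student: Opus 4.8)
The plan is to identify $\cW(\Im F)$ with $\left< A \mid B\right>$ modulo a geometric subspace. Concretely, I would study the map $\beta \mapsto \left[C(zI-A)^{-1}\beta\right]$ from $\left< A \mid B\right>$ into $\cW(\Im F)$ and show it is surjective with kernel $\cV^{*}(\Sigma)\vee\cC^{*}(\Sigma)$. First I would prove surjectivity: any element of $\pi_-(\Im F)$ is $\pi_-(Fu)$ for a rational $u=u_+ + u_-$ (polynomial plus strictly proper). Since $F$ is proper and $u_-$ is strictly proper, $Fu_-$ is strictly proper, hence lies in $\Im F\cap z^{-1}\Omega_\infty Y$ and represents the zero class. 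For the polynomial part, expanding $(zI-A)^{-1}z^{k}=z^{k-1}+\dots+A^{k-1}+(zI-A)^{-1}A^{k}$ gives $\pi_-(Fu_+)=C(zI-A)^{-1}\gamma$ with $\gamma=\sum_k A^k B(u_+)_k\in\left< A\mid B\right>$. Thus every class has the stated form, and since the difference of two such representatives is strictly proper, everything reduces to the single claim: for $\beta\in\left< A\mid B\right>$, one has $C(zI-A)^{-1}\beta\in\Im F$ if and only if $\beta\in\cV^{*}(\Sigma)\vee\cC^{*}(\Sigma)$.

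For the forward inclusion I would treat the two summands separately. If $\beta\in\cV^{*}(\Sigma)$, take $\Pi$ with $\Im\Pi=\cV^{*}(\Sigma)$ solving (\ref{matrix_zero_eq2}), as produced in Lemma \ref{lem:max_pi}; then the identity (\ref{matrix_product_eq}) reads $C(zI-A)^{-1}\Pi=-F\,H(zI-\Lambda)^{-1}$, so each column $C(zI-A)^{-1}\beta$ with $\beta\in\Im\Pi$ lies in $\Im F$. If instead $\beta\in\cC^{*}(\Sigma)$, then the reachability encoding behind Theorem \ref{thm:char_ker} (equations (\ref{reachable_set}) and (\ref{ker_zero_eq})) furnishes polynomials $g,h$ with $(zI-A)g=Bh-\beta$ and $Cg+Dh=0$; substituting $\beta=Bh-(zI-A)g$ and using $Cg=-Dh$ gives $C(zI-A)^{-1}\beta=Fh\in\Im F$. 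By linearity $\beta\in\cV^{*}(\Sigma)\vee\cC^{*}(\Sigma)$ implies $C(zI-A)^{-1}\beta\in\Im F$.

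The converse is the heart of the argument. Suppose $C(zI-A)^{-1}\beta=Fu$ and split $u=u_++u_-$. Comparing polynomial parts, $\pi_+(Fu)=0$ forces $Fu_+=C(zI-A)^{-1}\gamma$ with $\gamma\in\left< A\mid B\right>$ as above, while comparing strictly proper parts yields $Fu_-=C(zI-A)^{-1}(\beta-\gamma)$. For the polynomial input, $Fu_+=C(zI-A)^{-1}\gamma$ forces $Du_+=C(zI-A)^{-1}(\gamma-Bu_+)$; observability of $(C,A)$ makes $w:=(zI-A)^{-1}(\gamma-Bu_+)$ a polynomial with $Cw=Du_+$, so $g=-w,\ h=u_+$ satisfy $(zI-A)g=Bh-\gamma$ and $Cg+Dh=0$, whence $\gamma\in\cC^{*}(\Sigma)$. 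For the strictly proper input, fix a minimal realization $u_-=\hat H(zI-\hat\Lambda)^{-1}\hat G$ with $(\hat\Lambda,\hat G)$ controllable; then $F\,\hat H(zI-\hat\Lambda)^{-1}\hat G=-C(zI-A)^{-1}(\gamma-\beta)$, and Proposition \ref{product_equation} produces $\Pi$ solving (\ref{matrix_zero_eq2}) with $\Pi\hat G=\gamma-\beta$, so $\beta-\gamma=-\Pi\hat G\in\Im\Pi\subset\cV^{*}(\Sigma)$ by Lemma \ref{lem:max_pi}. Therefore $\beta=\gamma+(\beta-\gamma)\in\cC^{*}(\Sigma)\vee\cV^{*}(\Sigma)$, which closes the kernel computation and hence the theorem.

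I expect the converse to be the main obstacle: the key realization is that the two components of a rational preimage $u$ feed two \emph{different} geometric objects — the polynomial part lands in the minimal input-containing subspace $\cC^{*}(\Sigma)$, the strictly proper part in the maximal output-nulling controlled invariant subspace $\cV^{*}(\Sigma)$ — and each must be routed through the appropriate previously established tool (the reachability description of $\cC^{*}(\Sigma)$ from Theorem \ref{thm:char_ker}, and Proposition \ref{product_equation} together with Lemma \ref{lem:max_pi} for $\cV^{*}(\Sigma)$). A secondary point requiring care throughout is the repeated appeal to the observability of $(C,A)$ to guarantee that the relevant resolvent expressions $(zI-A)^{-1}(\cdot)$ are genuinely polynomial.
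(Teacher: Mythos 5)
Your proposal is correct and follows essentially the same route as the paper's own proof: the same splitting of a rational preimage into polynomial and strictly proper parts, the same identification of the polynomial part's contribution with an output-nulling reachable vector in $\cC^{*}\left(\Sigma\right)$, and the same use of Proposition \ref{product_equation} together with Lemma \ref{lem:max_pi} (via the identity (\ref{matrix_product_eq})) to place the strictly proper part's contribution in $\cV^{*}\left(\Sigma\right)$. The only cosmetic difference is that you encode membership in $\cC^{*}\left(\Sigma\right)$ through the polynomial identities $(zI-A)g = Bh-\gamma$, $Cg+Dh=0$, whereas the paper writes out the same condition coefficient-by-coefficient as an output-nulling input sequence.
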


\begin{proof}
Consider a rational $q$-tuple $g(z)$. Assume that
\[
g(z) = H_1\left(zI-\Lambda_1\right)^{-1}G_1 +
g_0 + g_1z + \dots + g_k z^k\;.
\]
Then the observation that 
$z^{j} \left(zI-A\right)^{-1} - 
A^j \left(zI-A\right)^{-1}$ 
is a polynomial implies that
\[
\pi_- \left(F(z) g(z)\right) = 
F(z) H_1\left(zI-\Lambda_1\right)^{-1}G_1
+ C\left(zI-A\right)^{-1} 
\sum_{l=0}^k A^lB g_l\;.
\]
The first term is strictly proper and in the space
$\Im F$ thus the second term
determines the corresponding equivalence class.
By definition any function of the
form $C\left(zI-A\right)^{-1} \beta$ where 
$\beta\in \left< A \mid B\right>$ 
can be obtained this way.
But possibly different $\beta$ vectors might
generate the same equivalence class.

Thus we have to characterize those 
$\beta\in \left< A \mid B\right>$ vectors
for which $C\left(zI-A\right)^{-1}\beta \in
\Im F \cap z^{-1}\Omega_\infty Y$.
To this aim assume that $F g = C\left(zI-A\right)^{-1} \beta$
is strictly proper for some rational function $g$ with the form
given above.

Straightforward calculation gives the 
polynomial part of the product.
Namely it is
\[
D g_k z^k + \sum_{j=0}^{k-1} \left( D g_j + 
\sum_{l=0}^{k-1-j} C A^l B g_{l+j+1}
\right)z^j\;.
\]
This should be zero. Since the 
polynomial part of $g$
gives rise to $C\left(zI-A\right)^{-1} 
\sum_{l=0}^k A^l B g_l$
in the strictly proper part of $F g$, 
we get that
\[
\beta_1 = \sum_{l=0}^k A^l B g_l
\]
 should be an output-nulling reachable element,
or in other words 
$\beta_1 \in \cC^{*}\left(\Sigma\right)$. 
Introducing the 
notation $\beta_2 = \beta - \beta_1$, we obtain
that
$F(z) H_1\left(zI-\Lambda_1\right)^{-1}G_1 =
C\left(zI-A\right)^{-1}\beta_2$.
Using the observability of the
pair $(C, A)$ and Proposition \ref{product_equation}
we obtain that 
\[
\beta_2 = \Pi_1 G_1\;,
\]
where $\Pi_1$ is a solution of the equation
\[
\left[\begin{array}{cc}
A & B \\ C & D
\end{array}\right]
\left[\begin{array}{c} \Pi_1 \\ H_1
\end{array}
\right]
=
\left[
\begin{array}{c}
\Pi_1\Lambda_1 \\ 0
\end{array}
\right]\;.
\]
I.e. $\beta_2$ is in the maximal output-nulling
controlled invariant set, 
$\beta_2 \in \cV^{*}\left(\Sigma\right)$.

Conversely, if $\beta \in \left< A \mid B\right>$,
and $\beta=\beta_1+\beta_2$, 
$\beta_1 \in \cC^{*}\left(\Sigma\right) 
\subset \left<A\mid B\right>$,
$\beta_2 \in \cV^{*}\left(\Sigma\right)$, then $\beta_1$
can be written in the form
\[
\beta_1 = \sum_{l=0}^ k A^lB g_l
\]
in such a way that for the polynomial
$g(z) = \sum_{l=0}^k g_l z^l$ the identity
\[
F(z)g(z) = C\left(zI-A\right)^{-1} \beta_1
\]
holds true. On the other hand assume that the 
triplet $\left(\Pi_\text{max}, H_\text{max}, 
\Lambda_\text{max}\right)$
forms a maximal solution of (\ref{matrix_zero_eq_max}).
Then $\beta_2\in \cV^{*}\left(\Sigma\right)$ implies that
\[
\beta_2 = -\Pi_\text{max} G
\]
for some vector $G$. Now immediate calculation gives that
\[
F(z) H_\text{max}
\left(zI-\Lambda_\text{max}\right)^{-1}G = 
-C(zI-A)^{-1}\Pi_\text{max} G
= C(zI-A )^{-1} \beta_2\;.
\]
Consequently,
\[
F(z) \left(H_\text{max}\left(zI-\Lambda_\text{max}\right)^{-1}G
+ g(z)\right) = C\left(zI-A\right)^{-1} \beta\;,
\]
thus it is in the space $\Im F \cap z^{-1}\Omega_\infty Y$,
concluding the
proof of the theorem.
\qed
\end{proof}

\begin{remark}
The identification of the co-range of the function
$F$ to the factor-space $\left< A\mid B \right>\ / \ 
\left(\left<A \mid B\right> \cap 
\left(\cV^{*}\left(\Sigma\right) 
\vee \cC^{*}\left(\Sigma\right)\right)\right)$
can be found e.g. in \cite{AL-SC-84} (even without
the assumption of the observability of $(C, A)$) but 
without explicitly identifying the functions
in the equivalence classes of $\cW\left(\Im\ F\right)$. 
\end{remark}

\begin{remark}\label{rem:Im_fr_f}Assume that the pair
$(C, A)$ is observable, and the eigenvalues of 
$A$ are in the closed
left half plane or  -- more generally -- conditions (\ref{eqn:disjoint_sp} and
(\ref{eqn:reach_AB_ACbar}) hold.
Consider the function $F_{\text{\bf r}}$ defined in
(\ref{f1_def}). Due to the fact that
it has the same "$\left(C, A\right)$" pair
as the function $F$, the previous theorem 
together with part (ii) and (iii) of 
Theorem \ref{thm:f1} imply
that
\[
\cW\left(\Im \ F_{\text{\bf r}}\right) = \cW\left(\Im \ F \right)\;.
\]
\end{remark}

\subsection{Zeros at infinity}

Let us recall the definition of 
the zero module at infinity:

\[
Z_\infty (F) = \frac{F^{-1}(z^{-1}\Omega_\infty Y)
+ z^{-1}\Omega_\infty U}{\ker F + z^{-1} \Omega_\infty U}\;.
\]
I.e. the $q$-tuples of rational functions $g$ should be
considered for which there exist a strictly
proper rational $q$-tuple $h$ such that
\begin{equation}\label{zero_infty_1}
F (g+h) \quad \text{is strictly proper},
\end{equation}
and $g_1, g_2$ with this property are considered
to be equivalent if for some strictly proper
$q$-tuple $h$ the identity
\[
F (g_1-g_2+h) = 0\;.
\]

\begin{theorem}\label{zero_infty_thm}
Assume that the pair $(C, A)$ is observable.
Then the equivalence classes in
$Z_\infty (F)$ are
determined by the vectors in 
$\cC^{*}\left(\Sigma\right)$ in 
the sense that for any
$\beta \in \cC^{*}\left(\Sigma\right)$ there exists a finite
input sequence producing no output but 
giving $\beta$ as the next immediate state-vector.
The input sequence gives the coefficients of a
polynomial in $F^{-1}\left(z^{-1}\Omega_\infty Y\right)
+ z^{-1}\Omega_\infty U$.

Two polynomials are taken to be equivalent if 
the difference of the corresponding $\beta$
vectors are in $\cR^{*}\left(\Sigma\right) 
= \cV^{*}\left(\Sigma\right) \cap \cC^{*}\left(\Sigma\right) =
\Im \left(\Pi_k\right) $\;, see (\ref{eq_im_pi1}).
\end{theorem}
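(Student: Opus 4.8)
The plan is to exhibit an explicit linear map $\phi$ from polynomial representatives of $Z_\infty(F)$ into $\cC^{*}(\Sigma)$ and to show that it descends to an isomorphism $Z_\infty(F)\cong \cC^{*}(\Sigma)/\cR^{*}(\Sigma)$. The first reduction is to polynomial representatives. Since $(zI-A)^{-1}$ is strictly proper, the product $Fh = Dh + C(zI-A)^{-1}Bh$ is strictly proper whenever $h$ is; hence for an arbitrary rational $q$-tuple $g$ the membership $g\in F^{-1}(z^{-1}\Omega_\infty Y)+z^{-1}\Omega_\infty U$ is equivalent to $F\,\pi_+(g)$ being strictly proper, and the strictly proper part $\pi_-(g)\in z^{-1}\Omega_\infty U$ sits in the denominator. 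Thus every class of $Z_\infty(F)$ has a polynomial representative $g(z)=\sum_{l=0}^k g_l z^l$ with $Fg$ strictly proper, and only such $g$ need be analyzed.

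For such a $g$ I would compute directly that the strictly proper part of $Fg$ equals $C(zI-A)^{-1}\beta$ with $\beta=\sum_{l=0}^k A^l B g_l$, while the requirement $\pi_+(Fg)=0$ unwinds, coefficient by coefficient, into exactly the relations obtained by feeding $g_k,g_{k-1},\dots,g_0$ (in this time order) into the system (\ref{system}) started from rest: the vanishing of the $z^j$-coefficient of $\pi_+(Fg)$ reproduces the output-nulling relations $y(j)=0$, $-k\le j\le 0$, along this trajectory, and $x(1)=\beta$. This is the infinite-zero analogue of the array (\ref{ker_zero_eq}). Comparing with the definition (\ref{reachable_set}) identifies $\beta$ as an output-nulling reachable vector, so $\phi:g\mapsto\beta$ maps the polynomial representatives onto $\cC^{*}(\Sigma)$; surjectivity is immediate because every $\beta\in\cC^{*}(\Sigma)$ arises from such an input sequence.

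The heart of the argument is to prove that $g\sim 0$ in $Z_\infty(F)$ if and only if $\beta\in\cR^{*}(\Sigma)=\cV^{*}(\Sigma)\cap\cC^{*}(\Sigma)$; by linearity of $\phi$ this yields the stated equivalence $\beta_1-\beta_2\in\cR^{*}(\Sigma)$. Since $\beta\in\cC^{*}(\Sigma)$ already holds, this reduces to $g\sim 0\Leftrightarrow\beta\in\cV^{*}(\Sigma)$. For the implication $\beta\in\cV^{*}(\Sigma)\Rightarrow g\sim 0$ I would take a maximal triple $(\Pi_\text{max},H_\text{max},\Lambda_\text{max})$ of (\ref{matrix_zero_eq_max}), write $\beta=\Pi_\text{max}w$ using $\cV^{*}(\Sigma)=\Im\Pi_\text{max}$ (Lemma \ref{lem:max_pi}), and invoke (\ref{matrix_product_eq}) to get $C(zI-A)^{-1}\beta=-F\,\tilde g$ with $\tilde g=H_\text{max}(zI-\Lambda_\text{max})^{-1}w$ strictly proper; then $F(g+\tilde g)=\pi_+(Fg)=0$, so $g+\tilde g\in\ker F$ and $g\sim 0$. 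For the converse, $g\sim 0$ gives a strictly proper $h$ with $g+h\in\ker F$, whence $C(zI-A)^{-1}\beta=\pi_-(Fg)=-Fh$; feeding a minimal (hence controllable) realization $h=H_h(zI-\Lambda_h)^{-1}G_h$ into Proposition \ref{product_equation} produces a $\Pi_h$ solving (\ref{matrix_zero_eq2}) with $\beta=\Pi_h G_h$, so by Lemma \ref{lem:max_pi} $\beta\in\Im\Pi_h\subset\cV^{*}(\Sigma)$ (the case $h=0$ forces $\beta=0$ by observability).

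Putting these together, $\phi$ induces an isomorphism $Z_\infty(F)\cong\cC^{*}(\Sigma)/\cR^{*}(\Sigma)$, and the final identification $\cR^{*}(\Sigma)=\Im(\Pi_k)$ is quoted from (\ref{eq_im_pi1}), giving the statement exactly as phrased. I expect the main obstacle to be the bookkeeping in the second paragraph: verifying cleanly that the vanishing of $\pi_+(Fg)$ reproduces the output-nulling recursion and that $\beta$ is the one-step-ahead state $x(1)$, while keeping the time-ordering of the inputs consistent with (\ref{reachable_set}). The algebraic steps of the third paragraph are comparatively routine once Proposition \ref{product_equation}, equation (\ref{matrix_product_eq}) and Lemma \ref{lem:max_pi} are in hand; the only care needed there is to choose a realization of $h$ with the controllability required by Proposition \ref{product_equation} and to treat the degenerate case $h=0$ separately.
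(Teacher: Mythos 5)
Your proposal is correct and follows essentially the same route as the paper's own proof: reduction to polynomial representatives via the properness of $F$, identification of each class with the output-nulling reachable state $\beta=\sum_{l} A^{l}Bg_{l}$, and characterization of triviality via Proposition \ref{product_equation} (applied to a controllable realization of the strictly proper correction $h$) in one direction and the maximal solution of (\ref{matrix_zero_eq_max}) together with (\ref{matrix_product_eq}) in the other. The only difference is presentational: you package the argument as an explicit map $\phi$ inducing $Z_\infty(F)\cong \cC^{*}\left(\Sigma\right)/\cR^{*}\left(\Sigma\right)$, which the paper leaves implicit.
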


\begin{proof}
Since $F$ is assumed to be proper the function
$F h$ is strictly proper if $h$ is strictly proper. Thus the 
condition in (\ref{zero_infty_1})
states that $F g$ should be strictly proper.
Due to our assumption that the function $F$ is
proper this is equivalent to 
\[
\pi_+ \left(F \pi_+ (g) \right) = 0\;.
\]
Using the notation
\[
\pi_+ \left( g \right) = 
g_0 + g_1 z + \dots g_k z^k\;,
\]
we get that the sequence
$g_k, g_{k-1}, \dots , g_0$ gives an 
output-nulling input sequence, so it
takes the origin into some state-vector
$\beta \in \cC^{*}\left(\Sigma\right)$.

\medskip
Two such sequences are considered to be equivalent if
adding to their difference an appropriate 
strictly proper function a function in $\ker F$ is
obtained. So let us assume that
\[
g (z) = H_1\left(zI - \Lambda_1\right)^{-1}G_1 + 
g_0 + g_1z + \dots + g_kz^k \in \ker F\;.
\]
Under the assumption that the input sequence
$g_k, g_{k-1}, \dots , g_0$ produces no
output we get that the polynomial part of the
product $Fg$ is zero. 
Thus, computing the strictly proper part of 
$F g$ the equation
\[
\left(D+C\left(zI-A\right)^{-1}B\right)
H_1\left(zI-\Lambda_1\right)^{-1}G_1 +
C\left(zI-A\right)^{-1}
\sum_{j=0}^k A^j B g_j = 0
\]
is obtained. Proposition \ref{product_equation}
implies that there exists a matrix $\Pi_1$ such that
equation
\[
\left[\begin{array}{cc}
A & B \\ C & D \end{array}
\right]
\left[
\begin{array}{c}
\Pi_1 \\ H_1 \end{array}
\right] = 
\left[\begin{array}{c} 
\Pi_1 \Lambda_1 \\ 0 
\end{array}
\right]
\;, \quad \Pi_1 G_1 = \sum_{j=0}^k A^j B g_j
\]
hold. 

Conversely, if 
\[
\beta = \sum_{j=0}^k A^j B g_j \in 
\cV^{*}\left(\Sigma\right)\;,
\]
for an output-nulling input sequence
$g_k, g_{k-1}, \dots , g_0$
then there exists a vector $G$ such that
$\beta = \Pi_\text{max} G$. 
Using the identity
\[
F(z) H_\text{max} \left(zI - \Lambda_\text{max}\right)^{-1}
= - C \left(zI-A\right)^{-1} \Pi_\text{max}
\]
straightforward
computation gives that 
\[
H_\text{max} \left(zI - \Lambda_\text{max}\right)^{-1} G
+ \sum_{j=0}^k g_j z^j \in \ker F\;.
\]

Thus the 
polynomial $g_0 + g_1 z + \dots + g_k z^k$
(with ouput-nulling input sequence coefficients)
is considered to be equivalent to zero
if and only if the
state vector 
$\beta = \sum_{j=0}^k A^j B g_j$
is in $\ \Im \ \Pi_\text{max} = 
\cV^{*}\left(\Sigma\right)$.
I.e. $\beta \in \cC^{*}\left(\Sigma\right)
\cap \cV^{*}\left(\Sigma\right)$.
\qed 
\end{proof}

\medskip
\begin{remark}
Again this Theorem should be compared to Theorem 4
in \cite{AL-SC-84}.
\end{remark}
\medskip
\begin{corollary}
Assume that the pair $(C, A)$ is observable. Then the 
subspace $Z_\infty (F)$ is trivial if and only if
\begin{equation}\label{no_zero_infty}
\left\{
B \eta \ \mid \ D \eta = 0
\right\} \subset \cV^{*}\left(\Sigma\right) = \Im \left(\Pi_{max}\right)\,.
\end{equation}
\end{corollary}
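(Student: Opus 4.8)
The plan is to read off the structure of $Z_\infty(F)$ directly from Theorem \ref{zero_infty_thm} and then translate the resulting subspace inclusion into the stated condition. That theorem identifies the equivalence classes of $Z_\infty(F)$ with the vectors $\beta \in \cC^{*}(\Sigma)$, two of them being equivalent precisely when their difference lies in $\cR^{*}(\Sigma) = \cV^{*}(\Sigma) \cap \cC^{*}(\Sigma)$. Hence $Z_\infty(F)$ is isomorphic to the factor space $\cC^{*}(\Sigma)/(\cC^{*}(\Sigma) \cap \cV^{*}(\Sigma))$, and its triviality is equivalent to the inclusion $\cC^{*}(\Sigma) \subset \cV^{*}(\Sigma)$. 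So the entire task reduces to showing that $\cC^{*}(\Sigma) \subset \cV^{*}(\Sigma)$ holds if and only if $\{B\eta \mid D\eta = 0\} \subset \cV^{*}(\Sigma)$.

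First I would record the easy auxiliary inclusion $\{B\eta \mid D\eta = 0\} \subset \cC^{*}(\Sigma)$: given $\eta$ with $D\eta = 0$, starting the system (\ref{system}) at the origin and applying the single input $u(0) = \eta$ yields output $y(0) = D\eta = 0$ and next state $x(1) = B\eta$, so $B\eta$ is output-nulling reachable in the sense of (\ref{reachable_set}). Granting this, the forward implication of the corollary is immediate: if $\cC^{*}(\Sigma) \subset \cV^{*}(\Sigma)$ then $\{B\eta \mid D\eta = 0\} \subset \cC^{*}(\Sigma) \subset \cV^{*}(\Sigma) = \Im(\Pi_\text{max})$.

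For the converse I would argue that, under the assumption $\{B\eta \mid D\eta = 0\} \subset \cV^{*}(\Sigma)$, the subspace $\cV^{*}(\Sigma)$ is forward-invariant under \emph{every} output-nulling transition. Fix the feedback $K$ realizing the controlled invariance, so that $(A+BK)\cV^{*}(\Sigma) \subset \cV^{*}(\Sigma) \subset \ker(C+DK)$. If $x \in \cV^{*}(\Sigma)$ and $x_+ = Ax + Bu$ with $Cx + Du = 0$, then subtracting the identity $(C+DK)x = 0$ gives $D(u - Kx) = 0$, whence $x_+ = (A+BK)x + B(u-Kx)$ with the first summand in $\cV^{*}(\Sigma)$ and the second in $\{B\eta \mid D\eta = 0\} \subset \cV^{*}(\Sigma)$; thus $x_+ \in \cV^{*}(\Sigma)$. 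Applying this step along an output-nulling trajectory from the origin to an arbitrary $x \in \cC^{*}(\Sigma)$ — which exists by the very definition (\ref{reachable_set}) — and starting from $0 \in \cV^{*}(\Sigma)$, an induction gives $x \in \cV^{*}(\Sigma)$, i.e. $\cC^{*}(\Sigma) \subset \cV^{*}(\Sigma)$.

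The one step requiring genuine care, and the main obstacle, is the forward-invariance claim in the converse: a single arbitrary output-nulling input $u$ need not keep the state inside $\cV^{*}(\Sigma)$, and the point is precisely that the assumed inclusion $\{B\eta \mid D\eta = 0\} \subset \cV^{*}(\Sigma)$ supplies exactly the correction term $B(u - Kx)$ needed to bridge the gap between the arbitrary transition and the distinguished one coming from the feedback $K$. Everything else is bookkeeping with the identification of $Z_\infty(F)$ furnished by Theorem \ref{zero_infty_thm}.
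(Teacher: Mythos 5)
Your proposal is correct and follows essentially the same route as the paper: both reduce the statement via Theorem \ref{zero_infty_thm} to the inclusion $\cC^{*}\left(\Sigma\right) \subset \cV^{*}\left(\Sigma\right)$, get the forward direction from the one-step output-nulling reachability of $\left\{ B\eta \mid D\eta = 0\right\}$, and prove the converse by induction along an output-nulling trajectory using the decomposition of $x_+$ into an invariant transition plus a correction term $B\eta$ with $D\eta = 0$. The only cosmetic difference is that you phrase the correction step with the feedback map $K$ (so the correction is $B(u-Kx)$), whereas the paper uses the maximal solution $\left(\Pi_\text{max}, H_\text{max}, \Lambda_\text{max}\right)$ of (\ref{matrix_zero_eq_max}) and the correction $B\left(u - H_\text{max}\xi\right)$; by Lemma \ref{lem:max_pi} these are equivalent formulations of the same argument.
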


\begin{proof}
The previous theorem implies that $Z_\infty (F)$ is
trivial if and only if 
$\cC^{*}\left(\Sigma\right) \subset \cV^{*}\left(\Sigma\right)$.
Since the set $\cC^{*}\left(\Sigma\right)$ contains those vectors which
are reachable from the origin with zero output, and the
set $\left\{ B \eta \ \mid \ D \eta = 0\right\}$ 
contains those vectors which can be reached 
from the origin in one step with zero output, we obtain that
if $Z_\infty (F)$ is trivial then
$\left\{
B \eta \ \mid \ D \eta = 0
\right\} \subset \cV^{*}\left(\Sigma\right)$.

Conversely, assume that $\left\{
B \eta \ \mid \ D \eta = 0
\right\} \subset \cV^{*}\left(\Sigma\right)$. We show by induction
that in this case $\cC^{*}\left(\Sigma\right) 
\subset \cV^{*}\left(\Sigma\right)$.
Consider a maximal solution 
$\left(\Pi_\text{max}, H_\text{max}, 
\Lambda_\text{max}\right)$ 
of (\ref{matrix_zero_eq_max}). According to Lemma 
\ref{lem:max_pi} $\cV^{*}\left(\Sigma\right) = 
\Im \left(\Pi_\text{max}\right)$.
Assume that $x \in \Im (\Pi_\text{max})$, i.e.
$x = \Pi_\text{max} \xi$ for some $\xi$, and
equations
\begin{eqnarray*}
x_+ &=& Ax + Bu \\
0 &=& Cx + Du
\end{eqnarray*}
hold true. Equation (\ref{matrix_zero_eq_max}) gives that
\[
\left[\begin{array}{cc} A & B \\ C & D
\end{array}\right]
\left[\begin{array}{c} \Pi_\text{max} \xi \\ 
H_\text{max}\xi
\end{array}\right] 
=
\left[\begin{array}{c} \Pi_\text{max}\Lambda_\text{max} \xi\\ 0
\end{array}\right]\;.
\]
Taking the difference
\[
\left[\begin{array}{c} B \\ D \end{array}\right]
(u - H_\text{max}\xi ) = 
\left[\begin{array}{c} x_+ 
- \Pi_\text{max}\Lambda_\text{max} \xi \\ 0
\end{array}\right]\;.
\]
The assumption implies that 
$x_+ -\Pi_\text{max}\Lambda_\text{max} \xi = 
B \left(u - H_\text{max}\xi\right) 
\in \Im (\Pi_\text{max})$,
giving that $x_+ \in \Im (\Pi_\text{max})$
and concluding the proof of the corollary.
\qed\end{proof}

\medskip
\begin{remark}
Assume that the pair $(C, A)$ is observable.
Consider the function $F_{\text{\bf r}}$ defined in
(\ref{f1_def}). The previous theorem 
together with part (ii) of 
Theorem \ref{thm:f1} implies
that
\[
Z_\infty \left( F_{\text{\bf r}}\right) 
= Z_\infty\left( F \right)\;.
\]
\end{remark}

\medskip
\subsection{Zero modules of $F$ vs. $F_{\text{\bf r}}$}
\label{subsec:zero_F_Fr}
\medskip
It is worth summarizing the 
connections between the various
zero modules of $F$ and $F_{\text{\bf r}}$. 
This is the subject of the
next proposition.

\begin{proposition}
Assume that $F$ has the realization
\[
F(z) \sim 
\left(
\begin{array}{c|c} A &   B
\\
\hline \rule{0cm}{.42cm}
   C & D
\end{array}
\right)
\]
where $(C, A)$ is an observable pair.

Then the function $F$ has the following factorization
\[
F = F_{\text{\bf r}} L_\beta^{*}
\]
where $L_\beta$ is a tall inner function, and
\begin{itemize}
\item[(i)]
\[
\cW\left(\ker F_{\text{\bf r}}\right) 
= \left\{ 0 \right\}\;;
\]
\item[(ii)]
\[
Z_\infty \left( F_{\text{\bf r}} \right) 
= Z_\infty ( F )\;;
\]
\item[(iii)]
if all the eigenvalues of $A$ are in the closed
left half-plane or conditions 
(\ref{eqn:disjoint_sp}) and 
(\ref{eqn:reach_AB_ACbar}) hold
then  \\
\centerline{the McMillan-degrees of $F$ and $F_{\bf r}$ are equal.}
\item[(iv)]
if the McMillan-degrees of $F$ and $F_{\bf r}$ are equal then
\begin{itemize}
\item[(a)]
\[
\cW\left(\Im\ F_{\text{\bf r}} \right) = \cW ( \Im \ F )
\]
and
\item[(b)]
the finite zero 
matrix of $F_{\text{\bf r}}$ is given as
(using the notation given in Theorem \ref{thm:f1}):
\[
\Lambda_f (F_{\text{\bf r}}) = 
\left[
\begin{array}{cc}
-\sigma^{-1}\left(\Lambda_k+\alpha_k\beta_k\right)^{*}
\sigma & 
\Lambda_{kf} + \sigma^{-1}
\left(\beta_k^{*}R_0^{*}+H_k^{*}\right)H_f \\
0 & \Lambda_f
\end{array}
\right]
\]
i.e. the finite zero matrix $\Lambda_f$ of $F$
is extended.
\end{itemize}
\end{itemize}
\end{proposition}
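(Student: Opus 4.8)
The plan is to assemble this proposition entirely from results already established for the pair $(F, F_{\text{\bf r}})$, since every assertion has an immediate counterpart in Theorem \ref{thm:f1}, Theorem \ref{zero_infty_thm} and Theorem \ref{thm:char_im} together with the remarks that follow them. First I would record the factorization itself: by definition $F_{\text{\bf r}} = F L_\beta$ (see (\ref{f1_def})), and since $[K_\beta, L_\beta]$ is a square inner function one has $K_\beta K_\beta^{*} + L_\beta L_\beta^{*} = I$; combining this with $F K_\beta = 0$ gives $F_{\text{\bf r}} L_\beta^{*} = F L_\beta L_\beta^{*} = F(K_\beta K_\beta^{*} + L_\beta L_\beta^{*}) = F$, which is exactly $F = F_{\text{\bf r}} L_\beta^{*}$ with $L_\beta$ tall inner.

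Part (i) is simply the closing line of Theorem \ref{thm:f1}(ii), where $\cW(\ker F_{\text{\bf r}}) = \{0\}$ is derived from (\ref{v1_r1}). For part (ii) I would invoke Theorem \ref{zero_infty_thm}, which identifies $Z_\infty$ with $\cC^{*}$ modulo $\cR^{*}$: thus $Z_\infty(F) \simeq \cC^{*}(\Sigma)/(\cV^{*}(\Sigma)\cap\cC^{*}(\Sigma))$, while (\ref{v1_r1}) makes $\cV^{*}(\Sigma_{\text{\bf r}})\cap\cC^{*}(\Sigma_{\text{\bf r}})$ trivial, so $Z_\infty(F_{\text{\bf r}}) \simeq \cC^{*}(\Sigma_{\text{\bf r}})$. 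Equation (\ref{vr_v1r1}) writes $\cC^{*}(\Sigma) = \cR^{*}(\Sigma) \vee \cC^{*}(\Sigma_{\text{\bf r}})$, and since $\cR^{*}(\Sigma) = \Im\Pi_k \subset \cV^{*}(\Sigma) = \cV^{*}(\Sigma_{\text{\bf r}})$ combined with (\ref{v1_r1}) forces $\cR^{*}(\Sigma)\cap\cC^{*}(\Sigma_{\text{\bf r}}) = \{0\}$, that sum is direct, so the natural map $\cC^{*}(\Sigma_{\text{\bf r}}) \to \cC^{*}(\Sigma)/\cR^{*}(\Sigma)$ is an isomorphism, yielding $Z_\infty(F_{\text{\bf r}}) = Z_\infty(F)$ as in the remark preceding the present subsection.

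For part (iii) the key observation is that, because both realizations share the observable pair $(C, A)$, the McMillan degree of each equals the dimension of its reachability subspace, and Theorem \ref{thm:f1}(iii) always yields the inclusion $\left< A \mid (B+\Pi_k\sigma^{-1}H_k^{*})L_0\right> \subset \left< A\mid B\right>$; under the closed-left-half-plane hypothesis, or under (\ref{eqn:disjoint_sp}) and (\ref{eqn:reach_AB_ACbar}), that inclusion becomes the equality (\ref{eqn:reach_F_Fr}), so the two dimensions, hence the two McMillan degrees, coincide. This also supplies the bridge to part (iv): since one reachability subspace sits inside the other, equality of McMillan degrees is equivalent to equality of the reachability subspaces, which is precisely the standing hypothesis of Theorem \ref{thm:f1}(iv). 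Thus (iv)(b) is just equation (\ref{eqn:finite_zero_Fr}), and (iv)(a) follows by the argument of Remark \ref{rem:Im_fr_f}: Theorem \ref{thm:char_im} describes $\cW(\Im F)$ via $C(zI-A)^{-1}\beta$ with $\beta \in \left< A\mid B\right>$ modulo $\cV^{*}(\Sigma)\vee\cC^{*}(\Sigma)$, and since the $(C,A)$ pair, the reachability subspace (now equal by part (iii)), and the join $\cV^{*}\vee\cC^{*}$ all agree for $F$ and $F_{\text{\bf r}}$, the two co-range descriptions coincide.

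The only genuinely delicate point, and the step I would write out most carefully, is the subspace bookkeeping that turns ``McMillan degrees equal'' into ``reachability subspaces coincide'', together with the verification that $\cV^{*}(\Sigma)\vee\cC^{*}(\Sigma) = \cV^{*}(\Sigma_{\text{\bf r}})\vee\cC^{*}(\Sigma_{\text{\bf r}})$; the latter rests on combining (\ref{v_v1}), (\ref{vr_v1r1}) with the inclusion $\cR^{*}(\Sigma) = \Im\Pi_k \subset \cV^{*}(\Sigma)$ recorded in (\ref{eq_im_pi1}). Everything else is a direct citation of the already-proven theorems, so I expect no new obstacle beyond this arithmetic of invariant subspaces.
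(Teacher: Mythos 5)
Your proposal is correct and takes essentially the same route as the paper: this proposition is presented there as a summary whose (implicit) proof is exactly the assembly you give — the factorization identity following (\ref{f1_def}), Theorem \ref{thm:f1}(ii)--(iv) for parts (i), (iii) and (iv)(b), Theorem \ref{zero_infty_thm} together with the remark after it for part (ii), and Remark \ref{rem:Im_fr_f} via Theorem \ref{thm:char_im} for part (iv)(a). The bookkeeping you spell out — the directness of the sum $\cR^{*}\left(\Sigma\right)\vee\cC^{*}\left(\Sigma_{\text{\bf r}}\right)$, the identity $\cV^{*}\left(\Sigma\right)\vee\cC^{*}\left(\Sigma\right)=\cV^{*}\left(\Sigma_{\text{\bf r}}\right)\vee\cC^{*}\left(\Sigma_{\text{\bf r}}\right)$, and the equivalence (under observability of $(C,A)$ and the inclusion of reachability subspaces) of equal McMillan degrees with equal reachability subspaces — is precisely the glue the paper leaves implicit.
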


\bigskip
\section{Connections between the left and right
zero module spaces}\label{sec:left_right}

\medskip
In the previous sections the zero module spaces were
defined with respect to the transformation
$h \rightarrow Fh$. For a fixed matrix valued 
rational function
we might consider the left multiplication, i.e.
$g \rightarrow gF$, and define the 
corresponding zero modules
accordingly. The previous theorems and 
propositions can be
carried over to cover this case almost 
without any changes.

For example -- assuming that the realization of $F$
provided by the matrices $(A, B, C, D)$ is
minimal -- according to Corollary \ref{cor:ZF_kerF}
to characterize the spaces 
$Z_{\text{left}} (F) \oplus 
\cW (\ker_{\text{left}} F)$
(where the subtext ``left'' indicates 
that these spaces
are defined with respect to the left 
multiplication)
maximal solution of the equation
\begin{equation}\label{matrix_left_zero}
\left[ \Pi'_\text{max} , H'_\text{max} \right]
\left[\begin{array}{cc} A & B \\ C & D \end{array}
\right]
=
\left[ \Lambda'_\text{max} \Pi_\text{max}' , 0 \right]
\end{equation}
should be considered.

The following theorem connects various ``left'' and 
``right'' subspaces.

\begin{theorem}\label{thm:left_right}
Assume that $F$ has the realization
\[
F(z) \sim 
\left(
\begin{array}{c|c} A &   B
\\
\hline \rule{0cm}{.42cm}
   C & D
\end{array}
\right)\;.
\]
Consider a maximal solution of the equation
(\ref{matrix_left_zero}). Then
\[
\ker \Pi_\text{max}' = \cC^*\left(\Sigma\right)\, ,
\]
in other words $\left(\cV^{*}_{\text{left}}\right)^\perp\left(\Sigma\right) 
= \cC^{*}\left(\Sigma\right)$,
(with the obvious meaning of the notation 
$\cV^{*}_{\text{left}}\left(\Sigma\right)$).
\end{theorem}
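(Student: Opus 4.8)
The plan is to recognize the left equation (\ref{matrix_left_zero}) as the \emph{right} zero-equation (\ref{matrix_zero_eq2}) of the adjoint system, so that Lemma \ref{lem:max_pi} can be applied verbatim, and then to close the argument with the classical geometric duality that interchanges output-nulling controlled invariance with the input-containing property. Throughout, maximality of a solution of (\ref{matrix_left_zero}) is understood (as in Corollary \ref{cor:ZF_kerF}) in terms of the row space of $\Pi'_{\max}$, i.e.\ of $\Im (\Pi'_{\max})^{*}$, which is the left analogue of $\Im \Pi$.

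First I would take adjoints in (\ref{matrix_left_zero}). Writing $\Sigma^{*}=(A^{*},C^{*},B^{*},D^{*})$ and setting $\Pi=(\Pi'_{\max})^{*}$, $H=(H'_{\max})^{*}$, $\Lambda=(\Lambda'_{\max})^{*}$, the equation turns into
\[
\left[\begin{array}{cc} A^{*} & C^{*} \\ B^{*} & D^{*} \end{array}\right]
\left[\begin{array}{c} \Pi \\ H \end{array}\right]
=
\left[\begin{array}{c} \Pi\Lambda \\ 0 \end{array}\right],
\]
which is exactly (\ref{matrix_zero_eq2}) for $\Sigma^{*}$. Since adjunction is a bijection between solutions of (\ref{matrix_left_zero}) and solutions of this right equation, and carries the row space of $\Pi'$ to $\Im\Pi$, a maximal solution of (\ref{matrix_left_zero}) maps to a maximal solution for $\Sigma^{*}$. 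Lemma \ref{lem:max_pi} applied to $\Sigma^{*}$ then gives $\Im (\Pi'_{\max})^{*}=\cV^{*}(\Sigma^{*})$, hence
\[
\ker \Pi'_{\max} = \bigl(\Im (\Pi'_{\max})^{*}\bigr)^{\perp} = \cV^{*}(\Sigma^{*})^{\perp}.
\]
By definition the row space of $\Pi'_{\max}$ is $\cV^{*}_{\text{left}}(\Sigma)$, so this already records $(\cV^{*}_{\text{left}})^{\perp}=\ker\Pi'_{\max}$, and it remains only to identify $\cV^{*}(\Sigma^{*})^{\perp}$ with $\cC^{*}(\Sigma)$.

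The main step is therefore the duality $\cV^{*}(\Sigma^{*})^{\perp}=\cC^{*}(\Sigma)$, which I would establish at the level of subspaces. A subspace $\cV$ is output-nulling controlled invariant for $\Sigma^{*}$ with feedback $K$, i.e.\ $(A^{*}+C^{*}K)\cV\subseteq\cV\subseteq\ker(B^{*}+D^{*}K)$, if and only if $\cV^{\perp}$ satisfies $(A+LC)\cV^{\perp}\subseteq\cV^{\perp}$ and $\Im(B+LD)\subseteq\cV^{\perp}$ with $L=K^{*}$; this is just the adjoint reading of the two inclusions, using $(A^{*}+C^{*}K)^{*}=A+K^{*}C$ and $\bigl(\ker(B^{*}+D^{*}K)\bigr)^{\perp}=\Im(B+K^{*}D)$. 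Thus orthogonal complementation is an inclusion-reversing bijection between the output-nulling controlled invariant subspaces of $\Sigma^{*}$ and the input-containing subspaces of $\Sigma$, sending the maximal one to the minimal one. Since $\cC^{*}(\Sigma)$ is precisely the minimal input-containing subspace (as recalled after (\ref{reachable_set})), we conclude $\cV^{*}(\Sigma^{*})^{\perp}=\cC^{*}(\Sigma)$, and combining with the previous display yields $\ker\Pi'_{\max}=\cC^{*}(\Sigma)$.

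The step I expect to be most delicate is the maximal/minimal bookkeeping: I must check that ``maximal solution of (\ref{matrix_left_zero})'' really corresponds under adjunction to the $\Im\Pi$-maximality of Lemma \ref{lem:max_pi}, and that the inclusion-reversing bijection indeed carries the \emph{maximal} controlled invariant subspace of $\Sigma^{*}$ to the \emph{minimal} input-containing subspace of $\Sigma$. As a sanity check, and as an independent verification of one inclusion, one can argue directly that $\cC^{*}(\Sigma)\subseteq\ker\Pi'_{\max}$: along any output-nulling trajectory $x(j+1)=Ax(j)+Bu(j)$, $0=Cx(j)+Du(j)$ starting from the origin, the left equation gives $\Pi'_{\max}x(j+1)=\Lambda'_{\max}\Pi'_{\max}x(j)-H'_{\max}\bigl(Cx(j)+Du(j)\bigr)=\Lambda'_{\max}\Pi'_{\max}x(j)$, so $\Pi'_{\max}x(j)=0$ for all $j$ by induction and in particular $x(1)\in\ker\Pi'_{\max}$; the reverse inclusion is exactly the content of the duality above.
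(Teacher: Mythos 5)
Your proof is correct, but it reaches the harder inclusion $\ker\Pi'_{\max}\subseteq\cC^{*}(\Sigma)$ by a genuinely different route than the paper. The paper argues directly from the definition of $\cC^{*}(\Sigma)$ as the set of output-nulling reachable states: it proves $\cC^{*}(\Sigma)\subseteq\ker\Pi'_{\max}$ by exactly the trajectory computation you relegate to a sanity check (multiplying the stacked trajectory equations by $\left[\Pi'_{\max},H'_{\max}\right]$), and it proves the reverse inclusion by introducing the recursive chain of row-vector subspaces $\cL^{r}$ (the standard geometric-control algorithm whose intersection is the row space of $\Pi'_{\max}$) and showing by induction on $r$ that every vector orthogonal to $\cL^{r}$ is output-nulling reachable. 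You instead dualize: adjunction turns (\ref{matrix_left_zero}) into (\ref{matrix_zero_eq2}) for the adjoint system $\Sigma^{*}=(A^{*},C^{*},B^{*},D^{*})$, Lemma \ref{lem:max_pi} (which is stated for an arbitrary quadruple, so it does apply) identifies $\Im(\Pi'_{\max})^{*}$ with $\cV^{*}(\Sigma^{*})$, and the orthogonal-complement correspondence between output-nulling controlled invariant subspaces of $\Sigma^{*}$ and input-containing subspaces of $\Sigma$ -- which you verify correctly, including the inclusion-reversing exchange of maximal and minimal elements -- reduces everything to the identification of $\cC^{*}(\Sigma)$ with the minimal input-containing subspace. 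That identification is precisely the fact the paper quotes from Aling and Schumacher in its preliminaries, so your use of it is legitimate within the paper's framework; the trade-off is that the paper's proof links $\ker\Pi'_{\max}$ to output-nulling reachability from scratch (in effect reproving the cited fact for this setting), whereas yours is shorter and reuses Lemma \ref{lem:max_pi}, but is only as self-contained as that cited characterization of $\cC^{*}(\Sigma)$.
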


\begin{proof}
First we show that 
$\ker \Pi_\text{max}' \subset \cC^{*}\left(\Sigma\right)$. 
To this aim we use
the following well-known construction from 
geometric control theory: define recursively 
the following
subspaces of row vectors
\[
\cL^r = \left\{ z \ \mid \ \exists \eta \ 
\text{such that }\ zA+\eta C \in \cL^{r-1}, 
\ \text{and} \ zB +\eta D = 0 \right\}\;,
\]
($\cL^0 = \CC^n$.)
Then $\cL^r \subset \cL^{(r-1)}$ and
$\cap_r \cL^r$ equals to the space spanned by
the rows of $\Pi_\text{max}^{'}$. 

We prove by induction that for any $r$ the 
vectors orthogonal to
the subspace $\cL^r$ are in the subspace 
$\cC^{*}\left(\Sigma\right)$.
Obviously, 
\[
\cL^1 = \left\{ z \ \mid \ \exists \eta: \ 
zB+\eta D = 0 \right\}\;.
\]
Now if the vector $\alpha$ is orthogonal to 
the elements of
$\cL^1$ then the equation
\[
\left[ z, \eta \right] \left[\begin{array}{c} B \\ D
\end{array}\right] = 0
\]
implies that 
\[
\left[ z, \eta \right] \left[\begin{array}{c} \alpha \\ 0
\end{array}\right] = 0\;.
\]
Consequently, there exists a vector $\zeta$ such that
\[
\left[\begin{array}{c} B \\ D \end{array}\right] \zeta = 
\left[\begin{array}{c} \alpha \\ 0 \end{array}\right]\;.
\]
In other words $\alpha = B\zeta$ can be reached from
the origin in one step with zero output, thus 
$\alpha  \in \cC^{*}\left(\Sigma\right)$.

For each $r$ consider a basis in $\cL^r$ 
and form
the matrix $\Pi^r$ containing the 
basis-vectors as its rows.
Then 
\[
\cL^r = \left\{ z \ \mid \ 
\exists \eta, \lambda \ \text{such that } \ 
zA + \eta C = \lambda \Pi^{(r-1)}, 
\ \text{and} \ zB + \eta D =0 \right\}\;.
\]
Assume that the vectors orthogonal to 
$\cL^{(r-1)}$ are
in the subspace $\cC^{*}\left(\Sigma\right)$.
Now if $\alpha$ is orthogonal to the elements of
$\cL^r$ then the equation
\[
\left[ z, \eta, -\lambda\right]
\left[\begin{array}{cc} A & B \\ C & D \\ 
\Pi^{(r-1)} & 0
\end{array}\right] = \left[ 0, 0 \right]
\]
should imply that
\[
\left[ z, \eta, -\lambda \right] 
\left[\begin{array}{c} \alpha \\ 0 \\ 0 
\end{array}\right]= 0\;.
\]
Consequently, there exist $\zeta, \xi$ such that
\[
\left[\begin{array}{cc} A & B \\ C & D \\ 
\Pi^{(r-1)} & 0
\end{array}\right]
\left[ \begin{array}{c} \zeta \\ \xi 
\end{array}\right]
=
\left[\begin{array}{c} \alpha \\ 0 \\ 0 
\end{array}\right]\;.
\]
In details, $\Pi^{(r-1)} \zeta = 0$ thus 
$\zeta \in \cC^{*}\left(\Sigma\right)$.
Also, $A\zeta + B\xi = \alpha$, $C\zeta + D\xi = 0$, so
$\alpha$ can be reached from $\zeta$ in 
one step with zero
output. The induction hypothesis gives 
that $\alpha \in \cC^{*}\left(\Sigma\right)$,
as well. The identity $\ker \Pi_\text{max}' 
= \cup \ker\Pi^r$, and
$\ \ker \Pi^r \supset \ker \Pi^{(r-1)}$ 
implies that
\[
\ker \Pi_\text{max}' \subset \cC^{*}\left(\Sigma\right)\;.
\]

Conversely, assume that the vector 
$\alpha \in \cC^{*}\left(\Sigma\right)$.
We are going to show that $\alpha \in \ker \Pi'$, 
where $\left(\Pi', H', \Lambda'\right)$ is any
solution of (\ref{matrix_left_zero}) implying
that $\cC^{*}\left(\Sigma\right) \subset \ker \Pi_\text{max}'$,
especially $\cC^{*}\left(\Sigma\right) \subset \ker \Pi_\text{max}'$.
According to the definition of $\cC^{*}\left(\Sigma\right)$ 
there exists a 
finite input sequence producing zero output and directing
the origin to the vector $\alpha$. Denoting by
$\eta_0, \eta_1, \dots , \eta_j$ this sequence of inputs
and by $\xi_0, \xi_1, \dots , \xi_{j-1}$ the sequence of 
state vectors produced by using this input sequence
the following system of equations holds:
\[
\left[\begin{array}{cc} A & B \\ C & D 
\end{array}\right]
\left[
\begin{array}{ccccc} \xi_0 & \xi_1 & \dots & \xi_{j-1}& 0\\
\eta_0 & \eta_1 & \dots & \eta_{j-1} & \eta_j
\end{array}
\right]
=
\left[\begin{array}{cccc} \alpha & \xi_0 & 
\dots & \xi_{j-1} \\
0 & 0 & \dots & 0
\end{array}
\right]\;.
\] 
Multiplying this equation by $\left[ \Pi' , H'\right]$
from the left and using 
(\ref{matrix_left_zero}) we obtain 
that
\[
\left[ \Pi'\alpha, \Pi'\xi_0, 
\dots , \Pi'\xi_{j-1}\right]
=
\left[\Lambda' \Pi'\xi_0, \Lambda'\Pi'\xi_1, \dots ,
\Lambda'\Pi'\xi_{j-1}, 0\right]\;.
\]
Consequently, 
\[
\Pi' \xi_{j-1} = 0 , \ \dots , \ \Pi'\xi_0 = 0, \ 
\Pi'\alpha =0\;.
\]
I.e. $\cC^{*}\left(\Sigma\right) \subset \ker \Pi'$, 
concluding the proof
of the theorem.
\qed\end{proof}

Similar proof gives the following statement:
\[
\left(\cC_{\text{left}}^{*}\right)^\perp\left(\Sigma\right) 
= \cV^{*}\left(\Sigma\right)\;,
\]
implying the following corollary:
\begin{corollary}
Assume that the realization 
\[
F(z) \sim 
\left(
\begin{array}{c|c} A &   B
\\
\hline \rule{0cm}{.42cm}
   C & D
\end{array}
\right)
\]
is minimal. Then
\begin{eqnarray}
\dim Z(F) &=& \dim Z_\text{left}(F)\;,\\
\dim \cW(\ker F) &=& \dim \cW (\Im_\text{left} F)\;,\\
\dim Z_\infty (F ) &=& \dim Z_{\infty, \text{left}}(F)\;,\\
\dim \cW (\Im F) &=& \dim \cW (\ker_\text{left} F)\;.
\end{eqnarray}
\end{corollary}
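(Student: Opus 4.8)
The plan is to reduce all four identities to elementary dimension counting among the subspaces $\cV^{*}(\Sigma)$, $\cC^{*}(\Sigma)$, their intersection $\cR^{*}(\Sigma)=\cV^{*}(\Sigma)\cap\cC^{*}(\Sigma)$, and their sum $\cV^{*}(\Sigma)\vee\cC^{*}(\Sigma)$. Write $n$ for the dimension of the state space and abbreviate $\cV^{*}=\cV^{*}(\Sigma)$, $\cC^{*}=\cC^{*}(\Sigma)$, $\cR^{*}=\cR^{*}(\Sigma)$. First I would read off explicit dimension formulas for the right modules from the characterizations already proved. Since the realization is minimal, $\left< A\mid B\right>$ is the whole state space, so Corollary \ref{cor:Im_pi} gives $\dim\bigl(Z(F)\oplus\cW(\ker F)\bigr)=\dim\cV^{*}$: the number of columns of $H_{fzk}(zI-\Lambda_{fzk})^{-1}$ equals the size of $\Lambda_{fzk}$, which by $\ker\Pi_{fzk}=\{0\}$ equals $\dim\Im\Pi_{fzk}=\dim\cV^{*}$. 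Theorem \ref{thm:char_ker2} together with Corollary \ref{cor:nu_r} gives $\dim\cW(\ker F)=\dim\cR^{*}$, whence $\dim Z(F)=\dim\cV^{*}-\dim\cR^{*}$. Theorem \ref{zero_infty_thm} yields $\dim Z_\infty(F)=\dim\cC^{*}-\dim\cR^{*}$, and Theorem \ref{thm:char_im} yields $\dim\cW(\Im F)=n-\dim(\cV^{*}\vee\cC^{*})$.

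Next I would invoke the observation opening Section \ref{sec:left_right}, namely that every one of these characterization theorems applies verbatim to the left multiplication $g\mapsto gF$; the same formulas therefore hold with $\cV^{*}$, $\cC^{*}$, $\cR^{*}$ replaced by their left counterparts $\cV^{*}_\text{left}$, $\cC^{*}_\text{left}$, $\cR^{*}_\text{left}=\cV^{*}_\text{left}\cap\cC^{*}_\text{left}$. The bridge between the two settings is Theorem \ref{thm:left_right} and its companion statement, which give $\bigl(\cV^{*}_\text{left}\bigr)^{\perp}=\cC^{*}$ and $\bigl(\cC^{*}_\text{left}\bigr)^{\perp}=\cV^{*}$. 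Taking orthogonal complements and using De Morgan's laws for subspaces, I would record
\[
\dim\cV^{*}_\text{left}=n-\dim\cC^{*},\qquad \dim\cC^{*}_\text{left}=n-\dim\cV^{*},
\]
\[
\dim\cR^{*}_\text{left}=n-\dim\bigl(\cV^{*}\vee\cC^{*}\bigr),\qquad \dim\bigl(\cV^{*}_\text{left}\vee\cC^{*}_\text{left}\bigr)=n-\dim\cR^{*},
\]
the last two coming from $\cV^{*}_\text{left}\cap\cC^{*}_\text{left}=(\cC^{*}\vee\cV^{*})^{\perp}$ and $\cV^{*}_\text{left}\vee\cC^{*}_\text{left}=(\cC^{*}\cap\cV^{*})^{\perp}$.

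The four identities would then follow by substitution together with the modular law $\dim(X\vee Y)=\dim X+\dim Y-\dim(X\cap Y)$. For instance,
\[
\dim Z_\text{left}(F)=\dim\cV^{*}_\text{left}-\dim\cR^{*}_\text{left}=\bigl(n-\dim\cC^{*}\bigr)-\bigl(n-\dim(\cV^{*}\vee\cC^{*})\bigr)=\dim\cV^{*}-\dim\cR^{*}=\dim Z(F).
\]
The remaining three are of exactly the same flavour: $\dim\cW(\Im_\text{left}F)=n-\dim(\cV^{*}_\text{left}\vee\cC^{*}_\text{left})=\dim\cR^{*}=\dim\cW(\ker F)$; then $\dim Z_{\infty,\text{left}}(F)=\dim\cC^{*}_\text{left}-\dim\cR^{*}_\text{left}=\dim\cC^{*}-\dim\cR^{*}=\dim Z_\infty(F)$; and finally $\dim\cW(\ker_\text{left}F)=\dim\cR^{*}_\text{left}=n-\dim(\cV^{*}\vee\cC^{*})=\dim\cW(\Im F)$.

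The main obstacle is not the algebra, which is routine, but the bookkeeping of two points. First, one must extract the correct right-hand dimension formulas from the module characterizations, in particular recognising via $\ker\Pi_{fzk}=\{0\}$ and minimality that the relevant ranks collapse to $\dim\cV^{*}$, $\dim\cC^{*}$ and $\dim\cR^{*}$. Second, one must keep the complementation duality $(\cV^{*}_\text{left})^{\perp}=\cC^{*}$, $(\cC^{*}_\text{left})^{\perp}=\cV^{*}$ consistent with the interchange of intersections and sums. One should also verify explicitly that the left-multiplication analogues of Theorems \ref{thm:char_ker2}, \ref{zero_infty_thm} and \ref{thm:char_im} really do hold with the asserted subspaces, since the entire reduction rests on transporting those dimension counts to the left setting.
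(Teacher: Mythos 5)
Your proof is correct and follows essentially the same route as the paper: the paper derives this corollary directly from the two duality relations $\left(\cV^{*}_{\text{left}}\right)^{\perp}(\Sigma)=\cC^{*}(\Sigma)$ and $\left(\cC^{*}_{\text{left}}\right)^{\perp}(\Sigma)=\cV^{*}(\Sigma)$ of Theorem \ref{thm:left_right}, combined (implicitly) with the dimension formulas $\dim\bigl(Z\oplus\cW(\ker)\bigr)=\dim\cV^{*}$, $\dim\cW(\ker)=\dim\cR^{*}$, $\dim Z_{\infty}=\dim\cC^{*}-\dim\cR^{*}$, $\dim\cW(\Im)=n-\dim(\cV^{*}\vee\cC^{*})$ coming from Corollary \ref{cor:Im_pi}, Corollary \ref{cor:nu_r}, Theorem \ref{zero_infty_thm} and Theorem \ref{thm:char_im}. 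You have merely written out the dimension bookkeeping that the paper leaves to the reader, and your accounting is accurate.
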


\bigskip
The following theorem shows that there is a
deeper connection between the left and 
right finite zero spaces of
$F$.

\begin{theorem}
Assume that the realization of $F$ given by
\[
F(z) \sim 
\left(
\begin{array}{c|c} A &   B
\\
\hline \rule{0cm}{.42cm}
   C & D
\end{array}
\right)
\]
is minimal. Then the left and right finite zero
matrices $\Lambda_f$, $\Lambda_{f,\text{left}}$ are
similar.
\end{theorem}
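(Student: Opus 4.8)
The plan is to recognize both finite zero matrices as coordinate representations of one and the same operator---multiplication by $z$---acting on finite-dimensional $\CC[z]$-modules that turn out to be isomorphic. Recall the elementary fact underlying the rational canonical form: two square matrices $M$ and $N$ are similar if and only if the $\CC[z]$-modules obtained by letting $z$ act as $M$, respectively $N$, on $\CC^n$ are isomorphic. Thus it suffices to exhibit an isomorphism of $\CC[z]$-modules between the structures carried by $\Lambda_f$ and $\Lambda_{f,\text{left}}$.

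First I would pin down the module behind $\Lambda_f$. By Theorem \ref{thm:char_ker2} and Remark \ref{rem:partition}, after factoring out $\cW(\ker F)$ the columns of $(H_f+R_0\beta_f)(zI-\Lambda_f)^{-1}$ form a basis in $Z(F)$. On such generators the identity $z\,H(zI-\Lambda_f)^{-1}=H+H(zI-\Lambda_f)^{-1}\Lambda_f$ shows that multiplication by $z$---the natural $\CC[z]$-action on $Z(F)$, the polynomial summand $H$ being absorbed into $\Omega U$---is represented in this basis precisely by $\Lambda_f$. Hence $\Lambda_f$ is the matrix of the $z$-action on the module $Z(F)$. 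The identical argument applied to the left equation (\ref{matrix_left_zero}) shows that $\Lambda_{f,\text{left}}$ is the matrix of the $z$-action on $Z_{\text{left}}(F)$.

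Next I would reduce the left module to a right module of the dual system. Writing $gF=(F^{T}g^{T})^{T}$ identifies the left-multiplication zero structure of $F$ with the ordinary right zero structure of the transposed transfer function $F^{T}(z)=D^{T}+B^{T}(zI-A^{T})^{-1}C^{T}$; in particular $Z_{\text{left}}(F)\cong Z(F^{T})$ as $\CC[z]$-modules. Since $(A,B,C,D)$ is minimal, so is the dual realization $(A^{T},C^{T},B^{T},D^{T})$ of $F^{T}$ (observability and controllability interchange under transposition), so the first step applies; indeed, transposing (\ref{matrix_left_zero}) yields exactly the right finite-zero equation for the dual system with $\Lambda=\Lambda_{f,\text{left}}^{T}$, and a matrix is similar to its transpose, so $\Lambda_{f,\text{left}}$ is similar to the right finite zero matrix $\Lambda_f(F^{T})$. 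It therefore remains to prove $Z(F)\cong Z(F^{T})$, which is the heart of the matter: the finite zero module is a transpose invariant of the transfer function. Writing the Smith--McMillan factorization $F=U\,\mathrm{diag}(\varepsilon_i/\psi_i)\,V$ with $U,V$ unimodular gives $F^{T}=V^{T}\,\mathrm{diag}(\varepsilon_i/\psi_i)\,U^{T}$, so $F$ and $F^{T}$ share the same Smith--McMillan form, whence both finite zero modules are isomorphic to $\bigoplus_i\CC[z]/(\varepsilon_i)$. (Equivalently, the invariant factors of the Rosenbrock system matrix $\left[\begin{smallmatrix} zI-A & B \\ -C & D\end{smallmatrix}\right]$ and of its transpose coincide, since transposition preserves every determinantal divisor $d_k(z)=\gcd$ of the $k\times k$ minors, and minimality guarantees these invariant factors are exactly the finite zero polynomials.)

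Combining the three steps, $\Lambda_f$ and $\Lambda_{f,\text{left}}$ both represent multiplication by $z$ on the isomorphic modules $Z(F)\cong Z(F^{T})\cong Z_{\text{left}}(F)$, and are therefore similar. I expect the main obstacle to be the rigorous identification in the first step---that the finite zero matrix genuinely realizes the module $z$-action, not merely its eigenvalues---together with confirming that minimality is precisely what strips away the reachability and kernel defects so that the Rosenbrock invariant factors agree on the nose with the finite zero polynomials; the transpose invariance itself is routine.
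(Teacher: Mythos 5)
Your proposal is correct, but it takes a genuinely different route from the paper. The paper never leaves its state-space machinery: it takes the partitioned maximal solutions of the right equations (\ref{eq:matrix_kerright}) and their left counterparts (\ref{eq:matrix_kerleft}), invokes Theorem \ref{thm:left_right} twice --- once for the orthogonality relations $\Pi_{fzk}'\Pi_k=0$ and $\Pi_k'\Pi_{fzk}=0$, and once to conclude that $\Pi_f'\Pi_f$ is square and nonsingular --- and then simply multiplies the right zero equation on the left by $\left[\Pi_{fzk}',\ H_{fzk}'\right]$, which collapses to the intertwining relation $\Pi_f'\Pi_f\Lambda_f=\Lambda_f'\Pi_f'\Pi_f$, i.e.\ an \emph{explicit} similarity $\left(\Pi_f'\Pi_f\right)\Lambda_f\left(\Pi_f'\Pi_f\right)^{-1}=\Lambda_f'$ implemented by the pairing of left and right zero directions. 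You instead read both matrices as coordinate representations of multiplication by $z$ on the modules $Z(F)$ and $Z_{\text{left}}(F)$, reduce the left module to $Z(F^{T})$ by transposition, and invoke the Wyman--Sain/Smith--McMillan classification $Z(F)\cong\bigoplus_i\CC[z]/(\varepsilon_i)$, which is transpose-invariant; similarity then follows from the structure theory of $\CC[z]$-modules. This is sound, and the identification step you flag as the main obstacle does go through: the cleanest way to see it is that in the basis of $Z(F)\oplus\cW(\ker F)$ given by the columns of $\left(H_{fzk}+R_0\beta\right)\left(zI-\left(\Lambda_{fzk}+\alpha_0\beta\right)\right)^{-1}$ the $z$-action is the block upper-triangular matrix $\Lambda_{fzk}+\alpha_0\beta$, the first block of coordinates spans the submodule $\cW(\ker F)$, so the induced action on the quotient $Z(F)$ is exactly the lower-right block $\Lambda_f$; minimality is what guarantees that both the right and the left state-space constructions actually compute their modules. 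The trade-off: the paper's argument is constructive and self-contained (the intertwining matrix $\Pi_f'\Pi_f$ built from zero directions is itself of interest, and relations of this kind are reused in the analysis of $F_{\text{\bf rl}}$), whereas yours is shorter and more conceptual --- it exposes the theorem as a realization-independent transpose-invariance of the zero module --- but it is non-constructive and imports the Smith--McMillan classification of $Z(F)$, external machinery that the paper deliberately avoids in favor of explicit equations in the system matrices.
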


\begin{proof}
Consider maximal solutions of equations
(\ref{matrix_zero_eq_fzk}), 
(\ref{matrix_kernel_eq}) and
(\ref{matrix_left_zero}) and the
corresponding ``left'' version of
(\ref{matrix_kernel_eq}).

\begin{eqnarray}\label{eq:matrix_kerright}
\left[
\begin{array}{cc} A & B \\ C & D
\end{array}\right]
\left[\begin{array}{c} \Pi_{fzk} \\ H_{fzk}
\end{array}\right] &=& 
\left[\begin{array}{c} \Pi_{fzk} \Lambda_{fzk} \\ 0
\end{array}\right]\;,\quad 
\left[\begin{array}{c} B \\ D 
\end{array}\right]
R_0 = 
\left[\begin{array}{c}
\Pi_{fzk} \alpha_0 \\ 0 
\end{array}\right]\;,\\
\rule{0in}{4ex}\label{eq:matrix_kerleft}
\left[ \Pi_{fzk}'\;, \; H_{fzk}'\right]
\left[
\begin{array}{cc} A & B \\ C & D
\end{array}\right] &=&
\left[\Lambda_{fzk}'\Pi_{fzk}'\;,\; 0 \right]\;,\quad 
R'_0\left[ C \;, D\right]= 
\left[\alpha'_0\Pi_{fzk}'\;,\ 0\right]\;,
\end{eqnarray}
where for the sake of simplicity the ``left'' is
indicated by the notation $\cdot^{'}$

Without loss of generality we might 
assume that these matrices are
partitioned as it is
described in Remark \ref{rem:partition}.
(Applying it also to the ``left'' structure, as well.):
\begin{eqnarray}
\Lambda_{fzk} &=& \left[\begin{array}{cc}
\Lambda_k & \Lambda_{kf} \\ 0 &
\Lambda_f
\end{array}\right]\ 
\alpha_0 = \left[\begin{array}{c}
\alpha_k \\ 0 \end{array}\right] \\
\Lambda_{fzk}' &=& \left[\begin{array}{cc}
\Lambda'_k & 0 \\ 
\Lambda'_{kf} & \Lambda'_f
\end{array}\right]\;,\; 
\alpha'_0 = \left[\alpha'_k \;,\; 0 \right]\;,
\end{eqnarray}

where the pair $(\Lambda_k, \alpha_k)$ 
is controllable,
$(\alpha'_k, \Lambda'_k)$ is observable.

Partitioning the matrices 
$\Pi_{fzk}, H_{fzk}, \Pi_{fzk}', H_{fzk}'$ 
accordingly, we get that

\begin{eqnarray}
\Im \Pi_k = \cC^{*}\left(\Sigma\right) 
\cap \cV^{*}\left(\Sigma\right)\;, && 
\Im \Pi_{fzk} = \cV^{*}\left(\Sigma\right)\;, \\
\Im_{\text{left}} \Pi'_k = \cC_{\text{left}}^{*}\left(\Sigma\right)
\cap \cV_{\text{left}}^{*}\left(\Sigma\right)\;, &&
\Im_{\text{left}} \Pi_{fzk}' = \cV_{\text{left}}^{*}\left(\Sigma\right)\;,
\end{eqnarray}

and Theorem \ref{thm:left_right} implies that
\[
\Pi_{fzk}' \Pi_k = 0 \;,\quad \Pi'_k \Pi_{fzk} = 0
\]
Multiplying the first equation in 
(\ref{eq:matrix_kerright})
from the left by $\left[ \Pi_{fzk}', H_{fzk}'\right]$ 
and using the
first equation in (\ref{eq:matrix_kerleft})
we obtain that
\[
\left[\begin{array}{c} 
\Pi'_k \\ \Pi'_f \end{array}\right]
\left[ \Pi_k\;,\ \Pi_f\right]
\left[\begin{array}{cc}
\Lambda_k & \Lambda_{kf} \\
0 & \Lambda_f
\end{array}\right] =
\left[\begin{array}{cc}
\Lambda'_k & 0 \\
\Lambda'_{kf} & \Lambda'_f
\end{array}\right] 
\left[\begin{array}{c}
\Pi'_k \\ \Pi'_f
\end{array}\right]
\left[ \Pi_k\;,\ \Pi_f\right]\;.
\]
Shortly
\[
\Pi'_f\Pi_f \Lambda_f = 
\Lambda'_f \Pi'_f\Pi_f\;.
\]
Since Theorem \ref{thm:left_right} gives also that
the matrix $\Pi'_f\Pi_f$ is square and nonsingular
the similarity of the matrix
$\Lambda_f$ and $\Lambda'_f$ is obtained.
In fact
\[
\left(\Pi'_f\Pi_f\right) \Lambda_f 
\left(\Pi'_f\Pi_f\right)^{-1} = 
\Lambda'_f \;.
\] 

In other words, the finite left and 
right zero matrices of
the function $F$ are similar to each other.
\qed\end{proof}

\bigskip

\subsection{Connection between the values of
$F$ and $K_0$ at a given point $\lambda\in\CC$}
\label{subsec:interp_vs_zero}

Assume that the matrices $(A, B, C, D)$ 
provide a {\it minimal} realization
of $F$ and consider the function $K_0$
given in Remark \ref{rem:K0} 
(or $K_\beta$ defined in Remark \ref{rem:Kbeta})
``generating'' the kernel of
$F$ (in the sense that for any 
q-tuple $g$ of rational functions
for which $Fg \equiv 0$ holds there 
exists a (vector-valued) rational 
function $h$ such that $g = K_0 h$. 
The converse statement
obviously holds)

Since $F K_0=0$, if both functions $F$ 
and $K_0$ are analytic at a
given
$\lambda'\in \CC$,  the same connection holds for the
values of these functions taken at $\lambda'$. I.e.  
\[
F (\lambda') K_0 (\lambda') = 0\;.
\]
In other words the row-vectors of $F(\lambda')$
are orthogonal to the column-vectors of $K_0(\lambda')$.

\medskip
More generally, consider a solution of the set
of equations:
\begin{equation}\label{left_interpol}
\left[ Y'\;,\ Z'\right]
\left[\begin{array}{cc} A & B \\
C & D \end{array}\right] =
\left[ \lambda' Y'\;, h'\right]
\end{equation}
(where now $\lambda'$ can also be a matrix) 
implying obviously that
\[
\left(zI - \lambda'\right)^{-1}
\left(Z'F(z) - h'\right) = 
- Y'\left(zI-A\right)^{-1} B\;.
\] 
Thus, if the spectra of $A$ and $\lambda'$ 
are disjoint then $h'$
determines the ``directional''
values and derivatives of $F$ taken at the 
eigenvalues of $\lambda'$.

Now multiplying from the right by 
$\left[\begin{array}{c} \Pi_{k} \\ H_{k}
\end{array}\right]$ and
by 
$\left[\begin{array}{c} 0 \\ R_0
\end{array}\right]$ we obtain the 
following equations
\begin{eqnarray*}
\lambda' Y'\Pi_{k} + h' H_{k} &=& Y'\Pi_{k} \Lambda_{k}\\
h' R_0 & = & Y'\Pi_{k} \alpha_k\;.
\end{eqnarray*}
In other words
\begin{equation}\label{right_zero}
\left[ Y'\Pi_{k}\;, \ -h'\right]
\left[\begin{array}{cc} \Lambda_{k} & \alpha_0 \\
H_{k} & R_0 \end{array}\right] =
\left[ \lambda' Y'\Pi_{k}\;, \ 0\right]\;,
\end{equation}
thus
\[
\left(zI-\lambda'\right)^{-1} h' 
\left(R_0+H_k\left(zI-\Lambda_k\right)^{-1}
\alpha_k\right)
= Y'\Pi_k \left(zI-\Lambda_k\right)^{-1}\alpha_k\;.
\]
Shortly
\[
\left(zI-\lambda'\right)^{-1} h' 
K_0(z)
= Y'\Pi_k \left(zI-\Lambda_k\right)^{-1}\alpha_k\;.
\]
Now, if the spectra of $\lambda'$ 
and $\Lambda_k$ are disjoint
then the pair $(\lambda', h')$ is a right-zero pair
of $K_0$.

Summarizing these considerations: 
if the spectra of $\lambda'$ and that of $A$
and $\Lambda_k$ are disjoint then
the assumption $\left(zI-\lambda'\right)^{-1}
\left(Z' F(z) - h'\right)$ is analytic on the 
set of eigenvalues of $\lambda'$ implies that
$\left(zI-\lambda'\right)^{-1}h' K_0(z)$ 
is analytic there.

In that special case, when $\lambda'$ is a matrix 
in Jordan-form, then 
equations (\ref{left_interpol}) 
and (\ref{right_zero}) establish connections
between the ``directional''derivatives of $F$ 
and $K_0$ taken at  
the eigenvalues of $\lambda'$.

The following theorem shows 
that the converse statement also
holds true. Under some conditions, 
if a pair is a right zero pair
of the function $K_0$, then the same pair determines
also interpolation values of the function $F$, i.e. at 
the same locations using appropriately defined directions 
the directional values of $F$ coincide with the zero directions of $K_0$.

\begin{theorem}\label{thm:F_vs_K_values}
Assume that the realization of $F$ given by
\[
F(z) \sim 
\left(
\begin{array}{c|c} A &   B
\\
\hline \rule{0cm}{.42cm}
   C & D
\end{array}
\right)
\]
is minimal.
Consider maximal solutions $\left(\Pi_\text{max}, H_\text{max},
\Lambda_\text{max}\right)$ of (\ref{matrix_zero_eq_max}) and
$(R_0, \alpha_0)$ of (\ref{matrix_kernel_eq}) for
which $\ker \Pi_\text{max} = \left\{ 0 \right\}$.
Define the function $K_0$ according to Remark \ref{rem:K0}.

Assume that for some matrices $\lambda'$ and $h'$
the product
\[
\left(zI-\lambda'\right)^{-1}h' K_0(z) \quad
\text{is analytic on the spectrum of}\ \lambda'\;.
\]
If the spectrum of $\lambda'$ is disjoint from
that of $\Lambda_f$ and $A$, then 
there exists a matrix $Z'$ such that the product
\[
\left(zI-\lambda'\right)^{-1}
\left(Z' F(z)-h'\right)\quad
\text{is analytic on the spectrum of}\
\lambda'\;.
\]
\end{theorem}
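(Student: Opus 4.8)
The plan is to reduce the statement to the solvability of the \emph{left pencil equation}
\[
[\, Y',\ Z'\,]\left[\begin{array}{cc} A & B \\ C & D \end{array}\right] = [\, \lambda' Y',\ h'\,]
\]
in some matrix $Y'$, since — exactly as in the computation preceding (\ref{left_interpol}) — any such pair yields the identity $(zI-\lambda')^{-1}(Z'F(z)-h')=-Y'(zI-A)^{-1}B$, whose right-hand side is analytic on the spectrum of $\lambda'$ because $\mathrm{spec}(\lambda')$ and $\mathrm{spec}(A)$ are disjoint. Thus it suffices to produce $(Y',Z')$ solving the pencil equation. The guiding intuition is the pointwise picture: for scalar $\lambda'=\mu$ the hypothesis ``$(z-\mu)^{-1}h'K_0(z)$ analytic at $\mu$'' says $h'K_0(\mu)=0$, i.e. $h'$ annihilates $\ker F(\mu)=\Im K_0(\mu)$, which is precisely the condition for $zF(\mu)=h'$ to be solvable in a row vector $z$. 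The proof makes this duality exact for a matrix $\lambda'$.

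First I would convert the analytic hypothesis into algebraic left-zero data for the generator. Recall from Remark \ref{rem:K0} and Remark \ref{rem:partition} that $K_0(z)=R_0+H_k(zI-\Lambda_k)^{-1}\alpha_k$ with $(\Lambda_k,\alpha_k)$ controllable, the eigenvalues of $\Lambda_k$ being the virtual zeros, which may be relocated freely. Since $\mathrm{spec}(\lambda')$ need not avoid $\mathrm{spec}(\Lambda_k)$, I would choose $\beta_k$ so that $\mathrm{spec}(\Lambda_k+\alpha_k\beta_k)$ misses $\mathrm{spec}(\lambda')$, replace $K_0$ by the equivalent generator $K_\beta$ of Remark \ref{rem:Kbeta}, and transfer the hypothesis through the bi-proper factorization $K_0=K_\beta N_\beta$ of Remark \ref{rem:K0}, so that $(zI-\lambda')^{-1}h'K_\beta=[(zI-\lambda')^{-1}h'K_0]N_\beta^{-1}$ stays analytic on $\mathrm{spec}(\lambda')$ once $N_\beta^{-1}$ is arranged to be analytic there. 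Writing $\tilde\Lambda_k=\Lambda_k+\alpha_k\beta_k$ and $\tilde H_k=H_k+R_0\beta_k$, the Sylvester equation $W\tilde\Lambda_k-\lambda'W=h'\tilde H_k$ then has a unique solution $W$, and substituting it back exactly as in the forward computation reduces the analyticity of $(zI-\lambda')^{-1}h'K_\beta$ to the vanishing of the strictly proper remainder $(zI-\lambda')^{-1}(h'R_0-W\alpha_k)$. Since a nonzero strictly proper function whose only possible poles lie in $\mathrm{spec}(\lambda')$ cannot be analytic there, this forces $W\alpha_k=h'R_0$, so $W$ obeys the two left-zero relations $W\tilde\Lambda_k-h'\tilde H_k=\lambda'W$ and $W\alpha_k=h'R_0$.

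Next I would reconstruct $(Y',Z')$ from $W$. By the intertwining relations (\ref{pi_1}) and (\ref{alpha1}) — valid for $(\Pi_k,\tilde H_k,\tilde\Lambda_k)$ thanks to Remark \ref{rem:nonuniq_sol} — the matrix $\Pi_k$ has full column rank with $\Im\Pi_k=\cR^{*}(\Sigma)$, and the forward direction shows that every genuine solution $(Y',Z')$ produces $W=Y'\Pi_k$. The task is the converse surjectivity. I would phrase it as a Fredholm alternative: eliminating $Y'$ through the unique Sylvester solution of $\lambda'Y'-Y'A=Z'C$ (legitimate because $\mathrm{spec}(\lambda')\cap\mathrm{spec}(A)=\emptyset$) turns the pencil equation into a single linear equation $\mathcal{T}(Z')=h'$, and I would establish $h'\in\Im\mathcal{T}$ by verifying that $h'$ annihilates $\ker\mathcal{T}^{*}$. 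Using minimality of $(A,B,C,D)$ together with Proposition \ref{product_equation}, each element of $\ker\mathcal{T}^{*}$ is identified with a right-kernel (column) direction of $F$ at $\mathrm{spec}(\lambda')$, hence with a column-direction of $K_\beta$ there; the relations just obtained — equivalently the hypothesis itself — say precisely that $h'$ pairs to zero against all of these. Thus $\mathcal{T}(Z')=h'$ is solvable, and with the induced $Y'$ the pencil equation holds, giving the conclusion via the identity quoted in the first paragraph.

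I expect the genuine difficulty to be concentrated in this converse reconstruction: the forward map $(Y',Z')\mapsto Y'\Pi_k$ is immediate, but its surjectivity onto the solutions of the left-zero relations — equivalently the exactness of the Fredholm duality identifying $\ker\mathcal{T}^{*}$ with the $K_\beta$-directions — is the crux and must be handled with care about the Jordan structure of $\lambda'$ (the tangential, higher-order version of the pointwise statement). A secondary nuisance is the virtual-zero bookkeeping: one must check that the relocation $\beta_k$ and the bi-proper factor $N_\beta$ can be chosen simultaneously so that neither $K_\beta$ nor $N_\beta^{-1}$ acquires a pole on $\mathrm{spec}(\lambda')$, using only the controllability of $(\Lambda_k,\alpha_k)$ and the standing hypothesis $\mathrm{spec}(\lambda')\cap(\mathrm{spec}(\Lambda_f)\cup\mathrm{spec}(A))=\emptyset$.
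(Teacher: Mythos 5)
Your overall architecture is viable and genuinely different from the paper's. Both proofs reduce the statement to solving the pencil equation (\ref{left_interpol}), and both convert the analytic hypothesis into the algebraic relations $W\tilde\Lambda_k-h'\tilde H_k=\lambda'W$, $W\alpha_k=h'R_0$ (the paper obtains these at once from Theorem \ref{zeros_description}(i) applied to the minimal realization $K_0=R_0+H_k(zI-\Lambda_k)^{-1}\alpha_k$, with no need for your relocation-plus-biproper-factor detour, but your version of this step is correct and the hypothesis $\mathrm{spec}(\lambda')\cap\mathrm{spec}(\Lambda_f)=\emptyset$ does make the bookkeeping for $N_\beta^{-1}$ work). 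Where you truly diverge is the solvability of the pencil equation: the paper proves it constructively (Lemma \ref{lem:sigma_h_lambda}: Sylvester extension of $\sigma'$ to $[\sigma',\sigma'']$ using $\Lambda_f$, then the maximality of $(R_0,\alpha_0)$, Theorem \ref{thm:left_right}, and a telescoping sum in powers of $\lambda'$), whereas you propose a duality argument. Your duality setup is in fact clean and correct: eliminating $Y'$ by the Sylvester map, the transpose of $\mathcal{T}$ with respect to the bilinear trace pairing is $U\mapsto CX+DU$ with $X$ the unique solution of $X\lambda'-AX=BU$, so $\ker\mathcal{T}^{*}$ is exactly the set of solutions $(X,U)$ of (\ref{matrix_zero_eq2}) with $\Lambda=\lambda'$, and $\mathcal{T}(Z')=h'$ is solvable if and only if $\mathrm{tr}\,(h'U)=0$ for every such pair.

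The genuine gap is the word \emph{hence} in ``identified with a right-kernel direction of $F$ at $\mathrm{spec}(\lambda')$, hence with a column-direction of $K_\beta$ there.'' Proposition \ref{product_equation} and minimality give only the first identification: elements of $\ker\mathcal{T}^{*}$ are right \emph{zero} pairs of $F$ at $\lambda'$, and such pairs in general include transmission-zero directions, which have nothing to do with $K_\beta$ and against which $h'$ has no reason to pair to zero. The passage from ``zero pair'' to ``kernel direction'' is precisely where the hypothesis $\mathrm{spec}(\lambda')\cap\mathrm{spec}(\Lambda_f)=\emptyset$ must act, yet you invoke that hypothesis only for the poles of $N_\beta^{-1}$, never here; as stated, the crux you flag remains unproved. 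It can be closed with the paper's own machinery: reduce to $(U,\lambda')$ observable (observability of $(C,A)$ annihilates the unobservable part of both $U$ and $X$); Theorem \ref{zeros_finite_kernel_thm}(ii) gives $X=\Pi_{fzk}\alpha'$; subtracting (\ref{matrix_zero_eq_fzk}) (in the form of Remark \ref{rem:nonuniq_sol}, i.e.\ for $H_{fzk}+R_0\beta$, $\Lambda_{fzk}+\alpha_0\beta$) multiplied by $\alpha'$ and using the maximality of $(R_0,\alpha_0)$ in (\ref{matrix_kernel_eq}) yields $U=\tilde H_k\alpha_1'+(H_f+R_0\beta_f)\alpha_2'+R_0\Theta_0$ together with $\alpha'\lambda'-(\Lambda_{fzk}+\alpha_0\beta)\alpha'=\alpha_0\Theta_0$; in the partition (\ref{special_form1}) the lower block of the latter reads $\alpha_2'\lambda'-\Lambda_f\alpha_2'=0$, so the disjointness of the spectra forces $\alpha_2'=0$ --- this Sylvester step is the missing use of the $\Lambda_f$ assumption. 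Then
\[
\mathrm{tr}\,(h'U)=\mathrm{tr}\bigl[(W\tilde\Lambda_k-\lambda'W)\alpha_1'\bigr]
+\mathrm{tr}\bigl[W(\alpha_1'\lambda'-\tilde\Lambda_k\alpha_1')\bigr]
=\mathrm{tr}\bigl[W\alpha_1'\lambda'\bigr]-\mathrm{tr}\bigl[\lambda'W\alpha_1'\bigr]=0
\]
by cyclicity of the trace. Note that this also dissolves the Jordan-structure worry you raise: the members of $\ker\mathcal{T}^{*}$ produce cofactors of the rigid form $\Theta_0(zI-\lambda')^{-1}$, so pure algebra replaces any principal-part analysis. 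With these insertions your route closes and constitutes a legitimate alternative to Lemma \ref{lem:sigma_h_lambda}; without them, the key implication is asserted rather than proved.
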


\begin{proof}
The proof of the theorem is 
based on the following lemma which 
is valid under more general assumptions, as well.

\begin{lemma}\label{lem:sigma_h_lambda}
Assume that the realization of $F$ given by
\[
F(z) \sim 
\left(
\begin{array}{c|c} A &   B
\\
\hline \rule{0cm}{.42cm}
   C & D
\end{array}
\right)
\]
is minimal.
Consider maximal solutions 
$\left(\Pi_\text{max}, H_\text{max},
\Lambda_\text{max}\right)$ of 
(\ref{matrix_zero_eq_max}) and
$(R_0, \alpha_0)$ of (\ref{matrix_kernel_eq}) for
which $\ker \Pi_\text{max} = \left\{ 0 \right\}$.
(Let us recall that under the minimality
assumption the subscripts $.\text{max}$ and
$._{fzk}$ mean the same.)
Assume that the matrices $\sigma', h', \lambda'$ 
provide a
solution of the equations
\begin{equation}\label{eq:left_zero_K}
\left[ \sigma'\;, \ -h'\right]
\left[\begin{array}{cc} \Lambda_\text{max} & \alpha_0 \\
H_\text{max} & R_0 \end{array}\right] =
\left[ \lambda' \sigma'\;, \ 0\right]\;,
\end{equation}
Then there exist matrices $Y', Z'$ such that
$Y'\Pi_\text{max} = \sigma'$ and equations
(\ref{left_interpol}) hold.
\end{lemma}

\begin{proof}
Before proving the lemma let us observe that 
multiplying the equation 
(\ref{left_interpol}) from the right by 
$\left[\begin{array}{c} \Pi_{fzk} \\ H_{fzk}
\end{array}\right]$ and
by 
$\left[\begin{array}{c} 0 \\ R_0
\end{array}\right]$ we obtain the 
following equations
\begin{eqnarray*}
\lambda' Y'\Pi_{fzk} + h' H_{fzk} &=& Y'\Pi_{fzk} 
\Lambda_{fzk}\\
h' R_0 & = & Y'\Pi_{fzk} \alpha_0\;.
\end{eqnarray*}
In other words
\begin{equation}\label{right_zero2}
\left[ Y'\Pi_{fzk}\;, \ -h'\right]
\left[\begin{array}{cc} \Lambda_{fzk} & \alpha_0 \\
H_{fzk} & R_0 \end{array}\right] =
\left[ \lambda' Y'\Pi_{fzk}\;, \ 0\right]\;.
\end{equation}
Thus the present lemma essentially states 
-- using the assumption that the realization of $F$
is minimal --  that
equations (\ref{left_interpol}) and
(\ref{eq:left_zero_K}) are equivalent.

\bigskip
For proving the lemma first notice
that since according to our assumption $\ker (\Pi_\text{max}) = 
\left\{ 0 \right\}$ there exists a matrix
$Y'_1$ such that $Y'_1\Pi_\text{max} = \sigma'$.
Then $\sigma' \alpha_0 = Y'_1\Pi_\text{max} \alpha_0 =
Y'_1BR_0$. Thus
\[
\left(Y'_1B - h'\right) R_0 = 0
\]
The maximality of the solution of equation 
(\ref{matrix_kernel_eq}) gives that 
$\ker \left[\begin{array}{cc} B & \Pi_{max} \\
D & 0 \end{array}
\right] =
\Im \left[\begin{array}{c} R_0 \\ 0 \end{array}\right]$
thus any row vector
orthogonal to the columns of $R_0$ can be written 
in the form $\eta B +\xi D$, where $\eta \Pi_\text{fzk} = 0$.
Thus there exist matrices $Y'_2, Z'_1$ such that
\begin{equation}\label{pre_YZh}
Y'_1B - h' = Y'_2B + Z'_1 D\;, \quad Y'_2 \Pi_{fzk} = 0
\end{equation}
Also the first equation in (\ref{right_zero2}) gives that
\[
Y'_1\Pi_\text{fzk} \Lambda_\text{fzk} 
-h'H_\text{fzk} -\lambda'Y'_1\Pi_\text{fzk}=0\;.
\]
Expressing $\Pi_\text{fzk}\Lambda_\text{fzk} = A \Pi_{fzk} + B H_{fzk}$ 
and $h'$ from (\ref{pre_YZh}) we get that
\[
Y'_1 A \Pi_\text{fzk} + Y'_1 BH_\text{fzk} 
- Y'_1B H_\text{fzk} + Y'_2 B H_\text{fzk} + Z'_1DH_\text{fzk} - 
\lambda' Y'_1\Pi_\text{fzk} =0\;.
\]
Thus
\[
Y'_1 A\Pi_\text{fzk}  - \lambda' Y'_1\Pi_\text{fzk} 
+ Y'_2 \Pi_\text{fzk} \Lambda_\text{fzk} 
- Y'_2 A \Pi_\text{fzk} - Z'_1 C \Pi_\text{fzk} =0 \;. 
\]
Using the identity $Y'_2\Pi_\text{fzk} =0$ we arrive at 
the following equation
\begin{equation}\label{pre_YZlambda}
\left[
\left(Y'_1-Y'_2\right)A - Z'_1 C 
-\lambda'\left(Y'_1-Y'_2\right)
\right]\Pi_\text{fzk} = 0
\end{equation}
Now Theorem \ref{thm:left_right} implies that the row
vectors orthogonal to $\Im (\Pi_\text{fzk})$ are in 
$\cC_{\text{left}}^{*}\left(\Sigma\right)$. 
I.e. there exists an integer $l$ and sequences of matrices
$\xi_0=0, \xi_1, \dots ,\xi_l$ and 
$\mu_0, \mu_1, \dots , \mu_{l-1}$ such that
\begin{equation}\label{eq_ximu}
\left[ \xi_j\;,\ \mu_j\right]
\left[\begin{array}{cc} A & B \\
C & D \end{array}\right] =
\left[ \xi_{j+1}\;, \ 0\right]\;,
\quad j=0, 1, \dots , l-1\;.
\end{equation}
and
\[
\xi_l = \left(Y'_1-Y'_2\right)A - Z'_1 C 
-\lambda'\left(Y'_1-Y'_2\right)\;.
\]
This latter equation together with
(\ref{pre_YZh}) can be written as follows
\begin{equation}\label{eq:YZlambda}
\left[\left(Y'_1-Y'_2\right)\;,\ -Z'_1
\right]
\left[\begin{array}{cc} A & B \\
C & D \end{array}\right] =
\left[\xi_l + 
\lambda'\left(Y'_1-Y'_2\right) 
\;,\ h'
\right]
\end{equation}
Multiplying (\ref{eq_ximu}) from the left by
$\left(\lambda'\right)^{l-1-j}$ taking the sum
from $j=0$ to $l-1$ and subtracting 
it from 
(\ref{eq:YZlambda}) we obtain that
\begin{multline}
\left[
\left(Y'_1-Y'_2 - \sum_{j=0}^{l-1} 
\left(\lambda'\right)^{l-1-j} \xi_j\right)\;,
-Z'_1 - \sum_{j=0}^{l-1} 
\left(\lambda'\right)^{l-1-j} \mu_j
\right]
\left[\begin{array}{cc} A & B \\
C & D \end{array}\right] \\ 
=
\left[\xi_l + 
\lambda'\left(Y'_1-Y'_2\right) -
\sum_{j=0}^{l-1} \left(\lambda'\right)^{l-1-j}\xi_{j+1}
\;,\ h'
\right]
\end{multline}
Introducing the notation (using that $\xi_0 = 0$)
\begin{eqnarray*}
Y' &=& Y'_1 -Y'_2 - \sum_{j=1}^{l-1} 
\left(\lambda'\right)^{l-1-j} \xi_j \\
Z' &=& -Z'_1 - \sum_{j=0}^{l-1} 
\left(\lambda'\right)^{l-1-j} \mu_j
\end{eqnarray*}
we get that
\[
\left[ Y'\;,\ Z'
\right]
\left[\begin{array}{cc} A & B \\
C & D \end{array}\right] 
=
\left[
\lambda' Y'\;,\ h'
\right]
\]
concluding the proof of the lemma.
\qed\end{proof}

Let us return to the proof of the theorem.
We might assume w.l.o.g. that the matrices
are partitioned according to 
(\ref{special_form1}) and (\ref{special_form2}).
Then the function $K_0$ has the 
{\it minimal} realization
$K_0 = R_0 + H_k \left(zI-\Lambda_k\right)^{-1}\alpha_k$.
According to part (i) of Theorem \ref{zeros_description},
if for the pair $(\lambda', h')$ the product
$\left(zI-\lambda'\right)^{-1}h' K_0(z)$ is analytic 
at the eigenvalues of $\lambda'$
then there exists a solution $\sigma'$ of the equation
\[
\left[ \sigma',\ -h'\right]
\left[ \begin{array}{cc}
\Lambda_k & \alpha_k \\
H_k & R_0
\end{array}\right] = 
\left[ \lambda'\sigma', \ 0\right]\;. 
\]
Using the assumption that the spectra of
$\lambda'$ and $\Lambda_f$ are disjoint
we get that the Sylvester-equation
\[
\lambda' \sigma^{''} - \sigma^{''}\Lambda_f 
= \sigma' \Lambda_{kf} - h' H_f
\]
has also a solution in $\sigma^{''}$.
In other words the equation
\[
\left[ \sigma',\ \sigma^{''},\ -h'\right]
\left[\begin{array}{ccc}
\Lambda_k & \Lambda_{kf} & \alpha_k \\
0 & \Lambda_f & 0 \\
H_k & H_f & R_0
\end{array}
\right] =
\left[ \lambda' \sigma',\ 
\lambda'\sigma^{''},\ 0\right]
\]
holds. Applying Lemma \ref{lem:sigma_h_lambda}
we obtain that there exists matrices
$Y'$, $Z'$ such that $Y' \Lambda_\text{max} = 
\left[ \sigma',\ \sigma^{''}\right]$
and equation (\ref{left_interpol}) holds.

Invoking now part (ii) of Theorem 
\ref{zeros_description} 
(or directly computing the product) 
-- using that $\lambda'$ and $A$ have no 
common eigenvalues we obtain that 
\[
\left(zI-\lambda'\right)^{-1}
\left(Z' F(z)-h'\right) = 
-Y' \left( zI - A\right)^{-1} B
\]
is analytic on the spectra of $\lambda'$, concluding
the proof of the theorem. \qed
\end{proof}

\begin{remark}\label{rem:all_K}
Let us note that in the previous theorem instead
of $K_0$ any other function $K_\beta$ can be used, due to 
the fact that the matrices 
$\left(\Pi_{fzk}, H_{fzk}+\alpha_0\beta, 
\Lambda_{fzk}+R_0\beta\right)$
are also maximal solutions of the 
equation (\ref{matrix_zero_eq_max}) due to the assumption
that a minimal realization of $F$ was considered.
\end{remark}

\begin{corollary}
Consider a complex number $\lambda\in \CC$ which is not a
finite zero of $F$, and assume that the functions $F$ and $K_0$
are analytic at $\lambda$. Choosing $\lambda' = \lambda I$
(where $I$ has appropriate size) the previous theorem 
gives that the row-space spanned by the row-vectors
of $F(\lambda)$ generate the 
\underline{\bf orthogonal complement} of
the column space generated by the column-vectors of
$K_0(\lambda)$.
\end{corollary}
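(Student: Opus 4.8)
The plan is to establish the claimed equality by proving two opposite inclusions between the row space of $F(\lambda)$ and the set $N$ of row vectors $h'$ satisfying $h'K_0(\lambda) = 0$, that is, the vectors annihilating the columns of $K_0(\lambda)$ (this annihilator is the ``orthogonal complement'' referred to in the statement). The easy inclusion comes for free from the defining property of $K_0$: since the columns of $K_0$ lie in $\ker F$ (see Remark \ref{rem:K0}), we have $F(z)K_0(z) \equiv 0$ as rational functions, and evaluating at a point where both factors are analytic gives $F(\lambda)K_0(\lambda) = 0$. Hence every row of $F(\lambda)$ belongs to $N$, so the row space of $F(\lambda)$ is contained in $N$.

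For the reverse inclusion I would invoke Theorem \ref{thm:F_vs_K_values} with $\lambda' = \lambda I$. First I would check its hypotheses. The assumption that $\lambda$ is not a finite zero means that $\lambda$ is not an eigenvalue of $\Lambda_f$, while analyticity of $F$ at $\lambda$ together with minimality of the realization forces $\lambda$ not to be an eigenvalue of $A$; thus the spectrum $\{\lambda\}$ of $\lambda'$ is disjoint from the spectra of $\Lambda_f$ and of $A$, exactly as required. Next, fix any row vector $h' \in N$. Since $K_0$ is analytic at $\lambda$, the function $h'K_0(z)$ is analytic there, so the only possible singularity of $\left(zI-\lambda'\right)^{-1} h' K_0(z) = (z-\lambda)^{-1} h'K_0(z)$ at $\lambda$ is a simple pole whose residue equals $h'K_0(\lambda) = 0$. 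The singularity is therefore removable and the product is analytic at $\lambda$, which is precisely the interpolation hypothesis of the theorem.

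Applying the theorem then yields a matrix $Z'$ for which $(z-\lambda)^{-1}\left(Z'F(z) - h'\right)$ is analytic at $\lambda$. Because $F$ itself is analytic at $\lambda$, the expression $Z'F(z)-h'$ is analytic there, so division by $(z-\lambda)$ remains analytic exactly when this expression vanishes at $\lambda$; hence $Z'F(\lambda) = h'$, and $h'$ lies in the row space of $F(\lambda)$. This proves $N \subseteq$ row space of $F(\lambda)$, and combined with the first inclusion gives the desired equality. The main obstacle is purely bookkeeping, namely matching the spectral-disjointness and analyticity conditions of Theorem \ref{thm:F_vs_K_values} to the three hypotheses stated in the corollary; once these are verified, the two inclusions close immediately. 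By Remark \ref{rem:all_K} the same argument applies verbatim with any $K_\beta$ in place of $K_0$.
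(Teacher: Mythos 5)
Your proof is correct and follows exactly the route the paper intends: the inclusion of the row space of $F(\lambda)$ into the annihilator of the columns of $K_0(\lambda)$ comes from $FK_0\equiv 0$ (as discussed at the start of Section \ref{subsec:interp_vs_zero}), and the reverse inclusion is obtained by applying Theorem \ref{thm:F_vs_K_values} with $\lambda'=\lambda I$, using that $\lambda$ avoids the spectra of $\Lambda_f$ and (by minimality) of $A$, and then reading off $h'=Z'F(\lambda)$ from the removability of the pole. Your verification of the hypotheses and the final evaluation step match the paper's implicit argument, so there is nothing to add.
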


\bigskip
\subsection{Further elimination via factorization:
$\cW (\ker_{\rm left} F )$ }

In Section \ref{sct:elim_ker} a special factorization
of function $F$ of the form
$F = F_{\text{\bf r}} L_\beta^{*}$ was discussed, where 
the inner function $L_\beta$ was constructed via the
square inner extension of the function $K_\beta$.
(This latter one generates the module 
$\cW \left(\ker F\right)$. See Theorems 
\ref{thm:char_ker2} and 
\ref{inner_fnc_kernel}.)  

Applying the same idea we can eliminate the left kernel module of $F$, as well. 
But in order to eliminate both the left and right kernel modules of $F$ 
at the same time
we have to consider the left kernel-module of $F_{\br r} = F L_{\beta}^{*}$.
To this aim first we have to consider \underline{maximal} solution of the "left"
version of equation (\ref{matrix_zero_eq_max}) for the realization 
(\ref{f1_real}) of $F_{\bf r}$ given in Theorem \ref{thm:f1}. 
As we have seen earlier, this maximal solution is connected to the subspace
$\cV^{*}_{\rm left}\left(\Sigma_{\bf r}\right)$.
Theorem \ref{thm:f1} provides explicit 
connections between the various
subspaces used in geometric control theory
(maximal output-nulling 
controlled invariant subspace, 
minimal input-containing subspace, 
maximal output-nulling reachability subspace) 
determined by the given realizations of
the functions
$F$ and $F_{\text{\bf r}}$, especially showing that
while $\cV^{*}\left(\Sigma_{\text{\bf r}}\right) =
\cV^{*}\left(\Sigma\right)$, the minimal input-containing subspace
reduces, 
$\cC^{*}\left(\Sigma_{\text{\bf r}}\right) \subset
\cC^{*}\left(\Sigma\right)$, in such a way that the 
intersection 
$\cR^{*}\left(\Sigma_{\text{\bf r}}\right) = 
\cV^{*}\left(\Sigma_{\text{\bf r}}\right) \cap 
\cC^{*}\left(\Sigma_{\text{\bf r}}\right)$ 
becomes trivial. As a consequence of this
-- using Theorem \ref{thm:left_right} --
$\cV^{*}_\text{left}\left(\Sigma_{\bf r}\right)$ becomes
larger than $\cV^{*}_\text{left} \left(\Sigma\right)$.  

Theorem \ref{thm:fzk_left} provides a detailed
picture of this question in terms of state-space matrices
solving the "left" version of equation (\ref{matrix_zero_eq_max})
for the realization (\ref{f1_real}) of $F_{\bf r}$ given in Theorem
\ref{thm:f1}.

To formulate this theorem we first need an auxiliary statement formulated
as a corollary of the following version 
of Lemma \ref{lem:sigma_h_lambda}.

\begin{lemma}\label{lem:sigma_h_lambda_mod}
Assume that the realization of $F$ given by
\[
F(z) \sim 
\left(
\begin{array}{c|c} A &   B
\\
\hline \rule{0cm}{.42cm}
   C & D
\end{array}
\right)
\]
is minimal.
Consider maximal solutions 
$\left(\Pi_\text{max}, H_\text{max},
\Lambda_\text{max}\right)$ of 
(\ref{matrix_zero_eq_max}) and
$(R_0, \alpha_0)$ of (\ref{matrix_kernel_eq}) for
which $\ker \Pi_\text{max} = \left\{ 0 \right\}$
assuming that the column vectors of $R_0$ are orthonormal
and the matrices are partitioned according to 
(\ref{special_form1}) and (\ref{special_form2}).

Assume that the matrices $\sigma', h', \lambda'$ 
provide a
solution of the equations
\begin{equation}\label{eq:left_zero_K_mod}
\left[ \sigma'\;, \ -h'\right]
\left[\begin{array}{cc} \Lambda_k & \alpha_k \\
H_k & R_0 \end{array}\right] =
\left[ \lambda' \sigma'\;, \ 0\right]\;,
\end{equation}
Then there exist matrices $Y', Z', V'$ such that
$Y'\Pi_k = \sigma'$, 
the row vectors of $V'$ are in $\cV^{*}_\text{\rm left}\left(\Sigma\right)$,
and equations
\begin{equation}\label{left_interpol_mod}
\left[ Y'\;,\ Z'\right]
\left[\begin{array}{cc} A & B \\
C & D \end{array}\right] =
\left[ \lambda' Y'\;+ V' , h'\right]
\end{equation}
hold.
\end{lemma}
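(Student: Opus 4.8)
The plan is to imitate the proof of Lemma \ref{lem:sigma_h_lambda} almost verbatim, replacing the full triple $(\Pi_\text{max},H_\text{max},\Lambda_\text{max})$ by the controllable $k$-block $(\Pi_k,H_k,\Lambda_k)$ of the partition (\ref{special_form1})--(\ref{special_form2}), and then tracking the one genuinely new phenomenon: the annihilator of $\Im(\Pi_k)$ is strictly larger than the annihilator of $\Im(\Pi_{fzk})$ used before, and the surplus is exactly what gets recorded in the correction term $V'$. So the target equation differs from (\ref{left_interpol}) only by the extra summand $V'$ in its first block, and everything hinges on where that summand comes from.

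First I would observe that, since the columns of $\Pi_{fzk}=[\Pi_k,\Pi_f]$ are independent, $\ker\Pi_k=\{0\}$, so there is a matrix $Y_1'$ with $Y_1'\Pi_k=\sigma'$. Reading off the two blocks of the hypothesis (\ref{eq:left_zero_K_mod}) gives $\sigma'\Lambda_k-h'H_k=\lambda'\sigma'$ and $\sigma'\alpha_k=h'R_0$; combining the second identity with (\ref{alpha1}), namely $BR_0=\Pi_k\alpha_k$, yields $(Y_1'B-h')R_0=0$. The maximality/duality argument of Lemma \ref{lem:sigma_h_lambda} depends only on $R_0$ and $\Pi_{fzk}$ and is therefore unchanged: it produces matrices $Y_2',Z_1'$ with $Y_1'B-h'=Y_2'B+Z_1'D$ and $Y_2'\Pi_{fzk}=0$, hence in particular $Y_2'\Pi_k=0$.

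Next I would substitute $\Pi_k\Lambda_k=A\Pi_k+BH_k$ and $C\Pi_k+DH_k=0$ from (\ref{pi_1}) into the first block, exactly as in the original proof; using $Y_2'\Pi_k=0$ the $H_k$-terms collapse and one is left with $\Xi\,\Pi_k=0$, where $\Xi=(Y_1'-Y_2')A-Z_1'C-\lambda'(Y_1'-Y_2')$. This is where the two proofs diverge. In Lemma \ref{lem:sigma_h_lambda} the rows of $\Xi$ lay in $(\Im\Pi_{fzk})^\perp=\cC^{*}_{\text{left}}(\Sigma)$, whereas here $\Im(\Pi_k)=\cR^{*}(\Sigma)=\cV^{*}(\Sigma)\cap\cC^{*}(\Sigma)$ by (\ref{eq_im_pi1}); so, invoking Theorem \ref{thm:left_right} together with its stated analog (which give $(\cC^{*})^\perp=\cV^{*}_{\text{left}}$ and $(\cV^{*})^\perp=\cC^{*}_{\text{left}}$), the rows of $\Xi$ lie in $(\cV^{*}\cap\cC^{*})^\perp=\cC^{*}_{\text{left}}(\Sigma)\vee\cV^{*}_{\text{left}}(\Sigma)$. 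I would therefore split $\Xi=\Xi_C+\Xi_V$ with the rows of $\Xi_C$ in $\cC^{*}_{\text{left}}$ and those of $\Xi_V$ in $\cV^{*}_{\text{left}}$.

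Finally, for $\Xi_C$ I would reproduce the telescoping of Lemma \ref{lem:sigma_h_lambda}: realize its rows through a left output-nulling sequence $[\xi_j,\mu_j]\left[\begin{smallmatrix}A&B\\C&D\end{smallmatrix}\right]=[\xi_{j+1},0]$ with $\xi_0=0$ and $\xi_l=\Xi_C$, weight the relations by powers of $\lambda'$, and subtract to cancel $\Xi_C$ against the $\lambda'$-shift. Setting $Y'=(Y_1'-Y_2')-\sum_{j=1}^{l-1}(\lambda')^{l-1-j}\xi_j$ and $Z'=-Z_1'-\sum_{j=0}^{l-1}(\lambda')^{l-1-j}\mu_j$ then gives $[Y',Z']\left[\begin{smallmatrix}A&B\\C&D\end{smallmatrix}\right]=[\lambda'Y'+\Xi_V,\,h']$, so $V':=\Xi_V$ has its rows in $\cV^{*}_{\text{left}}(\Sigma)$, matching (\ref{left_interpol_mod}). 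The identity $Y'\Pi_k=\sigma'$ survives because each $\xi_j$ has rows in $\cC^{*}_{\text{left}}=(\cV^{*})^\perp$, which annihilate $\Im(\Pi_k)\subset\cV^{*}$, leaving $Y'\Pi_k=Y_1'\Pi_k=\sigma'$. I expect the main obstacle to be conceptual rather than computational: one must recognize that shrinking $\Pi_{fzk}$ to $\Pi_k$ enlarges the row annihilator by precisely $\cV^{*}_{\text{left}}$, and that this surplus cannot be telescoped away by the $\cC^{*}_{\text{left}}$-recursion and so is unavoidably parked in $V'$.
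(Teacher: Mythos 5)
Your proposal is correct and follows essentially the same route as the paper's own proof, which likewise reruns Lemma \ref{lem:sigma_h_lambda} with $(\Pi_k,H_k,\Lambda_k)$ in place of $(\Pi_{\text{max}},H_{\text{max}},\Lambda_{\text{max}})$, uses Theorem \ref{thm:left_right} to identify $(\Im\,\Pi_k)^\perp=(\cV^{*}\cap\cC^{*})^\perp=\cC^{*}_{\text{left}}\vee\cV^{*}_{\text{left}}$, splits the residual term into a $\cC^{*}_{\text{left}}$ part (telescoped away) and a $\cV^{*}_{\text{left}}$ part (kept as $V'$). Your explicit check that $Y'\Pi_k=\sigma'$ survives the telescoping, via $\xi_j\Pi_k=0$ and $Y_2'\Pi_k=0$, is a detail the paper leaves implicit but is exactly the intended argument.
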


\begin{proof}
Let us make the obvious changes in the proof of 
Lemma \ref{lem:sigma_h_lambda}, i.e. instead of
considering $\Lambda_\text{max}, ...$ use
$\Lambda_k , ...$.  

Since according to
our assumption 
$\ker \left(\Pi_k\right) = \left\{ 0 \right\}$ there 
exists a matrix $Y_1^{'}$ such that 
$Y_1^{'} \Pi_k = \sigma'$. 

Following the steps in the 
previous proof we -- instead of 
equation (\ref{pre_YZlambda}) -- arrive at the equation
\[
\left[
\left(Y'_1-Y'_2\right)A - Z'_1 C 
-\lambda'\left(Y'_1-Y'_2\right)
\right]\Pi_k = 0\;,
\]
where -- as before --
$Y_2^{'} \Pi_\text{\rm max} = 0$, i.e. the rows
of $Y_2^{'}$ are in 
$\cC^{*}_\text{\rm left}\left(\Sigma\right)$.

Now according to Theorem \ref{thm:left_right} and its 
immediate consequence the row vectors
orthogonal to $\Im \left( \Pi_k\right)
= \cV^{*}\left(\Sigma\right) \cap 
\cC^{*}\left(\Sigma\right)$ are
in $\cC^{*}_\text{\rm left}\left(\Sigma\right) \vee 
\cV^{*}_\text{\rm left}\left(\Sigma\right)$
we have that
\[
\left(
\left(Y'_1-Y'_2\right)A - Z'_1 C 
-\lambda'\left(Y'_1-Y'_2\right)
\right) = \xi_l + V'
\]
for some matrices $\xi_l$ and $V'$ where
the rows of $\xi_l$ are in 
$\cC^{*}_\text{\rm left}\left(\Sigma\right)$,
while those of $V'$ are in 
$\cV^{*}_\text{\rm left}\left(\Sigma\right)$.

Thus instead of (\ref{eq:YZlambda})
we obtain that
\[
\left[\left(Y'_1-Y'_2\right)\;,\ -Z'_1
\right]
\left[\begin{array}{cc} A & B \\
C & D \end{array}\right] =
\left[\xi_l + 
\lambda'\left(Y'_1-Y'_2\right) 
+ V'\;,\ h'
\right]\;.
\]
Embedding the rows of $\xi_l$ into sequences
in $\cC^{*}_\text{\rm left}\left(\Sigma\right)$ and continuing
the proof as it was done in Lemma
\ref{lem:sigma_h_lambda} we get
that equation
\[
\left[ Y'\;,\ Z'
\right]
\left[\begin{array}{cc} A & B \\
C & D \end{array}\right] 
=
\left[
\lambda' Y' + V'\;,\ h'
\right]
\]
holds, 
concluding the proof of the present lemma.
\qed
\end{proof}

\begin{remark}\label{rem:R_compl_V}
Let us point out that the matrices $Y', Z'$ and $V'$
can be chosen in such a way that for the matrix $V'$
the following more stringent condition holds:
considering any (maximal) complementary subspace 
of $\cR^{*}_\text{\rm left}\left(\Sigma\right)$
in $\cV^{*}_\text{\rm left}\left(\Sigma\right)$ the row vectors of 
$V'$ are in this subspace.
\end{remark}

Let us observe that the equations (\ref{inner_eq1}) and
(\ref{inner_eq2}) (or equivalently the Riccati-equation
(\ref{ric}) and (\ref{beta_def})) can be written as
\[
\left[\sigma , \left(H_k+R_0\beta_k\right)^{*}
\right]
\left[
\begin{array}{cc}
\Lambda_k + \alpha_k\beta_k & \alpha_k \\
H_k+R_0\beta_k & R_0
\end{array}\right]
=
\left[ - \left(\Lambda_k+\alpha_k\beta_k\right)^{*} \sigma ,
0 \right].
\]
Thus Lemma \ref{lem:sigma_h_lambda_mod}
can be applied giving the following
corollary.

\begin{corollary}\label{cor:y1_z1}
Consider a {\bf minimal realization} of $F$ given as
\[
F(z) \sim 
\left(
\begin{array}{c|c} A &   B
\\
\hline \rule{0cm}{.42cm}
   C & D
\end{array}
\right)
\]
Assume that the columns of the function
$H_{fzk}\left(zI-\Lambda_{fzk}\right)^{-1}$
provide a basis in 
$Z(F)\oplus \cW\left(\text{ker} F\right)$.
Let $\Pi_{fzk}$ be the corresponding solution of
(\ref{matrix_zero_eq_fzk}). 
Consider a maximal solution -- in terms
of $\alpha_0$ and $R_0$ -- of the equation
(\ref{matrix_kernel_eq}) assuming 
-- w.l.o.g. -- that
the column-vectors of the matrix $R_0$ are orthonormal
and the matrices are partitioned according
to (\ref{special_form1}) and 
(\ref{special_form2}).

Denote by $\sigma$ the positive definite 
solution of the Riccati-equation (\ref{ric}). Set
\[
\beta_k^{*} = -H_k^{*} R_0 - \sigma \alpha_k\;.
\]
then there exists matrices $Y_k, Z_k$ and
$V_k$ such that the rows of $V_k$
are in $\cV^{*}_\text{\rm left}\left(\Sigma\right)$
\begin{equation} 
Y_k \Pi_k = \sigma\;,
\end{equation}
and
\begin{equation}
\left[ Y_k, \ Z_k\right]
\left[\begin{array}{cc}
A & B \\ C & D \end{array}\right]
=
\left[
-\left(\Lambda_k + \alpha_k\beta_k\right)^{*} Y_k
+ V_k , 
\ -\left(H_k+R_0\beta_k\right)^{*}
\right]\;.
\end{equation}
\end{corollary}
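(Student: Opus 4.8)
The plan is to recognise the asserted identity as a direct instance of the hypothesis (\ref{eq:left_zero_K_mod}) of Lemma~\ref{lem:sigma_h_lambda_mod}, so that the matrices $Y_k, Z_k, V_k$ can simply be read off from its conclusion. First I would record that, by Remark~\ref{rem:nonuniq_sol} (equivalently Remark~\ref{rem:partition}), replacing the pair $(H_k,\Lambda_k)$ by the feedback-shifted pair $(H_k+R_0\beta_k,\ \Lambda_k+\alpha_k\beta_k)$ again furnishes a solution of (\ref{pi_1}), while the matrix $\Pi_k$ and the relation (\ref{alpha1}) satisfied by $(R_0,\alpha_k)$ are left unchanged. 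Since controllability is invariant under state feedback, the pair $(\Lambda_k+\alpha_k\beta_k,\ \alpha_k)$ is again controllable and $\Im\,\Pi_k=\cR^{*}\left(\Sigma\right)=\cV^{*}\left(\Sigma\right)\cap\cC^{*}\left(\Sigma\right)$ is untouched. Consequently all the structural hypotheses of Lemma~\ref{lem:sigma_h_lambda_mod} hold verbatim for this shifted triple.

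Next I would verify---exactly as in the display immediately preceding the corollary---that the two inner-function relations (\ref{inner_eq1}) and (\ref{inner_eq2}), i.e.\ the Riccati equation (\ref{ric}) together with the formula (\ref{beta_def}) $\beta_k=-\alpha_k^{*}\sigma-R_0^{*}H_k$, can be transcribed as
\[
\left[\sigma,\ \left(H_k+R_0\beta_k\right)^{*}\right]
\left[\begin{array}{cc}
\Lambda_k+\alpha_k\beta_k & \alpha_k \\
H_k+R_0\beta_k & R_0
\end{array}\right]
=
\left[-\left(\Lambda_k+\alpha_k\beta_k\right)^{*}\sigma,\ 0\right].
\]
This is precisely (\ref{eq:left_zero_K_mod}) written for the shifted triple, under the reading $\sigma'=\sigma$, $\lambda'=-\left(\Lambda_k+\alpha_k\beta_k\right)^{*}$ and $-h'=\left(H_k+R_0\beta_k\right)^{*}$, that is $h'=-\left(H_k+R_0\beta_k\right)^{*}$. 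Applying Lemma~\ref{lem:sigma_h_lambda_mod} to this data then produces matrices, which I rename $Y_k,Z_k,V_k$, with the rows of $V_k$ lying in $\cV^{*}_\text{left}\left(\Sigma\right)$, with $Y_k\Pi_k=\sigma'=\sigma$, and satisfying the conclusion (\ref{left_interpol_mod}):
\[
\left[Y_k,\ Z_k\right]
\left[\begin{array}{cc} A & B \\ C & D \end{array}\right]
=
\left[\lambda' Y_k+V_k,\ h'\right]
=
\left[-\left(\Lambda_k+\alpha_k\beta_k\right)^{*}Y_k+V_k,\ -\left(H_k+R_0\beta_k\right)^{*}\right],
\]
which is the asserted identity.

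The only genuine point to watch is the reduction carried out in the first paragraph: one must be certain that Lemma~\ref{lem:sigma_h_lambda_mod}, although literally stated for the particular solution $(\Pi_k,H_k,\Lambda_k)$, remains applicable after the feedback shift by $\beta_k$. This hinges on the invariance of $\Im\,\Pi_k=\cR^{*}\left(\Sigma\right)$ and of the controllability of $(\Lambda_k,\alpha_k)$ under feedback, together with the fact that the proof of that lemma invokes only (\ref{pi_1}), (\ref{alpha1}) and the orthogonality supplied by Theorem~\ref{thm:left_right}---all of which are feedback-insensitive. Granting this, the corollary follows with no further computation.
\qed
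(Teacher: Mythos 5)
Your proof is correct and takes essentially the same route as the paper: the paper likewise rewrites (\ref{inner_eq1})--(\ref{inner_eq2}) (equivalently the Riccati-equation (\ref{ric}) together with (\ref{beta_def})) as the displayed matrix identity and then applies Lemma \ref{lem:sigma_h_lambda_mod} with $\sigma'=\sigma$, $\lambda'=-\left(\Lambda_k+\alpha_k\beta_k\right)^{*}$, $h'=-\left(H_k+R_0\beta_k\right)^{*}$. Your careful justification of the feedback shift is sound (and fills a step the paper leaves implicit), though it can even be bypassed: since the second block gives $\sigma\alpha_k+\left(H_k+R_0\beta_k\right)^{*}R_0=0$, the first block is equivalent to $\sigma\Lambda_k+\left(H_k+R_0\beta_k\right)^{*}H_k=-\left(\Lambda_k+\alpha_k\beta_k\right)^{*}\sigma$, so the identity is literally an instance of (\ref{eq:left_zero_K_mod}) with the \emph{unshifted} pair $\left(H_k,\Lambda_k\right)$ and no re-examination of the lemma's hypotheses is needed.
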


After these preliminary statements we can formulate the
theorem determining the subspace 
$\cV^{*}_\text{left}\left(\Sigma_{\text{\bf r}}\right)$ for the
given realization of $F_\text{\bf r}$.

\begin{theorem}\label{thm:fzk_left}
Consider a {\bf minimal} realization of $F$
given by
\[
F(z) \sim 
\left(
\begin{array}{c|c} A &   B
\\
\hline \rule{0cm}{.42cm}
   C & D
\end{array}
\right)
\]
Assume that the columns of the function
$H_{fzk}\left(zI-\Lambda_{fzk}\right)^{-1}$
provide a basis in 
$Z(F)\oplus \cW\left(\text{ker} F\right)$.
Let $\Pi_{fzk}$ be the corresponding solution of
(\ref{matrix_zero_eq_fzk}). 
Consider a maximal solution -- in terms
of $\alpha_0$ and $R_0$ -- of the equation
(\ref{matrix_kernel_eq}) assuming 
-- w.l.o.g. -- that
the column-vectors of the matrix $R_0$ are orthonormal
and the matrices are partitioned according
to (\ref{special_form1}) and 
(\ref{special_form2}).

Consider the function $F_{\text{\bf r}}$ determined
in Theorem \ref{thm:f1} with the realization
given in (\ref{eq:real_f1}).  Assume that
$\Pi'_{fzk}$, $\Lambda'_{fzk}$ 
and $H'_{fzk}$ define a
maximal solution of
\[
\left[
\Pi'_{fzk}\;, H'_{fzk}
\right]
\left[
\begin{array}{cc}
A & B \\
C & D
\end{array}
\right] =
\left[
\Lambda'_{fzk} \Pi'_{fzk} \;, 0
\right]\;,
\]
assuming that the left kernel of $\Pi'_{fzk}$ is trivial.

Then the  maximal solution of the
equation
\begin{equation}\label{eq:extended_eq}
\left[
\bar \Pi'\;, \bar H'
\right]
\left[
\begin{array}{cc}
A & \left(B+\Pi_k\sigma^{-1}H_k^{*}\right)L_0 \\
C & DL_0
\end{array}
\right] =
\left[
\bar \Lambda' \bar \Pi' \;, 0
\right]\;,
\end{equation}
is provided by
\begin{equation}\label{eq:extended_left_zero}
\bar\Pi' =
\left[\begin{array}{c}
\Pi'_{fzk} \\  Y_k
\end{array}
\right]\;,\quad
\bar H' = 
\left[
\begin{array}{c}
H'_{fzk} \\ Z_k
\end{array}
\right]\;,\quad
\bar\Lambda' =
\left[
\begin{array}{cc}
\Lambda'_{fzk} & 0 \\
\Delta' & -\left(\Lambda_k + \alpha_k\beta_k\right)^{*}
\end{array}
\right]\;,
\end{equation}
where the matrices $Y_k, Z_k$ are given in Corollary
\ref{cor:y1_z1} and $\Delta'$ is defined as the unique
solution of equation
\[
\Delta' \Pi'_{fzk} = V_k\;.
\]
\end{theorem}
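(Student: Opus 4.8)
The plan is to prove the statement in two stages: first verify that the triple $(\bar\Pi',\bar H',\bar\Lambda')$ given in (\ref{eq:extended_left_zero}) actually solves (\ref{eq:extended_eq}), and then show that its row space is maximal by identifying $\ker\bar\Pi'$ with $\cC^{*}(\Sigma_{\text{\bf r}})$ for the realization (\ref{eq:real_f1}).

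For the first stage I would substitute the block forms into (\ref{eq:extended_eq}) and check the resulting blocks separately. In the $A,C$-column the equation reads $\bar\Pi' A+\bar H' C=\bar\Lambda'\bar\Pi'$: the top block is $\Pi'_{fzk}A+H'_{fzk}C=\Lambda'_{fzk}\Pi'_{fzk}$, which holds because $(\Pi'_{fzk},H'_{fzk},\Lambda'_{fzk})$ solves the left equation for $F$; the bottom block must be $Y_kA+Z_kC=\Delta'\Pi'_{fzk}-(\Lambda_k+\alpha_k\beta_k)^{*}Y_k$, which is exactly Corollary \ref{cor:y1_z1} combined with the defining relation $\Delta'\Pi'_{fzk}=V_k$. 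For the second column one computes $\bar\Pi'(B+\Pi_k\sigma^{-1}H_k^{*})+\bar H'D$ block by block: the top block equals $\Pi'_{fzk}\Pi_k\sigma^{-1}H_k^{*}$, which vanishes since $\Im\Pi_k=\cR^{*}(\Sigma)\subset\cC^{*}(\Sigma)=\ker\Pi'_{fzk}$ by Theorem \ref{thm:left_right}; the bottom block simplifies, using $Y_k\Pi_k=\sigma$ and $Y_kB+Z_kD=-(H_k+R_0\beta_k)^{*}$, to $-\beta_k^{*}R_0^{*}$. Right-multiplication by $L_0$ then kills both terms because $R_0^{*}L_0=0$ (orthonormality of the columns of $[R_0,L_0]$). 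The only well-definedness point to record here is that $\Delta'$ exists and is unique: the rows of $V_k$ lie in $\cV^{*}_{\text{left}}(\Sigma)=\Im_{\text{left}}\Pi'_{fzk}$, and the assumed triviality of the left kernel of $\Pi'_{fzk}$ makes the coefficient matrix unique.

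The second stage is the heart of the argument. Using (\ref{v_v1}), (\ref{v1_r1}), (\ref{vr_v1r1}) and (\ref{eq_im_pi1}) from Theorem \ref{thm:f1} I would first record the direct-sum decomposition $\cC^{*}(\Sigma)=\Im\Pi_k\oplus\cC^{*}(\Sigma_{\text{\bf r}})$ (the intersection is trivial because $\Im\Pi_k\subset\cV^{*}(\Sigma)=\cV^{*}(\Sigma_{\text{\bf r}})$ and $\cV^{*}(\Sigma_{\text{\bf r}})\cap\cC^{*}(\Sigma_{\text{\bf r}})=\{0\}$). Since $\ker\bar\Pi'=\ker\Pi'_{fzk}\cap\ker Y_k=\cC^{*}(\Sigma)\cap\ker Y_k$ and $Y_k\Pi_k=\sigma$ with $\sigma$ invertible (so $\Im\Pi_k\cap\ker Y_k=\{0\}$), the inclusion $\ker\bar\Pi'\subset\cC^{*}(\Sigma_{\text{\bf r}})$ will follow once $Y_k$ is shown to annihilate $\cC^{*}(\Sigma_{\text{\bf r}})$. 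I prove the latter by induction along an output-nulling reachability trajectory of the realization (\ref{eq:real_f1}): if $x_{+}=Ax+(B+\Pi_k\sigma^{-1}H_k^{*})L_0u$ and $0=Cx+DL_0u$, then feeding in Corollary \ref{cor:y1_z1}, $Y_k\Pi_k=\sigma$ and $R_0^{*}L_0=0$, the increment collapses to $Y_kx_{+}=-(\Lambda_k+\alpha_k\beta_k)^{*}Y_kx+V_kx-Z_k(Cx+DL_0u)$. The output-nulling condition removes the last bracket, the induction hypothesis $Y_kx=0$ removes the first term, and $V_kx=0$ because the rows of $V_k$ lie in $\cV^{*}_{\text{left}}(\Sigma)$ while $x\in\cC^{*}(\Sigma)=(\cV^{*}_{\text{left}}(\Sigma))^{\perp}$ (Theorem \ref{thm:left_right}). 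Hence $Y_kx_{+}=0$, the induction closes, and combined with the decomposition both inclusions give $\ker\bar\Pi'=\cC^{*}(\Sigma_{\text{\bf r}})$.

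To finish I would invoke Theorem \ref{thm:left_right} for the realization (\ref{eq:real_f1}) of $F_{\text{\bf r}}$, which gives $\ker\bar\Pi'_{\max}=\cC^{*}(\Sigma_{\text{\bf r}})$ for the true maximal left solution, so $\dim\cV^{*}_{\text{left}}(\Sigma_{\text{\bf r}})=n-\dim\cC^{*}(\Sigma_{\text{\bf r}})$. Since $\bar\Pi'$ solves the left equation, its row space lies in $\cV^{*}_{\text{left}}(\Sigma_{\text{\bf r}})$ (the left analogue of Lemma \ref{lem:max_pi}), yet $\text{rank}\,\bar\Pi'=n-\dim\ker\bar\Pi'=n-\dim\cC^{*}(\Sigma_{\text{\bf r}})$ matches that dimension, forcing equality of the two row spaces and hence maximality. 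The step I expect to be the main obstacle is the induction in the second stage, specifically securing the cancellation $V_kx=0$; this is precisely where the orthogonality $\cC^{*}(\Sigma)=(\cV^{*}_{\text{left}}(\Sigma))^{\perp}$ and the placement of the rows of $V_k$ inside $\cV^{*}_{\text{left}}(\Sigma)$ (Remark \ref{rem:R_compl_V} and Corollary \ref{cor:y1_z1}) do the essential work.
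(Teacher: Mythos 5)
Your proposal is correct; its first stage coincides with the paper's own verification (the paper cites precisely the same ingredients: Corollary \ref{cor:y1_z1}, $Y_k\Pi_k=\sigma$, $\Pi'_{fzk}\Pi_k=0$, $R_0^{*}L_0=0$ and the defining relation $\Delta'\Pi'_{fzk}=V_k$), but your maximality argument takes a genuinely different route. The paper never computes $\ker\bar\Pi'$: it simply observes that the rows of $\bar\Pi'$ are linearly independent --- if $u\Pi'_{fzk}+vY_k=0$, right-multiplication by $\Pi_k$ together with $\Pi'_{fzk}\Pi_k=0$ and $Y_k\Pi_k=\sigma>0$ forces $v=0$ and then $u=0$ --- so that $\text{rank}\,\bar\Pi'=\text{rank}\,\Pi'_{fzk}+\text{rank}\,\Pi_k$, and it then matches this number against $\dim\cV^{*}_{\text{left}}\left(\Sigma_{\text{\bf r}}\right)=n-\dim\cC^{*}\left(\Sigma_{\text{\bf r}}\right)=n-\left(\dim\cC^{*}\left(\Sigma\right)-\text{rank}\,\Pi_k\right)$, obtained from Theorems \ref{thm:left_right} and \ref{thm:f1} exactly as in your final step. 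You instead prove the sharper statement $\ker\bar\Pi'=\cC^{*}\left(\Sigma_{\text{\bf r}}\right)$ by an induction along output-nulling trajectories of the realization (\ref{f1_real}); this induction is sound --- your collapse of $Y_kx_+$ is the right computation, and the cancellation $V_kx=0$ is legitimate because intermediate states lie in $\cC^{*}\left(\Sigma_{\text{\bf r}}\right)\subset\cC^{*}\left(\Sigma\right)$ by (\ref{vr_v1r1}), while $\cC^{*}\left(\Sigma\right)$ is annihilated by every row vector of $\cV^{*}_{\text{left}}\left(\Sigma\right)$ by Theorem \ref{thm:left_right} --- and rank--nullity then produces the same dimension comparison. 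What your route buys is the explicit kernel identification, which in the paper's treatment follows only a posteriori from maximality combined with Theorem \ref{thm:left_right} applied to $\Sigma_{\text{\bf r}}$; it also reuses the trajectory-induction mechanism the paper itself employs in proving Theorem \ref{thm:f1}(ii). What it costs is length: the paper's row-independence observation settles the rank of $\bar\Pi'$ in two lines, whereas your induction, the direct-sum decomposition $\cC^{*}\left(\Sigma\right)=\Im\,\Pi_k\oplus\cC^{*}\left(\Sigma_{\text{\bf r}}\right)$ and the two inclusions for $\ker\bar\Pi'$ are all needed before the count can be made.
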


\begin{proof}
Equations 
$Y_k\Pi_k = \sigma$, $R_0^{*}L_0 = 0$ and 
Corollary \ref{cor:y1_z1} 
imply that the matrices defined in 
(\ref{eq:extended_left_zero}) satisfy equation
(\ref{eq:extended_eq}). 

To prove that it gives amximal solution first let us determine 
the rank of the maximal solution. For a maximal solution we have that 
\begin{eqnarray*}
\text{rank}\left( \bar \Pi'\right)
 &=& \dim \cV^{*}_\text{left}\left(\Sigma_{\text{\bf r}}\right)\\
&=&
n - \dim \cC^{*}\left(\Sigma_{\text{\bf r}}\right) \\
&=&
n - \left(\dim\cC^{*}\left(\Sigma\right) - \text{rank}
\left( \Pi_k\right)\right)\\
&=&
\text{rank}\left( \Pi'_{fzk}\right) + 
\text{rank}\left(\Pi_k\right)\;,
\end{eqnarray*}
where $n$ is the dimension of the state space.

Equation $Y_k \Pi_k = \sigma > 0$ gives that 
$\text{rank} (Y_k) = \text{rank} (\sigma) = 
\text{rank} (\Pi_k)$.
Since $\Pi_{fzk}'\Pi_k = 0$,
while
$Y_k \Pi_k$ is positive definite, 
the left kernel of the matrix 
$\left[\begin{array}{c} \Pi'_{fzk} \\ Y_k
\end{array}\right]$
is trivial and its rank equals to the rank
of the maximal solution, 
concluding thus the proof of the
theorem.
\qed\end{proof}

\bigskip
Now let us return to the both sided factorization of $F$.
Let us apply the factorization ideas given in 
Section \ref{sct:elim_ker} for eliminating the
\underline{left} kernel of $F$ or equivalently of $F_{\text{\bf r}}$.
Since according to Theorem \ref{thm:f1} 
\[
\cV^{*}\left(\Sigma_{\text{\bf r}}\right) \vee 
\cC^{*}\left(\Sigma_{\text{\bf r}}\right) =
\cV^{*}\left(\Sigma\right) \vee 
\cC^{*}\left(\Sigma\right)
\]
Theorem \ref{thm:left_right} implies
that
\[
\cV^{*}_\text{left}\left(\Sigma_{\text{\bf r}}\right) \cap
\cC^{*}_\text{left}\left(\Sigma_{\text{\bf r}}\right) = 
\cV^{*}_\text{left}\left(\Sigma\right) \cap
\cC^{*}_\text{left}\left(\Sigma\right)
\]
giving that $F$ and $F_{\text{\bf r}}$ determine that
same ``left-kernel'' flat inner function,
denoted by $K'_{\beta'}$. (I.e. 
$K'_{\beta'} K_{\beta'}^{'*} = I$.)

With obvious notation:
\[
K'_{\beta'} (z) =
R_0' + \alpha'_k
\left(zI-\left(\Lambda'_k+\beta'_k\alpha'_k\right)
\right)^{-1}
\left( H'_k +\beta'_k R'_0\right)\;,
\]
where
\begin{equation}\label{beta_left}
\beta'_k = -\sigma' \alpha_k^{'*} - H_k'R_0^{'*}
\end{equation}
and $\sigma'$ is the positive definite solution
of the Riccati-equation
\begin{equation}
\label{ric_left}
\left(\Lambda_k' - H_k'R_0^{'*}\alpha_k'\right)\sigma' 
+
\sigma'\left(\Lambda_k' - H_k'R_0^{'*}\alpha_k'\right)^{*}
-\sigma' \alpha_k^{'*}\alpha_k'\sigma'
+H_k'\left(I-R_0^{'*}R_0'\right)H_k^{'*} = 0
 \;.
\end{equation}
Consider the square inner extension of $K'_{\beta'}$:
\begin{equation}\label{eq:Kleftinner}
K'_{\beta',\text{ext}} = \left[
\begin{array}{c}
K'_{\beta'} \\ L'_{\beta'}
\end{array}
\right]\;,
\end{equation}
where 
\[
L_{\beta'}' =
L'_0 - L'_0\left( H'_k+ \beta'_k R'_0 \right)^{*}
\left(zI-\left(\Lambda'_k+\beta'_k\alpha'_k\right)
\right)^{-1}
\left(H'_k+\beta'_kR'_0\right)
\]
and $\left[ \begin{array}{c} R'_0 \\ 
             L'_0\end{array}\right]$
is a unitary matrix.

Theorem \ref{thm:f1} applied to the 
left zero structure gives
the factorization summarized in the following theorem.

\begin{theorem}\label{thm:image_out}
Let a {\bf minimal realization} of $F$ be
given by
\[
F(z) \sim 
\left(
\begin{array}{c|c} A &   B
\\
\hline \rule{0cm}{.42cm}
   C & D
\end{array}
\right)\;.
\]
Consider maximal solutions of the equations 
(\ref{eq:matrix_kerright}) and 
(\ref{eq:matrix_kerleft}) assuming that
the columns (rows) of $R_0$ ($R_0'$)
are orthonormal and they are partitioned as it 
is described in Remark \ref{rem:partition}
(applying it also to the ``left'' structure,
as well). Denote by $\sigma$ ($\sigma'$)
the solutions of the Riccati-equations (\ref{ric})
((\ref{ric_left}) respectively). Define the 
functions $L_\beta$ and $L'_{\beta'}$ 
by (\ref{eq:Kinner}) and (\ref{eq:Kleftinner}).

Consider the function 
$F_{\text{\bf rl}} = 
L'_{\beta'} F L_\beta$. Then

\begin{itemize}
\item[(i)]
the function $F$ has the following factorization
\[
F = L^{'*}_{\beta'} F_{\text{\bf rl}} L_\beta^{*}\;,
\]
where $F_{\text{\bf rl}}$ has the realization
\begin{equation}\label{eq:f_rl_r}
F_{\text{\bf rl}}(z) \sim 
\left(
\begin{array}{c|c} A &  
\left( B+\Pi_k\sigma^{-1}H_k^{*}\right)L_0
\\
\hline \rule{0cm}{.42cm}
   L_0^{'}\left(C+H_k^{'*}\sigma^{'-1}\Pi_k^{'}\right)
 & L_0'DL_0
\end{array}
\right)
\end{equation}

\item[(ii)]
if 
\begin{itemize}
\item[(a)]
if all the eigenvalues of $A$ have non-positive real part , or
\item[(b)]
the matrices
\begin{eqnarray*}
A \quad \text{and}\quad -A^{*}&&
\text{have no common eigenvalues, and}\\
\text{the pair}\quad 
( A, \overline C^{*} )
 &&
\text{is stabilizable (in continuous time sense)} \\
\text{the pair} \quad (\overline B^{*}, A) && 
\text{is detectable (in continuous time sense),}
\end{eqnarray*}
where $\overline C = C P + D B^{*}$ and
$P$ is the solution of the Lyapunov-equation 
\begin{equation}
AP + PA^{*} + BB^{*} = 0\;,
\end{equation}
$\overline B = Q B + C^{*} D$ and $Q$ is the solution of the Lyapunov-equation
\begin{equation}
QA + A^{*}A + C^{*} C = 0
\end{equation} 
\end{itemize}
then the realization (\ref{eq:f_rl_r}) above of $F_{\bf r}$ is minimal.

\item[(iii)]
if the realization given in (\ref{eq:f_rl_r}) of $F_{\bf r}$
is minimal
then for the maximal output-nulling 
controlled invariant
subspace $\cV^{*}\left(\Sigma_{\text{\bf rl}}\right)$ 
and for the minimal input-containing
subspace $\cC^{*}\left(\Sigma_{\text{\bf rl}}\right)$ of 
the realization of $F_{\text{\bf rl}}$
the following identities hold:
\begin{eqnarray}
\cC^{*}\left(\Sigma_{\text{\bf rl}}\right)\cap 
\cV^{*}\left(\Sigma_{\text{\bf rl}}\right) &=& 
\left\{ 0 \right\}\label{eq:r_v_cap} \\
\cC^{*}\left(\Sigma_{\text{\bf rl}}\right)\vee 
\cV^{*}\left(\Sigma_{\text{\bf rl}}\right) &=& 
\CC^n \label{eq:r_v_vee} \\
\cC^{*}\left(\Sigma_{\text{\bf rl}}\right) &=& 
\cC^{*}\left(\Sigma_{\text{\bf r}}\right) \label{eq:r_kifo} \\
\left(\cC^{*}\left(\Sigma\right) \cap 
\cV^{*}\left(\Sigma\right)\right)
\vee \cC^{*}\left(\Sigma_{\text{\bf rl}}\right) &=& 
\cC^{*}\left(\Sigma\right) \label{eq:r_r_kifo} \\
\cV^{*}\left(\Sigma_{\text{\bf rl}}\right) \cap 
\left(\cC^{*}\left(\Sigma\right) \vee 
\cV^{*}\left(\Sigma\right)\right) 
&=& \cV^{*}\left(\Sigma\right)\label{eq:v_v_kifo}
\end{eqnarray}
and
\[
\cW(\ker F_{\text{\bf rl}}) = \left\{ 0 \right\}\;,\quad
\cW(\Im F_{\text{\bf rl}}) = \left\{ 0 \right\}\;,
\]
and the function $F_{\text{\bf rl}}$ is invertible;

\item[(iv)]
if the realization (\ref{eq:f_rl_r}) is minimal 
then the finite zero matrix of
$F_{\text{\bf rl}}$ is given by
\begin{equation}\label{eq:finite_zero_rl}
\left[
\begin{array}{ccc}
-\sigma^{-1}\left(\Lambda_k + \alpha_k\beta_k\right)^{*}
\sigma\;, & 
\Lambda_{kf} + \sigma^{-1} 
\left(H_k + R_0\beta_k\right)^{*} H_f\;, & 
\sigma^{-1} \left(H_k + R_0\beta_k\right)^{*} Z'_k \\
0 & \Lambda_f &  \Delta \\
0 & 0 & -\left(\Lambda'_k +\beta'_k\alpha'_k\right)^{*}
\end{array}
\right]
\end{equation}
for some matrices $Z'_k$ and $\Delta$.
\end{itemize}
\end{theorem}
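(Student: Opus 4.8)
The plan is to obtain the whole statement by applying Theorem~\ref{thm:f1} twice: once on the right, which has already produced $F_{\text{\bf r}} = F L_\beta$ with the realization (\ref{f1_real}) and removed $\cW\left(\ker F\right)$, and once on the left applied to $F_{\text{\bf r}}$, producing $F_{\text{\bf rl}} = L'_{\beta'} F_{\text{\bf r}}$ and removing the left kernel module of $F_{\text{\bf r}}$. Since $\cV^{*}_{\text{left}}\left(\Sigma_{\text{\bf r}}\right) \cap \cC^{*}_{\text{left}}\left(\Sigma_{\text{\bf r}}\right) = \cV^{*}_{\text{left}}\left(\Sigma\right) \cap \cC^{*}_{\text{left}}\left(\Sigma\right)$, the left-kernel inner function $K'_{\beta'}$ of $F_{\text{\bf r}}$ coincides with that of $F$, so $L'_{\beta'}$ is well defined. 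For part~(i) the realization (\ref{eq:f_rl_r}) follows from the left-dual of the computation (\ref{eq:real_f1}): left multiplication by $L'_{\beta'}$ leaves the $A$- and input-entries of (\ref{f1_real}) unchanged and replaces $C$, $DL_0$ by $L_0'\left(C + H_k^{'*}\sigma^{'-1}\Pi_k'\right)$ and $L_0'DL_0$, where $\beta'_k$ and $\sigma'$ are given by (\ref{beta_left}) and (\ref{ric_left}). The factorization $F = L^{'*}_{\beta'} F_{\text{\bf rl}} L_\beta^{*}$ then follows from $K_\beta K_\beta^{*} + L_\beta L_\beta^{*} = I$ together with $F K_\beta = 0$, and from the corresponding left identity with $K'_{\beta'} F_{\text{\bf r}} = 0$.

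For part~(ii) note that (\ref{eq:f_rl_r}) has state dimension $n$, the McMillan degree of $F$, so minimality is equivalent to controllability of $\left(A, \left(B+\Pi_k\sigma^{-1}H_k^{*}\right)L_0\right)$ and observability of $\left(L_0'\left(C+H_k^{'*}\sigma^{'-1}\Pi_k'\right), A\right)$. The first is exactly (\ref{eqn:reach_F_Fr}), which by Theorem~\ref{thm:f1}(iii) holds under hypothesis~(a) or under the disjoint-spectra condition together with stabilizability of $\left(A, \overline C^{*}\right)$; since $\left(A, B\right)$ is controllable the new pair is controllable. The observability statement is the left-dual of (\ref{eqn:reach_F_Fr}), holding under~(a) or under the disjoint-spectra condition together with detectability of $\left(\overline B^{*}, A\right)$; since $\left(C, A\right)$ is observable the new pair is observable. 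Thus under~(a) or~(b) the realization is minimal.

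For part~(iii) I would apply the left-dual of Theorem~\ref{thm:f1}(ii) to $F_{\text{\bf r}}$ and transport every identity into right-subspace form through Theorem~\ref{thm:left_right}, i.e. $\cV^{*}_{\text{left}} = \left(\cC^{*}\right)^{\perp}$ and $\cC^{*}_{\text{left}} = \left(\cV^{*}\right)^{\perp}$. The dual of (\ref{v_v1}) gives $\cC^{*}\left(\Sigma_{\text{\bf rl}}\right) = \cC^{*}\left(\Sigma_{\text{\bf r}}\right)$, which is (\ref{eq:r_kifo}); the dual of (\ref{v1_r1}) gives $\cC^{*}\left(\Sigma_{\text{\bf rl}}\right) \vee \cV^{*}\left(\Sigma_{\text{\bf rl}}\right) = \CC^n$, which is (\ref{eq:r_v_vee}); and the dual of (\ref{vr_v1r1}), using $\cV^{*}\left(\Sigma_{\text{\bf r}}\right) \vee \cC^{*}\left(\Sigma_{\text{\bf r}}\right) = \cV^{*}\left(\Sigma\right) \vee \cC^{*}\left(\Sigma\right)$, yields (\ref{eq:v_v_kifo}). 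Combining (\ref{eq:v_v_kifo}) with $\cC^{*}\left(\Sigma_{\text{\bf rl}}\right) = \cC^{*}\left(\Sigma_{\text{\bf r}}\right) \subset \cC^{*}\left(\Sigma\right)$, with $\cV^{*}\left(\Sigma_{\text{\bf r}}\right) = \cV^{*}\left(\Sigma\right)$, and with (\ref{v1_r1}) gives $\cC^{*}\left(\Sigma_{\text{\bf rl}}\right) \cap \cV^{*}\left(\Sigma_{\text{\bf rl}}\right) = \left\{0\right\}$, i.e. (\ref{eq:r_v_cap}); substituting $\cC^{*}\left(\Sigma_{\text{\bf rl}}\right) = \cC^{*}\left(\Sigma_{\text{\bf r}}\right)$ into (\ref{vr_v1r1}) gives (\ref{eq:r_r_kifo}). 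Then $\cW\left(\ker F_{\text{\bf rl}}\right) = \left\{0\right\}$ follows from (\ref{eq:r_v_cap}) and the identification of the kernel module with $\cC^{*} \cap \cV^{*}$ in Theorems~\ref{thm:char_ker} and~\ref{thm:char_ker2}, while $\cW\left(\Im F_{\text{\bf rl}}\right) = \left\{0\right\}$ follows from (\ref{eq:r_v_vee}) and Theorem~\ref{thm:char_im} once minimality gives $\left< A \mid \left(B+\Pi_k\sigma^{-1}H_k^{*}\right)L_0 \right> = \CC^n$. Finally $F_{\text{\bf rl}}$ is square of size equal to the normal rank of $F$; it is left-invertible by Remark~\ref{rem:left_inv} (as in Remark~\ref{rem:F_r_left_inv}) and right-invertible by the left dual, hence invertible.

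For part~(iv) I would apply Theorem~\ref{thm:f1}(iv) on the right, giving the finite zero matrix (\ref{eqn:finite_zero_Fr}) of $F_{\text{\bf r}}$, and then its left-dual, which materializes the left virtual zeros as finite zeros at $-\left(\Lambda'_k+\beta'_k\alpha'_k\right)^{*}$. Concretely, taking a matrix whose columns span $\cV^{*}\left(\Sigma_{\text{\bf rl}}\right)$ — which by (\ref{eq:v_v_kifo}) extends $\cV^{*}\left(\Sigma\right) = \Im \Pi_{fzk}$ by the subspace carrying the materialized left zeros — and running the product computation (\ref{guess_pi})--(\ref{pi_in_pibar}) for the realization (\ref{eq:f_rl_r}), the nesting of the invariant subspaces forces a block upper-triangular $\Lambda$ whose upper-left $2\times 2$ block reproduces (\ref{eqn:finite_zero_Fr}) (using $\left(H_k+R_0\beta_k\right)^{*} = H_k^{*}+\beta_k^{*}R_0^{*}$), whose $(3,3)$ block is $-\left(\Lambda'_k+\beta'_k\alpha'_k\right)^{*}$, and whose coupling blocks are the Sylvester terms $\sigma^{-1}\left(H_k+R_0\beta_k\right)^{*}Z'_k$ and $\Delta$; only the existence of $Z'_k$ and $\Delta$ is asserted, so no explicit formula is needed. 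The main obstacle throughout is the bookkeeping of the left-right duality: one must legitimately apply Theorem~\ref{thm:f1} to a left multiplication and carry each subspace identity across the orthogonal-complement correspondence of Theorem~\ref{thm:left_right}, and in~(iv) perform the coupling computation for the full realization (\ref{eq:f_rl_r}) rather than treating the two one-sided steps in isolation.
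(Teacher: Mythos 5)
Your parts (i)--(iii) follow essentially the same route as the paper's proof: the realization (\ref{eq:f_rl_r}) is obtained by the left-dual of the computation (\ref{eq:real_f1}), minimality in (ii) comes from Theorem \ref{thm:f1}(iii) and its left version applied to the reachability and non-observability subspaces, and all five subspace identities plus the triviality of $\cW(\ker F_{\text{\bf rl}})$, $\cW(\Im F_{\text{\bf rl}})$ and invertibility in (iii) are obtained, exactly as in the paper, by applying Theorem \ref{thm:f1}(ii) to the left factorization of $F_{\text{\bf r}}$ (legitimate since minimality of (\ref{eq:f_rl_r}) gives reachability of $\bigl(A, (B+\Pi_k\sigma^{-1}H_k^{*})L_0\bigr)$) and transporting each identity through the orthogonal-complement correspondence of Theorem \ref{thm:left_right}. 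These parts are correct.

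Part (iv), however, has a genuine gap. You assert that "running the product computation (\ref{guess_pi})--(\ref{pi_in_pibar}) for the realization (\ref{eq:f_rl_r})" and "the nesting of the invariant subspaces" force the block form (\ref{eq:finite_zero_rl}), with only existence of $Z'_k$ and $\Delta$ needed. But to run that computation on the part of $\cV^{*}\left(\Sigma_{\text{\bf rl}}\right)$ complementary to $\Im \Pi_{fzk}$ you must first exhibit a concrete basis matrix $Y'_k$ for that complement \emph{together with} an input matrix $Z'_k$ solving the zero equation of $\Sigma_{\text{\bf rl}}$, and show that the resulting dynamics are exactly $-\left(\Lambda'_k+\beta'_k\alpha'_k\right)^{*}$ with the $(1,3)$ coupling block lying in the range of $\sigma^{-1}\left(H_k+R_0\beta_k\right)^{*}$ (a nontrivial constraint, since $H_k+R_0\beta_k$ need not be injective). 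This is precisely what the paper's dedicated machinery provides: the left Riccati identity (\ref{eq:ric_ext_left}) is fed into Lemma \ref{lem:sigma_h_lambda_mod} (with Remark \ref{rem:R_compl_V}, i.e.\ the left analogue of Corollary \ref{cor:y1_z1}) to produce $Y'_k, Z'_k, V'_k$ with $\Pi'_k Y'_k = \sigma'$, the columns of $V'_k$ in $\Im\left(\Pi_f\right)$, and the right-type equation (\ref{eq:left_y_z}) holding for the \emph{original} system matrices; the error term $V'_k$ is what becomes $\Pi_f\Delta$. Your alternative, invoking the left-dual of Theorem \ref{thm:f1}(iv), only yields the \emph{left} finite zero matrix of $F_{\text{\bf rl}}$; passing to the right finite zero matrix via the similarity of left and right zero matrices determines (\ref{eq:finite_zero_rl}) merely up to similarity, not in the basis extending $\left[\Pi_k, \Pi_f\right]$ with the asserted triangular and coupling structure (in particular it cannot show that the upper-left $2\times 2$ block is literally (\ref{eqn:finite_zero_Fr}) and that the $(2,1)$, $(3,1)$, $(3,2)$ blocks vanish in that basis). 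Also note that even the $\Pi_{fzk}$-columns of the computation require the identity $\Pi'_k\Pi_{fzk}=0$ from Theorem \ref{thm:left_right} to neutralize the modified output matrix $L_0'\left(C+H_k^{'*}\sigma^{'-1}\Pi'_k\right)$; this step is absent from your sketch as well.
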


Note that the eigenvalues of $\Lambda_f$ determine the
finite zeros of the function $F$,
while the matrices $\Lambda_k, \alpha_k$ and
$\Lambda'_k, \alpha'_k$
are connected to the right and left kernel spaces
-- 
$\cW\left(\ker F \right)$, 
$\cW \left(\ker_\text{left}F  \right)$ --
of $F$, respectively.

\begin{proof}
Substitute into 
$\left(L_{\beta'}^{'}\right)^{*} 
F_{\text{\bf rl}} L^{*}_{\beta}$ the
definition of $F_{\text{\bf rl}}$:
\[
L_{\beta'}^{'*} F_{\text{\bf rl}} L_\beta^{*} = 
L_{\beta'}^{'*} L_{\beta'}^{'}F L_\beta L_\beta^{*} =
\left(L_{\beta'}^{'*} L_{\beta'}^{'} + 
K_{\beta'}^{'*} K_{\beta'}^{'}\right) F
\left( L_\beta L_\beta^{*} + K_\beta K_\beta^{*}\right) =
F
\]
Straightforward computation (or immediate application of
Theorem \ref{thm:f1} gives that the function
$F_{\text{\bf rl}}$ has the realization:
\begin{multline*}
F_{\text{\bf rl}} (z) = L_{\beta'}^{'} F_{\text{\bf r}} \\
= \left(L_0^{'} - L_0^{'}H_k^{'*} \sigma^{'-1}
\left(zI - \left(\Lambda_k^{'} + \beta_k^{'} \alpha_k^{'}\right)
\right)^{-1}\left(H_k^{'}+\beta_k^{'}R_0^{'}\right)\right)\\
\left(DL_0 + C\left(zI-A\right)^{-1}
\left(B+\Pi_k\sigma^{-1}H_k^{*}\right)L_0\right) \\
=
L_0^{'} D L_0 + L_0^{'}
\left(C+H_k^{'*} \sigma^{'-1}\Pi_k^{'}\right)
\left(zI-A\right)^{-1}
\left(B+\Pi_k\sigma^{-1}H_k^{*}\right)
L_0\;,
\end{multline*}
using the identities
\begin{eqnarray*}
\left(H_k^{'} +\beta_k^{'} R_0^{'}\right) C &=&
\Pi_k^{'}(zI-A) - 
(zI-(\Lambda_k^{'}+\beta_k^{'}\alpha_k^{'}))
\Pi_k^{'} \\
\left(H_k^{'} +\beta_k^{'} R_0^{'}\right) DL_0&=&
- \Pi_k^{'} \left(B+\Pi_k\sigma^{-1}H_k^{*}\right) L_0\;,
\end{eqnarray*}
proving (i).

(ii) Both a) and b) parts can be proven using part (iii) Theorem \ref{thm:f1}. 
In fact, under the condition
 that all eigenvalues of $A$ have non-positive real part part (iii) a) of
Theorem \ref{thm:f1} gives that the reachability
subspaces $<A \mid B >$ and 
$< A \mid (B+\Pi_k \sigma^{-1} H_k^{*})L_0 >$ coincide.
Applying the "left" version of this result we obtain that the
non-observability subspaces of the pairs $(C, A)$ and
$(L_0' (C+H_k^{'*} \sigma^{'-1} \Pi_k^{'}), A)$ coincide.
But according to our assumption the realization $F$ is minimal, consequently 
the realization $F_{\bf rl}$ above is also minimal. 

Concerning the b) part of this statement now part (iii) b) of Theorem and 
its "left" version gives again that reachability subspaces above and
non-observability subspaces above coincide, giving again the minimality
of the realization (\ref{eq:f_rl_r}).

(iii)
Denote by $\cV^{*}_\text{left}\left(\Sigma_{\text{\bf rl}}\right)$, 
$\cC^{*}_\text{left}\left(\Sigma_{\text{\bf rl}}\right)$
the maximal output-nulling controlled invariant subspace 
and the minimal input-containing subspace of the 
realization of $F_{\text{\bf rl}}$ given in (\ref{eq:f_rl_r})
with respect to the left multiplication. 

Then Theorem \ref{thm:left_right} allows us to transform the 
results of Theorem \ref{thm:f1} to the 
left zero structure of $F_{\text{\bf r}}$. Consequently,
\begin{eqnarray*}
\cC^{*}_\text{left}\left(\Sigma_{\text{\bf r}}\right) &=&
\cC^{*}_\text{left}\left(\Sigma\right)\;, \\
\cC^{*}_\text{left}\left(\Sigma_{\text{\bf r}}\right) \vee
\cV^{*}_\text{left}\left(\Sigma_{\text{\bf r}}\right) &=& \CC^n
\;, \\
\left( \cC^{*}_\text{left}\left(\Sigma\right)\vee
  \cV^{*}_\text{left}\left(\Sigma\right)\right) \cap
\cV^{*}_\text{left}\left(\Sigma_{\text{\bf r}}\right) &=&
\cV^{*}_\text{left}\left(\Sigma\right)\;.
\end{eqnarray*}

The pair
$\left(A, \left(B+\Pi_k\sigma^{-1}H_k^{*}\right)L_0\right)$
is reachable due to the our assumption that 
the realization (\ref{eq:f_rl_r}) is minimal
Consequently, Theorem \ref{thm:f1}
can be applied to the "left" factorization of $F_{\text{\bf r}}$ 
yielding that
\begin{eqnarray*}
\cV^{*}_\text{left}\left(\Sigma_{\text{\bf rl}}\right) &=&
\cV^{*}_\text{left}\left(\Sigma_{\text{\bf r}}\right)\;, \\
\cV^{*}_\text{left}\left(\Sigma_{\text{\bf rl}}\right) \cap
\cC^{*}_\text{left}\left(\Sigma_{\text{\bf rl}}\right) 
&=& \left\{ 0 \right\}
\;, \\
\left( \cV^{*}_\text{left}\left(\Sigma_{\text{\bf r}}\right)\cap
  \cC^{*}_\text{left}\left(\Sigma_{\text{\bf r}}\right)\right) \vee
\cC^{*}_\text{left}\left(\Sigma_{\text{\bf rl}}\right) &=&
\cC^{*}_\text{left}\left(\Sigma_{\text{\bf r}}\right)\;.
\end{eqnarray*}
In the last equation taking on both sides
the generated subspace by
$\cV^{*}_\text{left}\left(\Sigma_{\text{\bf rl}}\right) =
 \cV^{*}_\text{left}\left(\Sigma_{\text{\bf r}}\right)$
we obtain that
\[
\cV^{*}_\text{left}\left(\Sigma_{\text{\bf rl}}\right)
\vee
\cC^{*}_\text{left}\left(\Sigma_{\text{\bf rl}}\right) = \CC^n\;.
\]
Invoking Theorem \ref{thm:left_right}
(i.e. taking the orthogonal complements
of these subspaces)
we obtain that
\begin{eqnarray*}
\cC^{*}\left(\Sigma_{\text{\bf rl}}\right) &=&
\cC^{*}\left(\Sigma_{\text{\bf r}}\right)\;, \\
\cC^{*}\left(\Sigma_{\text{\bf rl}}\right) \vee
\cV^{*}\left(\Sigma_{\text{\bf rl}}\right) &=& \CC^n
\;, \\
\cC^{*}\left(\Sigma_{\text{\bf rl}}\right) \cap
\cV^{*}\left(\Sigma_{\text{\bf rl}}\right)
&=& \left\{ 0 \right\}\\
\left( \cC^{*}\left(\Sigma_{\text{\bf r}}\right)\vee
  \cV^{*}\left(\Sigma_{\text{\bf r}}\right)\right) \cap
\cV^{*}\left(\Sigma_{\text{\bf rl}}\right) &=&
\cV^{*}\left(\Sigma_{\text{\bf r}}\right)
\;.
\end{eqnarray*}
The last equation can be written as
\begin{equation}
\left( \cC^{*}\left(\Sigma\right)\vee
  \cV^{*}\left(\Sigma\right)\right) \cap
\cV^{*}\left(\Sigma_{\text{\bf rl}}\right) =
\cV^{*}\left(\Sigma\right)
\end{equation}
The complementary property of the subspaces 
$\cC^{*}\left(\Sigma_{\text{\bf rl}}\right)$ and 
$\cV^{*}\left(\Sigma_{\text{\bf rl}}\right)$ 
gives that the zero modules
$\cW\left( \ker F_{\text{\bf rl}}\right)$ and
$\cW\left( \Im \,F_{\text{\bf rl}}\,\right)$
are trivial.
In fact, Corollary \ref{cor:ker_triv} and
Theorem \ref{thm:char_im} can be applied
(using that the realization of $F_{\text{\bf rl}}$ is
observable).

\bigskip
According Proposition 4 in \cite{AL-SC-84}
the invertibility of a proper transfer function
is equivalent to that the property
that the corresponding $\cV^{*}\left(\Sigma\right)$ and
$\cC^{*}\left(\Sigma\right)$ are complementary 
subspaces and the columns of
$\left[\begin{array}{c} B \\ D \end{array}\right]$
are linearly independent, the rows of $\left[ C ,\ D\right]$
are linearly independent.

In the present situation the complementary
property of $\cV^{*}\left(\Sigma_{\text{\bf rl}}\right)$ and 
$\cC^{*}\left(\Sigma_{\text{\bf rl}}\right)$ was just proved. 

Furthermore, in Remark
\ref{rem:F_r_left_inv} we have checked the left
invertibility of $F_{\text{\bf r}}$. Similar argument gives
the left-invertibility of $F_{\text{\bf rl}}$. In fact,
if for some vector $\xi$ both
$\left(B+\Pi_l\sigma^{-1}H_k^{*}\right)L_0 \xi = 0$ and
$L'_0 D L_0 \xi=0$ then the identity $R'_0 D =0$ implies
that $DL_0\xi = 0$. From the first equation we obviously get
that $B L_0 \xi \in \Im \Pi_k \subset \Im  \Pi$. Using the 
maximality of $R_0$ we obtain that $L_0\xi \in \Im R_0$, i.e.
$\xi =0$. The right invertibility of $F_{\text{\bf rl}}$ can be 
proved with obvious modification, 
concluding the proof of part (iii).

(iv) 
The Riccati-equation (\ref{ric_left}) using (\ref{beta_left}) can be written
as
\begin{equation}\label{eq:ric_ext_left}
\left[
\begin{array}{cc}
\Lambda'_k+\beta'_k\alpha'_k &
H'_k+\beta'_kR'_0\\
\alpha'_k & R'_0
\end{array}\right]
\left[
\begin{array}{c}
\sigma' \\
\left(H'_k + \beta'_kR'_0\right)^{*}
\end{array}\right]
=
\left[\begin{array}{c}
-\sigma'
\left(\Lambda'_k + \beta'_k\alpha'_k\right)^{*}
\\
0
\end{array}\right]
\end{equation}

Now invoking Lemma \ref{lem:sigma_h_lambda_mod} 
(for the left multiplication) and
Remark \ref{rem:R_compl_V} we get that 
there exist matrices
$Y'_k, Z'_k$ and $V'_k$ 
where the columns of $V'_k$ are in
$\Im \left(\Pi_f\right)$ such that
\[
\Pi'_k Y'_k = \sigma'
\] 
and
\begin{equation}\label{eq:left_y_z}
\left[\begin{array}{cc}
A & B \\
C & D
\end{array}\right]
\left[
\begin{array}{c}
Y'_k \\ Z'_k
\end{array}\right]
=
\left[\begin{array}{c}
-Y'_k \left(\Lambda'_k + \beta'_k\alpha'_k\right)^{*}
+ V'_k
\\
-\left(H'_k+\beta'_k R'_0\right)^{*}
\end{array}\right]\;.
\end{equation}
Straightforward calculation gives that
\begin{eqnarray*}
&& 
\left[
\begin{array}{cc}
A & \left(B+\Pi_k\sigma^{-1}H_k^{*}\right)L_0\\
L'_0\left(C+H'_k\sigma^{'-1}\Pi'_k\right) & 
L'_0 D L_0
\end{array}\right]
\left[\begin{array}{cc}
\Pi_{fzk} & Y'_k \\ 
L_0^{*}H_{fzk} & L_0^{*}Z'_k  
\end{array}\right]\\
&& =
\left[\begin{array}{cc}
\Pi_{fzk} \left(\Lambda_{fzk} + \Gamma H_{fzk}\right) &
-Y'_k \left(\Lambda'_k + \beta'_k\alpha'_k\right)^{*}
+ V'_k + \Pi_k\sigma^{-1}
\left(H_k^{*} + \beta_k^{*} R_0^{*}\right)Z'_k
\\
0 & 0 
\end{array}\right] \\
&=&
\left[\begin{array}{cc}
\Pi_{fzk} \left(\Lambda_{fzk} + \Gamma H_{fzk}\right) &
-Y'_k \left(\Lambda'_k + \beta'_k\alpha'_k\right)^{*}
+ \Pi_f \Delta + \Pi_k\sigma^{-1}
\left(H_k^{*} + \beta_k^{*} R_0^{*}\right)Z'_k
\\
0 & 0 
\end{array}\right]
\end{eqnarray*}
where $\Gamma$ is defined in (\ref{eq:def_gamma}) and
$\Delta$ is defined by $\Pi_f \Delta = V'_k$.

In fact, the term $(1,1)$ is the same equation
as the first term in (\ref{pi_in_pibar}).
The identity $\Pi'_k \Pi_{fzk} = 0$ implies that the term
$(2,1)$ is essentially identical to the second equation in 
(\ref{pi_in_pibar}). 

On the other hand --
using that $DL_0L_0^{*} = D$,
$\sigma' = \Pi'_k Y'_k$ and
$L'_0 R_0^{'*} = 0$ --
\begin{eqnarray*}
L'_0\left(C+H'_k\sigma^{'-1}\Pi'_k\right)Y'_k + 
L'_0 D L_0 L_0^{*} Z'_k &=& 
- L'_0 \left(H'_k+\beta'_kR'_0\right)^{*} 
+ 
L'_0 H'_k\sigma^{'-1}\Pi'_k Y'_k \\
&=& 0\;,   
\end{eqnarray*}
and finally -- from
$BR_0 = \Pi_k\alpha_k$ -- we get that 
\begin{eqnarray*}
A Y'_k + \left(B+ \Pi_k\sigma^{-1} H_k^{*}\right)
L_0 L_0^{*} Z'_k &=& 
A Y'_k + B Z'_k - B R_0R_0^{*} Z'_k +
\Pi_k\sigma^{-1} H_k^{*} L_0L_0^{*} Z'_k  \\
&=& -Y'_k\left(\Lambda'_k+ \beta'_k\alpha'_k\right)^{*}
-\Pi_k
\left(\alpha_k R_0^{*} - \sigma^{-1} H_k^{*} L_0L_0^{*}\right)
Z'_k + V_k^{"}\\
&=& -Y'_k \left(\Lambda'_k+ \beta'_k\alpha'_k\right)^{*}
+\Pi_k \sigma^{-1}
\left(H_k + R_0\beta_k\right)^{*} Z'_k + \Pi_f \Delta\;.
\end{eqnarray*}

The identities $\Pi'_k \Pi_{fzk} = 0$, $\Pi'_kY'_k = \sigma' > 0$
give that the columns of $\left[ \Pi_{fzk}, Y'_k\right]$
are linearly independent. 
Furthermore,
\begin{eqnarray*}
\text{rank} \left(\left[ \Pi_{fzk}, Y'_k\right]\right) &=& 
\dim \left( \cV^{*}\left(\Sigma\right)\right) + 
\text{rank} \left(\Pi'_k\right) \\
&=& 
\dim \left( \cV^{*}\left(\Sigma\right)\right) +
\dim \left( 
\cV_\text{left}^{*}\left(\Sigma\right)\cap
\cC_\text{left}^{*}\left(\Sigma\right)
\right) \\
&=& 
\dim \left( \cV^{*}\left(\Sigma_{\text{\bf r}}\right)\right)
+
\left(n-\dim 
\left( \cV^{*}\left(\Sigma_{\text{\bf r}}\right)\vee 
\cC^{*}\left(\Sigma_{\text{\bf r}}\right)\right)\right)
 \\
&=& n - \cC^{*}\left(\Sigma_{\text{\bf r}}\right) \\
&=& n - \cC^{*}\left(\Sigma_{\text{\bf rl}}\right)\;,
\end{eqnarray*} 
proving the maximality 
of 
$\left[ \Pi_{fzk}, Y'_k\right]$
using the observation that
$\cC^{*}\left(\Sigma_{\text{\bf r}}\right)$ and
$\cV^{*}\left(\Sigma_{\text{\bf r}}\right)$ are complementary subspaces
-- and giving that
\[
\cV^{*}\left(\Sigma_{\text{\bf rl}}\right) = 
\Im \left[ \Pi_{fzk}, \ Y'_k\right]\;.
\]
Using the minimality of the realization $F_{\text{\bf rl}}$ we obtain
that the finite zero matrix of $F_{\text{\bf rl}}$ 
is determined by the equation
(\ref{eq:finite_zero_rl}).
This concludes the proof of the theorem.
\qed\end{proof}

\vskip 1truecm
{\bf Acknowledgments}:  part of this research
took place while the author was visiting the
Royal Institute of Technology in January 2006. The
warm hospitality and support is gratefully
acknowledged.

\end{document}